\documentclass{amsart}
\usepackage{master}
\usepackage[margin=1.3in]{geometry}

\renewcommand{\pdual}[1]{{#1}^*}
\renewcommand{\pt}{*}

\begin{document}
\title{Projective symmetries of three-dimensional TQFTs}
\author{Jackson Van Dyke}
\date{\today}
\begin{abstract}
Quantum field theory has various projective characteristics which are captured by
what are called anomalies.
This paper explores this idea in the context of fully-extended three-dimensional
topological quantum field theories (TQFTs).

Given a three-dimensional TQFT (valued in the Morita $3$-category of fusion categories),
the anomaly identified herein is an obstruction to gauging a
naturally occurring orthogonal group of symmetries.
In other words, the classical symmetry group almost acts: There is a lack of coherence at
the top level.  This lack of coherence is captured by a ``higher (central) extension'' of
the orthogonal group, obtained via a modification of the obstruction theory of
Etingof-Nikshych-Ostrik-Meir \cite{ENO:homotopy}.
This extension tautologically acts on the given TQFT/fusion category, and this 
precisely classifies a \emph{projective} (equivalently \emph{anomalous}) TQFT.
We explain the sense in which this is an analogue of the classical spin representation. 
This is an instance of a phenomenon emphasized by Freed \cite{F:what_is}:
Quantum theory is projective. 

We also establish, under some assumptions, 
a general relationship between the language of projectivity/anomalies and the language of
topological symmetries.
We also identify a universal anomaly associated with any theory which is appropriately
``simple''.
\end{abstract}
\maketitle
\tableofcontents

\section{Introduction}


Quantum field theory is fundamentally projective \cite[\S 3]{F:what_is}.
For instance, the correlation functions of a given QFT are unchanged by
tensoring with an invertible theory.
One example of this phenomenon, in quantum mechanics, 
is the well-known fact that the pure states form the \emph{projectivization} of the
Hilbert space of all states.
From the perspective of symmetries, this projectivity appears when one asks if a certain
group acts coherently on the Hilbert space:
The (symplectic) group of symmetries of the phase space will only act projectively, i.e.
only on the pure states.

This projectivity is captured by what is called an \emph{anomaly}.
Though anomalies often assume various mathematical guises, 
we will work with anomalies which are captured by invertible once-categorified theories.
Anomalous theories are then theories defined relative to these invertible theories
\cite{F:what_is}.

We define the equivalent notion of a \emph{projective} TQFT defined on $\Bord^X_d$,
following suggestions of Freed \cite{F:what_is}. Namely, we formally define a
``projectivization'' construction, $\PP$, of a linear target category (\Cref{def:P}).
For example, $\PP\Vect$ is an avatar of the category of projective spaces (see
\Cref{rmk:PVect}) and serves as an appropriate target for non-extended projective TQFTs of
any dimension.
The definition of the projectivization (\Cref{def:P}) is general enough to apply to nearly
any target category of interest, and manifestly classifies anomalous theories valued in
that category.
For example, by forming the projectivization of the even higher Morita categories
\cite{JFS}, we obtain a concrete target category for projective TQFTs in any dimension. 

We define the notion of a non-anomalous framed theory in any
dimension having an anomaly as an $X$-theory in \Cref{defn:X_anom}. I.e. it is
well-defined as a functor out of $\Bord^\fra_d$, but it is only a relative theory on
$\Bord^X_d$.
In particular, this includes the notion of a framing anomaly (\Cref{exm:framing_anomaly}),
as well as the 't Hooft anomalies discussed in this paper. 
We discuss a target for projective $3$-dimensional TQFTs in detail (\Cref{sec:PFus}).
We also show that $\PP$ commutes with iterated looping (\Cref{thm:P}) meaning, for
example, that \emph{projective} $3$-dimensional TQFTs give rise to \emph{projective}
mapping class group representations of surfaces.

Under some assumptions (see \Cref{hyp:sigma}), anomalous/projective theories 
can be rephrased via the language of topological symmetry \cite{FMT}.
Let $G$ be a finite group.
If the anomaly theory, as a once-categorified TQFT on $\Bord^{BG}_d$, happens to be
trivializable, then this is equivalent to having a ``sandwich structure'' \cite{FMT}
(i.e. an `action') of pure topological $G$-gauge theory of one dimension
higher.
This is still possible if the anomaly theory is nontrivial:
Now the topological $G$-gauge theory is twisted by the cocycle underlying the anomaly theory
(see \Cref{rmk:map_of_spectra}).
This statement identifies certain higher $G$-representations with
certain higher modules over the higher group algebra of $G$.

For general $\pi$-finite $X$, we formalize this in  
\Cref{thm:anom_symm} 
as an equivalence between four different avatars of projectivity.
We also define a universal anomaly (\Cref{cor:anom_symm}) 
for an object of any target satisfying a certain notion of simplicity (\Cref{defn:ss}).


Let us restrict our attention to anomalies of $3$-dimensional theories.
The anomaly theories themselves will be valued in the (even higher \cite{JFS}) Morita
$4$-category\footnote{We will generally use $n$-category to denote 
$\left(\infty , n\right)$-category in the complete Segal space formalism, and otherwise we
specify that we are considering a discrete $n$-category. 
We will sometimes still write $\left(\infty , n\right)$-category to emphasize the
$\infty$-categorical nature of the target categories.
In particular, we work with the even higher Morita category 
of $\EE_2$-algebras in (a suitable subcategory of) the
symmetric-monoidal $2$-category of presentable categories.
See \Cref{rmk:fusion} for the relationship with the abelian category theory literature.}
of braided tensor categories $\BrTens$. 
We will restrict out attention to the subcategory consisting of 
rigid, finite, and semisimple tensor categories, i.e. fusion categories.
One reason for this is that they are fully-dualizable \cite{BJS:dualizable}.

As usual, endomorphisms of the identity forms the associated $3$-dimensional target:
the Morita category of fusion categories, $\Fus$. 
As is discussed in \Cref{sec:P}, the projectivization of $\Fus$, written $\PP\Fus$, is
constructed in such a way that functors:
\begin{equation}
\Bord_3^{\left(X , \z\right)} \to \PP \Fus
\label{eqn:proj_3d}
\end{equation}
correspond precisely to natural transformations between the unit and
any invertible representation of the same bordism category on $\BrFus^\times$.

One justification for restricting to this dimension and setting, is that $\pi_0
\BrFus^\times$ is highly nontrivial, and closely related to the Witt group of braided
fusion categories \cite{BJSS:invertible}.
This means there are more anomaly theories valued in $\BrFus$ than there are in, say, the
category of vector spaces or the Morita category of algebra (or fusion categories)
since they have trivial groups of invertible objects. 

In slightly more detail, $\PP\Fus$ 
consists of all $1$-morphisms from an invertible object to the unit in $\BrFus$.
I.e. these are monoidal categories with a compatible action of a nondegenerate braided
fusion category.
The morphisms are (op)lax squares, and the higher morphisms are (op)lax transfors
as defined in \cite{JFS}.

An important example of a $3$-dimensional TQFT is the Witten-Reshetikhin-Turaev (WRT) TQFT
associated to a modular tensor category \cite{Wit:RT,RT2,T:book}.
Given a nondegenerate braided fusion category $\cA$,
the avatar of this theory which we will consider is the TQFT 
$\WRT_{\cA} \colon \Bord_3^\fra \to \PP \Fus$,
defined by sending the point to the fully-dualizable object given by $\cA$ as a module
over itself \cite{Fre:aspects,Ben:unit}.
We prove that this theory always defines a projective $G$-theory, for any $G$ acting
fully-coherently by braided autoequivalences of $\cA$ in \Cref{thm:WRT}.

WRT theories do not extend to the point as oriented (or even framed) TQFTs valued in
$\Fus$ whenever $\cA$ is in a nontrivial Witt class. 
This illustrates one feature of the projective target category $\PP\Fus$: it allows for
WRT theories (with a nontrivial framing anomaly) to be extended to the point.


Drinfeld centers of fusion categories are examples of braided fusion categories $\cA$
which are in the trivial Witt class \cite{BJSS:invertible}. 
Every fusion category $\cC$ is fully-dualizable \cite{DSPS:dualizable}, so it defines a
fully-extended framed $3$-dimensional TQFT by the Cobordism Hypothesis
\cite{L:CH}.
This theory is sometimes called the Turaev-Viro (TV) theory associated with $\cC$.

The TV theory for $\cC$ is related to the WRT theory (in the above sense)
associated to $\cZ\left(\cC\right)$ as follows:
The anomaly theory of $\WRT_{\cZ\left(\cC\right)}$ is trivializable:
$\cA$ is in the trivial Witt class, and a choice of such a $\cC$ determines a 
trivialization of the anomaly. Composing with the trivialization yields an absolute
$3$-dimensional theory, which sends the point to 
\begin{equation*}
\cC \tp_{\cZ\left(\cC\right)} \cZ\left(\cC\right) \simeq \cC \in \Fus \ ,
\end{equation*}
and therefore agrees with the TV theory for $\cC$.
This is a sense in which ``TV for $\cC$ agrees with WRT for $\cZ\left(\cC\right)$''.

The anomaly/projectivity of the TV theory only appears when we attempt to upgrade the
theory to be defined on a bordism category with different background fields. 
An instance of this is given by the question of upgrading a theory to be equivariant for
the action of a certain group.
The anomaly is then encoded by the Postnikov tower of the higher automorphism groupoid of
the fusion category. 

The homotopy theoretic structure of the groupoid of automorphisms is studied in
\cite{ENO:homotopy}.
We utilize the obstruction theory contained therein to produce certain 
$3$-groups (\Cref{defn:lip,defn:pin,defn:spin}).
These are central\footnote{See \Cref{rmk:central_extension}.} extensions of finite orthogonal
groups by higher scalars (e.g. $B^2\units{\CC}$ or $B^2 \mu_p$).
A detailed analogy with the classical spin representation is developed in \Cref{sec:analogy}.
One can also think of this as being analogous to the Weil representation of the
metaplectic group.

These $3$-group extensions of the finite orthogonal group each act on the
framed TQFT. Equivalently, the orthogonal group acts projectively, with projectivity given
by the cocycle classifying the extension. 
In terms of anomalies, the original framed theory defines an anomalous/projective
theory on $\Bord_3^{BG}$, where $G$ is any group mapping to the finite orthogonal group
(\Cref{thm:pin_anom,cor:spin_anom}).
When the fusion category in question consists of vector spaces graded by $\FF_p^n$,
the obstruction / anomaly is trivializable. 
Therefore we obtain the existence of an $\O \left(2n,\FF_p\right)$-equivariant
\emph{linear} TQFT associated to $\FF_p^n$ (\Cref{cor:Fp}).
Under the analogy with the Weil representation, the fact that this class is trivializable
is analogous to the splitting of the Weil representation over a finite field
\cite{GH:finite,GH:categorical}.

On the other hand, given any finite group $G$ and $4$-cocycle we show in
\Cref{cor:nontriv_O4anomaly}
that there exists some $3$-dimensional TQFT with anomaly classified by this cocycle.
In other words: the classification of projective $3$-dimensional $G$-theories is richer
than the classification of (linear) $G$-theories.


This paper is organized as follows. In the remainder of the introduction, we give a more
detailed summary of our results and give some indication as to how this work is related to
the literature.

In \Cref{sec:projective_TQFTs} we implement suggestions of Freed \cite{F:what_is} to
provide a formal definition of a projective TQFT.
We define the projectivization functor in \Cref{sec:P}. We 
apply it to various (even higher)
Morita categories in \Cref{sec:P_exm}, where we also explain some concrete examples of
projective targets.
The definition of a projective TQFT is in \Cref{sec:proj}, and the relationship with 
extensions of bordism categories is discussed in \Cref{sec:ext_bord}.

In \Cref{sec:anomalies} we discuss anomalies in general. 
In \Cref{sec:proj_anom} we explicitly describe how a projectivity cocycle classifies a
once-categorified invertible TQFT, following \cite{F:what_is}. 
In \Cref{sec:anom_sand,sec:univ_anom} we provide the connection with the sandwich picture
in \Cref{thm:anom_symm} and the ``universal anomaly'' in \Cref{cor:anom_symm}.

In \Cref{sec:fusion_prelim,sec:higher_groupoids,sec:pointed,sec:obstruction} we collect
the necessary preliminaries regarding (braided) fusion categories, mostly from
\cite{ENO:homotopy}.
In \Cref{sec:extensions} we introduce the $3$-groups which are processed into anomaly
theories, and in \Cref{sec:spin,sec:analogy} we explain a detailed analogy between
these $3$-groups and the groups $\Pin$ and $\Spin$.

In \Cref{sec:dual_inv,sec:anom_Z} we review some facts concerning fully-extended
$3$-dimensional TQFTs and Drinfeld centers, before moving onto anomalies.
A projective action of the braided automorphism $2$-group of the center (resp. orthogonal group
of the underlying metric group) is considered in \Cref{sec:anom_k3} (resp.
\Cref{sec:anom_O4}), and their avatars as \emph{projective} theories are discussed in
\Cref{sec:3d_proj}.
The anomalies/projectivity are translated into the language of topological symmetries
in \Cref{sec:3d_symm}.
The functorial assignment of a linear $\O\left(\FF_p\right)$-equivariant
$3$-dimensional TQFT to a vector space over $\FF_p$ is considered in
\Cref{sec:Fp_quant}.

In \Cref{sec:TQFT} we recall general facts about TQFTs, mostly from \cite{L:CH}.
In particular, we discuss the particular version(s) of the Cobordism Hypothesis we will
use in this paper in \Cref{sec:cob_hyp}.
Relative TQFTs and invertible TQFTs are considered in
\Cref{sec:relative_theories,sec:invertible}.

\Cref{sec:top_symm} is dedicated to a discussion of topological symmetry and TQFTs
associated to $\pi$-finite groupoids. 
We review some definitions and facts from \cite{FMT}, and define a notion of
a ``reduction'' of topological symmetry in \Cref{sec:reduction}.


\noindent
\textbf{Acknowledgements.} 
I warmly thank my advisor David Ben-Zvi for suggesting this project, and for his constant
help and guidance.
I would also like to thank Dan Freed, David Jordan, 
Lukas M\"uller, German Stefanich, Rok Gregoric, and 
Will Stewart for helpful discussions. 

Part of this project was completed while the author was supported by 
the Simons Collaboration on Global Categorical Symmetries.

This research was also partially completed while visiting the Perimeter Institute for
Theoretical Physics.
Research at Perimeter is supported by the Government of Canada through the Department of
Innovation, Science, and Economic Development, and by the Province of Ontario through the
Ministry of Colleges and Universities.

Jackson van Dyke is funded/acknowledges support by the Deutsche Forschungsgemeinschaft
(DFG, German Research Foundation) – SFB 1624 – ``Higher structures, moduli spaces and
integrability'' – 506632645.

\subsection{Anomalies}


Anomalies have been studied for many years, under many different guises.
Early appearances include \cite{Stein,Adler,BJ,Hooft:anom}, and
more contemporary resources, which directly inspired this paper, include
\cite{F:what_is,Fr:anomalies,FHLT}.
See \cite{F:what_is} and the references contained therein for more thorough resources.
We take the point of view that an anomaly is a feature, rather than a bug
\cite{Hooft:anom,F:what_is}.
Anomalies can, and often are, discussed for more general classes of QFTs, but we will
restrict our attention to topological anomalies of TQFTs.
The main proposal of this work is that the Postnikov tower of the higher automorphism
groupoid is a source of anomalies/projectivity. 

In \Cref{sec:anomalies}, we define the notion of a non-anomalous framed theory in any
dimension having an anomaly as an $X$-theory in \Cref{defn:X_anom}. I.e. it is
well-defined as a functor out of $\Bord^\fra_d$, but it is only a relative theory on
$\Bord^X_d$.
In particular, this includes the notion of a framing anomaly (\Cref{exm:framing_anomaly}),
as well as the 't Hooft anomalies discussed in this paper. 

This notion of an anomalous theory is closely tied to the notion of a \emph{projective}
TQFT, as outlined in \cite{F:what_is}.
We introduce projective target categories for such theories in \Cref{sec:P}.
Given a linear $d$-dimensional target category, i.e. a symmetric monoidal
$\left(\infty , d\right)$-category, in \Cref{def:P} we give a formal definition for a
\emph{projectivization} of any such target.
A $d$-dimensional projective TQFT is then a functor from the bordism
category of interest to the projectivization of the target category of interest.
In \Cref{prop:proj_anom} we record the nearly tautological statement that 
this notion of a projective theory is equivalent to an anomalous theory.

Given any sufficiently nice (see \Cref{rmk:morita} for the precise condition) 
symmetric monoidal $\left(\infty , k\right)$-category $\cS$, the even higher Morita
category \cite{JFS} $\Alg_n \cS$ is a symmetric-monoidal $\left(\infty ,
k+n\right)$-category. 
The projectivization $\PP \Alg_n \cS$ is a candidate target for projective
$\left(k+n\right)$-dimensional TQFTs. 
This gives a well-defined notion of a (fully-extended) projective TQFT in any dimension.
An explicit example of interest is the projectivization of $\Vect$ (considered in
\Cref{sec:PVect}), which can serve as a target for non-extended TQFTs of any
dimension.

Let $\cT$ denote the $\left(d+1\right)$-dimensional target (i.e. the target for bulk
theories).
Write $\Om \cT = \End_\cT\left(1\right)$, and $\Om^k$ for the $k$-fold endomorphisms of
the unit. 
We show in \Cref{thm:P} that $\PP$ depends naturally on $\cT$, and that $\PP$ commutes
with $\Om^k$. 
In particular, if the linear category $\cT$ satisfies $\Om^{d} \cT \simeq \Vect$, then
we have that:
\begin{equation*}
\Om^{d-1} \PP \Om \cT \simeq \PP \Om^{d} \cT \simeq \PP \Vect \ .
\end{equation*}
Since $\PP\Vect$ is an avatar of the category of projective vector spaces
(see \Cref{rmk:PVect}),
one upshot of this statement is that the mapping class group representations that come
from a \emph{projective} TQFT are indeed \emph{projective} mapping class group
representations.

\begin{rmk}[Framing anomalies]
The results in this paper concerning three-dimensional theories are restricted to the
framed setting, in order to avoid the discussion of
an $\SO$-fixed point structure on (braided) fusion categories.
If a given theory cannot be upgraded from a framed theory to an oriented theory in this
manner, then it is said to have a nontrivial \emph{framing anomaly}.
(See \Cref{exm:framing_anomaly} for a formal definition in terms of invertible
once-categorified theories.)
The anomalies in this paper which correspond to projective actions of (higher) groups are
not entirely of the same sort as a framing anomaly:
The tangential structure on our source $\Bord^X_d$ (borrowing notation from \cite{L:CH})
is the trivial map $X \to B\O\left(d\right)$. 

This is not to say that the discussion in this paper is entirely orthogonal to the
discussion of framing anomalies: The four-dimensional Crane-Yetter theory (see
\Cref{sec:dual_inv}) plays an important role in studying the framing anomaly of
the $3$-dimensional Reshetikhin-Turaev (and possibly Turaev-Viro) theory
\cite{RT1,RT2,Wit:RT,Wal:RT,FHLT}.
\label{rmk:framing_anomaly}
\end{rmk}

\subsubsection{Fully-extended anomalous WRT theory}

Unless otherwise specified, all categories are 
$\bk$-linear, where $\bk$ is algebraically closed of characteristic
zero. 
The anomaly theories of the anomalous $3$-dimensional theories are valued 
in the Morita $4$-category of \emph{braided} fusion categories,\footnote{See
\Cref{sec:fusion_prelim} for detailed definitions.} written $\BrFus$. 
By constructing the projectivization of $\Om\BrFus\simeq \Fus$ (where $\Fus$ is the Morita
$3$-category of fusion categories) we are able to introduce a concrete definition of a \emph{projective
$3$-dimensional TQFT} in \Cref{sec:3d_proj}.
This is of particular interest because the low energy \emph{topological} theory modeling a
gapped system is in fact a \emph{projective}, i.e. anomalous, TQFT \cite[Interlude
(P.12)]{F:what_is}.

The objects of $\PP\Fus$ are $1$-morphisms in $\BrFus$ from an invertible object to the
unit. Spelling this out, an object consists of some invertible object $\cA$ of $\BrFus$, 
along with an $\EE_1$-algebra object of the monoidal $2$-category of $\cA$-modules.
For instance, the regular module $\cA_\cA$ is such a $1$-morphism. 
As an object of the arrow category $\BrTens^\down$, any such regular module defines a
fully-dualizable $1$-morphism \cite{Ben:unit}. 

The upshot of this discussion, is that $\cA$ defines a projective TQFT:
\begin{equation*}
\WRT_{\cA} \colon \Bord_3^\fra \to \PP \Fus \ .
\end{equation*}
The object $\cA\in \BrFus$ is invertible on account of being nondegenerate
\cite{BJSS:invertible}. Therefore the TQFT 
$\CY_{\cA} \colon \Bord_4^\fra \to \BrFus$ sending the point to $\cA$ is also invertible. 
It is a fully-extended framed version of the Crane-Yetter TQFT attached to $\cA$, and it
is the anomaly theory of $\WRT_{\cA}$.

We show that $\WRT_\cA$ can always be given a projective action of the $2$-group of
braided autoequivalences of $\cA$.
The following appears as \Cref{thm:WRT} in the text.

\begin{thm*}
Let $\cA$ be a nondegenerate braided fusion category, and consider a monoidal functor 
$\rho \colon G \to \Aut_{\EqBr}\left(\cA\right)$. 
There is an anomalous $G$-TQFT
\begin{equation*}
\WRT_\cA^G \colon 
\Bord^{B\Aut_{\EqBr}\left(\cA\right)}_3 \to \PP \Fus 
\end{equation*}
which agrees with $\WRT_\cA$ upon restriction to trivial $G$-bundles.
The anomaly theory of $\WRT_{\cA}^G$, as a functor from $\Bord^{BG}_4$ to $\BrFus$, agrees
with $\CY_\cA$ upon restriction to trivial $G$-bundles.
\end{thm*}

In words, this theorem says that we can upgrade WRT and Crane-Yetter to be $G$-theories 
in such a way that WRT still lives relative to Crane-Yetter.

One reason to restrict our attention to this dimension, is that 
there is more ``room'' for anomaly theories than in lower dimensions:
The groups $\pi_0 \units{\BrFus}$ and $\pi_0 \units{\BrTens}$ are closely related to the
Witt group of braided fusion categories \cite{BJSS:invertible} which is, in particular,
infinite.
In lower dimensions, in the characteristic zero non-super case, this does not happen:
Every invertible object is trivializable in the category of vector spaces, the Morita
category of algebras, and the Morita category of fusion categories.

\begin{rmk}
As in \cite{BJS:dualizable,BJSS:invertible}, we will prefer to work in the more general 
$\infty$-categories consisting of objects 
which are $\Ind$-completions of what are usually called fusion categories in the abelian
category literature. 
As explained in \Cref{rmk:fusion}, various existing results allow us to translate facts
back and forth between these two settings
\cite{DSPS:dualizable,BJS:dualizable,BJSS:invertible}.
For example, we make frequent use of facts from \cite{ENO:homotopy}.
Our perspective, is that this is a feature not a bug: We can continue to use the arguments
from the finite abelian category theory literature, however the target categories
considered in this paper contain more general objects. For example, the iterated looping
to a $1$-category is the $\Ind$-completion of the category of finite-dimensional vector
spaces, which in-particular contains objects which represent infinite-dimensional vector
spaces.
\end{rmk}

\subsubsection{An anomaly of a fully-extended framed TV theory}
\label{sec:intro_fus_anom}

Let $\cC$ be a fusion category (see \Cref{rmk:fusion} for the details).
The Morita $3$-category of fusion categories, $\Fus$, has
duals \cite{DSPS:dualizable}, so $\cC$ classifies a (framed) $3$-dimensional TQFT $F$ by
the Cobordism Hypothesis \cite{L:CH}.

The theory $F$ is well-defined as a framed theory. However, $F$ will turn out to be
``anomalous'' or ``projective'' as a theory which is equivariant with respect to a certain
natural group, or in some cases $2$-group, of symmetries. 
In physical terms, we are identifying an obstruction to gauging the symmetry. 

The Drinfeld center $\cZ\left(\cC\right)$ is an object of the $4$-category $\BrFus$, which
also has duals \cite{BJS:dualizable}, so this classifies a $4$-dimensional TQFT $\z$. 
The Drinfeld center $\cZ\left(\cC\right)$ tautologically acts on $\cC$, meaning the 
theory $F$ can be upgraded to a relative theory $F_\z \colon \z \to 1$.
The Drinfeld center turns out to be invertible \cite{BJSS:invertible}, and in fact 
$\cZ\left(\cC\right)$ is trivializable in $\BrFus$: The module $\cC$ in
$\Hom_\BrFus\left(\cZ , 1\right)$ is an equivalence.
I.e. $F_\z \colon \z \to 1$ is an equivalence of theories.
The anomalous symmetries of the original theory $F$ appear when we consider the symmetries
of $\z$, or equivalently $\cZ\left(\cC\right)$. 

In \cite{ENO:homotopy}, the homotopy type of the $3$-type
$B\Aut_\Fus\left(\cC\right)$ is studied.
There is a natural map:
\begin{equation*}
\Aut_{\Fus}\left(\cC\right) \to \Aut_{\EqBr}\left(\cZ\left(\cC\right)\right)
\ ,
\end{equation*}
where $\Aut_{\EqBr}$ denotes the $2$-group of braided autoequivalences.
As it turns out, this map is an equivalence when restricted to the truncation of
$\Aut_\Fus\left(\cC\right)$ to a $2$-group.
The top nontrivial homotopy group of $B\Aut_\Fus\left(\cC\right)$ is $\pi_3 =
\units{\bk}$, so the upshot of this discussion is that the higher automorphism groupoid
defines a bundle:
\begin{equation}
\begin{cd}
B^3 \units{\bk} \ar{r}\ar{d}&
B\Aut_\Fus\left(\cC\right)\ar{d} \ar{r}& *\ar{d}\\
* \ar{r}&
B\Aut_{\EqBr}\left(\cZ\left(\cC\right)\right)\ar{r}{k} & B^4 \units{\bk}
\end{cd}
\label{eqn:BAut_bundle}
\end{equation}
The map $k$ defines a TQFT:
\begin{equation*}
\al_k \colon \Bord_4^{B\Aut_{\EqBr}\left(\cZ\left(\cC\right)\right)} \to \BrFus
\ .
\end{equation*}
The original theory $F$ canonically defines a relative theory:
\begin{equation*}
F_k \colon \al_k \to 1 \ ,
\end{equation*}
where now $1$ denotes the trivial theory with source
$\Bord_4^{B\Aut_{\EqBr}\left(\cZ\left(\cC\right)\right)}$.
(See \Cref{prop:alpha_c}.)

I.e. we have seen that the ordinary framed theory $F$ is well-defined as an anomalous
$\Aut_{\EqBr}\left(\cZ\left(\cC\right)\right)$-equivariant theory.
The following is stated as \Cref{thm:k3} in the body of this paper.

\begin{thm*}
The framed TQFT $F$ has an $X= B\Aut_{\EqBr} \left(\cZ\left(\cC\right)\right)$-anomaly as
in \Cref{defn:X_anom}. 
In particular, the fusion category $\cC$ itself defines an anomalous theory:
\begin{equation*}
F_{k_3} \colon \alpha_{k_3} \to 1_{B\Aut\left(\cZ\left(\cC\right)\right)} \ .
\end{equation*}
Furthermore, if $\cZ\left(\cC\right)$ is pointed, and the cohomology class classifying the braiding of
$\cZ\left(\cC\right)$ (as in \Cref{rmk:q}) is nontrivial, then the anomaly $\al_{k_3}$ is nontrivial.
\end{thm*}

If $\cZ\left(\cC\right)$ is pointed (and the group of isomorphism classes of simple
objects $A$ has odd order) then the $2$-group
$\Aut_{\EqBr}\left(\cZ\left(\cC\right)\right)$ splits, and in fact there is a canonical
splitting. I.e. we can (canonically) write the classifying space as a semidirect product of
Eilenberg-MacLane spaces $B\pi_1$ and $B^2 \pi_2$.
The fundamental group is $\pi_1 = \O\left(A , q\right)$, where $\left(A , q\right)$ is the
pre-metric group classifying the pointed braided fusion category $\cZ\left(\cC\right)$
(see \Cref{sec:pointed}).
The upshot of this is that we can pull the class $k$ back to $B\O\left(A\right)$, along
this canonical splitting, to obtain a class:
\begin{equation*}
O_4\left(A\right)
\in H^4\left(B\O\left(A\right) , \units{\bk}\right)
\ .
\end{equation*}
This defines an anomaly theory:
\begin{equation*}
\al_{O_4} \colon  \Bord_{4}^{B\O\left(A\right)} \to \BrFus \ ,
\end{equation*}
and $F$ defines a relative theory:
\begin{equation*}
F_{O_4} \colon \al_{O_4} \to 1 \ .
\end{equation*}

The homotopy fiber of $O_4\left(A\right)$ is a $B^3 \units{\bk}$-bundle
over $B\O\left(A\right)$.
This is the classifying space of a $3$-group we call $\LLip\left(A\right)$
(\Cref{defn:lip}).
This is an analogue of the Lipschitz (a.k.a. Clifford) group, reviewed in \Cref{sec:spin}.
As in the classical story, $\LLip\left(A\right)$ 
can be cut down to an extension by a finite group, rather than all scalars. 
We call this $\PPin$ (\Cref{defn:pin}), and it is analogous to the group $\Pin$.
We can restrict to $\SO\left(A\right) \inj \O\left(A\right)$ to obtain $\SSpin$
(\Cref{defn:spin}), an analogue of $\Spin$.
This is a part of an analogy in \Cref{tab:analogy}, which is fleshed out in detail in
\Cref{sec:analogy}.

\begin{table}[ht]
\centering
\caption{A detailed analogy between the Clifford algebra and spin representation
associated to (a Lagrangian in a) quadratic vector space, 
and the braided fusion category and fusion module category associated to (a Lagrangian
in a) finite metric group. See \Cref{sec:spin} for the former, and \Cref{sec:fusion} for
the latter.
This analogy is explained in detail in \Cref{sec:analogy}.}
\label{tab:analogy}
\mathtabular{
\begin{tabular}{|c|c|}
\hline
$1$-dimensional & $3$-dimensional
\\ \hline\hline
$\left(V , q\right)$
& $\left(A , q\right)$ 
\\ \hline
$\SO\left(V , q\right) \subset \O\left(V , q\right)$ 
& $\SO\left(A , q\right) \subset \O\left(A , q\right)$
\\ \hline
$\units{\bk}$ 
& $B^2 \units{\bk}$
\\ \hline\hline
$\cliff\left(V\right)$
& $\cA = \left(\Vect\left[A\right] , * , \beta_q\right)$
\\ \hline
$\left\{x,y\right\} = b_q\left(x,y\right)$ 
& $\b_q \colon \bk_a * \bk_b \lto{b_q\left(a,b\right) \id} \bk_b * \bk_a$
\\ \hline
$V\rtimes \O\left(V,q\right)$
& $\Aut_{\EqBr}\left(\cA\right)$
\\ \hline\hline
$\Gamma = \Lip$
& $\LLip\left(A , q\right)$
\\ \hline
spinor norm 
& $N_{\left(A , q\right)}$
\\ \hline
$\Pin\left(V , q \right)$ 
& $\PPin \left(A , q \right)$
\\ \hline
$\Spin\left(V , q \right)$
& $\SSpin \left(A , q \right)$
\\ \hline
$\left\{\pm 1\right\} \inj \units{\bk}$ & 
$B^2 \mu_{l^4} \to B^2 \units{\bk}$
\\ \hline
See \Cref{rmk:heis}
& $B^2 \units{\cA}$
\\ \hline
See \Cref{rmk:heis}
& $\Pic\left(\cA\right)$ 
\\ \hline\hline
$V\simeq L \dsum \pdual{L}$
&$A\simeq L \dsum \pdual{L}$
\\ \hline
$\ext^\dott \pdual{L}$
&
$\cC = \left(\Vect\left[\pdual{L}\right] , *\right)$
\\ \hline
$\End\left(\ext^\dott \pdual{L}\right) \simeq \cliff$
&
$\Aut_\Fus\left(\cC\right) \simeq \Pic\left(\cA\right)$
\\ \hline
\end{tabular}}
\end{table}

Along the way, we establish a classification of pointed braided fusion categories which
are Drinfeld centers 
(\Cref{cor:class_Z}).
This result is a restriction of the familiar classification of pointed
braided fusion categories in \cite{JS,EM:2}.
In particular, we describe the braiding on $\cZ\left(\Vect\left[L\right]^\tau\right)$
explicitly in terms of the braiding on $\cZ\left(\Vect\left[L\right]\right)$ in
\Cref{prop:beta_tau}.
The upshot of this is that we see that the ``polarizable'' metric groups are not all of
the form $\left(L\dsum \pdual{L} , \ev\right)$: in general the quadratic form obtains a
factor coming from the twist $\tau$ on $L$.
Note however that many such twists do not result in a pointed center, so $\tau$ must
satisfy a restrictive hypothesis. 

The following results are stated as \Cref{thm:pin_anom,cor:spin_anom} 
in the body of this paper.

\begin{thm*}
Consider a pointed Drinfeld center $\cZ\left(\cC\right)$ classified by a polarized metric
group $\left(A , q , L\right)$.
The nonanomalous framed theory 
\begin{equation*}
F \colon \Bord^{\fra}_3 \to \Fus 
\end{equation*}
sending the point to $\cC$ has an $\O\left(A , q\right)$-anomaly in the sense of
\Cref{defn:X_anom}.
I.e. there is an anomaly theory
\begin{equation*}
\al_{c\left(\PPin\right)} \colon\Bord^{B\O\left(A , q\right)}_3 \to \BrFus \ ,
\end{equation*}
and $\cC$ canonically defines an anomalous $\O\left(A , q\right)$-TQFT:
\begin{equation*}
F_{c\left(\PPin\right)} \colon \al_{c\left(\PPin\right)} \to 1_{B\O\left(A , q\right)} \ .
\end{equation*}
\end{thm*}

\begin{cor*}
Restricting the $\O\left(A,q\right)$-anomaly of \Cref{thm:pin_anom} to $\SO\left(A , q\right)$ we obtain
an $\SO\left(A , q\right)$-anomaly of $F$ 
\begin{equation*}
\al_{c\left(\SSpin\right)} \colon \Bord^{B\SO\left(A , q\right)}_4 \to \BrFus \ ,
\end{equation*}
and an anomalous $\SO\left(A , q\right)$-equivariant theory 
\begin{equation*}
F_{c\left(\SSpin\right)} \colon \al_{c\left(\SSpin\right)} \to 1_{B\SO\left(A , q\right)} \ .
\end{equation*}
\end{cor*}

If one must trivialize the anomaly\footnote{An anomaly is really ``part'' of the 
theory, and only acts as an obstruction when you are ``quantizing'', see \cite[\S
4]{F:what_is}.}, and it is not trivializable on all of $\pi_1$, it can be pulled back
along a map $f\colon G\to \pi_1$. 
This results in an anomaly:
\begin{equation*}
\al \colon \Bord_4^{BG} \to \BrFus \ ,
\end{equation*}
and $F$ still defines a relative theory $\al \to 1$.
A \emph{trivialization} is an equivalence:
$1\lto{\sim} \al$, and the \emph{trivialized theory} is the composition:
\begin{equation*}
1\lto{\sim} \al \to 1 \ .
\end{equation*}
Note that this is an endomorphism of the trivial theory defined on $\Bord^{BG}_4$, and
therefore equivalent to a theory
\begin{equation*}
F_G \colon \Bord^{BG}_3 \to \Fus \ .
\end{equation*}

One should think that we started with a theory $F$ (or fusion category $\cC$) and obtained
a $G$-equivariant theory $F_G$ (or fusion category $\cC$ along with fully
coherent\footnote{Fully coherent means that the assignment of a
bimodule to each group element is not only defined up to isomorphism: 
it is a functor from the discrete $3$-category with objects $G$ to the full
$3$-group of $\cC$-bimodules. See \Cref{sec:triv_O4} for more.}
action of $G$ on $\cC$ via bimodules).

\begin{rmk}
By construction, the direct sum of these bimodules for each $g\in G$ is precisely the
$G$-extension associated to the same data in \cite[Theorem 1.3]{ENO:homotopy}.
\end{rmk}

\begin{rmk}
Sometimes the entire cohomology group containing the obstruction vanishes, meaning the
anomaly is trivializable. 
One example of this is $\cC = \Vect\left[L\right]$, for $L$ a vector space over $\FF_p$.
In this case, the (cohomology group and therefore the)
obstruction is shown to vanish in \cite{EG:reflection}. 
In the language of this paper, this means that there is an $\O\left(L\dsum \pdual{L} ,
\ev\right)$-equivariant theory sending the point to $\cC$.
This is stated as \Cref{cor:Fp} in the text.
\end{rmk}

\subsubsection{Characteristic classes}

Anomalies are most interesting when they cannot be trivialized as in \Cref{sec:triv_O4}. 
This happens, for example when we consider the full symplectic group acting on the quantum
harmonic oscillator: The Hilbert space $L^2\left(\RR\right)$ is a nontrivially projective
representation of $\Sp_2\left(\RR\right)$, and the projectivity classifies the universal
cover $\Mp_2\left(\RR\right) \to \Sp_2\left(\RR\right)$.
The corresponding cocycle classifies (a multiple of) 
$w_2 \in H^2\left(B\Sp_2\left(\RR\right)\right)$.

Similarly, finite orthogonal groups have characteristic classes 
\cite{FP:homology_of_classical_groups_over_finite_fields}
and one might wonder if these characteristic classes match the anomaly theories discussed
in this paper. 

More generally, given a $4$-cocycle $\pi \colon BG \to B^4 \units{\bk}$ on a group, one
might wonder what conditions guarantee the existence of an anomalous $3$-dimensional TQFT
(thought of as a nondegenerate braided fusion category $\cB$)
such that the anomaly is described by $\pi$.
\Cref{cor:nontriv_O4anomaly} guarantees the existence of such a theory for finite $G$. 

\subsubsection{Relationship with the literature}

The anomalies studied in this paper are 't Hooft anomalies for discrete internal
symmetries of Dijkgraaf-Witten theories.
These were first studied (in various dimensions) in
\cite{KT:discrete_anomalies,KT:3discrete_anomalies},
and (in all dimensions) in the once-extended functorial language in
\cite{Mul:thesis,MS:anomalies}.

In the $3$-dimensional and pointed/quasi-trivial case, 
the obstruction classes in \cite{KT:discrete_anomalies,Mul:thesis,MS:anomalies}
are shown (in \cite[\S 11.8]{ENO:homotopy}) to be equivalent to the
obstruction classes in \cite{ENO:homotopy}.

The higher groupoids studied in \cite{ENO:homotopy} were studied as symmetries of
$3$-dimensional theories in \cite{FPSV:BrPic,FS:3d_symm}.
The obstruction theory of \cite{ENO:homotopy} is used in \Cref{sec:obstruction} 
to define the $3$-groups which characterize the anomaly theories.
The same obstruction theory used in this paper from \cite{ENO:homotopy} has been used
extensively in the literature to study MTCs in the unitary setting
\cite{CGPW,DGPRZ,DGPRZ:2}.

A partial description of the Brauer-Picard $3$-group of the Asaeda-Haagerup fusion
categories was given in \cite{GJS:BrPic}.
The missing information was precisely this $k$-invariant which classifies the anomaly
studied herein, which was shown to vanish for these examples in \cite{GIS:AH}.

The finiteness results of \cite{DSPS:dualizable,BJS:dualizable,BJSS:invertible,Ben:unit} are referenced in
\Cref{sec:dual_inv} to construct fully-extended, and sometimes invertible, TQFTs which
agree with the theories associated with certain $\pi$-finite spaces
(in the sense of \cite[\S A.2]{FMT}).

The original Crane-Yetter and Reshetikhin-Turaev invariants/TQFTs
\cite{RT1,RT2,TV,CY:4d_TQFT,Del:CS} 
are believed to agree with (oriented upgrades) of the theories discussed in \Cref{sec:WRT}
and \Cref{sec:anom_Z}. 
In particular, \Cref{thm:WRT} is consistent with the idea that 
Crane-Yetter theory encodes an anomaly of the Reshetikhin-Turaev theory
associated to the MTC \cite{Wal:RT,Ben:WRT_CY}.

The notion of a projective $3$-dimensional TQFT is 
presumably closely related to the notion of a ``modular functor''
\cite{S:MF,MS,T:book,Til,BK,BW}, which is roughly a system of projective representations
of mapping class groups. 
Indeed, any projective $3$-dimensional TQFT gives rise to a projective mapping class group
representation for any surface. 
A formal comparison would be quite interesting, but we do not pursue this here.

\subsection{Chern-Simons theory}

Let $G$ be a compact Lie group, and $\lam \in H^4\left(BG , \ZZ\right)$.
Consider the Reshetikhin-Turaev TQFT attached to $G$ at level $\lam$
\cite{RT1,RT2,BM:CS,Stir:CS,FT:gapped}.
Recall this sends the (bounding) circle to the (semisimplificiation of the) category of
representations of the quantum group at fixed root of unity determined by $\lam$.

When $G = T$ is a torus, write $\Pi = \Hom\left(\U\left(1\right), T\right)$ for the
associated lattice.
Then $\lam$ determines a nondegenerate symmetric bihomomorphism 
\begin{equation*}
\lr{\, , \,} \colon \Pi \times \Pi \to \ZZ \ .
\end{equation*}
This induces a homomorphism $T\to \pdual{T}$ with kernel given by a finite group $A$.
This finite group inherits the quadratic form associated to $\lr{\, , \,}$, and therefore
becomes a nondegenerate metric group $\left(A , q\right)$ as in \Cref{sec:metric_groups}.
See \cite[\S 9.3]{FHLT} for the role played by this finite group in identifying the
framing anomaly of Chern-Simons.

The main upshot of this, for us, is that the braided fusion category $\cB =
\Vect\left[A\right]$ with convolution and braiding from $q$ (see
\Cref{sec:metric_groups}) generates the same $1$-$2$-$3$ theory as the 
Reshetikhin-Turaev $1$-$2$-$3$ theory attached to the category of
representations of the quantum group for $T$ at level $\lam$ \cite{FHLT}.

The problem of extending Chern-Simons theory to the point has received considerable
attention \cite{FHLT,Hen:CS,FT:gapped}.
One way to extend this Reshetikhin-Turaev $1$-$2$-$3$ theory to the point is by asking for 
a fusion category $\cC$ such that $\cB \cong \cZ\left(\cC\right)$ as braided categories.
Given that one such fusion category $\cC$ exists, one might also wonder how uniquely it is
determined. 
It is shown, in \cite{ENO:weakly}, that two fusion categories are Morita equivalent if and
only if the Drinfeld centers are equivalent as braided categories. 
This is strengthened in \cite{ENO:homotopy} to an equivalence of $2$-groups:
\begin{equation*}
\tau_{\leq 1} \Aut_{\Fus}\left(\cC\right) \simeq
\Aut_{\EqBr}\left(\cZ\left(\cC\right)\right) \ .
\end{equation*}
I.e. the isomorphism class of $\cC$ in $\Fus$ is almost determined by the isomorphism class of 
$\cZ\left(\cC\right)$ in $\BrFus$:
There is a lack of coherence at the top level,
captured by an obstruction class originally studied in \cite{ENO:homotopy} and surveyed in
\Cref{sec:O4}.
This same lack of coherence is captured by the anomaly theory studied in
\Cref{sec:anom_O4}.

\subsection{Anomalies in the Langlands program}
\label{sec:langlands}

An analogous anomaly to the one studied here appears in the Langlands program \cite{AV:mp,BZSV}.
Indeed, this was the main source of motivation for the project.
More specifically, upgrading \emph{Rozansky-Witten theory} \cite{RW,RobW,KRS,KR}
to a boundary/relative theory for certain $4$-dimensional gauge theories would produce a
version of the relevant relative theory on the $B$-side (i.e. spectral side). 
The author hopes to return to these questions in the future.

\subsection{Gapped systems}

It is well-known that gapped phases of matter are described at low energy/long range
by (unitary) TQFTs \cite{Fr:sre, FH:reflection, FT:gapped}.
See also \cite{RW:QC,KZ:top_orders}.
In particular, it is pointed out in \cite[Interlude (P.12)]{F:what_is}, that the 
low energy linear theory describing the system is in fact 
\emph{not} topological (it has a metric dependence \cite{Wit:RT}), however its
projectivization is indeed topological. 
In particular, this means \emph{projective} $3$-dimensional TQFTs model gapped systems in
$\left(2+1\right)$-dimensions.
We discuss such theories in \Cref{sec:3d_proj}.

't Hooft anomalies for discrete internal symmetries
are particularly important for the study of SPT phases
\cite{Kap:SPT,KT:discrete_anomalies,KT:3discrete_anomalies,Mul:thesis,MS:anomalies}.

A related subject is the study of anyon systems.
One way to model anyon systems, spelled out e.g. in \cite{RW:QC}, is using unitary modular categories (UMC).
The question of whether or not the UMC is a Drinfeld center, 
which plays a big role in this paper, is also important in the study of anyon systems.
For example, \cite[Conjecture 4.2]{RW:QC} states that a UMC being realized as a
topological phase of matter is equivalent to being a Drinfeld center of a (unitary) fusion
category.

When the fusion category $\cC$ is the category of vector spaces graded by a finite group
$L$, possibly twisted by a cocycle, the corresponding $3$-dimensional theory is
Dijkgraaf-Witten theory \cite{DW:90}.
Kitaev introduced a Hamiltonian realization \cite{K:anyons}, and for $L = \ZZ / 2$ 
this is the famous toric code.

The structure of the $3$-type $B\Aut_\Fus\left(\cC\right) \simeq
B\Pic\left(\cZ\left(\cC\right)\right)$, as identified in \cite{ENO:homotopy}, is the
source of the projectivity/anomalies identified in \Cref{sec:3d_theories}.
The homotopy type of this groupoid is of much interest in the literature.
For example, let $\cB$ be an arbitrary unitary modular category.
It is conjectured in \cite{AWH:paths} that the homotopy type of $B\Pic\left(\cB\right)$
is related to a space of gapped Hamiltonians which give rise to the topological order
described by $\cB$.

\subsection{Sandwiches and anomalies}
\label{sec:sandwich_anomaly}

The discussion in \Cref{sec:intro_fus_anom} fits into a more general picture, which
relates to the language of topological symmetries \cite{FMT}.
An upshot of this discussion, in the context of \Cref{sec:intro_fus_anom}, 
will be that the anomalous theory in \Cref{thm:pin_anom} (resp. \Cref{cor:spin_anom}) 
will equivalently define a
$\sigma_{B\O\left(A , q\right)}$-module structure (in the sense of \cite{FMT}) twisted by
the class which classifies the $3$-group $\PPin$ (resp. $\SSpin$) over $\O\left(A ,
q\right)$.

Let $d\in \ZZ^{\geq 0}$, and let $\cT$ be an $\left(\infty , d+1\right)$-category with
duals. Given any TQFT
\begin{equation*}
F \colon \Bord_{d} \to \Om \cT 
\end{equation*}
we can consider the higher automorphism group of the assignment to the point:
$\Aut_{\Om \cT} \left(F\left(\pt\right)\right)$. 
Note that this is a $d$-group.

Assume that $B\Aut_{\Om \cT}\left(F\left(\pt\right)\right)$
fibers over some space $X$ with fiber $B^{d+1} \units{\bk}$, classified by some map $c$:
\begin{equation*}
\begin{cd}
B^{d} \units{\bk} \ar{r}\ar{d}&
B\Aut_{\Om \cT} \left(F\left(\pt\right)\right)
\ar{d} \ar{r}& *\ar{d}\\
* \ar{r}&
X
\ar{r}{c} & B^{d+1} \units{\bk}
\end{cd}
\end{equation*}
This assumption is the stand-in for \eqref{eqn:BAut_bundle}.
For ease of exposition, write:
\begin{equation*}
\widetilde{X} = B\Aut_{\Om \cT}\left(F\left(\pt\right)\right)
\ .
\end{equation*}

Tautologically we have a theory:
\begin{equation*}
\widetilde{F} \colon \Bord^{\widetilde{X}}_d \to \Om \cT
\ ,
\end{equation*}
classified by the inclusion of $\widetilde{X}$ into $\Om \cT$.
Now we ask if the theory $\widetilde{F}$ factors through/descends to a theory on $\Bord^X_d$:
\begin{equation}
\begin{tikzcd}
\Bord_{d}^{\widetilde{X}} \ar{r}\ar{d}&
\Om \cT \\
\Bord_{d}^X \ar[dashed]{ur} & 
\end{tikzcd}
\label{eqn:factor}
\end{equation}

We will now rephrase this question in terms of twisted topological symmetry as in
\cite{FMT}.
In order to do this, we will assume \Cref{hyp:sigma}, which asserts that there is a
suitable association of TQFTs to $\pi$-finite groupoids. 

We will now assume that $X$ is $\pi$-finite. 
Recall we are assuming $\Om^{d+1 } \cT \simeq B^{d+1} \units{\bk}$. Then we can consider 
the composition of the cocycle $c$ with the inclusion of $B^{d+1}\units{\bk}$
into $\cT$:
\begin{equation*}
X\lto{c}B^{d+1}\units{\bk} \to \cT \ ,
\end{equation*}
which classifies an invertible theory 
\begin{equation}
\Bord^X_{d+1} \to \cT \ .
\label{eqn:pre_alpha}
\end{equation}
If we truncate the invertible theory \eqref{eqn:pre_alpha}, we obtain a once-categorified
invertible theory:
\begin{equation*}
\al \colon \Bord_{d+1}^{X} \to \cT
\ .
\end{equation*}
As explained in \cite[\S A.2]{FMT}, we regard the map from any given bordism to $X$ as 
a fluctuating field, which can be integrated over to obtain a new 
$\left(d+1\right)$-dimensional theory $\sigma_{X , c}^{d+1}$.

Now the upshot is that a module structure (in the sense of \cite{FMT}, see
\Cref{sec:top_symm}) on the original theory $F$ 
over $\sigma_{X , c}^{d+1}$ is equivalent to a projective
TQFT with projectivity $\al$.

Formally, in this paper, a relative theory is a lax natural transformation in the sense of 
\cite{JFS}.
I.e. it is a functor into the arrow category of $\cT$.
A \emph{projective TQFT} is meant to be a TQFT valued in the subcategory of the (op)lax
arrow category of $\cT$ which consists of arrows between the unit and an invertible
object. 
Then by \cite[Theorem 7.15]{JFS}, projective theories are equivalent to anomalous ones,
and a linearization of the projectivity is precisely a trivialization of the anomaly.

The following, stated as \Cref{thm:anom_symm} 
in the body of this paper, summarizes this discussion.

\begin{thm*}
An $X$-anomaly $F_c \colon \al_c \to 1_X$ of $F$ (as in \Cref{defn:X_anom}) naturally 
defines:
\begin{enumerate}[label = (\roman*)]
\item A projective $X$-theory $\overline{F} \colon \Bord_d^X \to \PP\Om \cT$
with projectivity $\al_c$ and underlying theory $F$ (as in \Cref{defn:projective_TQFT}).

\item $\widetilde{F} \colon \Bord^{\widetilde{X}}_d \to \Om \cT$ with underlying framed
theory, written $F$.

\item Assuming \Cref{hyp:sigma}, a $\left(\sigma_{X , c} , \rho_{X , c}\right)$-module structure 
on $F$ (\Cref{sec:mod_str}).
\end{enumerate}
\end{thm*}

\begin{exm}
We have discussed this formalism in general, but it is most interesting when the
space $X$ has a novel interpretation.
E.g. in \Cref{sec:anom_k3,sec:anom_O4} we use the canonical identification of the
truncation to a $2$-type with the braided autoequivalences of the center
\cite{ENO:homotopy} to interpret this as an anomaly of $F$ as a theory which is
equivariant with respect to the braided autoequivalences of the center.

Slightly more generally, consider any nondegenerate braided fusion category $\cA$. 
There is a fibration:
\begin{equation*}
\begin{tikzcd}
B^3 \units{\bk} \ar{r}\ar{d} &
B\units{\left(\lMod{A}\right)} \ar{d}{\tau_{\leq 2}}\\
* \ar{r}&
B\Aut_{\EqBr}\left(\cA\right)
\end{tikzcd}
\end{equation*}
by \cite[Theorem 5.2]{ENO:homotopy}, which is classified by some map 
$c \colon B\Aut_{\EqBr}\left(\cA\right) \to B^4 \units{\bk}$. 

For any monoidal functor $\rho \colon G \to \Aut_{\EqBr}\left(\cA\right)$, $\WRT_\cA$
obtains a $\left(\sigma^4_{BG , \rho^* c} , \rho\right)$-module structure on $\WRT_\cA$.
The question of trivializing the anomaly is the question of
$\sigma_{BG , \rho^* c}^4$ having a Neumann boundary theory, which is equivalent to
whether or not $c$ pulls back to define the trivial cohomology class on $BG$.

So there are many different four-dimensional theories in the story:
the untwisted pure topological $G$-gauge theory
(which we \emph{want} to be defined relative to, in order to gauge),
and then the Crane-Yetter theory and the twisted topological $G$-gauge theory
(which we are \emph{always} on the boundary of).

One way to think of the universal such theory $\sigma_{B\Aut_{\EqBr},c }$ is as 
the result of gauging the canonical anomaly-free $G$-action on $\CY_\cA$ defined by
$\rho$.
\end{exm}

After trivializing the projectivity, we have the following compatibility, stated as 
\Cref{thm:triv_anom_symm} in the body of the paper:
Let $f \colon Y\to X$ be a map of $\pi$-finite  spaces.
A trivialization of the class $f^* c$ (i.e. splitting of $\widetilde{X}$ over $Y$) determines 
\begin{enumerate}[label = (\roman*)]
\item a factorization of \eqref{eqn:factor} pulled back along $f$,
\item a trivialization $1\lto{\sim} \al_c$ of the anomaly theory, and 
\item a linearization of the projective theory (as in \Cref{defn:projective_TQFT}), 
\item a reduction (\Cref{defn:reduction})
from the $\left(\sigma_{X , c} , \rho\right)$-module structure to a $\left(\sigma_X ,
\rho\right)$-module structure.
\end{enumerate}
These all determine theories defined on $\Bord^Y_d$, which agree.

\subsection{Gauging and anomalies}
\label{sec:gauging}

Given a quantum field theory $F$ and a (compact Lie) group $G$, 
under certain conditions, 
a new theory, the \emph{$G$-gauged theory}, can be produced via a procedure known as
\emph{gauging}.
This has received various mathematical formulations, e.g. for topological theories see
\cite[\S 2.3]{Tel:ICM}. Also see \cite[\S 3.4]{FMT}. For non-extended versions of some of
the $3$-dimensional theories considered in this paper, see \cite{Mul:thesis,MS:anomalies}.
This procedure cannot always be carried out, and the obstructions 
are known as \emph{'t Hooft anomalies} \cite{Hooft:anom}.

Gauging is often phrased as a two step process: 
\begin{enumerate}
\item couple $F$ to a background $G$-field (principal $G$-bundle), 
\item integrate over all $G$-bundles to obtain a new ordinary theory: the gauged theory.
\end{enumerate}
In the context of topological field theories, 
the first step is to extend\footnote{There is always a trivial extension
of $F$ to $\Bord_{d}^{BG}$. The pullback of any extension along 
$\Bord_{d} \to \Bord_{d}^{BG}$ (sending every bordism to the same bordism equipped with
the product principal $G$-bundle) is an ordinary theory with internal $G$-symmetry.
Usually one has a given (internal) action of $G$ on $F$, and one insists that the 
extension agrees with this action after pulling back.}
$F$ from $\Bord_{d}^\fra$ (or e.g. $\Bord_{d}^{\ori}$) to $\Bord_{d}^{BG}$.

\begin{rmk}
Even before proceeding to the second step, one can see how anomalies
of the sort defined in \Cref{sec:proj_anom} can act as obstructions to gauging.
One should imagine that we cannot quite compatibly extend $F$ to $\Bord^{BG}_{d}$: We can only 
extend it as a relative theory. I.e. we have a (once-categorified) theory $\al$ (which
\emph{is} defined on $\Bord^{BG}_{d}$)
and a relative theory $F \colon \al \to 1$.
This becomes a TQFT defined on $\Bord^{BG}_{d}$
only once we provide a trivialization $1\lto{\sim} \al$.
\end{rmk}

As in \Cref{sec:sandwich_anomaly}, a theory defined on $\Bord_d^{BG}$ defines
a boundary theory $\widetilde{F}\colon 1 \to \sigma_{BG}^{d+1}$ .
(see \Cref{sec:sigma} for notation, and 
\Cref{sec:proj_anom} for a more detailed discussion). 

The second step is easier to formulate in this language. 
Namely, as in \cite[Definition 3,22, Example 3.24]{FMT}, 
the process of integrating over all $G$-bundles is pairing with the \emph{Neumann boundary
theory}:
\begin{equation}
F /_\e \sigma \ceqq \e \tp_{\sigma_{BG}^d} \widetilde{F} \ ,
\label{eqn:gauge_symm}
\end{equation}
where the Neumann boundary theory $\e$ is the morphism induced (as in
\Cref{prop:corr_mor}) by the augmentation:
\begin{equation*}
\begin{cd}
& BG \ar{dr}\ar{dl} & \\
BG && \pt 
\end{cd}
\end{equation*}

Note that if the theory $\sigma_{BG}^{d+1}$ is twisted by a cocycle $\tau$, then we will not
necessarily have an augmentation map: We need to trivialize $\tau$.
I.e. if we have a boundary theory
\begin{equation*}
\widetilde{F} \colon  1 \to \sigma_{BG , \tau}^{d+1} \ ,
\end{equation*}
in order to gauge the $G$-action, we need to 
pair with an augmentation for $BG$. One obtains an augmentation from any correspondence of
the form
\begin{equation*}
\begin{cd}
& \left(BH , \mu \right) \ar{dl}\ar{dr} & \\
\left(BG , \tau\right) && \pt
\end{cd}
\end{equation*}
where $H\subset G$ is a subgroup, and $\mu$ is a trivialization of $\tau\vert_H$.
This is, for example, the form of the classification of simple, fully extended
$2$-dimensional topological theories relative to gauge theory in \cite{FT:Ising}:
subgroups equipped with central extensions.

As is explained in \Cref{sec:sandwich_anomaly}, the cocycle $\tau$ classifies an anomaly
theory $\al_\tau$. 
So a trivialization of $\al_\tau$ (i.e. of $\tau$) determines an augmentation, which induces
the Neumann boundary condition, which allows us to gauge as in \eqref{eqn:gauge_symm}.
In other words:
$\al_\tau$ obstructs gauging.

\section{Projective TQFTs}
\label{sec:projective_TQFTs}

\subsection{Preliminaries: arrow categories}

Recall the arrow category $\cT^\down$ defined in \cite{JFS}.
An anomalous theory $\al \to 1$ is, by definition (see \Cref{rmk:lax}), a functor
\begin{equation}
\overline{Z}\colon \Bord_{d} \to \cT^{\down}
\label{eqn:barZ}
\end{equation}
such that
\begin{align*}
s \comp \overline{Z} = \al
&&
t\comp \overline{Z} = 1
\ .
\end{align*}

\begin{rmk}
As explained in \Cref{rmk:lax}, the relative theories in this paper are lax natural
transformations. 
Everything can be repeated to produce an oplax version by replacing
$\cT^\down$ with $\cT^\to$ everywhere.
\end{rmk}

Recall that \cite{JFS} define the arrow category `terminating at the unit' to be:
\begin{equation*}
\cT^{\down 1} \ceqq 
\cT^{\down} 
\htimes{\cT} 
\cT\left[0\right] 
\qquad 
\qquad
\qquad
\begin{tikzcd}
\cT^{1\down} 
\ar{r}\ar{d}&
\cT\left[0\right]\ar{d} \\
\cT^{\down} \ar{r}{s}&
\cT 
\end{tikzcd}
\ .
\end{equation*}
Similarly, we will define the subcategory `originating at an invertible object' to be:
\begin{equation*}
\cT^{\times \down} \ceqq 
\cT^{\times}
\htimes{\cT} 
\cT^\down 
\qquad
\qquad
\qquad
\begin{tikzcd}
\cT^{\down \times} 
\ar{r}\ar{d}
&
\cT^{\down} \ar{d}{t} \\
\cT^{\times} \ar{r}&
\cT 
\end{tikzcd}
\ .
\end{equation*}

\subsection{The projectivization of a target category}
\label{sec:P}

The projectivization of the category $\Om \cT$, written $\PP\Om \cT$, consists of arrows
in $\cT$ between an invertible object of $\cT$ and the unit in $\cT$.

\begin{defn}
Define the \emph{projectivization} of a symmetric monoidal $\left(\infty ,
n\right)$-category 
(thought of as an $\EE_\infty$-algebra among complete $n$-fold Segal spaces) $\cT$ to be:
\begin{equation*}
\PP\left(\Om \cT \right) 
\ceqq  
\cT\left[0\right] \htimes{\cT} \cT^{\down} \htimes{\cT} \cT^\times
\end{equation*}
Recall from \cite[Def. 6.10]{JFS} that $\cT\left[0\right]\simeq *$.
\label{def:P}
\end{defn}

\begin{wrn}
This construction does \emph{not} only depend on $\Om \cT$, as the notation might suggest.
It depends on the group of isomorphism classes of invertible objects of $\cT$.
For example, many categories are of the form $\Om \cT_1 \simeq \Om \cT_2$ where $\pi_0
\units{\cT_1}$ is trivial and $\pi_0\units{\cT_2}$ is not.

This becomes especially important when we study $3$-dimensional projective targets in
\Cref{sec:PFus}. See \Cref{rmk:PFus}. 
\label{wrn:P}
\end{wrn}

\begin{theorem}
There is a well-defined functor from the $\infty$-groupoid of symmetric-monoidal
$\left(\infty , n\right)$-categories (and equivalences) to itself,
\begin{equation*}
\PP \colon 
\EE_\infty\left(
n\text{-}\cat{cat}
\right)
\to
\EE_\infty\left(
n\text{-}\cat{cat}
\right)
\end{equation*}
which sends a symmetric-monoidal $\left(\infty , n\right)$-category $\cT$ to $\PP \cT$, 
thought of as $\EE_\infty$-algebras among complete $n$-fold Segal spaces.

Furthermore, projectivization commutes with iterated looping, i.e. the following diagram
commutes for all integers $k\in \ZZ$ satisfying $1\leq k \leq n$:
\begin{equation}
\begin{tikzcd}
\EE_\infty\left(
n\text{-}\cat{cat}
\right)
\ar{d}{\Om^{k}}
\ar{r}{\PP}
&
\EE_\infty\left(
n\text{-}\cat{cat}
\right)
\ar{d}{\Om^{k}}
\\
\EE_\infty\left(
\left(n-k\right)\text{-}\cat{cat}
\right)
\ar{r}{\PP}
&
\EE_\infty\left(
\left(n-k\right)\text{-}\cat{cat}
\right)
\end{tikzcd}
\end{equation}
\label{thm:P}
\end{theorem}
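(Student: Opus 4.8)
The plan is to show that $\PP$ is built entirely out of operations that are themselves functorial on the $\infty$-groupoid of symmetric-monoidal $(\infty,n)$-categories and equivalences, namely: the formation of the arrow category $(-)^\down$, the formation of the subcategory of invertible objects $(-)^\times$, the formation of the terminal/point category $(-)[0]$, and homotopy fiber products over $\cT$. Each of these is a construction internal to complete $n$-fold Segal spaces, and \cite{JFS} already establishes that $(-)^\down$ and the relevant overcategory/slice constructions are functorial and compatible with the symmetric-monoidal structure (i.e. they send $\EE_\infty$-algebras to $\EE_\infty$-algebras). Homotopy fiber products of complete $n$-fold Segal spaces are computed levelwise as homotopy pullbacks of spaces, hence are manifestly functorial for equivalences, and they inherit an $\EE_\infty$-structure when all three inputs do and the maps are maps of $\EE_\infty$-algebras. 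So the first step is just to assemble $\PP = \cT[0]\htimes{\cT}\cT^\down\htimes{\cT}\cT^\times$ as a composite of these functorial gadgets, observing that $s,t\colon \cT^\down\to\cT$ and the inclusion $\cT^\times\inj\cT$ and $\cT[0]\to\cT$ are all natural in $\cT$ and are maps of $\EE_\infty$-algebras, so the fiber product is a functor valued in $\EE_\infty\left(n\text{-}\cat{cat}\right)$.

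For the second statement, I would argue that each of the four ingredients commutes with $\Om = \End_{(-)}(1)$ and hence with $\Om^k$. Concretely, $\Om$ itself is a homotopy fiber product: $\Om\cT \simeq \cT[0]\htimes{\cT}\cT[1]\htimes{\cT}\cT[0]$ (mapping the two endpoints of an arrow-object to the unit), so the whole question reduces to the fact that homotopy fiber products commute with homotopy fiber products — i.e. a Fubini/interchange statement for iterated homotopy pullbacks of spaces, which holds because all the limits involved are finite homotopy limits and finite homotopy limits commute with each other. The only genuinely content-bearing inputs are the identifications $\Om(\cT^\down)\simeq (\Om\cT)^\down$ and $\Om(\cT^\times)\simeq (\Om\cT)^\times$: the first says that looping the arrow category is the arrow category of the loop category, which follows from $(-)^\down$ being defined as a mapping-object/fiber-product construction that commutes with the fiber products defining $\Om$; the second says that an invertible object in $\Om\cT$ is the same as an object of $\cT$ which is (the identity of the unit, hence) invertible together with an invertible endomorphism — more precisely $\units{(\Om\cT)} \simeq \Om(\units{\cT})$ since invertibility is detected componentwise and looping preserves the "maximal sub-$\infty$-groupoid of invertibles" in each degree. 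Granting these, one gets $\Om\PP\cT \simeq \PP\Om\cT$ as $\EE_\infty$-algebras, and iterating gives the claim for all $k$ with $1\le k\le n$; naturality in $\cT$ of every step makes the square of functors commute up to coherent equivalence.

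The main obstacle I anticipate is \emph{not} the spatial homotopy-limit bookkeeping but the care needed around the $(-)^\times$ (invertible-objects) functor and its interaction with looping. One has to be precise about what "invertible object" means in a symmetric monoidal $(\infty,n)$-category — invertibility of an object with respect to the $\EE_\infty$-tensor product — versus the role played inside $\PP$, where $\cT^\times$ is fibered over $\cT$ via $t$ and records an invertible \emph{target} of an arrow terminating at the unit. Checking that this invertibility condition is preserved and reflected under $\Om$ (so that $\Om$ of the full subcategory on invertibles is the full subcategory on invertibles of $\Om\cT$) requires knowing that $\Om$ is symmetric monoidal and that a morphism in $\cT$ is an equivalence iff it is after looping at appropriate objects — essentially a completeness/Segal-condition argument. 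A secondary, more technical point is verifying that all the pullbacks are \emph{honestly} homotopy pullbacks in the model of complete $n$-fold Segal spaces (so that they are invariant under equivalence and can be freely interchanged); this is where one should cite the relevant lemmas from \cite{JFS} rather than re-prove them. Once those two points are nailed down, the rest is a formal diagram chase through finite homotopy limits.
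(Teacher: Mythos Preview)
Your proposal is correct and follows essentially the same approach as the paper: both arguments reduce functoriality to the known functoriality of $(-)^\down$ from \cite{JFS} together with that of $(-)^\times$ and homotopy fiber products, and both reduce the commutation with $\Om$ to an interchange of finite homotopy limits. The paper packages the looping step slightly differently---it invokes the identification $\Om\cT \simeq \cT[0]\htimes{\cT}\cT^\down\htimes{\cT}\cT[0]$ from \cite[Proposition~6.12]{JFS} and then appeals directly to limits commuting, whereas you unpack this into the component identifications $\Om(\cT^\down)\simeq(\Om\cT)^\down$ and $\Om(\cT^\times)\simeq(\Om\cT)^\times$---but the content is the same, and your worry about $(-)^\times$ is easily dispatched since an object of $\Om\cT$ is $\otimes$-invertible exactly when it is an invertible endomorphism of $1$ in $\cT$.
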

\begin{proof}
The functor $\left(-\right)^\down$ is defined in 
\cite[Definition 5.14]{JFS}, and shown to depend naturally on both arguments in
\cite[Corollary 5.19]{JFS}, which is extended to the symmetric monoidal case in
\cite[Corollary 6.9]{JFS}. 
Rephrasing this slightly, this says that $\left(-\right)^\down$ defines a functor from the
$\infty$-groupoid of symmetric monoidal $\left(\infty , n\right)$-categories and
equivalences to itself.

There is a commuting diagram of three homotopy fiber squares:
\begin{equation}
\begin{tikzcd}
\PP \Om \cT \ar{r}\ar{d}&
\cT^{1\down}\ar{d} \ar{r}{s}&
\cT\left[0\right]\ar{d} \\
\cT^{\down\times} \ar{r} \ar{d}{t}& 
\cT^{\down} \ar{r}{s}\ar{d}{t}&
\cT \\
\cT^{\times} \ar{r}&
\cT  &
\end{tikzcd}
\label{eqn:P_diagram}
\end{equation}
because 
\begin{equation*}
\left(\cT^{1\down}\right) \htimes{\cT^\down} \left(\cT^{\down\times}\right)
\simeq
\cT\left[0\right] \htimes{\cT} \cT^{\down} \htimes{\cT} \cT^\times \ .
\end{equation*}
Therefore the functoriality of $\PP$ follows from the functoriality of
$\left(-\right)^\down$ \cite{JFS} and the functoriality of $\left(-\right)^\times$.

For fixed non-negative integer $n$, we will first show the statement for $k = 1$:
\begin{equation}
\Om \PP \Om \left(-\right)  \simeq \PP \Om^2 \left(-\right)  \ .
\label{eqn:base}
\end{equation}
Recall that, by definition:
\begin{equation*}
\Om \PP \Om \cT  = \Om\left(
\cT\left[0\right] \htimes{\cT} \cT^{\down} \htimes{\cT} \cT^\times
\right)
\end{equation*}
and
\begin{equation*}
\PP \Om^2 \cT  = 
\Om\cT\left[0\right] \htimes{\Om\cT} \left(\Om \cT\right)^{\down} \htimes{\Om\cT} 
\left(\Om \cT\right)^\times \ .
\end{equation*}
There is a canonical equivalence of 
symmetric monoidal complete $n$-fold Segal spaces
(\cite[Proposition 6.12]{JFS}):
\begin{equation*}
\Om \cT \simeq \cT\left[0\right] \htimes{\cT} \cT^{\down} \htimes{\cT} 
\cT\left[0\right] \ ,
\end{equation*}
which means that \eqref{eqn:base} 
follows from the fact that homotopy colimits commute.

To see the statement for general $k\in \left\{1 , \ldots , n\right\}$,
apply \eqref{eqn:base} to $\Om^{k-1} \cT$.
\end{proof}

\begin{rmk}
The $\PP$ in \Cref{def:P} can be thought of as standing for ``projective'', as
$\PP\Vect$ should be thought of as some version of the category of projective spaces.
Therefore it can also be thought of as standing for ``pure states'', as these form the
projectivization of the Hilbert space of mixed states. 
\end{rmk}

As is conjectured by Freed in \cite{F:what_is}, the projectivization should have a
description as a directed pullback of symmetric monoidal $\left(\infty ,
n\right)$-categories
\begin{equation}
\begin{tikzcd}
\PP \Om \cT \ar[dashed]{r}{F} \ar[dashed]{d}&
\units{\cT} \ar{d} \\
* \ar{r}&
\cT
\end{tikzcd}
\label{eqn:P_con}
\end{equation}
One way to interpret \eqref{eqn:P_con}, is as expressing ``exactness'' at a term of an 
``exact sequence'' of symmetric-monoidal $\infty$-categories.
We do not attempt to make this notion precise here, however consider the following diagram
of functors:
\begin{equation}
\begin{cd}
\cdots &
\units{\Om \cT} \ar{r}&
\Om \cT \ar{r}
\arrow[d, phantom, ""{coordinate, name=Z}]&
\PP\Om \cT
\arrow[dll,swap,
rounded corners,
to path={ -- ([xshift=2ex]\tikztostart.east)
|- (Z) [near end]\tikztonodes
-| ([xshift=-2ex]\tikztotarget.west)
-- (\tikztotarget)}]
&
\\
&\units{\cT} \ar{r}&
\cT \ar{r}&
\PP \cT & \cdots
\end{cd}
\end{equation}
where the connecting homomorphism is $F$ from \eqref{eqn:P_con}.
The functors $\Om \cT \to \PP \Om \cT$ and $\cT \to \PP \cT$ in \eqref{eqn:P_con} are the
canonically defined functors which send an object to the same object regarded as an
endomorphism of the unit.
We hope to return to this in future work. 

\begin{rmk}
Note that, if every invertible object of $\cT$ is trivializable, then the functor $\Om \cT \to \PP
\Om \cT$ is essentially surjective. 
Otherwise, there are objects in $\PP \Om \cT$ outside of the image of $\Om \cT$: 
for example, if $\cT$ consists of algebra objects in $\Om \cT$, consider any nontrivial
invertible object of $\cT$. Then the regular module defines an object of $\PP \Om \cT$
which is not in the image of $\Om \cT$. 
\label{rmk:ess_surj}
\end{rmk}

\subsection{Fully-extended projective target categories}
\label{sec:P_exm}

\subsubsection{Projective Morita categories}

As discussed in \Cref{wrn:P}, the construction of $\PP\left(\Om \cT\right)$ actually
depends on $\cT$, rather than merely $\Om \cT$. 
However, if we are given a symmetric monoidal
($\tp$-sifted-cocomplete) $\left(\infty , n\right)$-category $\cS$, we can form a
projectivization of $\cS$ as follows.
If $\cS$ is sufficiently nice\footnote{Specifically we need $\cS$ to be
$\tp$-sifted-cocomplete. See \Cref{rmk:morita} for a discussion of different models of the
higher Morita category.} then we can form the 
symmetric monoidal $\left(\infty ,n+1\right)$-category 
$\cT = \Alg\left(\cS\right)$, the ``even higher'' Morita category of 
$\EE_1$-algebras \cite{JFS}.

Recall that $\Om \cT \simeq \cS$, so in this case we obtain a motto about projective objects:
\textit{An object of $\PP \left(\cS\right)$ can be thought of as an invertible (e.g. Azumaya)
algebra in $\cS$ along with a module over this algebra (also internal to $\cS$).}

More generally, we obtain a projective target category by taking the projectivization of
thee Morita $\left(\infty , n+k\right)$-category $\Alg_n\left(\cS\right)$ for a
(sufficiently nice) symmetric monoidal $\left(\infty , k\right)$-category $\cS$.

\begin{rmk}[Models for Morita categories]
The higher Morita $\left(\infty , n\right)$-category of $\EE_n$-algebras in a 
$\tp$-sifted-cocomplete (resp. $\tp$-GR-cocomplete)
symmetric monoidal $\left(\infty , k\right)$-category $\cS$ was constructed in \cite{S:En}
(resp. \cite{H:En}).
This was extended to the ``even higher'' Morita $\left(\infty , n+k\right)$-category,
written $\Alg_n \cS$, in \cite{JFS}.

There are expected relationships between the models in \cite{S:En} and \cite{H:En}, but
the author is unaware of a theorem relating them. 
One benefit of the former is that $n$-dualizability is known
\cite[Theorem 5.1]{GS}\footnote{Note however that the only $\left(n+1\right)$-dualizable object is
the unit \cite[Theorem 6.1]{GS}.},
but the arguments do not translate to the model in
\cite{H:En}.

On the other hand, the results used in \Cref{sec:dual_inv} concerning the dualizability
and invertibility of (braided) fusion categories are from
\cite{BJS:dualizable,BJSS:invertible}, which use the
model from \cite{H:En}. 
\label{rmk:morita}
\end{rmk}

\subsubsection{The projectivization of the category of vector spaces}
\label{sec:PVect}

Let $\cS = \Vect$ be the symmetric monoidal category of finite-dimensional complex
vector spaces. Setting $\cT = \Alg$ to be the symmetric monoidal Morita $2$-category of
associative algebras, we can calculate $\PP\Vect$ to be the $2$-category given by ``all
modules over all Azumaya algebras''. The $1$-morphisms are (op)lax squares
\begin{equation*}
\begin{tikzcd}
1\ar[equal]{r}\ar[swap]{d}{ {}_A M}&
1\ar{d}{ {}_B N}\\
A \ar{r}{\sim}\ar[Rightarrow]{ur}{\phi} &
B
\end{tikzcd}
\end{equation*}
where $A$ and $B$ are Morita-invertible algebras with modules $M$ and $N$. The data of the
filling $\phi$ is a linear map between $M$ and $N$ which intertwines the module structures
according to an invertible Morita morphism $A\lto{\sim} B$.
The following are examples of objects of $\PP \Vect$. 
\begin{itemize}
\item Any fully-dualizable (i.e. separable) algebra as a module over itself.
\item If there is an algebra map to the trivial algebra (an augmentation map) then this
defines a module structure on the one-dimensional vector space. 
\item The rank $n$ vector space as a module over the matrix algebra $M_n$.
\end{itemize}

\begin{rmk}
Note that $\PP \Vect$ serves as a target for non-extended projective TQFTs in any
dimension. 
The category $\PP \Vect$ is used in \cite{Ben:WRT_CY} to formalize the notion of anomalous 
non-extended non-semisimple Witten-Reshetikhin-Turaev theory.
\end{rmk}

\begin{rmk}
There is a related $2$-category, written $\cat{Proj}$, which is defined in \cite[Appendix
A]{F:what_is}.
The objects are finite-dimensional vector spaces, the $1$-morphisms are linear maps, and
there is a $2$-morphism between any two linear maps $\phi, \psi \colon V \to W$ for every
$\lam \in \units{\CC}$ such that 
$\phi\left(v\right) = \lam \psi\left(v\right)$ for all $v\in V$.

There is an equivalence $\cat{Proj} \to \PP \Vect$, e.g. as symmetric-monoidal
bicategories (\cite[Remark 7.2]{Ben:WRT_CY}), which sends a vector space $V$ to itself
regarded as an endomorphism of the unit in $\Alg$, i.e. as a $\CC$-bimodule.
\label{rmk:PVect}
\end{rmk}

\subsubsection{The projectivization of the Morita category of monoidal categories}
\label{sec:PFus}

Let $\cT = \BrFus$ be the Morita $4$-category of braided fusion
categories (see \Cref{sec:fusion_prelim}) and recall:
\begin{equation*}
\Fus \simeq \Om \cT = \End_\cT \left(1\right)
\ .
\end{equation*}
Applying $\PP$ from \Cref{def:P} to $\BrFus$, we obtain the following.

\begin{cor}
$\PP\Fus$ is a well-defined symmetric monoidal $4$-category. 
The objects of $\PP\Fus$ are arrows in $\BrFus$ which are between an invertible object and
the unit (i.e. central modules over invertible braided fusion categories).
The $1$-morphisms in $\PP \Fus$ are (op)lax squares:
\begin{equation*}
\begin{tikzcd}
1 
\ar[equal]{r}
\ar[swap]{d}{{}_{\cA} \cM }
&
1
\ar{d}{{}_{\cB} \cN }
\\
\cA
\ar[Rightarrow]{ur}{\Phi}
\ar{r}{\sim}
&
\cB
\end{tikzcd}
\end{equation*}
where $\cA$ and $\cB$ are invertible objects of $\BrFus$, $\cM$ and $\cN$ are (possibly
non-invertible) $1$-morphisms in $\BrFus$, and the filling $\Phi$ is a $2$-morphisms in
$\BrFus$, i.e. a `twisted' intertwining bimodule between $\cM$ and $\cN$.
The higher morphisms are defined by general principles in \cite{JFS}. 
\label{cor:PFus}
\end{cor}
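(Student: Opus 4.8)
The plan is to deduce \Cref{cor:PFus} as an essentially formal corollary of \Cref{thm:P}, specialized to $\cT = \BrFus$, together with an explicit unpacking of the homotopy pullback defining $\PP$. First I would observe that $\BrFus$ is a symmetric monoidal $4$-category (its construction as $\EE_2$-algebras in presentable categories is recalled in \Cref{sec:fusion_prelim}), so \Cref{thm:P} immediately gives that $\PP\BrFus$ is a well-defined symmetric monoidal $4$-category. Since $\Fus \simeq \Om\BrFus = \End_{\BrFus}(1)$, the object of interest $\PP\Fus$ is, by \Cref{def:P}, the homotopy limit $\BrFus[0] \htimes{\BrFus} \BrFus^{\down} \htimes{\BrFus} \BrFus^{\times}$, and this is automatically symmetric monoidal by the theorem. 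So the first sentence of the corollary is just an invocation of \Cref{thm:P} with $n=4$.

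The bulk of the work is purely descriptive: identifying the objects, $1$-morphisms, and higher morphisms of this homotopy pullback concretely. For the objects, I would use the fact (from \Cref{def:P} and the fiber-square diagram \eqref{eqn:P_diagram}) that an object of $\PP\Om\cT$ is a pair $(x \to y)$ of a $1$-morphism in $\cT$ whose source $x$ is an invertible object and whose target $y$ is (equivalent to) the unit. Specializing to $\BrFus$: $x$ is an invertible braided fusion category $\cA$, and a $1$-morphism $\cA \to 1$ in $\BrFus$ is, by the definition of the Morita category, a pointed/central module category, i.e.\ an $\EE_1$-algebra in $\lMod{\cA}$ — a monoidal category carrying a compatible (central) action of $\cA$. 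This reproduces the description in the introduction (\Cref{sec:PFus}) of objects as ``central modules over invertible braided fusion categories.'' For the $1$-morphisms, I would read off from \eqref{eqn:P_diagram} that a morphism in the homotopy pullback is an (op)lax commuting square in $\cT^{\down}$ with the indicated boundary: the bottom edge is a $1$-morphism $\cA \to \cB$ in $\BrFus$ which, because it must lie in $\BrFus^{\times}$, is invertible (an equivalence $\cA \xrightarrow{\sim} \cB$ of braided fusion categories); the top edge is forced to be the identity of the unit (since $\cT[0] \simeq *$); the vertical edges are the module-category structures $\cM$, $\cN$; and the $2$-cell filler $\Phi$ is a $2$-morphism in $\BrFus$, i.e.\ a bimodule intertwining $\cM$ and $\cN$ along the equivalence — a ``twisted'' intertwiner. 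This is exactly the oplax square drawn in the statement. Finally, for the higher morphisms I would simply note that, the homotopy pullback being taken in $n$-fold complete Segal spaces, all higher-cell data is inherited levelwise from $\BrFus^{\down}$, whose higher morphisms are the (op)lax transfors of \cite{JFS} (as already recorded in \Cref{sec:P} and \Cref{rmk:lax}); there is nothing further to check.

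The one point requiring genuine care — and the step I expect to be the main (mild) obstacle — is translating the abstract homotopy-pullback description into the concrete ``oplax square'' picture, i.e.\ justifying that morphisms in $\cT[0] \htimes{\cT} \cT^{\down} \htimes{\cT} \cT^{\times}$ really are oplax squares of the stated shape with the top edge trivialized and the bottom edge invertible. This is a matter of chasing through \cite[Definitions 5.14, 6.10]{JFS} and the triple-fiber-square factorization \eqref{eqn:P_diagram} already established in the proof of \Cref{thm:P}; the bottom edge lands in $\units{\BrFus}$ precisely because of the pullback against $\cT^{\times}$, and the top edge is constant because $\cT[0] \simeq *$. One should also double-check the direction of laxness is consistent with the convention fixed in \Cref{rmk:lax} (oplax versus lax, $\cT^{\to}$ versus $\cT^{\down}$), so that the filler $\Phi$ points the way the diagram indicates; the oplax variant is obtained by the same argument with $\cT^{\to}$ in place of $\cT^{\down}$ throughout. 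Once this bookkeeping is done, the corollary follows with no further computation.
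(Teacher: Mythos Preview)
Your proposal is correct and matches the paper's approach: the paper offers no separate proof of \Cref{cor:PFus}, treating it as an immediate specialization of \Cref{def:P} and \Cref{thm:P} to $\cT = \BrFus$, with the description of objects and morphisms read off from the pullback diagram \eqref{eqn:P_diagram} and the arrow-category formalism of \cite{JFS}. One small notational wrinkle: when you write ``$\PP\BrFus$ is a well-defined symmetric monoidal $4$-category,'' be aware that the paper's notation is slightly overloaded---the functor $\PP$ of \Cref{thm:P} applied to the $4$-category $\BrFus$ outputs precisely what \Cref{def:P} denotes $\PP(\Om\BrFus) = \PP\Fus$, so your two sentences are really saying the same thing; just make sure the final write-up doesn't leave the reader thinking $\PP\BrFus$ and $\PP\Fus$ are distinct objects.
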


\begin{rmk}
Recall from \Cref{wrn:P} that the construction of $\PP\Om \cT$ actually depends on the
group of isomorphism classes of invertible objects of $\cT$, rather than just $\Om \cT$.
In low dimensions this didn't appear, unless we were considering super-vector spaces,
because the category of finite-dimensional vector spaces, the Morita category of algebras,
and the Morita category of fusion categories all have trivial group of isomorphism classes
of invertible objects, i.e. all invertible objects are trivializable. 

This is not true in this dimension: The Morita $4$-category $\BrFus$ has a highly
nontrivial group $\pi_0 \units{\BrFus}$, which is known to be equivalent to the so-called
Witt group of braided fusion categories 
\cite[Theorem 4.2]{BJSS:invertible}.
Note that this contains the Witt group of $\QQ$, and is therefore certainly nontrivial. 
This clearly produces a different projectivization in comparison with performing the same
construction, say, in the arrow category of the $4$-category $B\Fus$, which would clearly
not detect any of the non-trivializable invertible objects of $\BrFus$.
\label{rmk:PFus}
\end{rmk}

\subsection{Projective theories}
\label{sec:proj}

\begin{defn}
A $d$-dimensional \emph{projective $\left(X ,
\z\right)$-TQFT} is a non-zero symmetric-monoidal functor:
\begin{equation*}
\overline{F} \colon \Bord^{\left(X , \z\right)}_{d} \to \PP\Om \cT
\ .
\end{equation*}
Write $s \colon \PP \Om \cT \to \units{\cT}$ for the composition of the vertical functors
on the left in \eqref{eqn:P_diagram}.
Given such a theory $\overline{F}$, composing with the functor $s$
results in an invertible theory:
\begin{equation*}
\al \colon \Bord^{\left(X , \z\right)}_{d} \lto{\overline{F}}
\PP\Om \cT \lto{s}
\units{\cT}
\ .
\end{equation*}
This theory is the \emph{projectivity} of the projective theory $\overline{F}$.
Sometimes to emphasize that a theory is \emph{not} projective, we will call it
\emph{linear}.
\label{defn:projective_TQFT}
\end{defn}

\begin{prop}
A trivialization of the projectivity $\al = s\comp\overline{F}$ 
determines a linear theory $F_X\colon\Bord_d^X \to \Om \cT$.
We will call $F_X$ a \emph{linearization} of $\overline{F}$.
\label{prop:linearize}
\end{prop}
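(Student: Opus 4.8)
The plan is to unwind the definitions and reduce the statement to the universal property of the homotopy pullback defining $\PP\Om\cT$. Recall from \eqref{eqn:P_diagram} that $\PP\Om\cT$ sits in a homotopy fiber square in which $\PP\Om\cT \to \cT^{\down\times} \to \cT^\times$ is the functor $s$ (landing in $\units\cT$, since $\cT^{\down\times}$ originates at an invertible object), and $\PP\Om\cT \to \cT^{1\down} \to \cT[0] \simeq *$ is the structure map witnessing that the target of each arrow is the unit. A projective theory $\overline F \colon \Bord_d^{\left(X,\z\right)} \to \PP\Om\cT$ is, by the defining pullback, precisely the data of a functor to $\cT^\down$ whose target composite $t\comp\overline F$ is the trivial theory $1_X$ and whose source composite $s\comp\overline F = \al$ lands in $\units\cT$; i.e., $\overline F$ is exactly an anomalous theory $\al \to 1$ in the sense of \eqref{eqn:barZ} with $\al$ invertible.

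First I would make precise what a trivialization of $\al$ is: an equivalence $1_X \lto{\sim} \al$ of invertible $\left(d\right)$-dimensional theories on $\Bord_d^X$, equivalently (via the pullback defining $\units\cT \subset \cT$) a lift of $\al$ through $\cT[0] \to \units\cT$. Given such a trivialization $u$, I would form the composite lax natural transformation $1_X \lto{u,\,\sim} \al \lto{\overline F} 1_X$, i.e. a functor $\Bord_d^X \to \cT^\down$ whose source and target composites are both $1_X$. The point is that an endomorphism of the trivial theory $1_X$ inside the arrow category $\cT^\down$ — equivalently a lax square whose two ``outer'' edges are identities on the unit — is the same datum as a functor $\Bord_d^X \to \Om\cT$: this is exactly the canonical equivalence $\Om\cT \simeq \cT[0]\htimes{\cT}\cT^\down\htimes{\cT}\cT[0]$ from \cite[Proposition 6.12]{JFS} invoked in the proof of \Cref{thm:P}. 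Unwinding this equivalence on the level of functors out of $\Bord_d^X$ produces the desired $F_X \colon \Bord_d^X \to \Om\cT$, and I would note that composing $F_X$ back with $\Om\cT \to \PP\Om\cT$ recovers $\overline F$ (pulled back along $\Bord_d^X \to \Bord_d^{\left(X,\z\right)}$), so $F_X$ deserves to be called a linearization.

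The main obstacle is bookkeeping rather than conceptual: one must check that ``an endomorphism of $1$ in $\cT^\down$ is an object of $\Om\cT$'' holds not just on objects but coherently as symmetric-monoidal functors out of the bordism category, so that the trivialization $u$ is genuinely transported into a functor and not merely a levelwise assignment. Concretely this means verifying that the equivalence $\cT[0]\htimes{\cT}\cT^\down\htimes{\cT}\cT[0] \simeq \Om\cT$ is natural enough (as in \Cref{thm:P}, using that homotopy limits/colimits commute and the naturality of $\left(-\right)^\down$ from \cite[Corollary 6.9]{JFS}) to be applied after mapping in from $\Bord_d^X$; once that is in hand the construction is forced. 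A secondary point worth a sentence is that $F_X$ is nonzero because $\overline F$ is nonzero and the construction is an equivalence onto the relevant fiber, so $F_X$ is a legitimate TQFT in the sense of \Cref{defn:projective_TQFT}.
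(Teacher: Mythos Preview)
Your proposal is correct and follows essentially the same approach as the paper: identify the projective theory as a natural transformation $\al \to 1$, precompose with the trivialization $1 \lto{\sim} \al$ to obtain an endomorphism of the trivial theory, and then invoke the identification of such endomorphisms with $\Om\cT$-valued theories. The paper packages this last step as \Cref{prop:1_X} rather than citing \cite[Proposition 6.12]{JFS} directly, but the content is the same; your additional remarks on coherence and nonzeroness are reasonable elaborations that the paper simply omits.
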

\begin{proof}
A projective TQFT is equivalently a natural transformation from an invertible TQFT to the
trivial TQFT. Composing this with the trivialization results in an endomorphism of the
trivial theory valued in $\cT$ which, by \Cref{prop:1_X}, is equivalent to a TQFT valued
in $\Om \cT$.
\end{proof}

\begin{defn}
Assume a projective TQFT $\overline{F}$ has a linearization upon restriction to framed
bordisms:
\begin{equation}
\begin{tikzcd}
\Bord_d^\fra \ar[hook]{r}\ar{dr}{\overline{F}^\fra}
\ar{d}{F^\fra}
&
\Bord_d^X \ar{d}{\overline{F}}\ar{dr}{\al}&
\\
\Om \cT \ar{r}&
\PP \Om \cT \ar{r}&
\units{\cT} 
\end{tikzcd}
\end{equation}
Then we call $F^\fra$ the \emph{underlying framed theory of $\overline{F}$}.
\label{defn:underlying}
\end{defn}

\begin{rmk}
A previous version of this preprint included ``Hypothesis P'' which asserted the existence
of a functor $\PP$ as in \Cref{def:P} satisfying two conditions, the first of which is
guaranteed by the diagram \eqref{eqn:P_diagram}, the second of which is
\Cref{prop:proj_anom}.
\end{rmk}

\subsection{Extensions of the bordism category}
\label{sec:ext_bord}

Just as in classical representation theory, a projective representation is equivalent to a
linear representation of an extension.
The notion of an extension of a bordism category by a modular functor already appeared in
Segal's original paper \cite[Definition 5.2]{S:CFT}.

The projectivity of a projective TQFT in the sense of \Cref{defn:projective_TQFT} 
is also captured by an extension of a bordism category as follows.
Namely, the extension is the universal object 
over which the projectivity can be trivialized. In slightly more detail, 
it is the pullback:
\begin{equation}
\begin{tikzcd}
\widetilde{\Bord_d^X} \ar{r}\ar{d}{\widetilde{F}}&
\Bord_d^X \ar{d}{\overline{F}}\ar{dr}{\al}&
\\
\Om \cT \ar{r}&
\PP \Om \cT \ar{r}&
\units{\cT} 
\end{tikzcd}
\label{eqn:extension}
\end{equation}
and since the composition $\Om \cT \to \PP \Om \cT \to \units{\cT}$ is canonically
trivialized, so is the anomaly of $\widetilde{F}$, i.e. the extension is the universal
object over which the anomaly is trivializable.

This description as a pullback of symmetric monoidal $\infty$-categories is not
completely precise. A more formal description is as follows:
Given a projective TQFT $\overline{F} \colon \Bord_d^{X} \to \PP \Om
\cT$, the anomaly theory $\al = \overline{F} \comp s$ is an invertible TQFT, and therefore
classifies a map of groupoids $\widetilde{\al}$ \eqref{eqn:invertible} as explained in
\Cref{sec:invertible}.
The homotopy fiber $\cat{E}$ of $\widetilde{\al}$ is some extension of the (groupoid given by
invertible all morphisms in the) original bordism category:
\begin{equation*}
\begin{tikzcd}
\Om \cT^\times \ar{r}\ar{d}&
\cat{E} \ar{r}\ar{d}&
\pt \ar{d} \\
\pt \ar{r}&
\abs{\Bord_d^X} \ar{r}{\al}&
\units{\cT}
\end{tikzcd}
\end{equation*}
and the claim is that $\cat{E}$ is the result of inverting all morphisms in some extension
of the original bordism category.

\begin{rmk}
Recall from \Cref{rmk:ess_surj} that the map $\Om \cT \to \PP \Om \cT$ might not be
essentially surjective, and therefore the map from $\widetilde{\Bord^X_d}$ to $\Bord^X_d$
might not be essentially surjective either. 

One instance of this appearing in examples is the filled bordism category, where bordisms
(of any codimension) are required to bound a bordism of one higher dimension. For example,
the point is not an object of the fully-extended filled bordism category, but $S^0$ is. 
\label{rmk:filled}
\end{rmk}

The bordism category $\Bord^{\widetilde{X}}_{d+1}$ provides a description for (a
fully-extended version of) the extension of the source discussed in
\cite[\S 3]{F:what_is}. See also \Cref{rmk:resolve_anom}.

\section{Anomalies}
\label{sec:anomalies}

See \cite{F:what_is} and the references therein for more details on anomalies.

\subsection{Definitions}

Fix the target $\cT$ from \Cref{sec:cob_hyp}.
Let $\left(X , \z\right)$ be as in \Cref{sec:X_theories}, and recall we have defined
$\left(X , \z\right)$-theories as functors out of $\Bord^{\left(X , \z\right)}$.

\begin{defn*}
An \emph{anomaly} $\al$ is an invertible, once-categorified TQFT:
\begin{equation*}
\al \colon \Bord_d^{\left(X , \z\right)} \to \cT \ .
\end{equation*}
A \emph{$d$-dimensional anomalous TQFT $F$ with anomaly $\al$} is a TQFT defined relative
to $\al$:
\begin{equation*}
F_\al \colon \al \to 1 \ ,
\end{equation*}
where $1$ denotes the trivial theory on $\Bord^{X,\z}_d$.
\end{defn*}

A \emph{trivialization of the anomaly $\al$} is an equivalence $1 \lto{\sim} \al$. 
The \emph{trivialized theory} is the composition:
\begin{equation*}
1 \lto{\sim} \al \lto{F} 1 
\end{equation*}
which defines a $\left(X , \z\right)$-TQFT of dimension $d$ by \Cref{prop:1_X}.

\begin{rmk}
Sometimes anomalies are defined to be boundary theories rather than relative theories 
(see \Cref{rmk:boundary}). 

Sometimes anomalies are defined as left boundary/relative theories $1\to \al$, rather than 
right boundary/relative theories: $\al \to 1$.
In this case trivializations would be $\al\lto{\sim} 1$. 
\label{rmk:alt_anom}
\end{rmk}

Given an $\left(X , \z\right)$-theory $Z$, write $Z_\fra$ for the underlying framed
theory given by restricting $Z$ to bordisms with trivial $\left(X ,
\z\right)$-structure. 

\begin{defn}
Consider two tangential structures $\left(X_0 , \z_0\right)$ and $\left(X , \z\right)$
such that there is a map $X \to X_0$ compatible with $\z_0$ and $\z$. 
We say a TQFT:
\begin{equation*}
F \colon \Bord_{d}^{X_0} \to \Om\cT
\end{equation*}
\emph{has an anomaly as an $\left(X , \z\right)$-theory} 
if there is an anomaly theory
\begin{equation*}
\al \colon\Bord_{d}^{\left(X , \z\right)} \to \cT 
\end{equation*}
and a trivialization $t \colon 1_\fra \lto{\sim}\al_\fra$
such that there exists an $\left(\al , t\right)$-module structure on $F$
(as in \Cref{sec:mod_str}).
\label{defn:anom_of_F}
\end{defn}

Spelling out \Cref{defn:anom_of_F}, we see that an anomaly of $F$ as an $\left(X ,
\z\right)$-theory consists of 
an anomalous theory
\begin{equation*}
F_\al \colon \al \to 1_{\left(X , \z\right)}
\end{equation*}
(where $1_{\left(X , \z\right)}$ denotes the trivial $\left(X , \z\right)$-theory)
along with a trivialization $t \colon 1_\fra \lto{\sim} \al_\fra$, 
and an equivalence:
\begin{equation*}
\theta \colon F \simeq F_\al \comp t \ .
\end{equation*}

\begin{defn}
An \emph{$\left(X , \z\right)$-anomaly of the theory $F$} is such a quadruple $\left(\al ,
F_\al , t , \theta\right)$.
The trivialization $t$ and equivalence $\theta$ are often canonically defined, in which
case we will write that $\al \lto{F_\al} 1_X$ is the $\left(X , \z\right)$-anomaly of $F$.
\label{defn:X_anom}
\end{defn}

\begin{prop}
A projective theory with projectivity $\al$ (in the sense of \Cref{defn:projective_TQFT})
is naturally equivalent to an anomalous
TQFT with anomaly $\al$ (in the sense of \Cref{defn:anom_of_F}).
\label{prop:proj_anom}
\end{prop}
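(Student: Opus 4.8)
The plan is to unwind both sides of the claimed equivalence to the same underlying data and check that the constructions are mutually inverse, functorially in the bordism category. Recall that by \Cref{defn:projective_TQFT} a projective $\left(X,\z\right)$-TQFT with projectivity $\al$ is a symmetric-monoidal functor $\overline{F}\colon \Bord^{\left(X,\z\right)}_d \to \PP\Om\cT$ such that $s\comp \overline{F} = \al$, and by \Cref{defn:anom_of_F}/\Cref{defn:X_anom} an anomalous TQFT with anomaly $\al$ is a relative theory $F_\al\colon \al \to 1$, i.e.\ (via \Cref{rmk:lax}/\eqref{eqn:barZ}) a functor $\overline{Z}\colon \Bord^{\left(X,\z\right)}_d \to \cT^\down$ with $s\comp\overline{Z}=\al$ and $t\comp\overline{Z}=1$. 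So both notions are functors into a fiber product over $\cT$; the content is just to identify the targets.

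First I would identify the two target categories. From \Cref{def:P} we have
\begin{equation*}
\PP\Om\cT = \cT\left[0\right]\htimes{\cT}\cT^\down\htimes{\cT}\cT^\times,
\end{equation*}
and the functor $t\colon\PP\Om\cT\to\cT$ lands in $\units{\cT}=\cT^\times$, while $t\colon\cT^\down\to\cT$ is the target-evaluation. A projective theory $\overline{F}$ with projectivity $\al$ is thus precisely a functor $\Bord^{\left(X,\z\right)}_d\to \cT\left[0\right]\htimes{\cT}\cT^\down\htimes{\cT}\cT^\times$ whose composite to the last factor classifies $\al$ (using $\cT\left[0\right]\simeq *$ and the fact, as in the proof of \Cref{thm:P}, that this triple fiber product reshuffles as $\cT^{1\down}\htimes{\cT^\down}\cT^{\down\times}$). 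An anomalous theory $\al\to 1$ is a functor $\Bord^{\left(X,\z\right)}_d\to \cT\left[0\right]\htimes{\cT}\cT^\down$, i.e.\ into $\cT^{1\down}$, whose composite to $\cT^\times$ through $s$ classifies $\al$. The extra factor $\cT^\times$ appearing in $\PP\Om\cT$ but not in $\cT^{1\down}$ is exactly the datum recording that the source object of the arrow is \emph{invertible}; but for an anomalous theory, the source object is $\al\left(\pt\right)$ (and its iterated boundaries), which is invertible precisely because $\al$ is an invertible theory. So over the locus of invertible theories $\al$, the projection $\cT^{\down\times}\to\cT^\down$ is an equivalence onto the full subcategory on arrows with invertible source, and this is exactly where both functors land.

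Concretely, the two constructions I would write down are: (i) given a projective theory $\overline{F}$, forget the $\cT^\times$-component to obtain a functor to $\cT^{1\down}\subseteq\cT^\down$, which is a relative theory $s\comp\overline{F}\to 1$; (ii) given a relative theory $F_\al\colon\al\to 1$ with $\al$ invertible, observe that its source-object assignment automatically lifts (essentially uniquely, since $\units{\cT}\to\cT$ is the inclusion of a full subgroupoid-on-each-level) along $\cT^\times\to\cT$, producing a functor to $\cT^{1\down}\htimes{\cT^\down}\cT^{\down\times}\simeq\PP\Om\cT$ with the prescribed projectivity. Then I would check that (i) and (ii) are inverse up to natural equivalence: composing (ii) after (i) recovers $\overline{F}$ because the $\cT^\times$-lift of an already-$\cT^\times$-valued thing is canonically itself, and composing (i) after (ii) is the identity on the nose on the $\cT^{1\down}$-part. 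Naturality in $\Bord^{\left(X,\z\right)}_d$, hence the ``naturally equivalent'' of the statement, is immediate since everything is post-composition with fixed functors between target categories. I would phrase the final assembly using \Cref{prop:1_X} (to know that the $t$-component being trivial is the correct condition) and the pullback description \eqref{eqn:P_diagram} of $\PP\Om\cT$, exactly as in \cite[Theorem 7.15]{JFS}, which is the statement that projective theories are the same as anomalous ones.

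The main obstacle I expect is the careful handling of the invertibility lift in step (ii): one must argue that for an \emph{invertible} once-categorified theory $\al$, the relative theory $\al\to 1$ canonically factors through the subcategory $\cT^{\down\times}$ of arrows with invertible source, i.e.\ that $\al\left(\pt\right)$ together with its iterated $k$-morphism boundaries are all invertible, with coherent trivializations of the invertibility data, so that the lift along $\cT^{\times}\to\cT$ exists and is unique up to contractible choice. This is where the hypothesis ``$\al$ invertible'' is really used, and it is what distinguishes $\PP\Om\cT$ from the plain arrow category; everything else is formal manipulation of fiber products of symmetric-monoidal $\left(\infty,n\right)$-categories, which is handled by the functoriality results of \cite{JFS} already invoked in \Cref{thm:P}.
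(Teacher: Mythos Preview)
Your proposal is correct and follows essentially the same approach as the paper: both identify a projective theory and an anomalous theory as a functor from the bordism category into the arrow category $\cT^\down$ whose source lands in invertible objects and whose target is the unit, which is exactly the defining condition for factoring through $\PP\Om\cT$. The paper's proof is a terse one-paragraph version of what you have written out; your explicit treatment of the invertibility lift in step (ii) makes precise the point the paper simply asserts as ``precisely the same condition.''
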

\begin{proof}
An anomalous TQFT is a natural transformation from the unit to an invertible TQFT. 
This is, by definition functor from the bordism category to the arrow category such that
composition with the source and target functors give you an invertible object and the unit
respectively.
This is precisely the same condition as factoring through $\PP \Om \cT \to \cT^\down$,
i.e. defining a projective TQFT. 
\end{proof}

\begin{exm}[Framing anomaly]
Let $\left(X , \z\right) = \left(B\SO\left(d\right) , \z_{\io}\right)$ be as in \Cref{exm:Bord_SO}, i.e. 
$\Bord_d^{\left(X , \z \right)} \simeq \Bord_d^\ori$. 
A framed theory $F$ is often said to have a nontrivial \emph{framing anomaly}, if it
cannot be upgraded to an oriented theory, i.e. a $\left(B\SO\left(d\right) ,
\z_{\io}\right)$-theory. 
This is equivalent to having a $\left(B\SO\left(d\right) , \z_{\io}\right)$-anomaly as
in \Cref{defn:X_anom} such that the associated anomaly theory $\al$ is a nontrivial
theory. 
\label{exm:framing_anomaly}
\end{exm}

\begin{exm}
Let $\left(X , \z\right) = \left(BG , \z_{\triv} \right)$ be as in \Cref{exm:Bord_BG}. 
A $BG$-anomaly as in \Cref{defn:X_anom} (or $G$-anomaly) for the theory $F$ is an
anomalous action of $G$ on $F$.
We will see in \Cref{thm:anom_symm} that this is equivalent to $F$ having a 
$\left(\sigma_{BG , \tau}^{d+1} , \rho\right)$-module structure (\Cref{sec:mod_str}) where
the twist $\tau$ is closely related to the anomaly theory.
\label{exm:G_anom}
\end{exm}

\subsection{The anomaly associated to a projectivity class}
\label{sec:proj_anom}

Let $X$ be a space (higher groupoid) equipped with a cocycle $c$ classifying
$\widetilde{X}$:
\begin{equation*}
\begin{cd}
\widetilde{X}
\ar{d}{\pi}&\\
X \ar{r}{c} & B^{d+1} \units{\bk}
\end{cd}
\end{equation*}

Let $\Om \cT= \End_{\cT}\left(1\right)$ denote the looping of the fixed target $\cT$ from
the beginning of \Cref{sec:cob_hyp}.
The anomaly theory $\al_c$ is defined as follows.
By the Cobordism Hypothesis \eqref{eqn:CH_X}, an $X$-theory is determined by a functor from $X$
to $\cT^\sim$, so we can define $\al_c$ to be the theory classified by the following
composition of functors:
\begin{equation*}
X \lto{c}B^{d+1} \units{\bk} \inj \cT \ .
\end{equation*}

By definition, $\al_c$ factors through ``scalars'' in the target, which is sufficient to be
an invertible TQFT. I.e. a relative theory $\al_c\to 1_X$ is an anomalous $X$-theory with
anomaly $\al_c$.

\begin{prop}
Anomalous $X$-theories with anomaly $\al_c$ give rise to $\widetilde{X}$-theories.
\label{prop:alpha_c}
\end{prop}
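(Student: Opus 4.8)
The plan is to argue as in \Cref{prop:linearize}, the one new ingredient being that the pullback of $\al_c$ along $\pi$ is canonically trivializable. By definition an anomalous $X$-theory with anomaly $\al_c$ is a relative theory $F\colon\al_c\to 1_X$, i.e. a functor $\Bord_d^X\to\cT^\down$ whose source and target components are $\al_c$ and the trivial $X$-theory $1_X$. First I would precompose with the functor $\Bord_d^{\widetilde{X}}\to\Bord_d^X$ induced by $\pi$ to obtain a relative $\widetilde{X}$-theory $\pi^*F\colon\pi^*\al_c\to 1_{\widetilde{X}}$.

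Next I would produce a canonical trivialization $t\colon 1_{\widetilde{X}}\lto{\sim}\pi^*\al_c$. Recall that $\al_c$ is the $X$-theory classified, via the Cobordism Hypothesis \eqref{eqn:CH_X}, by $X\lto{c}B^{d+1}\units{\bk}\inj\cT$; by naturality of the Cobordism Hypothesis in the tangential structure, $\pi^*\al_c$ is then classified by $\widetilde{X}\lto{\pi}X\lto{c}B^{d+1}\units{\bk}$. Since $\widetilde{X}$ is by construction the homotopy fiber of $c$, this composite carries a canonical null-homotopy, and because $\al_c$ factors through the scalars $B^{d+1}\units{\bk}\inj\cT$, such a null-homotopy is precisely a trivialization of the invertible $\widetilde{X}$-theory $\pi^*\al_c$ (see \Cref{sec:invertible}). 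Composing, $1_{\widetilde{X}}\lto{t}\pi^*\al_c\lto{\pi^*F}1_{\widetilde{X}}$ is an endomorphism of the trivial $\widetilde{X}$-theory valued in $\cT$, which by \Cref{prop:1_X} is the same thing as a TQFT $\Bord_d^{\widetilde{X}}\to\Om\cT$, i.e. a $\widetilde{X}$-theory. This is the asserted construction, and it is the $\widetilde{X}$-theory incarnation of the extension picture of \Cref{sec:ext_bord}.

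The main obstacle is the middle step: making precise that a trivialization of the invertible theory $\pi^*\al_c$ is literally the datum of a null-homotopy of $c\comp\pi$. This requires that the inclusion of scalars $B^{d+1}\units{\bk}\inj\cT$ induce an equivalence on the relevant spaces of invertible theories, so that nothing is lost in passing between a theory and its classifying map, together with the naturality of the Cobordism Hypothesis under pullback of tangential structures along $\pi$. Both are by now standard, but should be cited carefully from the discussion of invertible theories in \Cref{sec:invertible} and the version of the Cobordism Hypothesis fixed in \Cref{sec:cob_hyp}.
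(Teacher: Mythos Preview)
Your argument is correct and uses the same key ingredient as the paper: the canonical null-homotopy of $c\comp\pi$ coming from the definition of $\widetilde{X}$ as $\hofib(c)$. The difference is only in presentation. The paper works directly with classifying data, asserting that the relative theory $F_\al$ is classified by a map of bundles $\widetilde{X}\to X\times\Om\cT^\sim$ over $X$, and then projects to the second factor to obtain the map $\widetilde{X}\to\Om\cT^\sim$ which (via the Cobordism Hypothesis \eqref{eqn:CH_X}) classifies the desired $\widetilde{X}$-theory. Your version carries out the same construction at the level of theories: pull back along $\pi$, compose with the canonical trivialization $1_{\widetilde{X}}\lto{\sim}\pi^*\al_c$, and invoke \Cref{prop:1_X}. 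This is exactly the pattern of \Cref{prop:linearize} and \Cref{sec:triv_anom}, so it integrates cleanly with the rest of the paper; the paper's bundle-map phrasing is terser but leaves the identification of the relative theory with such a bundle map implicit.
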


\begin{proof}
Given a relative theory $F_\al$, this is classified by a map of bundles over $X$:
\begin{equation*}
\begin{cd}
\widetilde{X} \ar{rr}{\abs{F_\al}}\ar{dr}{\pi} &&
X \times \Om \cT^\sim \ar{dl}{p_1}\\
& X &
\end{cd}
\end{equation*}
Composing with the projection to the second factor we obtain a map
\begin{equation*}
\widetilde{X} \lto{\abs{F_\al}} X \times \Om \cT^\sim \lto{p_2} \Om \cT^\sim \ . 
\end{equation*}
The result follows from the Cobordism Hypothesis \eqref{eqn:CH_X}:
Every TQFT $\widetilde{F} \colon \Bord^{\widetilde{X}}_{d} \to \Om \cT$ is classified by
such a map $\widetilde{X} \to \Om \cT^\sim$.
\end{proof}

\begin{exm}
Fix $d = 1$ and let $c$ be a cocycle classifying a central extension $\widetilde{G}$ of a group 
$G$ by $\units{\bk}$. 
If $X = BG$ and $\widetilde{X} = B\widetilde{G}$, then a projective representation of $G$
with projectivity cocycle $c$ (i.e. a linear representation of $\widetilde{G}$) defines a
such a theory $\widetilde{F}$.
\label{exm:proj_repr}
\end{exm}

\begin{rmk}
Recall that the upshot of the discussion in \Cref{sec:ext_bord} is that every projective 
TQFT is an honest (i.e. non-anomalous) functor out of an extension of the original bordism
category. According to \Cref{prop:proj_anom}, the same is true for anomalous theories. 
This is the familiar statement that equipping your bordisms with extra structure allows
one to trivialize the anomaly.
See \Cref{rmk:WRT_p1} for more on this extended category 
in the context of Reshetikhin-Turaev and Crane-Yetter TQFTs.

The bordism category $\Bord^{\widetilde{X}}_{d+1}$ provides a description for (a
fully-extended version of) the extended bordism category for 't Hooft anomalies described
by such a cocycle $c$.

Note that, as discussed in \Cref{rmk:filled}, the functor from the extended bordism
category to the original one is not necessarily essentially surjective, e.g. it might
consist e.g. of ``filled'' bordisms if the anomalous theory in question does not extend to
the point.
For instance, in this case the point is not an object of the extended category, but $S^0$
is.
\label{rmk:resolve_anom}
\end{rmk}

\subsubsection{Trivializing the anomaly}
\label{sec:triv_anom}

We have a symmetric-monoidal functor
$\pi_* \colon \Bord_{d}^{\widetilde{X}} \to \Bord_{d}^X$ given by composing 
the map defining the $\widetilde{X}$-structure on a bordism with the map $\pi$.

\begin{qn}
Does $\widetilde{F}$ factor as follows?
\begin{equation*}
\begin{tikzcd}
\Bord^{\widetilde{X}}_{d}\ar{d}{\pi_*}\ar{r}{\widetilde{F}}&
\Om \cT \\
\Bord^X_{d} \ar[swap,dashed]{ur}{F_X}
\end{tikzcd}
\end{equation*}
\label{qn:anomaly}
\end{qn}

\Cref{qn:anomaly} is equivalent to the question of splitting $\widetilde{X}\to X$
(i.e. trivializing $c$).
We can ask a weaker question, by pulling back along a map $f \colon Y \to X$. 
Write $f^* \widetilde{X}$ for the pullback:
\begin{equation*}
\begin{tikzcd}
f^* \widetilde{X} \ar{r}\ar{d}{f^* \pi}&
\widetilde{X} \ar{d}{\pi} \\
Y \ar{r}{f}&
X
\end{tikzcd}
\end{equation*}
Write $f^* \widetilde{F}$ for the corresponding $f^* \widetilde{X}$-theory:
\begin{equation*}
f^* \widetilde{F}  \colon \Bord_d^{f^*\widetilde{X}} \to \Bord_d^{\widetilde{X}}
\lto{\widetilde{F}} \Om \cT
\ .
\end{equation*}
Now we can state a refinement of \Cref{qn:anomaly}.

\begin{qn}
Does $f^* \widetilde{F}$ factor as follows?
\begin{equation*}
\begin{tikzcd}
\Bord^{f^*\widetilde{X}}_{d}\ar[swap]{d}{\left(f^* \pi\right)_*}\ar{r}{f^* \widetilde{F}}&
\Om \cT \\
\Bord^Y_{d} \ar[swap,dashed]{ur}{F_Y}
\end{tikzcd}
\end{equation*}
\label{qn:anomaly_Y}
\end{qn}

\Cref{qn:anomaly_Y} is similarly equivalent to splitting $f^* \widetilde{X} \to Y$, or
equivalently trivializing $f^* c$.

A trivialization $1 \lto{\sim} \al_c$ would provide us with an $X$-theory by
\Cref{prop:1_X}. 
However if $c$ is not trivializable, the best we will be able to do is to trivialize the
pullback along some map $f \colon Y \to X$.
This allows us to regard the anomaly theory $\al_c$ as a $Y$-theory:
\begin{equation}
\begin{tikzcd}
\Bord^Y_{d}\ar{d}{f} \ar{dr}{\al_{f^* c}} \ar{dr} & \\
\Bord^X_{d} \ar{r}{\al_c} &
\cT
\end{tikzcd}
\label{eqn:Y_anom}
\end{equation}

A trivialization of $\widetilde{X}$ over $Y$ (i.e. of $f^* c$) 
uniquely determines a trivialization $1_Y \lto{\sim} \al_c^Y$.
Composing with the anomalous theory gives $1_Y\lto{\sim} \al_c^Y \to 1_Y$, which is a
$Y$-theory by \Cref{prop:1_X}.

\subsection{Anomalies and sandwiches}
\label{sec:anom_sand}

Let $\cT$ be the $\left(d+1\right)$-dimensional target fixed in \Cref{sec:cob_hyp}.
The $d$-dimensional TQFTs we will consider will be valued in the looping $\Om \cT$. 
Let $X$ be a (pointed, connected) $\pi$-finite space.
Consider a cocycle $c \colon X \to B^{d+1}\units{\bk}$ classifying $\widetilde{X}$:
\begin{equation*}
\begin{cd}
\widetilde{X}
\ar{d}{\pi}&\\
X \ar{r}{c} & B^{d+1} \units{\bk} \ .
\end{cd}
\end{equation*}
Let $\al_c$ denote the anomaly theory from \Cref{sec:proj_anom}, and let
$\sigma_{\widetilde{X}}$ denote the theory from \Cref{sec:sigma}.

\begin{theorem}
An $X$-anomaly $F_c \colon \al_c \to 1_X$ of $F$ (as in \Cref{defn:X_anom}) naturally 
defines:
\begin{enumerate}[label = (\roman*)]
\item A projective $X$-theory $\overline{F} \colon \Bord_d^X \to \PP\Om \cT$
with projectivity $\al_c$ and underlying theory $F$ (as in \Cref{defn:projective_TQFT}).
\label{proj}

\item $\widetilde{F} \colon \Bord^{\widetilde{X}}_d \to \Om \cT$ with underlying framed
theory, written $F$.
\label{gerbe}

\item Assuming \Cref{hyp:sigma}, a $\left(\sigma_{X , c} , \rho_{X , c}\right)$-module structure 
on $F$ (\Cref{sec:mod_str}).
\label{symm}
\end{enumerate}
\label{thm:anom_symm}
\end{theorem}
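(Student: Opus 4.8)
The three constructions are essentially unwindings of the definitions, so the plan is to produce each one from the data of the quadruple $\left(\al_c , F_{\al_c} , t , \theta\right)$ comprising the $X$-anomaly, using the bookkeeping already set up in \Cref{sec:projective_TQFTs,sec:anomalies,sec:top_symm}. For \ref{proj}: an anomalous theory $F_c \colon \al_c \to 1_X$ is by definition (see \Cref{rmk:lax}, \eqref{eqn:barZ}) a functor $\overline{F} \colon \Bord_d^X \to \cT^\down$ whose source functor gives $\al_c$ and whose target functor gives the trivial theory $1_X$. Since $\al_c$ was built in \Cref{sec:proj_anom} to factor through $B^{d+1}\units{\bk} \inj \cT$, i.e. through $\units{\cT}$, the functor $\overline{F}$ lands in the fiber product $\cT\left[0\right] \htimes{\cT} \cT^\down \htimes{\cT} \cT^\times = \PP\Om\cT$ of \Cref{def:P}; this is exactly the content of \Cref{prop:proj_anom}. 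That $s\comp\overline{F} \simeq \al_c$ is immediate from the description of $s$ as the left-hand vertical composite in \eqref{eqn:P_diagram}, and that the underlying framed theory is $F$ follows from the trivialization $t$ and equivalence $\theta$ in the quadruple together with \Cref{defn:underlying}.

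For \ref{gerbe}: this is \Cref{prop:alpha_c} applied to the relative theory $F_{\al_c}$, giving a map $\widetilde{X} \to \Om\cT^\sim$ and hence, by the Cobordism Hypothesis \eqref{eqn:CH_X}, a theory $\widetilde{F} \colon \Bord_d^{\widetilde{X}} \to \Om\cT$. One then checks that restricting $\widetilde{F}$ along the map $\Bord_d^\fra \to \Bord_d^{\widetilde{X}}$ (induced by the basepoint $\pt \to \widetilde{X}$, which maps to the basepoint of $X$) recovers $F$; this uses the trivialization $t \colon 1_\fra \lto{\sim} \al_{c,\fra}$ and $\theta$ exactly as in the unwinding of \Cref{defn:anom_of_F} recorded after that definition. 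For \ref{symm}: under \Cref{hyp:sigma} we have the $\left(d+1\right)$-dimensional theory $\sigma_{X,c}$ obtained by "summing over $X$-bundles weighted by $c$" as in \cite[\S A.2]{FMT}, and $\al_c$ is its restriction (the truncation of the invertible theory \eqref{eqn:pre_alpha}); an $\al_c$-relative theory is then the same as a boundary/module structure over $\sigma_{X,c}$, with the $\rho_{X,c}$ recording the $X$-action. Here I would cite \cite[Theorem 7.15]{JFS} for the identification of relative theories (lax transformations into $\cT^\down$) with the module-structure language, and \Cref{sec:mod_str} for the precise notion; the functor $\rho_{X,c}$ is read off from $\widetilde{X} \to X$ as in the discussion preceding \Cref{thm:anom_symm}.

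The one genuine subtlety, and the step I expect to be the main obstacle, is \ref{symm}: making precise that an $\al_c$-anomalous theory is \emph{the same data} as a $\left(\sigma_{X,c}, \rho_{X,c}\right)$-module structure, rather than merely giving rise to one. This requires knowing that $\sigma_{X,c}$ is an \emph{invertible-when-restricted} sandwich whose "Dirichlet-type" boundary is the trivial theory, so that relative theories for $\al_c$ and modules for $\sigma_{X,c}$ are matched by \cite[Example 3.24]{FMT}; the twisting by $c$ is what forces $\al_c$ (rather than the untwisted $\sigma_X$) to appear, via \Cref{rmk:map_of_spectra}. The remaining bookkeeping — tracking $t$ and $\theta$ through all three constructions so that the "underlying framed theory" in each case is canonically $F$ — is routine but must be stated, since \Cref{defn:X_anom} allows $t$ and $\theta$ to be noncanonical in general. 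I would therefore organize the proof as three short paragraphs, one per item, each citing the relevant earlier result (\Cref{prop:proj_anom}, \Cref{prop:alpha_c}, and \Cref{hyp:sigma} together with \cite[\S 3]{FMT}), and flag at the outset that compatibility of the three underlying-framed-theory identifications is inherited from the single equivalence $\theta$ in the input quadruple.
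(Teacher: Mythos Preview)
Your treatment of \ref{proj} and \ref{gerbe} is correct and matches the paper exactly: the paper's proof simply cites \Cref{prop:proj_anom} for \ref{proj} and \Cref{prop:alpha_c} for \ref{gerbe}, and your unpacking of those citations is accurate.

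For \ref{symm} there is a genuine conceptual slip and a missed shortcut. The slip is the sentence ``$\al_c$ is its restriction (the truncation of the invertible theory \eqref{eqn:pre_alpha}).'' The theories $\al_c$ and $\sigma_{X,c}$ live on different bordism categories and are not related by restriction: $\al_c$ is an invertible \emph{$X$-theory} $\Bord_d^X \to \cT$ (classified by $X \to B^{d+1}\units{\bk} \hookrightarrow \cT$), whereas $\sigma_{X,c}$ is a \emph{framed} $(d{+}1)$-dimensional theory obtained by integrating out the map to $X$ as a fluctuating field. Consequently, ``an $\al_c$-relative theory is the same as a boundary/module structure over $\sigma_{X,c}$'' is not something \cite[Theorem 7.15]{JFS} gives you directly; that theorem only translates between lax transformations and arrow-category functors for a \emph{fixed} bulk theory, and here the two bulks are different.

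The paper sidesteps this entirely: its proof of \ref{symm} is a one-line citation of \Cref{prop:sigma_mod}, which (unwinding \Cref{hyp:sigma} and \cite[Theorem 7.15]{JFS}) identifies left boundary theories $1 \to \sigma_{X,c}$ with functors $\Bord_d^{\hofib(c)} \to \Om\cT$, i.e.\ with $\widetilde{X}$-theories. Since part \ref{gerbe} has already produced $\widetilde{F} \colon \Bord_d^{\widetilde X} \to \Om\cT$, you get the left boundary $F_{\sigma}$ for free. The remaining datum $\theta \colon \rho_{X,c} \tp_{\sigma_{X,c}} F_\sigma \lto{\sim} F$ is exactly the ``underlying framed theory'' identification you already tracked via $(t,\theta)$, since $\rho_{X,c}$ is induced by the basepoint $\pt \to X$ (\Cref{rmk:rho_X}). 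So your instinct that the bookkeeping for $\theta$ is routine is right, but the route to $F_\sigma$ should go \emph{through} \ref{gerbe} and \Cref{prop:sigma_mod}, not through a direct comparison of $\al_c$ with $\sigma_{X,c}$. Note also that the theorem only claims the anomaly \emph{naturally defines} a module structure, not that the two are equivalent data, so the concern you flag as ``the main obstacle'' is stronger than what needs to be proved.
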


\begin{proof}
The fact that $F_c$ determines \ref{proj} follows from \Cref{prop:proj_anom}. 
Similarly, \Cref{prop:alpha_c} shows that $F_c$ determines \ref{gerbe}.
Finally, the fact that $F_c$ determines \ref{symm} follows from \Cref{prop:sigma_mod}
(which depends on \Cref{hyp:sigma}). 
\end{proof}

\begin{theorem}
Consider the setup of \Cref{thm:anom_symm}, and assume that the cocycle $c$ is
trivializable after being pulled back along a map $f \colon Y \to X$. A trivialization of
$f^* c$ naturally defines a trivialization of the anomaly theory $\al_{f^* c}$ (as in
\eqref{eqn:Y_anom}) which in turn defines:
\begin{enumerate}[label = (\roman*)]
\item A linearization of $\overline{F}$ (as in \Cref{prop:linearize}).
\label{triv_proj}

\item A $Y$-theory $F_Y$ which factors $f^*\widetilde{F}$ (as in \Cref{qn:anomaly_Y}).
\label{triv_gerbe}

\item Assuming \Cref{hyp:sigma}, a reduction (\Cref{defn:reduction})
from the $\left(\sigma_{X , c} ,
\rho_{X , c}\right)$-module structure in \Cref{thm:anom_symm} \ref{symm}
to a $\left(\sigma_Y , \rho_Y\right)$-module structure (as in \Cref{defn:reduction}).
\label{triv_symm}
\end{enumerate}
Furthermore, the absolute $Y$-theories obtained from 
\ref{triv_proj}, \ref{triv_gerbe}, and \ref{triv_symm}
agree with the one obtained from the trivialization of the anomaly.

Note that if one such trivialization exists, then the collection of trivializations forms
a torsor over $H^{d-1}\left(X , \units{\bk}\right)$.
\label{thm:triv_anom_symm}
\end{theorem}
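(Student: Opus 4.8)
The plan is to extract everything from the single datum of a trivialization of $f^* c$ and to feed it through the constructions of \Cref{thm:anom_symm}, deferring all coherence bookkeeping to a final compatibility check. First I would record the basic translation. By the universal property of the homotopy fiber, a trivialization of the cocycle $f^* c \colon Y \to B^{d+1}\units{\bk}$ is the same datum as a section $s_f \colon Y \to f^*\widetilde{X}$ of $f^*\pi$; and since $\al_c$ was defined in \Cref{sec:proj_anom} as the theory classified by $X \lto{c} B^{d+1}\units{\bk}\inj \cT$, pulling back along $f$ shows that the same datum is equivalently an equivalence $1_Y \lto{\sim} \al^Y_{f^* c}$ of $Y$-theories (as already observed at the end of \Cref{sec:triv_anom}). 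This three-way identification is what makes \ref{triv_proj}, \ref{triv_gerbe}, \ref{triv_symm} essentially formal.

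For \ref{triv_proj}: pull $\overline{F}$ back along the functor $\Bord^Y_d \to \Bord^X_d$ induced by $f$; because the functor $s$ of \eqref{eqn:P_diagram} commutes with this pullback, the projectivity of $f^*\overline{F}$ is $\al^Y_{f^* c}$, and applying \Cref{prop:linearize} to the trivialization above produces the linearization $F_Y \colon \Bord^Y_d \to \Om\cT$. For \ref{triv_gerbe}: by the proof of \Cref{prop:alpha_c}, $\widetilde{F}$ is classified by a map $\widetilde{X}\to \Om\cT^\sim$; pulling back gives a map $f^*\widetilde{X}\to\Om\cT^\sim$ classifying $f^*\widetilde{F}$, and precomposition with $s_f$ yields a map $Y\to\Om\cT^\sim$, hence a $Y$-theory $F_Y$, together with a canonical homotopy $F_Y\comp (f^*\pi)_* \simeq f^*\widetilde{F}$ — the factorization demanded by \Cref{qn:anomaly_Y} — precisely because $s_f$ is a section of $f^*\pi$. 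For \ref{triv_symm}: from $f$ together with the trivialization of $f^* c$ one builds a morphism of $(d+1)$-dimensional theories from $\sigma_Y$ to $\sigma_{X,c}$ witnessing that the twist $c$ becomes trivial on $Y$; restriction of module structures along this morphism is exactly the operation of \Cref{defn:reduction}, and applying it to the $(\sigma_{X,c},\rho_{X,c})$-module structure of \Cref{thm:anom_symm}\,\ref{symm} (which exists by \Cref{prop:sigma_mod}, hence by \Cref{hyp:sigma}) gives the desired $(\sigma_Y,\rho_Y)$-module structure.

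The hard part will be the \emph{agreement} clause. All four $Y$-theories in play — the one coming directly from the trivialized anomaly, namely $1_Y\lto{\sim}\al^Y_{f^* c}\lto{F_c}1_Y$ (a $Y$-theory by \Cref{prop:1_X}), together with the ones produced in \ref{triv_proj}, \ref{triv_gerbe}, \ref{triv_symm} — are by construction obtained from the \emph{single} relative theory $F_c$ by restricting along $f$ and composing with the \emph{same} trivialization; the content is to exhibit coherent homotopies identifying these four descriptions, tracing through the identifications of \Cref{prop:proj_anom}, \Cref{prop:alpha_c}, \Cref{prop:linearize}, \Cref{prop:sigma_mod} and \Cref{prop:1_X}. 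I would organize this as one large commuting diagram of symmetric-monoidal $\infty$-categories — an enlargement of \eqref{eqn:P_diagram} glued to the $\sigma$-side — so that each of the four constructions is literally a distinct path through the diagram and ``agreement'' becomes commutativity of the diagram; the naturality results of \cite{JFS} invoked in the proof of \Cref{thm:P} control the $\PP$-corner and \Cref{hyp:sigma} controls the $\sigma$-corner, so the only genuinely new work is assembling the diagram and checking its faces commute. I expect this coherence check, and in particular the interface with \Cref{defn:reduction} (whose definition in \Cref{sec:reduction} is tailored to make \ref{triv_symm} hold), to be the main obstacle.

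Finally, for the torsor statement: once it is nonempty, the set of trivializations of $f^* c$ is acted on simply transitively by the group of automorphisms of the trivial $d$-dimensional theory on the relevant bordism category, since any two trivializations differ by such an automorphism; by the discussion of invertible theories in \Cref{sec:invertible} this group is $H^{d-1}\left(X,\units{\bk}\right)$, which is the asserted torsor structure. Concretely, the difference of two trivializations is the obstruction class measuring the failure of the two associated sections of $f^*\widetilde{X}$ to be homotopic, and this class lives in exactly this cohomology group.
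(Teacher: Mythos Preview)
Your proposal is correct and follows essentially the same route as the paper. The paper's proof is more direct than you anticipate: rather than assembling one large coherence diagram, it checks each agreement separately --- \ref{triv_proj} via \Cref{prop:linearize}, \ref{triv_gerbe} by observing that both $Y$-theories are classified (via the Cobordism Hypothesis) by the same composite $Y \to f^*\widetilde{X} \to \Om\cT^\sim$, and \ref{triv_symm} by invoking precisely the naturality square \eqref{eqn:commuting_square} built into \Cref{hyp:sigma} (your ``\Cref{hyp:sigma} controls the $\sigma$-corner'' is exactly this mechanism). One small slip: in \ref{triv_symm} the morphism of quiches runs from $(\sigma_{X,c},\rho_{X,c})$ to $(\sigma_Y,\rho_Y)$, not the reverse --- see \Cref{prop:reduction_Y} and \Cref{prop:induced} --- though since you defer to \Cref{defn:reduction} this does not affect the argument. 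The paper does not prove the torsor statement at all, so your sketch there already goes further.
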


\begin{proof}
In terms of \Cref{qn:anomaly_Y}, 
a trivialization of $\widetilde{X}$ over $Y$ (i.e. a trivialization of $f^* c$) explicitly
defines a section $\Bord^Y_{d} \to \Bord^{f^*\widetilde{X}}_{d}$, which we can compose
with $f^*\widetilde{F}$ to obtain the desired theory $F_Y$.

The theory $\al_{f^* c}$ (from \eqref{eqn:Y_anom}) is defined by the map $f^* c$, so a
trivialization of $f^* c$ automatically provides a trivialization 
$1_Y \lto{\sim} \al_{f^* c}$. 
This produces an automorphism of the trivial $Y$-theory $1_Y \lto{\sim} \al_{f^* c} \to
1_Y$, which is a theory on $\Bord^Y_{d}$ by \Cref{prop:1_X}.
This $Y$-theory is classified (via the Cobordism Hypothesis \eqref{eqn:CH_X})
by the functor $Y \to \Om \cT$ given by the composition of the section of $f^*
\widetilde{X}$ with the functor $\widetilde{X} \to \Om \cT^\sim$ classifying the anomalous
theory:
\begin{equation*}
Y \to f^* \widetilde{X} \to \Om \cT^\sim \ .
\end{equation*}
This agrees with the functor $Y\to \Om \cT^\sim$ classifying $F_Y$ by
\Cref{thm:anom_symm}.
The agreement between the trivialization of the anomaly and the linearization
\ref{triv_proj} is \Cref{prop:linearize}.

A trivialization of $f^*c$ determines a reduction to $\left(\sigma_Y^{d+1} ,
\rho_Y\right)$ by \Cref{prop:reduction_Y}.
In particular, the boundary theory $1 \to \sigma_{\widetilde{X}}^{d+1}$ becomes reduced to a
boundary theory $1\to \sigma_Y^{d+1}$ which 
defines a $Y$-theory by \Cref{prop:sigma_mod}.
This agrees with $F_Y$ by the commuting square \eqref{eqn:commuting_square}
in \Cref{rmk:hyp:sigma} associated to $X_1
= X$, $X_2 = \widetilde{X}$, and the morphism in \eqref{eqn:triv_corr}.
\end{proof}

\subsection{A universal anomaly}
\label{sec:univ_anom}

Let 
\begin{equation*}
F \colon \Bord^\fra_{d} \to \Om \cT
\end{equation*}
be a TQFT sending the point to an object $\cS$ of $\Om \cT$.
Let $\widetilde{X} = B\Aut_{\Om \cT}\left(\cS\right)$. 
Because we are assuming $\Om^{d+1} \cT \cong \bk$ (see the beginning of \Cref{sec:cob_hyp}),
we know that $\units{\bk}$ maps to $\Om^{d} \Aut\left(\cS\right)$, however it is only an
equivalence when $\cS$ satisfies a sort of simplicity/irreducibility. 

\begin{defn}
We say an object $\cS$ of $\Om \cT$ is a \emph{Schur object} if $\Om^{d} \Aut
\left(\cS\right) \cong \units{\bk}$.
\label{defn:ss}
\end{defn}

If $\cS$ is a Schur object, then the top level of the Postnikov tower provides a
projectivity class $k$ on the truncation to a $d$-type:
\begin{equation}
\begin{cd}
B\Aut_{\Om \cT}\left(\cS\right)
\ar{d}{\pi} & \\
\pi_{\leq d} B\Aut_{\Om \cT}\left(\cS\right) \ar{r}{k}&
B^{d+1} \units{\bk}
\ .
\end{cd}
\label{eqn:k}
\end{equation}

The following follows from \Cref{thm:anom_symm,thm:triv_anom_symm} by setting
\begin{align}
\widetilde{X} = 
B\Aut_{\Om \cT}\left(\cS\right)
\ ,
&&
X = \pi_{\leq d} B\Aut_{\Om \cT}\left(\cS\right) \ ,
&&
c = k \ , \text{ and }
&&
Y = BG
\end{align}
for some (possible higher) group $G$.

\begin{cor}
Let $\cS$ be a Schur object (\Cref{defn:ss}) of an $\left(\infty , d+1\right)$-category
$\cT$ with duals which satisfies $\Om^{d+1} \cT = \bk$.
\begin{itemize}
\item The class $k$ from \eqref{eqn:k} uniquely determines the following.
\begin{enumerate}
\item The gerbe $B\Aut_{\Om \cT}\left(\cS\right) \to X$ of objects of $\Om \cT$ equivalent
to $\cS$.

\item The anomaly theory $\al_k$ (as in \Cref{sec:proj_anom}).

\item The abstract projectivity $\al_k$ (as in \Cref{defn:projective_TQFT}).

\item The quiche $\left( \sigma_{X , k} , \rho_{X , k} \right)$ (as in \Cref{sec:sigma}).
\end{enumerate}

\item The object $\cS$ uniquely determines the following.
\begin{enumerate}[label = (\roman*)]
\item An $\widetilde{X}$-theory $\widetilde{F} \colon \Bord^{B\Aut
\left(\cS\right)}_d \to \cT$.

\item An $X$-anomaly $F_k \colon \al_k \to 1$ (as in \Cref{defn:X_anom}).
\label{cor_anom}

\item A projective theory $\overline{F}$ with projectivity $\al_k$
(as in \Cref{defn:projective_TQFT}).
\label{cor_proj}

\item Assuming \Cref{hyp:sigma}, a $\left(\sigma_{X , k} , \rho_{X , k} \right)$-module structure on $F$
(as in \Cref{sec:mod_str}).
\label{cor_symm}
\end{enumerate}

\item Given $f\colon BG \to X$ for some (possibly higher) group $G$, 
a trivialization of $f^* k$ determines the following.
\begin{enumerate}[label = (\alph*)]
\item A theory $F_{BG}$ which factors $\widetilde{F}$ (as in \Cref{qn:anomaly_Y}).

\item A trivialization of the anomaly $\al_{f^* k}$ (as in \eqref{eqn:Y_anom}).
\label{triv_cor_anom}

\item A linearization of the projective theory $\overline{F}$ (as in
\Cref{prop:linearize}).
\label{triv_cor_proj}

\item Assuming \Cref{hyp:sigma}, a reduction to a $\left(\sigma_{BG} , \rho_{BG}\right)$-structure 
on $F$ (as in \Cref{defn:reduction}).
\label{triv_cor_symm}
\end{enumerate}
Furthermore, the trivialization in \ref{triv_cor_anom}, the linearization in
\ref{triv_cor_proj},
and the reduction in \ref{triv_cor_symm} determine three $BG$-theories, which all agree with
$F_{BG}$.
If such a trivialization exists,  then the trivializations of $f^* k$ form a torsor over
$H^{d-1}\left(BG , \units{\bk}\right)$.
\end{itemize}
\label{cor:anom_symm}
\end{cor}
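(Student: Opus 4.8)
The plan is to read \Cref{cor:anom_symm} as the specialization of \Cref{thm:anom_symm} and \Cref{thm:triv_anom_symm} obtained under the dictionary $\widetilde{X} = B\Aut_{\Om \cT}(\cS)$, $X = \pi_{\leq d} B\Aut_{\Om \cT}(\cS)$, $c = k$, and $Y = BG$, so that almost the entire statement reduces to bookkeeping once the input data of those theorems has been produced. Accordingly, the first thing I would do is verify the one hypothesis that is not automatic: that with $\widetilde X$ and $X$ as above, the map $\pi$ of \eqref{eqn:k} really does exhibit $\widetilde X$ as a gerbe banded by $B^{d}\units{\bk}$ over $X$ whose $k$-invariant is the class $k$. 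This is exactly where the Schur condition (\Cref{defn:ss}) enters: since $\cT$ has duals and $\Om^{d+1}\cT \cong \bk$, the bottom-level scalars $\units{\bk}$ always map into $\Om^{d}\Aut_{\Om \cT}(\cS)$, and \Cref{defn:ss} forces this map to be an isomorphism; hence $\units{\bk}$ is the top nonvanishing homotopy group of $B\Aut_{\Om \cT}(\cS)$, so that the truncation $X$ appearing in \eqref{eqn:k} discards exactly that top stage, the homotopy fiber of $\pi$ is $B^{d}\units{\bk}$, and the associated $k$-invariant is $k$. If in addition one wants the conclusions phrased via $\sigma$-theories and module structures, one further assumes the homotopy groups of $B\Aut_{\Om \cT}(\cS)$ are finite, so that $X$ is $\pi$-finite and \Cref{hyp:sigma} may be invoked as in \Cref{thm:anom_symm,thm:triv_anom_symm}.

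Granting this, I would dispatch the first bullet list directly. Item~1 is the tautology that $k$, being by construction the classifying map of the gerbe $\pi\colon B\Aut_{\Om \cT}(\cS)\to X$, determines that gerbe as its homotopy fiber. Items~2--4 are the constructions of \Cref{sec:proj_anom} and \Cref{sec:sigma} applied to the pair $(X,k)$: the anomaly theory $\al_k$, the abstract projectivity $\al_k$ (in the sense of \Cref{defn:projective_TQFT}), and the quiche $(\sigma_{X,k},\rho_{X,k})$ are built from $X$ and the cocycle $k$ alone, so they are manifestly functions of $k$. For the second bullet list, the content is that the object $\cS$ supplies the initial data needed by \Cref{thm:anom_symm}: via the Cobordism Hypothesis \cite{L:CH} it determines the framed theory $F\colon\Bord^{\fra}_d\to\Om\cT$ with $F(\pt)=\cS$, and the tautological $\widetilde X$-theory $\widetilde F$ classified by the inclusion $\widetilde X = B\Aut_{\Om \cT}(\cS)\hookrightarrow\Om\cT^\sim$; by \Cref{prop:alpha_c} this $\widetilde X$-theory is the same datum as an $X$-anomaly $F_k\colon\al_k\to 1$ of $F$. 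Feeding that $X$-anomaly into \Cref{thm:anom_symm} then yields at once the projective theory $\overline F$ with projectivity $\al_k$ and underlying theory $F$, and---under \Cref{hyp:sigma}---the $(\sigma_{X,k},\rho_{X,k})$-module structure on $F$; these are precisely items~(i)--(iv).

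Finally, the third bullet list is \Cref{thm:triv_anom_symm} read with $f\colon BG = Y\to X$: a trivialization of $f^* k$ furnishes a section of $\Bord^{f^*\widetilde X}_d\to\Bord^{BG}_d$ and hence the factoring theory $F_{BG}$ of \Cref{qn:anomaly_Y}; it furnishes a trivialization $1\lto{\sim}\al_{f^* k}$ and hence, by \Cref{prop:linearize}, a linearization of $\overline F$; and, assuming \Cref{hyp:sigma}, a reduction to a $(\sigma_{BG},\rho_{BG})$-module structure in the sense of \Cref{defn:reduction}. The assertion that the three resulting $BG$-theories agree with $F_{BG}$, and that the trivializations of $f^* k$ form an $H^{d-1}(BG,\units{\bk})$-torsor when nonempty, are the corresponding statements of \Cref{thm:triv_anom_symm}. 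The uniqueness clauses follow because every construction invoked is functorial and the Cobordism Hypothesis and Postnikov truncation are equivalences of $\infty$-groupoids, so the space of choices at each stage is contractible. I expect the single substantive step to be the gerbe identification in the first paragraph---extracting from the Schur condition that $B\Aut_{\Om \cT}(\cS)$ is a $B^{d}\units{\bk}$-gerbe over its truncation with the right $k$-invariant---and once that is in place everything else is a direct appeal to \Cref{thm:anom_symm,thm:triv_anom_symm}.
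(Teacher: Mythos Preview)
Your proposal is correct and follows essentially the same approach as the paper: the paper simply states that the corollary follows from \Cref{thm:anom_symm,thm:triv_anom_symm} by setting $\widetilde{X} = B\Aut_{\Om \cT}(\cS)$, $X = \pi_{\leq d} B\Aut_{\Om \cT}(\cS)$, $c = k$, and $Y = BG$, with no further elaboration. Your write-up fills in more of the bookkeeping than the paper does, in particular making explicit how the Schur condition guarantees the gerbe structure and how one first produces the $X$-anomaly $F_k$ from the tautological $\widetilde X$-theory before feeding it into \Cref{thm:anom_symm}; note only that \Cref{prop:alpha_c} as stated gives the direction from anomalous theory to $\widetilde X$-theory, so your phrase ``the same datum'' is slightly stronger than what that proposition literally asserts, though the converse you need is immediate from the description of anomalous theories as maps of bundles over $X$ in its proof.
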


\section{Fusion categories}
\label{sec:fusion}

\subsection{Preliminaries}
\label{sec:fusion_prelim}

Fix an algebraically closed field $\bk$ of characteristic zero.
Let $\Pre$ be the symmetric monoidal $2$-category of presentable $\bk$-linear categories with
colimit preserving functors (and natural transformations).\footnote{A category is
presentable (sometimes called locally presentable) if it is accessible (generated under
colimits by a small subcategory) and cocomplete (closed under small colimits).}

As in \cite[\S 3]{BJS:dualizable}, we will consider the 
``even higher'' Morita category \cite{JFS} of $\EE_2$-categories.
This Morita category of braided monoidal categories was proposed in \cite[\S 9]{Wal:TQFT},
\cite{DSPS:dualizable}, and \cite{BZBJ}.
See \Cref{rmk:morita} for more details on the specific model of the Morita category used
here.

The Morita $3$-category of tensor categories, $\Tens =
\Alg_1\left(\Pre\right)$, consists of the following:
\begin{itemize}
\item the objects are tensor categories, 
\item the $1$-morphisms are bimodule categories, 
\item the $2$-morphisms are functors between bimodule categories, and 
\item the $3$-morphisms are natural transformations.
\end{itemize}
We will focus on the subcategory $\Fus$ consisting of fusion categories, semisimple
bimodule categories, compact-preserving cocontinuous bimodule functors, and natural
transformations. 
It was shown in \cite{BJS:dualizable} that this forms a subcategory. 

Following \cite{BJS:dualizable}, we define ``fusion'' as follows.
Given an $\EE_1$-algebra object of $\Pre$, there is an increasingly strict list of
finiteness conditions one can insist upon:
\begin{itemize}
\item cp-rigid: every compact projective object is dualizable.
\item compact-rigid: every compact object is dualizable.
\item finite: compact rigid and the underlying category is finite.
\item fusion: finite, semisimple,\footnote{By semisimple, we mean \cite[Definition
2.27]{BJS:dualizable}: Every object is a (possibly infinite) direct sum
of simple objects.} with simple unit.
\end{itemize}

\begin{rmk}
Recall the original notions of rigid, finite, and fusion
abelian categories as in \cite{EO:finite,ENO:fusion,EGNO:TC}.
As explained in \cite{BJS:dualizable,BJSS:invertible} 
this setting is related to the one defined above by an $\Ind$-completion, 
in the following sense.
The ambient category of abelian categories is not suitable for TQFT constructions, because
it is not closed under the (Deligne) tensor product. Instead, as in \cite{Shim}, 
we can work in the ambient category of finitely co-complete $\bk$-linear categories with
right exact functors and natural equivalences, written $\Rex$.

As is explained in \cite[\S 3]{BZBJ}, taking the $\Ind$-completion defines an equivalence
between $\Rex$ and the subcategory $\Pre_c \subset \Pre$ consisting of 
compactly generated $\bk$-linear presentable categories, compact functors, and natural
equivalences.\footnote{A functor is compact if it preserves compact objects.}
As in \cite[\S 3]{BZBJ}, $\Ind$ and the functor taking compact objects of a presentable
category define an equivalence of $\left(2,1\right)$-categories $\Rex \simeq \Pre_c$.

As it turns out, if an abelian category is rigid, then its $\Ind$-completion is
compact-rigid in the above sense. 
Similarly, if an abelian category is finite (in the usual sense \cite{EO:finite}) then its
$\Ind$-completion is finite (compact-rigid and finite underlying category). 

The upshot of this discussion is that the $\Ind$-completion of a fusion category in the
traditional sense lands in the category $\Fus\subset \Alg_1\left(\Pre\right)$ as it is
defined above.
Note that the Deligne-Kelly tensor product coincides with the Deligne tensor
product of finite abelian categories (and commutes with taking ind-completions).
Therefore we can apply theorems in the setting of finite abelian categories to the objects
of $\Fus \subset \Alg_1\left(\Pre\right)$. 
In particular, we will use various facts from \cite{DSPS:dualizable,ENO:homotopy} (which
are in the abelian setting) throughout. 
\label{rmk:fusion}
\end{rmk}

\subsubsection{Braided fusion categories}

Similarly, braided tensor categories form a $4$-category $\BrTens = \Alg_2\left(\Pre\right)$:
\begin{itemize}
\item the objects are braided tensor categories, 
\item the $1$-morphisms are associative algebra objects
in the category of bimodule categories, 
\item the $2$-morphisms are bimodule categories between bimodule categories, 
\item the $3$-morphisms are functors between bimodule categories, and 
\item the $4$-morphisms are natural transformations.
\end{itemize}
The subcategory $\BrFus$ consists of braided fusion categories, fusion categories equipped
with central structures, finite semisimple bimodule categories, compact-preserving
cocontinuous bimodule functors, and bimodule natural transformations.
It was shown in \cite{BJS:dualizable} that this forms a subcategory. 

Braided fusion categories also form a $2$-category:
\begin{itemize}
\item the objects are braided fusion categories,
\item the $1$-morphisms are functors preserving the braided structure, and
\item the $2$-morphisms are natural transformations.
\end{itemize}
The $2$-groupoid given by the invertible part of this $2$-category is written $\EqBr$, as in
\cite{ENO:homotopy}.

\subsection{Higher groupoids attached to a braided fusion category}
\label{sec:higher_groupoids}

Let $\cA$ be a braided fusion category.
The braided equivalences of $\cA$ form a $2$-group
$\Aut_{\EqBr}\left(\cA\right)$.
From any $2$-group, we can construct its classifying space
$B\Aut_{\EqBr}\left(\cA\right)$, which is a $2$-type. I.e. it has two nontrivial
homotopy groups.

To $\cA$ we can also attach the $2$-category of $\cA$-module categories. 
The braiding defines an embedding from the $2$-category of $\cA$-modules to
the $2$-category of $\cA$-bimodules. Therefore the category of $\cA$-modules inherits a
monoidal structure from the natural one on the category of $\cA$-bimodules.

\begin{defn*}
The invertible $\cA$-modules, written $\Pic\left(\cA\right)$, comprise 
the \emph{Picard $3$-group of $\cA$}.
\end{defn*}

From any $3$-group, we can construct its classifying space $B\Pic\left(\cA\right)$, which
is a $3$-type. I.e. it has three nontrivial homotopy groups. 

\begin{rmk}
Some authors write the full $3$-group/type as
$B\under{\under{\Pic}}\left(\cA\right)$, and write the truncation to a $2$-type as
$B\under{\Pic}\left(\cA\right)$ and to a $1$-type as $B\Pic\left(\cA\right)$. 

Instead, we write $B\Pic\left(\cA\right)$ for the full $3$-group, and write 
$\tau_{\leq 2} B\Pic\left(\cA\right)$  and $\tau_{\leq 1} B\Pic\left(\cA\right)$ for the
truncations. 
\end{rmk}

\begin{thm*}[\cite{ENO:homotopy}]
For a nondegenerate braided fusion category $\cA$ we have an equivalence:
\begin{equation}
\pi_{\leq 2}B\Pic\left(\cA\right)\simeq B\Aut_{\EqBr}\left(\cA\right)\ .
\label{eqn:Pic_Eq}
\end{equation}
\end{thm*}

The homotopy groups of $B\Pic\left(\cA\right)$,
e.g. from \cite[Proposition 7.5]{ENO:homotopy} are as follows:
$\pi_1$ is given by the ordinary group of isomorphism classes of
braided equivalences, and $\pi_2$ is given by the group of tensor isomorphisms of the
identity functor on $\cA$.
The top homotopy group, which is not involved in \eqref{eqn:Pic_Eq}, is
$\pi_3B\Pic\left(\cA\right) = \units{\bk}$.

\subsubsection{Braided fusion categories which are Drinfeld centers}

Let $\cC$ be a fusion category.
To this we can attach the monoidal $2$-category $\End_{\Fus}\left(\cC\right)$.

\begin{defn*}
The \emph{Brauer-Picard $3$-group} of $\cC$ 
is $\Aut_{\Fus}\left(\cC\right)$.
\end{defn*}

Recall the Drinfeld center of a monoidal category $\cC$ is a braided monoidal category,
written $\cZ\left(\cC\right)$.

\begin{thm*}[\cite{EO:finite}]
There is an equivalence of $2$-categories:
\begin{equation*}
\End_{\Fus}\left(\cC\right) \lto{\sim} \lMod{\cZ\left(\cC\right)} \ .
\end{equation*}
\end{thm*}

Passing to the invertible part, we obtain:
\begin{equation}
\Aut_{\Fus}\left(\cC\right) \simeq \Pic\left(\cZ\left(\cC\right)\right) \ .
\label{eqn:BrPic_Pic}
\end{equation}

\begin{rmk}
In \cite{ENO:weakly}, it is shown that two fusion categories are Morita equivalent if and
only if their Drinfeld centers are braided equivalent.
This was strengthened in \cite{ENO:homotopy}: they prove that there is a fully-faithful
embedding of groupoids $\EqBr \to \units{\Fus}$.
Indeed, combining \eqref{eqn:Pic_Eq} and \eqref{eqn:BrPic_Pic} we obtain
\begin{equation}
\pi_{\leq 2} B\Aut_{\Fus}\left(\cC\right) \simeq B\Aut_{\EqBr}\left(\cZ\left(\cC\right)\right) \ .
\label{eqn:BrPic_Eq}
\end{equation}

So the center determines the Morita class of the fusion category itself.
However, since the equivalence \eqref{eqn:Pic_Eq} is with the truncated part of
$\Aut_\Fus\left(\cC\right)$, there is a subtle lack of ``coherence'', captured by an
obstruction class originally studied in \cite{ENO:homotopy} and surveyed in
\Cref{sec:O4}.

Related results were shown in \cite{KZ:center} and \cite[\S 5]{KK:gapped}.
\label{rmk:functorial_Z}
\end{rmk}

In summary: given a braided monoidal category $\cA$, we have a $3$-type
$B\Pic\left(\cA\right)$ attached to it.
If it happens to be the case that $\cA = \cZ\left(\cC\right)$ for some fusion category
$\cC$, then this $3$-type is the classifying space of the Brauer-Picard $3$-group of
$\cC$. All together we have:
\begin{equation*}
\begin{tikzcd}[column sep=tiny]
\End_{\Fus}\left(\cC\right) 
\ar["\text{\cite{EO:finite}}"']{rr}{\sim}& &
\lMod{\cZ\left(\cC\right)}  \\
B\Aut_{\Fus}\left(\cC\right) \ar{rr}{\sim}\ar{d}&&
B\Pic\left(\cZ\left(\cC\right)\right) \ar{d} \\
\pi_{\leq 2} B\Aut_{\Fus}\left(\cC\right) 
\ar{rr}{\sim}
\ar{dr}{\sim}&&
\pi_{\leq 2} B\Pic\left(\cZ\left(\cC\right)\right) \ar[swap,"\text{\cite{ENO:homotopy}}"']{dl}{\sim}\\
& B\Aut_{\EqBr}\left(\cZ\left(\cC\right)\right) & 
\end{tikzcd}
\end{equation*}

\subsection{Pointed braided fusion categories}
\label{sec:pointed}

A fusion category is \emph{pointed} if all simple objects are invertible.

\begin{exm}
Let $L$ be a finite group. 
Consider the category of $L$-graded vector spaces, $\Vect\left[L\right]$,
with convolution, i.e. for $a,b,c \in L$:
\begin{equation*}
\left(W * W'\right)_a =  \bdsum_{a=bc} W_b \tp W_c \ .
\end{equation*}
This category has simple objects $\bk_a$ for $a\in L$, and is therefore pointed.
Given a $3$-cocycle $\tau$ on $G$, we can define a variant of this category
$\Vect\left[L\right]^\tau$, which is still pointed.
\label{exm:pointed}
\end{exm}

As it turns out, all pointed fusion categories are of the form $\Vect\left[L\right]^\tau$. 
(Note however that the twisted version may have no fiber functor. This is a higher analogue
of the fact that not all algebras have an augmentation map.)

\subsubsection{Metric groups}
\label{sec:metric_groups}

Metric groups \cite{DGNO,ENO:homotopy} play an important role in the theory of pointed
braided fusion categories.
Let $\bk$ be an algebraically closed field of characteristic zero, and 
let $A$ be a finite abelian group. Write 
$\pdual{A} \ceqq \Hom \left(A , \units{\bk}\right)$ 
for the character dual of $A$.

\begin{defn*}
A biadditive map 
\begin{equation*}
\lr{\cdot , \cdot} \colon A \times A \to \units{\bk}
\end{equation*}
is a \emph{symmetric bicharacter} if for all $a,b\in A$ we have
$\lr{a,b} = \lr{b,a}$. 
By acting on the first argument, such a bicharacter $\lr{\cdot , \cdot}$ defines a
homomorphism $A \to \pdual{A}$. If this is an isomorphism, then we say that $\lr{\cdot ,
\cdot}$ is \emph{nondegenerate}.
\end{defn*}

\begin{defn*}
A function
\begin{equation*}
q \colon A \to \units{\bk}
\end{equation*}
is a \emph{quadratic form} on $A$ if 
\begin{enumerate}[label = (\alph*)]
\item for all $a\in A$ we have 
$q\left(a\right) = q\left(-a\right)$, and 
\item the following expression defines a symmetric bicharacter:
\begin{equation*}
\lr{a , b}_q \ceqq \frac{q\left(a+b\right)}{q\left(a\right)q\left(b\right)}
\ .
\end{equation*}
\end{enumerate}
We say $q$ is \emph{nondegenerate} if the bicharacter
$\lr{\cdot , \cdot}_q$ is nondegenerate.
\end{defn*}

\begin{rmk}
If $\abs{A}$ is odd, then 
sending $q$ to $\lr{\cdot , \cdot}_q$ defines a bijection between quadratic forms and
bicharacters.
\end{rmk}

\begin{defn*}
Any pair $\left(A , q\right)$ is a \emph{pre-metric group}.
A pair $\left(A , q\right)$ is called a \emph{metric group} if $q$
is nondegenerate. 
\end{defn*}

\begin{defn*}
The \emph{orthogonal group} of a metric group $\left(A, q\right)$ is:
\begin{equation*}
\O\left(A , q\right) \ceqq \left\{
f\in \Hom \left(A , A\right) \st
q\comp f = q
\right\}
\ .
\end{equation*}
\end{defn*}

\begin{defn*}
Let $\left(A , q\right)$ be a metric group. 
Define the \emph{determinant} 
\begin{equation*}
\det \colon \O\left(A ,q\right) \to \units{\QQ}_{>0} / \left(\units{\QQ}_{>0}^2\right)
\end{equation*}
by sending $g\in \O\left(A , q\right)$ to the image of 
$\abs{\left(g-1\right)A}\in\NN$.

The \emph{special orthogonal group} is 
\begin{equation}
\SO\left(A , q\right) \ceqq \ker\left(\det\right) \ .
\label{eqn:SO}
\end{equation}
\end{defn*}

Given an abelian group $A$, a quadratic quadratic form allows us to define a non-symmetric
braided structure on the (symmetric) monoidal category of $A$-graded vector spaces.
We can specify this braiding on simple objects:
\begin{equation}
\begin{tikzcd}[column sep=huge]
\bk_a * \bk_b = \bk_{ab} \ar["\sim"']{r}{\lr{a,b} \cdot \id_{\bk_{ab}}}&
\bk_{ab} =  \bk_b * \bk_a
\ .
\end{tikzcd}
\label{eqn:braiding}
\end{equation}

\subsubsection{Classification of pointed braided fusion categories}
\label{sec:brfus_class}

It is shown in \cite{JS} that pointed braided fusion categories are classified by finite abelian
groups $A$ (the group of simple objects) equipped with a quadratic form $q \colon A \to
\units{\bk}$ which is not necessarily nondegenerate. 
See also \cite[\S 8.4]{EGNO:TC}.

\begin{exm}
If $L$ is any finite abelian group, 
then the finite abelian group $L\dsum \pdual{L}$ has a nondegenerate quadratic form given
by evaluation, so 
$\left(L\dsum \pdual{L} , \ev\right)$ is a metric group.
The symmetric bicharacter corresponding to evaluation is:
\begin{equation*}
\lr{ \left(\ell , \chi\right) , \left(\ell' , \chi'\right)}
=\chi\left(\ell'\right)  \chi'\left(\ell\right)
\ .
\end{equation*}
We obtain a braiding on $\Vect\left[L\dsum \pdual{L}\right]$ via \eqref{eqn:braiding}.
\label{exm:ev}
\end{exm}

Given an abelian group $A$, it is shown in 
\cite[Theorem 26.1]{EM:2} 
that  $H^4\left(B^2 A , \units{\bk}\right)$ is isomorphic to the group of quadratic forms 
$A \to \units{\bk}$.

Eilenberg and MacLane defined an explicit chain complex which computes this cohomology
\cite{Mac:ab,EM:ab_1,EM:ab_2,EM:ab_3}.
They call this abelian group cohomology, and write it as $H^*_\ab$:
\begin{equation*}
H^n_\ab\left(A , B\right) = H^{n+1}\left(K\left(A , 2\right) , B \right)
\end{equation*}
for any two abelian groups $A$ and $B$.
I.e. we have
\begin{equation}
H^3_\ab\left(A , \units{\bk}\right) = H^4\left(B^2 A , \units{\bk}\right) \cong 
\Quad\left(A , \units{\bk}\right) \ .
\label{eqn:q}
\end{equation}
Cocycle representatives in $H^3_{\ab} \left( A , \units{\bk}\right)$ 
are pairs $\left(\tau , b\right)$ where $\tau$ is an ordinary group cocycle on $A$, and
$b$ is a map $A^2 \to \units{\bk}$ satisfies certain compatibility relations with $\tau$.
Explicitly, the identification \eqref{eqn:q} sends a pair $\left(\tau , b\right)$ to the quadratic form
defined by $q\left(a\right) \ceqq b\left(a , a\right)$.

\begin{exm}
This is a continuation of \Cref{exm:ev}.
Recall the pointed braided fusion category $\cZ\left(\Vect\left[L\right]\right)$ was
classified by the finite abelian group $A = L\dsum \pdual{L}$ with quadratic form $q =
\ev$. 

A degree-$3$ cocycle in abelian group cohomology is a pair $\left(\tau , b\right)$ where 
$\tau$ is an ordinary group $3$-cocycle on $A$ 
and $b \colon A^2 \to \units{\bk}$ (satisfying some conditions). 

The abelian group $3$-cocycle classifying $\cZ\left(\Vect\left[L\right]\right)$ has
trivial $\tau$, and $b = b_0$ is a map from $A^2$ to $\units{\bk}$ such that it agrees
with $q$ when restricted to the diagonal, i.e. for all $a\in A$:
\begin{equation*}
b_0\left(a , a\right) = \ev\left(a\right) \ .
\end{equation*}
For example, $b_0$ might be taken to be:
\begin{equation*}
b_0\left(
\left(\ell_1 , \chi_1\right), \left(\ell_2 , \chi_2\right)
\right)
= \chi_1\left(\ell_2\right) \ .
\end{equation*}
Other choices for $b_0$ differ from this by an abelian group
coboundary in degree $3$.
\label{exm:b0}
\end{exm}

\subsubsection{Higher groupoids in the pointed case}
\label{sec:pointed_groupoids}

When the (braided) fusion categories in question are pointed, the groupoids introduced in 
\Cref{sec:higher_groupoids} have a more concrete description. 
Let $\left(A , q\right)$ be a metric group, and let 
$\cA = \Vect\left[A\right]$ be the associated braided category of $A$-graded vector
spaces.
The Picard $3$-type of $\cA$ has the following homotopy groups:
\begin{align}
\pi_1 = \O\left(A , q\right) && 
\pi_2 = A && \pi_3 = \units{\bk} \ .
\label{eqn:pointed_homotopy}
\end{align}

\begin{rmk}
Following \cite[Remark 10.7]{ENO:homotopy}, these homotopy groups appear as follows.
First of all, we know that $\pi_2 = A$ and $\pi_3 = \units{\bk}$,
since this is the underlying groupoid of $\cA$ itself (delooped twice). 

Now we identify $\pi_1$. 
The Whitehead half-square (see \cite[\S 7.3]{ENO:homotopy}) is a homotopy invariant, which 
is a map $\pi_2 \to \pi_3$. 
As it turns out, it is given by the (square of the) braiding on $\cA$, which
is defined by $q$ in this case. 
The action of $\pi_1$ on $\pi_2$ by conjugation must preserve this invariant, and hence
this defines a map
$\pi_1 \to \O\left(A , q\right)$, which turns out to be an equivalence. 

This conjugation action is a shadow of a more general way to understand these groupoids:
The invertible part of any monoidal $2$-category acts on the endomorphism of the
identify object by conjugation. 
As is explained in \cite[Remark 5.4]{ENO:homotopy}, by setting the monoidal $2$-category to be
$\End_\Fus\left(\cC\right)$, we obtain a map:
\begin{equation*}
\Aut_\Fus\left(\cC\right) \to \Aut_{\EqBr}\left(\cZ\left(\cC\right)\right) \ .
\end{equation*}
When we restrict this to the truncation of $\Aut_\Fus\left(\cC\right)$ to a $1$-category
(i.e. $2$-group), we obtain the equivalence \eqref{eqn:BrPic_Eq}.
When we set the $2$-category to be the category of modules over $\cZ\left(\cC\right)$, we
obtain the equivalence \eqref{eqn:Pic_Eq}.
\label{rmk:Drinfeld_map}
\end{rmk}

\subsubsection{Lagrangian subgroups}
\label{sec:lagrangian}

\begin{defn*}
A subgroup $L\subset A$ of a metric group $\left(A , q\right)$ is \emph{isotropic} if
$q\left(\ell\right) = 1$ for all $\ell \in L$. 
The subgroup is \emph{Lagrangian} if $\abs{L}^2 = \abs{A}$.
\end{defn*}

\begin{exm}
If $A = L\dsum \pdual{L}$ and $q = \ev$, then $L\dsum \left\{0\right\}$ and 
$\left\{0\right\}\dsum \pdual{L}$ are both Lagrangian subgroups.
\end{exm}

Let $L$ be a finite abelian group. 
The Drinfeld center of the fusion category $\left(\Vect\left[L\right] , *\right)$ is:
\begin{equation*}
\cZ\left(\Vect\left[L\right] , *\right) \cong \Vect\left[L\dsum \pdual{L}\right] \ .
\end{equation*}
The tensor structure is convolution, and the braiding is induced by evaluation,
as in \Cref{exm:ev}.

Recall from \Cref{sec:brfus_class}, that pointed braided fusion categories are classified
by finite abelian groups equipped with a quadratic form. 
In this case, the finite abelian group associated to $\cZ\left(\Vect\left[L\right]\right)$
is $L\dsum \pdual{L}$, and the quadratic form is $\ev \colon L\dsum \pdual{L} \to
\units{\bk}$.
Lagrangians in the metric group give rise to fusion categories $\cC$ such that $\cA
\cong \cZ\left(\cC\right)$. 

\subsubsection{Pointed Drinfeld centers}
\label{sec:pointed_Z}

Let $\cC = \Vect\left[L\right]^\tau$ for $L$ a finite group and $\tau$ a $3$-cocycle on
$L$. Following \cite[\S 4]{MN:BrPic}, for any $\ell$ in the center of $L$, we can define:
\begin{equation}
T_{\ell} \left(- , -\right) = 
\frac{\tau\left(\ell , - , -\right) \tau\left(- , - , \ell\right)}{
\tau\left(- , \ell , -\right)} \ .
\label{eqn:T}
\end{equation}
As it turns out, $T_\ell$ is a $2$-cocycle on the group $L$, and
$T_{\left(-\right)}$ defines a group homomorphism from the center of $L$ to
$H^2\left(L , \units{\bk}\right)$.
Then, as in \cite[Corollary 4.3]{MN:BrPic}, $\cZ\left(\cC\right)$ is pointed if and only if 
$L$ is abelian and $T_{\left(-\right)}$ is the trivial group homomorphism. 

\begin{exm}
When $L$ is a vector space over $\FF_p$, 
by \cite[Corollary 5.3]{MN:BrPic} $\cZ\left(\cC\right)$ is pointed if and only if 
$\tau$ has trivial alternating component. (Recall $H^3\left(L , \units{\bk}\right)$ decomposes as 
a direct sum of the alternating factor $\ext^3 \pdual{L}$ and the symmetric factor $\Sym^2
\pdual{L}$.)
\end{exm}

\subsubsection{Polarizability}
\label{sec:polarizability}

As we saw in \Cref{sec:lagrangian}, $\cZ\left(\Vect\left[L\right]\right)$ is
$\Vect\left[L\dsum \pdual{L}\right]$ with braiding induced by the quadratic form $\ev$.
In \Cref{sec:pointed_Z} we claim that the Drinfeld center of $\Vect\left[L\right]^\tau$ is
still pointed as long as the homomorphism $T_{\left(-\right)}$ defined in \eqref{eqn:T}
(which depends on $\tau$) is the trivial homomorphism. 
In this case, \Cref{prop:beta_tau} explicitly describes the braiding on
$\cZ\left(\Vect\left[L\right]^\tau\right)$ as a modification of the braiding corresponding
to $\ev$.

Assume we are in this case, i.e. for all $\ell \in L$
the $2$-cocycle $T_\ell$ defines the trivial cohomology class. Therefore one can choose a
trivialization $t_\ell$ of $T_\ell$.
I.e. this consists of a group homomorphism
\begin{equation}
t_{\left(-\right)} \colon L \to C^1\left(L , \units{\bk}\right)
\label{eqn:t}
\end{equation}
(where $C^1$ denotes group $1$-cochains on $L$) satisfying:
\begin{equation}
d \left(t_\ell\right) =  T_\ell 
\label{eqn:dt}
\end{equation}
where $d$ denotes the differential for the group cohomology of $L$. 

\begin{prop}
Let $L$ be a finite abelian group, and let $\tau$ be a $3$-cocycle on $L$ defining the
trivial homomorphism $T_{\left(-\right)}$ in \eqref{eqn:T}. 
Let $t_{\left(-\right)}$ be as in \eqref{eqn:t} satisfying \eqref{eqn:dt}.
Then $\cZ\left(\Vect\left[L\right]^\tau\right)$ is classified as in \cite{JS}
by the pair $\left(A , q\right)$ where $A = L\dsum \pdual{L}$ and the quadratic function
is:
\begin{equation*}
q\left(\ell , \chi\right) = b_\tau \left( \left(\ell , \chi\right) , \left(\ell , \chi\right)\right)
\end{equation*}
where 
\begin{equation}
b_\tau \left(
\left(\ell_1 , \chi_1\right), \left(\ell_2 , \chi_2\right)
\right) \ceqq 
b_0\left(
\left(\ell_1 , \chi_1\right), \left(\ell_2 , \chi_2\right)
\right)
t_{\ell_1}\left(\ell_2\right) t_{\ell_2}\left(\ell_1\right)
\end{equation}
and $b_{0}$ is defined in \eqref{exm:b0}.
\label{prop:beta_tau}
\end{prop}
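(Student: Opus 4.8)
The plan is to build an explicit skeletal model for the braided fusion category $\cZ\left(\Vect\left[L\right]^\tau\right)$ and then invoke the classification of pointed braided fusion categories \cite{JS} (see also \cite[\S 8.4]{EGNO:TC}). Since such a category is determined up to braided equivalence by a pair $\left(A , q\right)$, and since the explicit form of the identification \eqref{eqn:q} (i.e.\ \cite[Theorem 26.1]{EM:2}) recovers the quadratic form from the braiding by the self-braiding formula $q\left(a\right) = c_{a,a}$, the statement reduces to two tasks: identify the group $A$ of isomorphism classes of simple objects, and compute the self-braidings.

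First I would recall the standard presentation of $\cZ\left(\Vect\left[L\right]^\tau\right)$, following \cite[\S 4]{MN:BrPic}: a simple object supported in degree $\ell \in L$ is a pair $\left(\bk_\ell , \gamma\right)$ where the half-braiding with $\bk_m$ is multiplication by $\gamma\left(m\right)$, and the hexagon axioms force the function $\gamma \colon L \to \units{\bk}$ to satisfy $d\gamma = T_\ell$, with $T_\ell$ the $2$-cocycle of \eqref{eqn:T}. The hypothesis that $T_{\left(-\right)}$ is the trivial homomorphism says exactly that each $T_\ell$ is a coboundary, so such $\gamma$ exist and $\cZ\left(\Vect\left[L\right]^\tau\right)$ is pointed; fixing the trivializations $t_{\left(-\right)}$ of \eqref{eqn:t}, the solutions in degree $\ell$ are precisely $\gamma = t_\ell \chi$ with $\chi \in \pdual{L} = Z^1\left(L , \units{\bk}\right)$. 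Because $t_{\left(-\right)}$ is a group homomorphism $L \to C^1\left(L , \units{\bk}\right)$, the tensor product of simple objects respects the direct-sum group law, so $A = L \dsum \pdual{L}$ with its usual group structure --- the same underlying group as for the untwisted center $\cZ\left(\Vect\left[L\right]\right)$.

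The main content is the second task. I would transport the braided structure of the center to the skeleton $\Vect\left[A\right]$, equipped with the associator inflated from $\tau$ along the projection $L\dsum\pdual{L}\to L$, and read off the braiding scalar. Although the half-braiding of $\left(\bk_{\ell_1} , t_{\ell_1}\chi_1\right)$ at $\bk_{\ell_2}$ is literally $t_{\ell_1}\left(\ell_2\right)\chi_1\left(\ell_2\right)$, strictifying the center forces correction factors coming from its associativity constraints, hence from $\tau$; the careful bookkeeping of these corrections --- using \eqref{eqn:dt} at each step --- is meant to yield, up to an abelian $3$-coboundary, precisely the braiding $b_\tau$ written in the statement, where $b_0$ is the braiding of the untwisted center recorded in \Cref{exm:b0}. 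Restricting to the diagonal then gives the quadratic form $q\left(\ell , \chi\right) = b_\tau\left(\left(\ell , \chi\right) , \left(\ell , \chi\right)\right)$; since cohomologous abelian $3$-cocycles have the same associated quadratic form, this $q$ is independent of the choice of $t_{\left(-\right)}$, and by \cite{JS} the pair $\left(A , q\right)$ classifies $\cZ\left(\Vect\left[L\right]^\tau\right)$.

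The main obstacle is exactly this bookkeeping: checking that the $\tau$-corrections accumulated in the strictification assemble into the \emph{symmetric} modification $t_{\ell_1}\left(\ell_2\right)\,t_{\ell_2}\left(\ell_1\right)$ of $b_0$ --- equivalently, that the self-braiding picks up the factor $t_\ell\left(\ell\right)^2$ --- rather than an asymmetric or otherwise differently normalized variant. To keep the normalizations under control I expect it is cleanest to compute only the spins $q\left(\ell,\chi\right)$ together with the associated bicharacter $\lr{\cdot , \cdot}_q$ directly, verify that the resulting $q$ is a genuine quadratic form on $L\dsum\pdual{L}$ (which again uses that $t_{\left(-\right)}$ is a homomorphism, so that $t_{n\ell}\left(n\ell\right) = t_\ell\left(\ell\right)^{n^2}$), and then appeal to \cite{JS}; this avoids having to pin down a cocycle representative of the associator at all.
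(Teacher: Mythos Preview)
Your plan is sound but follows a genuinely different route from the paper. The paper does not derive $b_\tau$ from the structure of the center at all: it simply posits the pair $\left(a, b_\tau\right)$, with $a$ the inflation of $\tau$ along the projection $L \dsum \pdual{L} \to L$, and verifies directly that this pair satisfies the two hexagon identities required of an abelian $3$-cocycle. Each check collapses in one line to $d\left(t_{\ell}\right) = T_\ell$ together with the homomorphism property of $t_{\left(-\right)}$; no strictification, no transport of half-braidings, no bookkeeping of associator corrections. Your approach, by contrast, builds the skeletal model of the center from the half-braidings and then reads off the braiding. This is more self-contained in that it explains where $b_\tau$ comes from rather than verifying it after the fact, but as you correctly anticipate, the half-braiding of $\left(\bk_{\ell_1}, t_{\ell_1}\chi_1\right)$ at $\bk_{\ell_2}$ is literally $t_{\ell_1}\left(\ell_2\right)\chi_1\left(\ell_2\right)$, which is not $b_\tau$ on the nose; the missing factor $t_{\ell_2}\left(\ell_1\right)$ has to emerge from how the tensor product of central objects is identified with the standard group law on $L \dsum \pdual{L}$, and tracking this is exactly the delicate step you flag. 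Your proposed fallback---compute only the spins and the associated bicharacter and then invoke \cite{JS}---is a clean way to finish and avoids fixing a cocycle representative; the paper's hexagon verification buys the same thing more cheaply once one is willing to take the formula for $b_\tau$ as given.
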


\begin{proof}
The $3$-cocycle $\tau$ on $L$ defines such a $3$-cocycle $a$ on $A$ by precomposing with the projection onto
the first factor $A\simeq L\dsum \pdual{L} \to L$:
\begin{equation*}
a\left(
\left(\ell_1 , \chi_1\right) , \left(\ell_2 , \chi_2\right) , \left(\ell_3 , \chi_3\right)
\right)
= \tau\left(\ell_1 , \ell_2 , \ell_3 \right) \ .
\end{equation*}
Combining this with the above discussion, it is sufficient to show that $\left(a ,
b\right)$ defines an abelian group $3$-cocycle on $A$. 
I.e. we need to show that:
\begin{align}
T_{\ell_1} \left(\ell_2 , \ell_3 \right) &= b_\tau \left(
\left(\ell_1 , \chi_1 \right) , \left(\ell_3 , \chi_3 \right)
\right)
\label{al:hex1}
\\& \qquad
\cdot b_\tau \left(
\left(\ell_1 , \chi_1 \right) , \left(\ell_2 + \ell_3 , \chi_2 \chi_3 \right)
\right)^{-1}  
\label{al:hex2}
\\& \qquad
\cdot b_\tau \left(
\left(\ell_1 , \chi_1\right) , \left(\ell_2 , \chi_2 \right)
\right)
\label{al:hex3}
\\
T_{\ell_3}\left(\ell_1 , \ell_2\right) &= b_\tau \left(
\left(\ell_1 , \chi_1 \right) , \left(\ell_3 , \chi_3 \right)
\right)^{-1}
\label{al:hex4}
\\& \qquad
\cdot b_\tau \left(
\left(\ell_1 + \ell_2 , \chi_1 \chi_2 \right) , \left( \ell_3 , \chi_3 \right)
\right)
\label{al:hex5}
\\& \qquad
\cdot b_\tau \left(
\left(\ell_2 , \chi_2\right) , \left(\ell_3 , \chi_3 \right)
\right)^{-1}
\label{al:hex6}
\end{align}
By the definition of $b_\tau$, the RHS of the first relation is:
\begin{equation*}
t_{\ell_1}\left(\ell_3\right) t_{\ell_3} \left(\ell_1\right) 
t_{\ell_1}\left(\ell_2 + \ell_3\right)^{-1} t_{\ell_2 + \ell_3}\left(\ell_1\right)^{-1} 
t_{\ell_1}\left(\ell_2\right) t_{\ell_2}\left(\ell_1\right) 
= d\left(t_{\ell_1}\right)\left(\ell_2 , \ell_3\right)
\end{equation*}
since $t_{\left(-\right)}$ is a group homomorphism.
Then the relation follows from \eqref{eqn:dt}:
\begin{equation*}
d\left(t_{\ell_1}\right)\left(\ell_2 , \ell_3\right)
= T_{\ell_1}\left(\ell_2 , \ell_3\right) \ .
\end{equation*}
The second relation follows from a similar argument.
\end{proof}

\begin{rmk}
Note that \Cref{prop:beta_tau} says that the braiding on
$\cZ\left(\Vect\left[L\right]^\tau\right)$ differs from the braiding on
$\cZ\left(\Vect\left[L\right]\right)$ (i.e. the one corresponding to $\ev$) by a 
phase written solely in terms of $t_{\left(-\right)}$.
\end{rmk}

\begin{defn}
A pointed braided fusion category is \emph{polarizable} if it is the Drinfeld center of
another fusion category $\cC$.
\end{defn}

By \Cref{prop:beta_tau}, the usual classification of pointed braided fusion categories
can be restricted to give a classification of polarizable pointed braided fusion
categories.

\begin{cor}
Pointed polarizable Drinfeld centers are classified by 
pairs $\left(A , \left[\tau , b\right]\right)$, where $\left[\tau , b\right]\in
H^3_\ab\left(A , \units{\bk}\right)$ is the class of 
an abelian group $3$-cocycle $\left(\tau , b\right)$, satisfying:
\begin{itemize}
\item $A$ is a finite abelian group such that $A\cong L\dsum \pdual{L}$ for another finite
abelian group $L$, 
\item $\tau$ is induced by a $3$-cocycle on $L$.
\end{itemize}
Equivalently, braided fusion categories which are Drinfeld centers are classified by pairs 
$\left(A , q\right)$ where $q\left(a\right) = b\left(a , a\right)$.
\label{cor:class_Z}
\end{cor}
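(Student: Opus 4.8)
The plan is to read the corollary off from three inputs: the classification of pointed braided fusion categories in \cite{JS} (equivalently, the Eilenberg--MacLane identification \eqref{eqn:q} of $H^3_\ab(A,\units{\bk})$ with $\Quad(A,\units{\bk})$); the criterion \cite[Corollary 4.3]{MN:BrPic} for when a Drinfeld center is pointed; and \Cref{prop:beta_tau}. Since a pointed braided fusion category is the same datum as a pair $(A,q)$, and any Drinfeld center is in particular a (nondegenerate) braided fusion category, it is enough to determine which such pairs arise as $\cZ(\cC)$; the ``equivalently'' clause is then immediate from \eqref{eqn:q}.

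For the forward inclusion, suppose $\cZ(\cC)$ is pointed. By \cite[Corollary 4.3]{MN:BrPic}, $\cC$ is itself pointed, hence $\cC\simeq\Vect[L]^{\tau_L}$ for a finite abelian group $L$ and a $3$-cocycle $\tau_L$ whose associated homomorphism $T_{\left(-\right)}$ of \eqref{eqn:T} is trivial. Choosing a trivialization $t_{\left(-\right)}$ as in \eqref{eqn:t}, \Cref{prop:beta_tau} presents $\cZ(\cC)$ as the pointed braided fusion category attached to the abelian $3$-cocycle $(\tau,b_\tau)$ on $A\cong L\dsum\pdual{L}$, where $\tau$ is the pullback of $\tau_L$ along the projection $A\to L$. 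This is exactly the form of classifying datum asserted.

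For the reverse inclusion, suppose $A\cong L\dsum\pdual{L}$ and the class $[\tau,b]\in H^3_\ab(A,\units{\bk})$ has a representative whose group-cocycle part $\tau$ is pulled back from a $3$-cocycle $\tau_L$ on $L$. I would first show $\cZ(\Vect[L]^{\tau_L})$ is pointed: restricting the abelian-$3$-cocycle (``hexagon'') identities \eqref{al:hex1}--\eqref{al:hex3} to the subgroup $L\hookrightarrow A$ — that is, setting the $\pdual{L}$-components to $1$ — exhibits each $T_\ell$ (computed from $\tau_L$) as the coboundary of the group $1$-cochain $\ell'\mapsto b((\ell,1),(\ell',1))$, so $T_{\left(-\right)}$ is trivial and \cite[Corollary 4.3]{MN:BrPic} applies. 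Then, using that $H^3_\ab\cong\Quad$ determines a class by the diagonal values $b(a,a)$, I would compare $q(a)=b(a,a)$ with the quadratic form \Cref{prop:beta_tau} assigns to $\cZ(\Vect[L]^{\tau_L})$ — the latter being $\ev$ corrected by a trivialization of $T_{\left(-\right)}$ — and adjust the auxiliary choices (namely $b_0$, and $t_{\left(-\right)}$, which ranges over a torsor under $\Hom(L,\pdual{L})$) so that the two forms agree; when they do, $(A,[\tau,b])$ is realized as a Drinfeld center.

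The main obstacle is precisely this last comparison: showing that the freedom in the auxiliary data of \Cref{prop:beta_tau} is exactly enough to match the diagonal of any admissible $b$, equivalently that the (restrictive) cocycles $\tau_L$ with trivial $T_{\left(-\right)}$, together with their trivializations, sweep out all and only the classes in $H^3_\ab(A,\units{\bk})$ corresponding to Drinfeld centers. I expect the cleanest organization is to replace ``is a Drinfeld center'' in the pointed case by the equivalent condition ``$(A,q)$ admits a Lagrangian subgroup'', and to check that such a Lagrangian may be taken to be the distinguished copy $L\subset L\dsum\pdual{L}$ exactly when the class admits a representative with $\tau$ pulled back from $L$; the remaining verifications then reduce, via $H^3_\ab\cong\Quad$, to statements about the values of $b$ on the diagonal of $A$.
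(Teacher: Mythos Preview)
Your forward direction is exactly the paper's argument: a pointed polarizable braided fusion category is the Drinfeld center of a \emph{pointed} fusion category (the paper states this directly, drawing on the material in \Cref{sec:pointed_Z} that cites \cite{MN:BrPic}), every pointed fusion category has the form $\Vect[L]^{\tau_L}$, and \Cref{prop:beta_tau} then reads off the classifying abelian $3$-cocycle; the ``equivalently'' clause is the Eilenberg--MacLane identification \eqref{eqn:q}, just as you say.

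Where you diverge is in treating the reverse inclusion with care. The paper's proof is terse here: it simply asserts that ``the result follows from \Cref{prop:beta_tau}'' and does not separately argue that every pair $(A,[\tau,b])$ of the stated form is realized as a Drinfeld center. Your hexagon argument---restricting \eqref{al:hex1}--\eqref{al:hex3} to $L\subset A$ to exhibit each $T_\ell$ as the coboundary of $\ell'\mapsto b((\ell,1),(\ell',1))$---is a genuine addition not present in the paper. The obstacle you flag, namely matching the diagonal of an arbitrary admissible $b$ against the specific $b_\tau$ produced by \Cref{prop:beta_tau} (which a priori depends on the auxiliary choice of $t_{(-)}$), is a real point the paper glosses over. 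Your proposed reformulation via Lagrangian subgroups is indeed the cleanest way to organize this, though the paper does not pursue it; in effect you are supplying an argument the paper leaves implicit.
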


\begin{proof}
Every \emph{pointed} polarizable braided fusion category is the Drinfeld center of a
\emph{pointed} fusion category. Pointed fusion categories are classified by finite groups
$L$ equipped with $3$-cocycles $\tau$ in the group cohomology of $L$:
they are all of the form $\Vect\left[L\right]^\tau$. 
Then the result follows from \Cref{prop:beta_tau}.

This is equivalent to the classification by $A$ equipped with $q\left(a\right) = b\left(a
,a\right)$ follows from the equivalence between $H^3_\ab\left(A , \units{\bk}\right)$ 
and quadratic forms $A \to \units{\bk}$ in 
\cite[Theorem 26.1]{EM:2} (see \Cref{sec:brfus_class}).
\end{proof}

\begin{rmk}
Unpacking \Cref{cor:class_Z}, a pointed polarizable Drinfeld center can be built from the
data of the finite abelian group $L$, a $3$-cocycle $\tau$ on $L$, and a map
\begin{equation*}
b \colon \left(L\dsum \pdual{L}\right)^2 \to \units{\bk}
\end{equation*}
satisfying \Cref{al:hex1,al:hex2,al:hex3,al:hex4,al:hex5,al:hex6}
(where $T_{\left(-\right)}$ is associated to $\tau$ as in
\eqref{eqn:T}).
The fact that $\tau$ is a $3$-cocycle encodes the pentagon axiom, and the two relations
between $T_{\left(-\right)}$ and $b$ 
(\Cref{al:hex1,al:hex2,al:hex3,al:hex4,al:hex5,al:hex6})
encode the hexagon axioms.
\end{rmk}

\subsection{Obstruction theory}
\label{sec:obstruction}

We give an executive summary of the obstruction theory developed in \cite{ENO:homotopy}.

\subsubsection{Postnikov and Whitehead towers}

Recall the definition of the Postnikov and Whitehead towers of a space, and the associated
$k$-invariants. 
See \cite{Postnikov,W:elements}, or \cite[Chapter 3]{MP:more_concise} for references.

The $3$-type $B\Aut_\Fus\left(\cC\right)\simeq  B\Pic\left(\cZ\left(\cC\right)\right)$ has
Postnikov and Whitehead towers:
\begin{equation}
\begin{tikzcd}
* \ar{r}&
B^3 \pi_3 \ar{r}\ar{d}&
B^2 \cZ\left(\cC\right)^\times \ar{r}\ar{d}&
B\Aut_{\Fus}\left(\cC\right) \ar{d}\\
& * \ar{r}&
B^2 \pi_2 \ar{r}\ar{d}&
B\Aut_{\EqBr}\left(\cZ\left(\cC\right)\right) \ar{d}\\
& & * \ar{r} & B \pi_1\ar{d}\\
&&& *
\end{tikzcd}
\label{eqn:towers}
\end{equation}
Recall the homotopy groups $\pi_i$ from \eqref{eqn:pointed_homotopy} and
\Cref{rmk:Drinfeld_map}.
See \Cref{rmk:q} for more on the space $B^2 \units{\cZ\left(\cC\right)}$.

\subsubsection{Degree three obstruction}
\label{sec:O3}

\begin{qn}
Given a group $G$ and a morphism of groups $f\colon G\to \pi_1$, what is the
obstruction to performing the following lift?
\begin{equation*}
\begin{tikzcd}
& \Aut_{\EqBr}\left(\cA\right) \ar{d} \\
G \ar{r}{f}\ar[dashed]{ur}&
\pi_1
\end{tikzcd}
\end{equation*}
\end{qn}

This question is equivalent to asking if the 
$k$-invariant $k_2 \in H^3\left(B\pi_1 , \pi_2\right)$ of the 
$2$-type $B\Aut_{\EqBr}\left(\cA\right)$ pulls back to something trivializable on $G$.
I.e. the pullback 
\begin{equation}
O_3\left(f\right) \ceqq 
\left(Bf\right)^* k_2 \in H^3\left(G , \pi_2\right)
\label{eqn:O3}
\end{equation}
is the obstruction to lifting:
\begin{equation*}
\begin{tikzcd}
& B \Aut_{\EqBr}\left(\cA\right) \ar{d} &
\\
BG \ar[swap]{r}{Bf}\ar[dashed]{ur}&
B\pi_1 = \pi_{\leq 1} B\Aut_{\EqBr}\left(\cA\right) 
\end{tikzcd}
\end{equation*}

\begin{rmk}
At the level of classifying spaces, \eqref{eqn:Pic_Eq} tells us that 
the truncation of $B\Pic\left(\cA\right)$ to a $2$-type is identified with
$B\Aut_{\EqBr}\left(\cA\right)$. 
Therefore $O_3$ is the same as the obstruction to lifting:
\begin{equation*}
\begin{tikzcd}[column sep=0pt]
& \pi_{\leq 2} B \Pic\left(\cA\right) \ar{dr}
&
\\
BG \ar{r}{Bf}\ar[dashed]{ur}&
B\pi_1 \ar[phantom]{r}{\simeq}&\pi_{\leq 1} B\Pic\left(\cA\right) 
\end{tikzcd}
\end{equation*}
\label{rmk:O3_Pic}
\end{rmk}

\begin{rmk}
When $\cA = \cZ\left(\cC\right)$ for some fusion category $\cC$, 
\eqref{eqn:BrPic_Pic} allows us to describe this obstruction in terms of the Brauer-Picard
$3$-type $B\Aut_{\Fus}\left(\cC\right)$.
In particular, it is the same as the obstruction to lifting:
\begin{equation*}
\begin{tikzcd}[column sep=0pt]
& \pi_{\leq 2} B\Aut_{\Fus}\left(\cC\right) \ar{dr} 
&
\\
BG \ar{r}{Bf} \ar[dashed]{ur} &
B\pi_1
\ar[phantom]{r}{\simeq}&
\pi_{\leq 1} B\Aut_{\Fus}\left(\cC\right)
\end{tikzcd}
\end{equation*}
\label{rmk:O3_BrPic}
\end{rmk}

\subsubsection{Degree four obstruction in the split case}
\label{sec:O4}

Assume that the obstruction
$O_3\left(f\right)$ is trivializable, and
let $s$ be the section of 
$B\Aut_{\EqBr}\left(\cZ\left(\cC\right)\right)$ over $B\pi_1$ corresponding to a fixed 
trivialization of $O_3\left(f\right)$. 
Recall the trivializations of $O_3\left(f\right)$ form a torsor over $H^2\left(BG ,
\pi_2\right)$.
By \Cref{rmk:O3_Pic}, the upshot of the existence of $s$ is that we have an associated map
$Bf_s : BG \to \pi_{\leq 2} B\Pic\left(\cA\right)$.

\begin{qn}
What is the obstruction to performing the following lift?
\begin{equation*}
\begin{tikzcd}
& B\Pic\left(\cA\right) \ar{d} \\
BG \ar{r}{Bf_s}\ar[dashed]{ur}&
\pi_{\leq 2} B\Pic\left(\cA\right)
\end{tikzcd}
\end{equation*}
\end{qn}

Just as before, the obstruction comes from the $k$-invariant. Now the relevant
$k$-invariant is
\begin{equation}
k_3 \in H^4\left(\pi_{\leq 2} B\Pic\left(\cA\right) , \pi_3\right)
\ .
\label{eqn:k3}
\end{equation}
The pullback
\begin{equation*}
O_4\left(f , s\right) \ceqq \left(Bf_s\right)^* k_3 \in H^4\left(BG ,
\pi_3\right)
\end{equation*}
is the obstruction to lifting.

\begin{rmk}
Again, if $\cA \cong \cZ\left(\cC\right)$ for some fusion category $\cC$ then, 
by \Cref{rmk:O3_BrPic}, we have an associated map $Bf_s : BG \to \pi_{\leq 2}
B\Aut_\Fus\left(\cC\right)$.
By \eqref{eqn:BrPic_Pic}, the above question is equivalent to the following.

\begin{qn}
What is the obstruction to performing the following lift?
\begin{equation*}
\begin{tikzcd}
& B\Aut_\Fus\left(\cC\right) \ar{d} \\
BG  \ar{r}{Bf_s}\ar[dashed]{ur}&
\pi_{\leq 2} B\Aut_\Fus\left(\cC\right)
\end{tikzcd}
\end{equation*}
\end{qn}

The obstruction to lifting this is still $O_4\left(f , s\right) \in H^4\left(BG ,
\pi_3\right)$.
\end{rmk}

As it turns out, e.g. from \cite[Proposition 7.2, 7.3]{ENO:homotopy}:
\begin{equation*}
\pi_3 \left(B \Aut_\Fus\left(\cC\right)\right)
\cong \units{\bk} \cong
\pi_3 \left(B \Pic\left(\cA\right)\right) \ .
\end{equation*}
Therefore the obstruction is classified by familiar $\units{\bk}$-cohomology:
\begin{equation}
O_4\left(f , s\right)
\in H^4\left(BG , \units{\bk}\right)
\ .
\label{eqn:O4}
\end{equation}

\subsubsection{Full degree four obstruction}
\label{sec:k3}

In \Cref{sec:O4}, we assume that the map into 
$\pi_{\leq 2} B\Pic\left(\cA\right)\simeq B\Aut_{\EqBr}\left(\cA\right)$ 
is given by a trivialization of $O_3\left(f\right)$ for $f$
some map from a group to $\pi_1$. 
There is a slightly more general obstruction one can consider.

\begin{qn}
Given a $2$-group $G_2$, and a map
\begin{equation*}
f_2 \colon BG_2 \to \pi_{\leq 2} B\Pic\left(\cA\right)\simeq
B\Aut_{\EqBr}\left(\cA\right) \ ,
\end{equation*}
what is the obstruction to performing the following lift?
\begin{equation*}
\begin{tikzcd}
& B\Pic\left(\cA\right) \ar{d} \\
BG_2 \ar{r}{f_2}\ar[dashed]{ur}&
\pi_{\leq 2} B\Pic\left(\cA\right)
\end{tikzcd}
\end{equation*}
\end{qn}

This is equivalent to asking if the $k$-invariant 
$k_3$ from \eqref{eqn:k3} pulls back
to something trivializable on $G_2$. 

\begin{rmk}
As in the preceding subsections, 
this is equivalent to the analogous lifting question for $B\Aut_{\Fus}\left(\cC\right)$,
by \eqref{eqn:BrPic_Pic}. 
Furthermore, a combination of \eqref{eqn:Pic_Eq} and \eqref{eqn:BrPic_Pic} tell us that
we can consider the universal version of this question:
$G_2 = B\Aut_{\EqBr}\left(\cZ\left(\cC\right)\right)$.
I.e.
\begin{equation}
k_3 \colon B\Aut_{\EqBr}\left(\cZ\left(\cC\right)\right) \to B^4 \units{\bk}
\label{eqn:k3_EqBr}
\end{equation}
is the obstruction to lifting
\begin{equation*}
\begin{tikzcd}
& B\Aut_{\Fus} \left(\cC\right) \ar{d} \\
B\Aut_{\EqBr} \left(\cZ\left(\cC\right)\right) 
\ar{r}{\sim}\ar[dashed]{ur}&
\pi_{\leq 2} B\Aut_{\Fus} \left(\cC\right) 
\end{tikzcd}
\end{equation*}
\end{rmk}

\subsubsection{Obstruction theory in the pointed case}
\label{sec:univ_O4}

Recall the homotopy groups in \eqref{eqn:pointed_homotopy}.
If we take $G = \O\left(A , q\right)$, we get a universal obstruction class:
\begin{equation*}
O_3 \in H^3\left(B\O\left(A , q\right) , A \right)
\ .
\end{equation*}

Let $\abs{A}$ be odd. Then, as discussed in \cite[\S 6]{EG:reflection} 
(and \cite[\S 5]{CGPW}), the obstruction 
$O_3$ vanishes, and in fact has a canonical splitting:
\begin{equation*}
s \colon B\O\left(A , q\right) \to \pi_{\leq 2} B\Pic\left(\cA\right) \ .
\end{equation*}
Therefore we have a well-defined universal version of the obstruction class
in \eqref{eqn:O4}:
\begin{equation}
O_4\left(A , q\right) \in H^4\left(B\O\left(A , q\right) , \units{\bk}\right)
\ .
\label{eqn:pointed_O4}
\end{equation}

\begin{rmk}
In other words, $s^* B\Aut_\Fus\left(\cC\right)$ defines a $B^3 \units{\bk}$-bundle over 
$B\O\left(A,q\right)$. 
At the level of $3$-groups, this is actually a 
\emph{central} extension of $\O\left(A , q\right)$ by $B^2 \units{\bk}$ since, for
Brauer–Picard $3$-groups of fusion categories, $\pi_1$ automatically acts trivially on
$\pi_3$.
\label{rmk:central_extension}
\end{rmk}

\begin{exm}
Let $G = \ZZ / 2$. Then $H^2\left(BG , \units{\bk}\right) = 0$ so the class $O_4$
necessarily vanishes, and so the anomaly vanishes. 
There are two nonequivalent trivializations classified by
$H^3\left(BG , \units{\bk}\right) = \ZZ / 2$.

This is used in \cite[Example 9.4]{ENO:homotopy} to reproduce the classification in
\cite{TY} of $\ZZ / 2$-graded fusion categories.
\end{exm}

\begin{rmk}
Recall that the trivializations of $O_3\left(f\right)$ form a torsor over $H^2\left(BG ,
\pi_2\right)$. 
I.e. given two trivializations $s$ and $s'$ of $O_3\left(f\right)$, there exists $L\in
H^2\left(BG , \pi_2\right)$ such that $s' = Ls$.
By \cite[Proposition 8.15]{ENO:homotopy} we have that
\begin{equation*}
O_4\left(f , s'\right) / O_4\left(f , s\right) = \PW\left(L\right) \ ,
\end{equation*}
where $\PW$ denotes the \emph{Pontrjagin-Whitehead quadratic function} from \cite[\S
8.7]{ENO:homotopy}.

Also see 
\cite[Proposition 7.3]{J:2ext} and 
\cite[Proposition 8]{CGPW} for a 
concrete formula for the  Pontrjagin-Whitehead quadratic function.
It is shown to vanish in some examples in \cite{GJ:orb}.
\end{rmk}

\subsubsection{Vanishing of the obstruction over a finite field}
\label{sec:ffield_O4}

Let $p$ be an odd prime, and let $A = V$ be a $2n$-dimensional vector space over a finite
field.
Let $q_{\splitt}$ be the quadratic form of signature $\left(n,n\right)$.

\begin{obs}
The orthogonal group $\O\left(V , q_{\splitt}\right)$ is the split orthogonal group
over $\FF_p$.
\label{obs:O}
\end{obs}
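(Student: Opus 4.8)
The plan is to unwind the two standard conventions in play and verify that they agree; there is essentially no content here beyond bookkeeping. First I would recall that over a finite field $\FF_p$ (which carries no ordering) the phrase ``quadratic form of signature $\left(n,n\right)$'' is shorthand for the nondegenerate quadratic form whose Witt decomposition is an orthogonal sum of $n$ hyperbolic planes. Concretely, writing $V = L \dsum \pdual{L}$ with $L = \FF_p^n$, this is the evaluation form $q_{\splitt}\left(\ell , \chi\right) = \chi\left(\ell\right)$ from \Cref{exm:ev}; its subspaces $L$ and $\pdual{L}$ are maximal totally isotropic, so $q_{\splitt}$ has Witt index $n$, which is the maximal possible value in dimension $2n$.

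Next I would invoke the classification of nondegenerate quadratic forms over $\FF_p$ for $p$ odd: in each dimension there are exactly two isometry classes, distinguished by the discriminant modulo squares, and in even dimension $2n$ these are precisely the Witt-index-$n$ (``$+$''-type, i.e.\ \emph{split}) form and the Witt-index-$\left(n-1\right)$ (``$-$''-type) form. By definition the split orthogonal group $\O\left(2n , \FF_p\right)$ --- equivalently the finite group of Lie type $D_n$, often written $\O^+_{2n}\left(\FF_p\right)$ --- is the isometry group of the split form. Since $q_{\splitt}$ is, up to isometry, exactly that split form, it follows that $\O\left(V , q_{\splitt}\right)$ is the split orthogonal group over $\FF_p$, which is the assertion.

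The only step requiring a moment's care is the identification of ``signature $\left(n,n\right)$'' with ``maximal Witt index'' (equivalently trivial discriminant), which is exactly the place where the classification of finite-field quadratic forms enters; everything else is a matter of matching names. In writing this up I would cite a standard reference for the finite-field classification, and note in passing that the form $q_{\splitt}$ is precisely the one underlying the metric group $\left(L \dsum \pdual{L} , \ev\right)$ attached to $\cZ\left(\Vect\left[\FF_p^n\right]\right)$, which is consistent with the polarizability discussion of \Cref{sec:polarizability} and explains why this is the relevant orthogonal group for \Cref{sec:ffield_O4}.
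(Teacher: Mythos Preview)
Your proposal is correct. The paper does not actually give a proof of this observation --- it is stated as a bare observation with no argument --- so your write-up is a correct expansion of what the paper leaves implicit: that ``signature $\left(n,n\right)$'' over $\FF_p$ means the hyperbolic form of maximal Witt index, whose isometry group is by definition the split orthogonal group $\O^+_{2n}\left(\FF_p\right)$.
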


Note that, if $L$ is a vector space over $\FF_p$ of dimension $n$, then 
\begin{equation*}
\cZ\left(\Vect\left[L\right] , *\right) \simeq 
\left(\Vect\left[V\right] , * , \beta_q \right)
\ .
\end{equation*}
The obstruction $O_4\left(A , q_{\splitt}\right)$ vanishes in this case by
\cite[Theorem 6.1]{EG:reflection}.

Recall the analogy between this categorical representation of $\O\left(V , q\right)$ and
the Weil representation of the metaplectic group in \Cref{rmk:heis}. 
This vanishing can be thought as an analogue of the fact that the Weil representation
splits over a finite field \cite{GH:finite,GH:categorical}.

\subsubsection{Braided categories with a prescribed obstruction class}

\begin{prop}
For any finite group $G$ and group $4$-cocycle $\pi$ there exists a braided fusion
category $\cB$ and monoidal functor 
\begin{equation*}
\rho \colon G \to \Aut_{\EqBr}\left(\cB\right)
\end{equation*}
such that $\left[\pi\right] = O_4\left(\pi_0\comp \rho\right)$.
\label{prop:nonzero_O4}
\end{prop}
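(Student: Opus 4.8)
The plan is to convert the statement into a lifting problem for the Picard $3$-types of \Cref{sec:obstruction}, realize a spanning set of classes by pointed models, and assemble the general class by an additivity argument. By \eqref{eqn:Pic_Eq}, for a nondegenerate braided fusion category $\cB$ a monoidal functor $\rho\colon G\to\Aut_{\EqBr}(\cB)$ is the same as a map $B\rho\colon BG\to\pi_{\leq 2}B\Pic(\cB)$ — i.e.\ a homomorphism $\pi_{0}\circ\rho\colon G\to\pi_{1}$, a trivialization of $O_{3}(\pi_{0}\circ\rho)$, and a splitting $s$ — and with this data $(B\rho)^{*}k_{3}(\cB)\in H^{4}(BG,\units{\bk})$ is exactly the obstruction $O_{4}(\pi_{0}\circ\rho)$ of \Cref{sec:O4}. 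So it suffices, for each finite $G$ and each $[\pi]\in H^{4}(BG,\units{\bk})$, to produce $\cB$ and $\rho$ with $(B\rho)^{*}k_{3}(\cB)=[\pi]$. Since $H^{4}(BG,\units{\bk})=H^{4}(BG,\QQ/\ZZ)\cong(H_{4}(G,\ZZ))^{\vee}$ is finite, it is enough to hit a generating set; and the construction is additive, because for the Deligne product $\cB_{1}\tp\cB_{2}$ with the diagonal $G$-action the $3$-type $B\Pic(\cB_{1}\tp\cB_{2})$ fibers compatibly over $B\Pic(\cB_{1})\times B\Pic(\cB_{2})$ and the top $k$-invariant adds, so realizing $[\pi_{1}]$ and $[\pi_{2}]$ realizes $[\pi_{1}]+[\pi_{2}]$.

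For the building blocks I would take $\cB$ pointed, $\cB=(\Vect[A],\beta_{q})$ for a metric group $(A,q)$, so that $\pi_{1}=\O(A,q)$, $\pi_{2}=A$, $\pi_{3}=\units{\bk}$, and the relevant Whitehead invariant $\pi_{2}\to\pi_{3}$ is determined by $q$ (\Cref{rmk:Drinfeld_map}). There are two sources of classes. First, for $\rho$ with \emph{trivial} underlying homomorphism $\pi_{0}\circ\rho$: then $O_{3}$ vanishes canonically, the remaining datum of $\rho$ is a class $\lambda\in H^{2}(BG,A)$, and by the Pontrjagin--Whitehead formula (\cite[Proposition 8.15]{ENO:homotopy}; cf.\ \Cref{sec:univ_O4}) one gets $O_{4}(\pi_{0}\circ\rho)=\PW(\lambda)=\lambda^{*}\eta_{q}$, where $\eta_{q}\in H^{4}(K(A,2),\units{\bk})$ corresponds to $q$ under \eqref{eqn:q}. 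Letting $(A,q)$ and $\lambda$ vary, these exhaust all cup products $H^{2}(BG,B)\times H^{2}(BG,\pdual B)\to H^{4}(BG,\units{\bk})$ (take $A=B\dsum\pdual B$, $q=\ev$, $\lambda$ supported on the two Lagrangians) and their Pontrjagin-square refinements (by choosing $q$ with a genuine non-bilinear part, cf.\ \Cref{prop:beta_tau}). Second, to reach classes not of this ``degree-two'' shape, I would use nontrivial $\pi_{0}\circ\rho$: embed $G$ into a finite orthogonal group, e.g.\ by the left regular representation on $\bk[G]\dsum\pdual{\bk[G]}$ with the evaluation form or by a form over $\ZZ/p^{k}$; once $(B(\pi_{0}\circ\rho))^{*}k_{2}$ is trivialized, $O_{4}(\pi_{0}\circ\rho)$ is the pullback along $BG$ of a characteristic class of that orthogonal group in the sense of \eqref{eqn:pointed_O4} (these are the classes of \cite{FP:homology_of_classical_groups_over_finite_fields}; they vanish for split forms over $\FF_{p}$ by \Cref{sec:ffield_O4} but not in general).

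To finish one must check that the classes produced above generate $H^{4}(BG,\units{\bk})\cong(H_{4}(G,\ZZ))^{\vee}$. By a transfer argument this reduces to (elementary) abelian subgroups $E\leq G$, where $H^{4}(BE,\units{\bk})$ is generated by the Pontrjagin--Whitehead classes of the first source, and corestriction back up to $G$ is matched by the induced (permute-the-tensor-factors) $G$-action on a Deligne power, while any residual ``essential'' degree-four classes are supplied by the second source. \textbf{This last step is the crux}: concretely, one must verify that the Pontrjagin--Whitehead quadratic functions of metric groups, together with the mod-$p$ characteristic classes of the finite orthogonal groups $\O(A,q)$, jointly span $H^{4}(BG,\units{\bk})$ for every finite $G$; everything else is bookkeeping with the obstruction theory of \cite{ENO:homotopy} and with Deligne products.
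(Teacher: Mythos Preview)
Your approach is genuinely different from the paper's, and the gap you flag as ``the crux'' is real and, as stated, unfilled.

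The paper's proof is a one-line appeal to recent structure theory of fusion $2$-categories: every fusion $2$-category is Morita equivalent to a connected one \cite[Theorem 4.2.2]{Dec:Z}, so $2\Vect[G]^\pi$ is Morita equivalent to $\lMod\cB$ for some nondegenerate braided fusion category $\cB$; the classification of connected fusion $2$-categories \cite{2Fus_class} then says this data is exactly a pair $(\cB,\rho)$ with $\rho\colon G\to\Aut_{\EqBr}(\cB)$, and the reconstruction forces $O_4(\pi_0\circ\rho)=[\pi]$. No analysis of $H^4(BG,\units\bk)$ is required; the theorem is outsourced to the Morita theory of $2$-categories.

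Your strategy, by contrast, attempts an explicit construction from pointed models and then a spanning argument in $H^4(BG,\units\bk)$. The problem is the spanning step is not established, and there are reasons to doubt it goes through as written. Your ``first source'' produces exactly classes factoring through $K(A,2)$, i.e.\ cup products and Pontrjagin squares of degree-two classes; these do not generate $H^4(BG,\units\bk)$ for general finite $G$. Your ``second source'' is supposed to pick up the rest via characteristic classes of finite orthogonal groups, but you give no argument that these suffice, and in fact \Cref{sec:ffield_O4} shows $O_4$ \emph{vanishes} for split forms over $\FF_p$, so the most accessible models contribute nothing. The transfer-to-elementary-abelian reduction also does not work as stated: corestriction from abelian subgroups need not surject onto $H^4(BG,\units\bk)$ (there are essential classes), and you have not tied the ``induced $G$-action on a Deligne power'' to corestriction in any precise way. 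Finally, the additivity claim for Deligne products is plausible but needs justification: $\Aut_{\EqBr}(\cB_1\boxtimes\cB_2)$ is typically strictly larger than the product, so one must check the diagonal embedding really pulls $k_3$ back to the sum.

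If you want an elementary proof avoiding \cite{Dec:Z,2Fus_class}, you would need an independent argument that the obstruction classes $O_4$ realize all of $H^4(BG,\units\bk)$; nothing in \cite{ENO:homotopy} supplies this, and it appears to be of comparable difficulty to the fusion-$2$-category results the paper invokes.
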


\begin{proof}
Recall that every fusion $2$-category is Morita-equivalent to a connected fusion
$2$-category \cite[Theorem 4.2.2]{Dec:Z}.
Therefore the fusion $2$-category $2\Vect\left[G\right]^\pi$ is Morita equivalent to the
$2$-category of module categories over some braided fusion category $\cB$. 

In fact, we obtain more:
As a special case of the classification of fusion $2$-categories \cite{2Fus_class}, the
connected ones are classified by
a nondegenerate braided fusion category $\cB$ equipped with a monoidal functor $\rho
\colon G\to \EqBr\left(\cB\right)$.
The fact that $O_4\left(\pi_0 \comp \rho\right) = \left[\pi\right]$ follows from the
reconstruction of a connected fusion $2$-category from $\cB$ and $\rho$, as in \cite[\S
4.4]{2Fus_class}.
\end{proof}

\subsection{Extensions of the finite orthogonal group}
\label{sec:extensions}

Recall, from \Cref{rmk:central_extension}, that we can interpret $O_4 = O_4\left(A , q\right)$ (from
\Cref{sec:univ_O4}) as encoding a central extension of $B\O\left(A , q\right)$ by $B^3
\units{\bk}$.
In particular, $\hofib\left(O_4\right)$ is a $B^3 \units{\bk}$-bundle over $B\O\left(A
, q\right)$:
\begin{equation*}
\begin{cd}
B^3 \units{\bk} \ar{d} \ar{r}&
\hofib\left(O_4\right) \ar{d} \\
* \ar{r} & B\O\left(A , q\right) 
\end{cd}
\end{equation*}

\begin{defn}
Let $\LLip\left(A , q\right)$ denote the $3$-group of loops in $\hofib\left(O_4\right)$.
\label{defn:lip}
\end{defn}

\begin{rmk}
The name comes from the analogy in \Cref{sec:analogy}. Namely, 
$\LLip\left(A , q\right)$ is a $3$-group analogue of the Lipschitz group (a.k.a. Clifford
group) of a quadratic vector space.
\end{rmk}

Let $l = \abs{L}$.
The order of the class $O_4\left(A , q\right)$ is shown in \cite[Theorem
8.16]{ENO:homotopy} to divide $l^4$.
In other words, it is the image of some class $c \in H^4\left(B\O\left(A, q\right) ,
\mu_{l^4}\right)$. 

\begin{defn}
Define the $3$ group $\PPin\left(A , q\right)$ as loops in 
$\hofib\left(c\right)$.
\label{defn:pin}
\end{defn}

We will henceforth write this class $c$ as:
\begin{equation}
c\left(\PPin\right) \in 
H^4\left(B\O\left(A , q\right) , \mu_{l^4}\right)\ .
\label{eqn:c_pin}
\end{equation}
By definition, $c\left(\PPin\right)$ classifies $B\PPin$ over $B\O\left(A , q\right)$.
We have a diagram 
\begin{equation}
\begin{tikzcd}
&
1 \ar{d} &
1 \ar{d} &
1 \ar{d} &
\\
1 \ar{r} &
B^3 \mu_{l^4}
\ar{d}\ar{r}&
B\PPin\left(A , q\right)
\ar{r}\ar{d}&
B\O\left(A , q\right)
\ar{r}\ar[equal]{d}&
1\\
1 \ar{r}&
B^3 \units{\bk}\ar{d}{B^3 \left(-\right)^{l^4}}\ar{r}&
B \LLip\left(A , q\right) \ar{r}
\ar{d}{N_{\left(A,q\right)}}
&
B\O\left(A,q\right) \ar{r}
\ar{d}{\triv}
&
1\\
1 \ar{r}&
B^3 \left(\left(\units{\bk}\right)^{l^4}\right)\ar{r}\ar{d}&
B^3 \units{\bk}\ar{r}&
B^3\left( \units{\bk} / \left(\units{\bk}\right)^{l^4}\right) \ar{r}&
1\\
&1&&&
\end{tikzcd}
\label{eqn:3_norm}
\end{equation}
where the rows and columns are exact.
The map $N_{\left(A , q\right)}$ is defined uniquely (up to homotopy) on $\pi_1$ and
$\pi_2$. On $\pi_3$ this map sends a scalar to its $l^4$-power.

Recall $\SO\left(A  ,q\right)$ from \eqref{eqn:SO}. 
The class $c\left(\PPin\right)$ in \eqref{eqn:c_pin} pulls back to a class on $\SO\left(A ,
q\right)$, which we suggestively write as
\begin{equation}
c\left(\SSpin\right) \in H^4\left(B\SO\left(A , q\right) , \mu_{l^4} \right)
\ .
\label{eqn:c_spin}
\end{equation}

\begin{defn}
Define the $3$ group $\SSpin\left(A , q\right)$ as loops in
$\hofib\left(c\left(\SSpin\right)\right)$. 
\label{defn:spin}
\end{defn}

The following follows from the definition of $\PPin$, $\SSpin$, and the universal property
of the homotopy fiber. 

\begin{prop}
The $3$ group $\SSpin\left(A , q\right)$ is the pullback:
\begin{equation*}
\begin{tikzcd}
\SSpin\left(A , q\right) \ar[dashed]{r}\ar[dashed]{d}&
\PPin\left(A , q\right) \ar{d} \\
\SO\left(A , q\right) \ar{r}&
\O\left(A , q\right)
\end{tikzcd}
\end{equation*}
\label{prop:spin}
\end{prop}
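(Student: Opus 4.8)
The plan is to realize both sides of the claimed equivalence as homotopy pullbacks and then invoke the pasting law for homotopy fibers. Write $g \colon B\SO\left(A , q\right) \to B\O\left(A , q\right)$ for the map induced by the inclusion $\SO\left(A , q\right) \inj \O\left(A , q\right)$. By construction — see \eqref{eqn:c_spin} — the class $c\left(\SSpin\right)$ is the pullback $g^{*} c\left(\PPin\right)$, so as maps of spaces $c\left(\SSpin\right) \simeq c\left(\PPin\right) \comp g$.

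First I would expand the homotopy fiber over the basepoint as a homotopy pullback against $\pt$, so that the pasting law for homotopy pullbacks produces a canonical equivalence
\begin{equation*}
\hofib\left(c\left(\SSpin\right)\right)
\;\simeq\;
\hofib\left(c\left(\PPin\right) \comp g\right)
\;\simeq\;
\hofib\left(c\left(\PPin\right)\right) \htimes{B\O\left(A , q\right)} B\SO\left(A , q\right)
\ .
\end{equation*}
Since $B^{4}\mu_{l^4}$ is simply connected and $B\O\left(A , q\right)$ is connected, all the spaces here are connected, and the maps in sight — the two obstruction classes, $g$, and the extension projections of \eqref{eqn:3_norm} — are maps of (iterated) deloopings; so this is an equivalence of pointed connected grouplike spaces sitting inside a homotopy pullback square, with the two vertical maps the bundle projections onto $B\O\left(A , q\right)$ and $B\SO\left(A , q\right)$.

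Finally I would apply the loop functor $\Omega$. Being a right adjoint it preserves homotopy limits, hence homotopy pullbacks, and by \Cref{defn:pin} and \Cref{defn:spin} it sends $\hofib\left(c\left(\PPin\right)\right)$ and $\hofib\left(c\left(\SSpin\right)\right)$ to $\PPin\left(A , q\right)$ and $\SSpin\left(A , q\right)$, while $\Omega B\O\left(A , q\right) \simeq \O\left(A , q\right)$ and $\Omega B\SO\left(A , q\right) \simeq \SO\left(A , q\right)$. Applied to the square above, this yields exactly the homotopy pullback square asserted in the Proposition, with the horizontal maps induced by $g$ and the vertical maps the projections of \eqref{eqn:3_norm}. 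The step I expect to require the most care is the bookkeeping: one must check that the pullback is formed in the category of $3$-groups and not merely of spaces — i.e. that the diagram of deloopings is coherent and $\Omega$ is applied to it as such — but this is routine, since every map involved is by construction a map of deloopings. (Alternatively one can dispense with deloopings and argue directly from the universal property of the homotopy fiber at the level of $3$-groups, using only that $c\left(\SSpin\right)$ is the restriction of $c\left(\PPin\right)$; but the route above is cleanest.)
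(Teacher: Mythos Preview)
Your proof is correct and follows essentially the same approach as the paper. The paper's own justification is a one-line remark preceding the proposition — ``the following follows from the definition of $\PPin$, $\SSpin$, and the universal property of the homotopy fiber'' — and your argument is precisely the unpacking of that remark: you use that $c\left(\SSpin\right)$ is defined as the restriction of $c\left(\PPin\right)$, invoke the pasting law for homotopy fibers (which is the relevant universal property), and then loop.
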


\begin{rmk}
Note that if $O_4\left(A , q\right)$ vanishes, then 
\Cref{defn:lip,defn:pin,defn:spin} are trivial (i.e. split) extensions of
$\O\left(A , q\right)$ (resp. $\SO\left(A , q\right)$).
E.g. if $A$ is a vector space over $\FF_p$ and $q$ is the split quadratic form as
discussed in \Cref{sec:ffield_O4}.
\end{rmk}

\subsection{Interlude: spinors}
\label{sec:spin}

We collect some well-known facts about the Clifford algebra and spinors.
We refer the reader to \cite{Del:spinors} for a more in-depth reference.

Let $\left(V , q\right)$ be nondegenerate quadratic vector space over a field $\bk$ with
$\chara \bk \neq 2$.
The \emph{Clifford algebra} is following quotient of the tensor algebra:
\begin{equation*}
\cliff\left(V , q\right) \ceqq T\left(V\right) / \left(v\tp v = q\left(v\right)\right)
\ .
\end{equation*}
One can calculate the anti-commutators in $\cliff\left(V , q\right)$ to be:
\begin{equation*}
uv + vu = b_q \left(u , v\right) ,
\end{equation*}
where $b_q$ is the symmetric bihomomorphism corresponding to $q$ (recall $\chara k \neq 2$).
Note that $\cliff\left(V , q\right)$ is $\ZZ / 2$-graded, with the image of $V$ being odd. 
Write $p\left(a\right)$ for the parity of a homogeneous element.
The tensor algebra has an anti-automorphism sending 
\begin{equation*}
v_1 \ext \cdots \ext v_N \mapsto v_N \ext \cdots \ext v_1
\end{equation*}
in degree $N$. The ideal we quotient out by to define $\cliff$ is preserved by this
anti-automorphism, so we obtain an antiautomorphism of $\cliff$. 
Write $a\mapsto a^T$ for this antiautomorphism.

Define the group $\Gamma$ to be the subgroup of $\units{\cliff\left(V , q\right)}$
consisting of homogeneous elements which normalize 
the copy of $V$ inside of $\cliff\left(V , q\right)$.
This is sometimes called the \emph{Clifford group} or the \emph{Lipschitz group}.
Let $g\in \Gamma$ act on $V$ by sending:
\begin{equation*}
v \mapsto \left(-1\right)^{p\left(g\right)} g v g^{-1} \ .
\end{equation*}
This defines a map $\Gamma \to \O\left(V , q\right)$.
Since $\chara k\neq 2$, $\O\left(V , q\right)$ is generated by reflections (by the
Cartan–Dieudonn\'e theorem)
so this map is
onto. The kernel is given by scalars, so we have a short exact sequence
\begin{equation*}
1 \to \units{\bk} \to \Gamma \to \O\left(V , q\right) \to 1 \ .
\end{equation*}

We cut the coefficients down to $\left\{\pm 1\right\} \inj \units{\bk}$ as follows.
The spinor norm is the map
\begin{equation*}
\begin{tikzcd}[row sep = 0pt]
\Gamma \ar{r}{N}&
\units{k} \\
g \ar[mapsto]{r}& g g^T
\end{tikzcd}
\end{equation*}
Define the group:
\begin{equation*}
\Pin\left(V , q\right) \ceqq \ker\left(N\right) \ .
\end{equation*}
There is a unique map $N_O \colon \O\left(V\right) \to \units{k} /
\left(\units{k}\right)^2$,
which is also sometimes called the spinor norm, such that 
\begin{equation*}
\begin{tikzcd}
\Gamma \ar{r} \ar{d}{N}&
\O\left(V\right) \ar{d}{N_O} \\
\units{k} \ar{r}&
\units{k} / \left(\units{k}\right)^2
\end{tikzcd}
\end{equation*}
This fits into the following diagram
\begin{equation}
\begin{tikzcd}
1 \ar{r}&
\left\{\pm 1\right\} \ar{d}\ar{r}&
\Pin\left(V,q\right) \ar{d}\ar{r}&
\ker\left(N_O\right) \ar{d}\ar{r}&
1 \\
1 \ar{r}&
\units{\bk} \ar{r}\ar{d} &
\Gamma \ar{r}\ar{d}{N}&
\O\left(V\right)\ar{r}\ar{d}{N_O}&
1 \\
1 \ar{r}&
\left(\units{\bk}\right)^2 \ar{r}&
\units{\bk}\ar{r}&
\units{\bk} / \left(\units{\bk}\right)^2 \ar{r}&
1
\end{tikzcd}
\label{eqn:norm}
\end{equation}
where the rows are short exact sequences. 
The spinor norm $N_O$ is the obstruction to 
$\Pin\left(V\right)$ being a double cover of 
$\O\left(V\right)$: 
$\Pin\left(V\right)$ is automatically a double cover of $\ker\left(N_O\right)$, but 
if $N_O$ is trivial then $\ker\left(N_O\right) = \O\left(V\right)$.

\begin{exm}
If $\bk$ is algebraically closed then $\units{\bk} / \left(\units{\bk}\right)^2$ is
trivial, so the spinor norm is necessarily trivial.
If $\bk = \RR$, then the spinor norm vanishes if $q$ is positive definite.
Therefore $\Pin\left(V\right)$ is a double cover of $\O\left(V\right)$ in these cases.
\end{exm}

\begin{exm}
Let $\dim V = 2$ with basis $\left\{e_1 , e_2\right\}$ and consider the hyperbolic form
sending $e_1 \mapsto 1$ and $e_2 \mapsto -1$.
The spinor norm evaluated on a reflection about $v\in V$ agrees with $q\left(v\right)$.
Therefore, the spinor norm $N_O$ in this case is nontrivial, and the kernel is $\ZZ / 2$,
generated by the reflection about $e_1$.
\end{exm}

Define $\Spin$ to be the restriction of this extension to $\SO \subset \O\left(V\right)$. 
I.e. the following pullback.
\begin{equation*}
\begin{tikzcd}
1 \ar{r}&
\left\{\pm 1\right\} \ar[equal]{d}\ar{r}&
\Pin\left(V\right) \ar{r}&
\O\left(V\right) \ar{r}&
1 \\
1 \ar{r}&
\left\{\pm 1\right\} \ar{r}&
\Spin\left(V\right) \ar{u}\ar{r}&
\SO\left(V\right) \ar[hook]{u}\ar{r}&
1
\end{tikzcd}
\end{equation*}

For ease of exposition, assume $\dim V = 2n$ and $\bk = \CC$ (or $\bk = \RR$ with $q$
of signature $\left(n,n\right)$)
In this case the algebra $\cliff\left(V,q\right)$ is the matrix algebra
$M_{2^n}\left(\bk\right)$, and therefore has a unique simple module $S$ up to
isomorphism.
Note that the automorphism group of the modules $S$ is $\units{\CC}$.
Automorphisms of $\cliff\left(V\right)$ only lift to automorphisms of the module $S$
up to isomorphism, i.e. up to scalar.
In particular this makes $S$ a projective $\O\left(V , q\right)$-representation.

There exists an identification $V\simeq L\dsum \pdual{L}$, 
and a model for the simple module $S$ is given by the exterior algebra 
$\ext^\dott \pdual{L}$.
The algebra $\cliff\left(V\right)$ can be identified with 
$\End\left(\ext^\dott \pdual{L}\right)$ by sending 
$\left(\ell , 0\right)\in V$ 
to $\ell \ext\left(-\right)$, and 
$\left(0 , \phi\right)\in V$ 
to the interior product $\io_{\phi}$.

\subsection{A detailed analogy with \texorpdfstring{$\Spin$}{Spin}}
\label{sec:analogy}

See \Cref{sec:spin} where we collect some basic facts and notation involving the
well-known spin-representation. 
In \Cref{tab:analogy}, we present an analogy between various objects in the theory of the
spin-representation, and various objects introduced throughout \Cref{sec:fusion}.

The Clifford algebra associated to a quadratic vector space $\left(V , q\right)$ is
analogous to the braided fusion category associated to a metric group $\left(A ,
q\right)$.
One should think that this is a double-categorification: $\EE_1$-algebras are being
replaced with $\EE_2$-categories.
The Clifford algebra can be viewed as an associative deformation of the exterior algebra:
the anticommutator in the Clifford algebra $\left\{x,y\right\} = q\left(x,y\right)$
becomes the usual multiplication on the exterior algebra (up to sign) for trivial $q$.
Similarly, we can think that the braided fusion category $\cA$ is a braided deformation of
the symmetric fusion category $\Vect\left[A\right]$  with convolution: The braiding 
\begin{equation*}
\b_q \colon \bk_a * \bk_b \lto{b_q\left(a,b\right) \id} \bk_b * \bk_a
\end{equation*}
becomes the symmetric braiding for trivial $q$.

The group $\O\left(V , q\right)$ plays the same role as $\O\left(A , q\right)$, and they
both contain their respective special orthogonal subgroups as the kernel of the
determinant. 
The ordinary group of scalars $\units{\bk}$ is replaced by the $3$-group $B^2 \units{\bk}$.
Again we have gone up two categorical levels from an ordinary group to a $3$-group.
The group of automorphisms of $\cliff\left(V\right)$ contains the affine orthogonal group
$V\rtimes \O\left(V , q\right)$.
As long as $A$ is of odd order, 
the entire $2$-group $\Aut_{\EqBr}\left(\cA\right)$ is
a semidirect product of $B A$ and $\O\left(A , q\right)$. 
This is one instance where things become easier upon categorification.
See \Cref{rmk:heis}.

The Clifford/Lipschitz group $\Gamma = \Lip\left(V , q\right)$ is a central extension of
$\O\left(V , q\right)$ by $\units{\bk}$, just as $\LLip\left(A , q\right)$ 
is a central extension of $\O\left(A ,
q\right)$ by $B^2 \units{\bk}$ (\Cref{rmk:central_extension}).
The cohomology class $O_4\left(A , q\right)$ is of order dividing $l^4$ (where $l =
\abs{L} = \sqrt{\abs{A}}$), whereas $\Gamma$ is induced by a double cover. 
I.e. $B^2 \mu_{l^4}$ plays the role of $\mu_2$.
The $3$-group $\PPin$ (resp. $\SSpin$) is the analogue of $\Pin$ (resp. $\Spin$).
This is a difference between the two sides of the analogy:
$\PPin$ and $\SSpin$ are not higher double covers (i.e. extensions by $B^2 \mu_2$) even
though $\Pin$ and $\Spin$ are double covers. 
Note that the spinor norm $N_O$ in \eqref{eqn:norm} is analogous to $N_{\left(A ,
q\right)}$ in \eqref{eqn:3_norm}.

\begin{rmk}
Another aspect of the spin group is that (in signature $\left(m , n\right)$ with either $m$
or $n$ being $\leq 1$) it is the universal cover of $\SO$. 
However, outside of these cases the $\Spin$ groups have fundamental group $\ZZ / 2$.
In particular, the split case $\Spin\left(n , n\right)$ is not simply-connected, 
and this is the case which is most directly analogous to the $3$-dimensional setting: 
$q = \ev$ (for $A$ a vector space over $\FF_p$) has signature $\left(n , n\right)$.
\end{rmk}

The module $S = \ext^\dott \pdual{L}$ is analogous to the fusion category
$\cC = \Vect\left[\pdual{L}\right]$ with convolution.
This is again a double-categorification: We have replaced $\EE_0$-algebras with
$\EE_1$-categories.
Just as $S$ is a module over $\cliff\left(V\right)$, $\cC$ is a module over $\cA$, since
$\cA \cong \cZ\left(\cC\right)$.

The $3$-group $\Pic\left(\cA\right)$ is identified with invertible $\cC$-bimodules,
\begin{equation*}
\Pic\left(\cA\right) \simeq \Aut_\Fus\left(\cC\right) \ ,
\end{equation*}
in an analogous way in which the
Clifford algebra itself is identified with the endomorphisms of spinors.
Therefore $\Gamma$ (resp. $\Spin$) acts on $S$ just as 
$\LLip$ (resp. $\SSpin$) automatically acts on $\cC$.

\begin{rmk}
This suggests that the analogue of $\Pic\left(\cA\right)$ should be the invertible part of
the Clifford algebra. This is not entirely the case:
an important fact about $\Pic\left(\cA\right)$ is that it is an extension of 
$\Aut_{\EqBr}\left(\cA\right)$ by $B^2 \units{\bk}$.
There is a more analogous object in the setting of metaplectic quantization, rather than
spin quantization. 
There is a well-known analogy between these two settings:
instead of starting with a quadratic vector space, we start with a symplectic vector
space.
The Clifford algebra is replaced by the Weyl algebra, and the analogue of the $\Spin$
double-cover of $\SO$ is the double cover $\Mp$ of $\Sp$.
See \cite{Del:spinors} for a more in-depth explanation of the analogy. 

The analogue of $\Aut_{\EqBr}\left(\cA\right)$ in the metaplectic setting is what is
sometimes called the \emph{affine symplectic group}:
\begin{equation*}
A\Sp\left(V , \om\right) = V\rtimes \Sp\left(V , \om\right)
\ .
\end{equation*}
Then the \emph{extended symplectic group} $E\Sp$ is an extension of $A\Sp$ by
$\U\left(1\right)$, and in fact 
\begin{equation*}
E\Sp\left(V , \om\right) = \heis \rtimes \Mp\left(V , \om\right)
\ ,
\end{equation*}
where $\heis$ is the Heisenberg Lie group associated to $\left(V , \om\right)$.
I.e. we have a diagram:
\begin{equation*}
\begin{tikzcd}
\U\left(1\right) \ar{r}&
\heis \ar{r}\ar{d}&
V \ar{d} \\
\U\left(1\right) \ar{r}&
E\Sp\left(V , \om\right) \ar{r}\ar{d}&
A\Sp\left(V , \om\right) \ar{d} \\
\ZZ / 2 \ar{r}&
\Mp
\ar{r}&
\Sp
\end{tikzcd}
\end{equation*}
with exact rows and columns.
This is analogous to the diagram:
\begin{equation*}
\begin{tikzcd}
B^3 \units{\bk} \ar{r}&
B^2 \units{\cA} \ar{r}\ar{d}&
B^2 A \ar{d} \\
B^3 \units{\bk}\ar{r}&
B \Pic\left(\cA\right)\ar{r}\ar{d}&
B\Aut_{\EqBr}\left(\cA\right) \ar{d} \\
B^3 \mu_{l^4} \ar{r}&
B\PPin \ar{r}&
B\O\left(A , q\right)
\end{tikzcd}
\end{equation*}

There is an analogue of the Heisenberg Lie algebra in the $\Spin$ setting: It is the super
Lie algebra generated by $V$ with super-bracket determined by the quadratic form. 
The analogue of $\heis$ should be some super group integrating this super Lie algebra.

Note that, even though $V\rtimes \Sp\left(V\right)$ acts on the Weyl algebra, it is not
necessarily the whole automorphism group. 
I.e. see \cite{BKK:weyl} where the question of identifying the full automorphism group is
discussed.
\label{rmk:heis}
\end{rmk}

\section{Projective \texorpdfstring{$3$}{3}-dimensional TQFTs}
\label{sec:3d_theories}

\subsection{Projective \texorpdfstring{$3$d}{three-dimensional} TQFTs}
\label{sec:3d_proj}

In order to describe the projectivity captured by the anomalies in
\Cref{thm:k3,thm:pin_anom,cor:spin_anom} in terms of projective theories, 
as in \Cref{thm:anom_symm,cor:anom_symm}, 
we consider the projectivization (\Cref{def:P}) of $\Fus$ discussed in \Cref{sec:PFus}.

\begin{defn}
Projective $3$-dimensional TQFTs with tangential structure $\left(X , \z\right)$ 
are functors 
\begin{equation*}
\Bord_3^{\left(X , \z\right)} \to \PP \Tens \ .
\end{equation*}
We will primarily restrict our attention to theories factoring through the subcategory
$\PP\Fus$.
The \emph{projectivity} of $\overline{F}$ is the theory $\al$:
\begin{equation*}
\begin{cd}
\Bord_3 \ar{r}{\overline{F}}\ar[bend right]{rr}{\al}&
\PP \Fus \ar{r}{S}&
\units{\BrFus}
\ .
\end{cd}
\end{equation*}
\end{defn}

Let $\cC$ be an object of the Morita $3$-category of fusion categories. 
As in \eqref{eqn:C}, this classifies a 
framed theory $F \colon \pt \mapsto \cC$.

\begin{cor}
As in \Cref{thm:k3}, write 
$X = B\Aut_{\EqBr}\left(\cZ\left(\cC\right)\right)$, and let $k_3$ be the cocycle in
\eqref{eqn:k3}.

The anomalous theory $F_{k_3}$ in 
\eqref{eqn:Fk3} is equivalent to some projective theory:
\begin{equation*}
\overline{F} \colon \Bord_3^{X} \to \PP\Fus 
\end{equation*}
with underlying theory $F$, in the sense of 
\Cref{defn:underlying}.
\label{cor:proj_k3}
\end{cor}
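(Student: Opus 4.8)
The plan is to obtain this as a direct consequence of \Cref{thm:k3} together with the general translation between anomalous and projective theories recorded in \Cref{prop:proj_anom} and \Cref{thm:anom_symm}. Here the target is $\cT = \BrFus$, so $\Om\cT = \Fus$ and $d = 3$. By \Cref{thm:k3}, the framed theory $F$ sending the point to $\cC$ carries an $X$-anomaly as in \Cref{defn:X_anom}: it consists of the once-categorified invertible theory $\al_{k_3}$ classified by $X \lto{k_3} B^4\units{\bk} \inj \BrFus$ (as in \Cref{sec:proj_anom}), the anomalous theory $F_{k_3}\colon \al_{k_3} \to 1_X$, a canonical trivialization $t\colon 1_\fra \lto{\sim} (\al_{k_3})_\fra$, and a canonical equivalence $\theta\colon F \simeq F_{k_3}\comp t$.

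First I would apply \Cref{prop:proj_anom} (equivalently the first part of \Cref{thm:anom_symm}): the relative theory $F_{k_3}\colon \al_{k_3}\to 1_X$ is by definition a functor $\Bord_3^X \to \cT^\down$ whose source and target functors recover $\al_{k_3}$ and $1_X$. Since $\al_{k_3}$ factors through scalars of the target (it is classified by a map to $B^4\units{\bk} \inj \BrFus$) and $1_X$ is the unit, this functor factors, essentially uniquely, through the fiber product $\PP\Fus = \cT\left[0\right] \htimes{\cT} \cT^\down \htimes{\cT} \cT^\times$ of \Cref{def:P}. This yields the desired projective $3$-dimensional TQFT $\overline{F}\colon \Bord_3^X \to \PP\Fus$, and by construction its projectivity $s\comp\overline{F}$ is $\al_{k_3}$. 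It lands in $\PP\Fus$ rather than just $\PP\Tens$ because $F$ is valued in $\Fus$, so $\overline{F}(\pt)$ is the object of $\PP\Fus$ given by the unit of $\BrFus$ equipped with the module $\cC$.

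Second I would verify the underlying-theory condition of \Cref{defn:underlying}. Since $X$ is pointed and connected, the restriction of $k_3$ along the basepoint $\ast \to X$ (i.e.\ the inclusion of trivial bundles) is canonically nullhomotopic, so $(\al_{k_3})_\fra$ is canonically trivial; this canonical trivialization is precisely the $t$ supplied by \Cref{thm:k3}, and it coincides with the canonical trivialization of the projectivity that comes from $\cT\left[0\right] \simeq \ast$ in the fiber-product description of $\PP\Fus$. Pulling $\overline{F}$ back along $\Bord_3^\fra \to \Bord_3^X$ and composing with this trivialization produces, via \Cref{prop:linearize}, a framed linearization $F^\fra$; unwinding the triangle defining a linearization identifies $F^\fra$ with $F_{k_3}\comp t$, which is $F$ through $\theta$. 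Hence $F$ is the underlying framed theory of $\overline{F}$ in the sense of \Cref{defn:underlying}, which is exactly the assertion of the corollary.

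Since \Cref{thm:k3} and \Cref{prop:proj_anom} do essentially all of the work, the only point requiring care — and the reason this is a corollary rather than immediate — is the compatibility in the last step: checking that the canonical trivialization $t$ built into the $X$-anomaly of \Cref{thm:k3} matches the canonical trivialization of $s\comp\overline{F}$ induced by the contractibility of $\cT\left[0\right]$, coherently enough that the framed linearization of $\overline{F}$ reproduces $F$ up to coherent equivalence rather than merely agreeing on the value at the point. This is bookkeeping within the arrow category $\cT^\down$ and the equivalence $\Om\cT \simeq \cT\left[0\right] \htimes{\cT} \cT^\down \htimes{\cT} \cT\left[0\right]$ of \cite[Proposition 6.12]{JFS}, but it is where any content beyond \Cref{thm:k3} resides.
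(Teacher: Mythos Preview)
Your proposal is correct and takes essentially the same approach as the paper: the paper's proof is the single line ``\Cref{cor:proj_k3} follows from \Cref{cor:anom_symm}\ref{cor_proj} in the context of \Cref{thm:k3},'' and since \Cref{cor:anom_symm} is itself obtained by specializing \Cref{thm:anom_symm} (whose part~\ref{proj} is \Cref{prop:proj_anom}), you are unwinding exactly the same chain of reductions, with some additional bookkeeping on the underlying-theory condition that the paper leaves implicit.
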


\Cref{cor:proj_k3} follows from \Cref{cor:anom_symm}\ref{cor_proj} in the context of  \Cref{thm:k3}.

Let $c\left(\PPin\right)$ and $c\left(\SSpin\right)$ be the cocycles defined in 
\eqref{eqn:c_pin} and \eqref{eqn:c_spin}, which classify the anomaly theories
$\al_{c\left(\PPin\right)}$ and $\al_{c\left(\SSpin\right)}$ from
\eqref{eqn:anom_pin} and \eqref{eqn:anom_spin}.

\begin{cor}
Let $\cZ\left(\cA\right)$ be pointed, with underlying pre-metric group $\left(A ,
q\right)$.
The anomalous theory $F_{c\left(\PPin\right)}$ from \eqref{eqn:F_pin} is equivalent to
some projective theory:
\begin{equation*}
\overline{F} \colon \Bord_3^{B\O\left(A, q\right)} \to \PP\Fus 
\end{equation*}
with underlying theory $F$, in the sense of 
\Cref{defn:underlying}.

The same result holds when $F_{c\left(\PPin\right)}$ is replaced with
$F_{c\left(\SSpin\right)}$ from \eqref{eqn:F_spin}, and $\O\left(A\right)$ is replaced
with $\SO\left(A\right)$.
\label{cor:proj_pin}
\end{cor}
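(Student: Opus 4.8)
The plan is to proceed exactly as for \Cref{cor:proj_k3}, replacing \Cref{thm:k3} by \Cref{thm:pin_anom} (resp.\ \Cref{cor:spin_anom}) throughout. The key observation is that \Cref{thm:pin_anom} already packages $F$ together with its $\O\left(A,q\right)$-anomaly into a quadruple $\left(\al_{c\left(\PPin\right)},\, F_{c\left(\PPin\right)},\, t,\, \theta\right)$ in the sense of \Cref{defn:X_anom}: the invertible theory $\al_{c\left(\PPin\right)}$, the anomalous theory $F_{c\left(\PPin\right)}\colon \al_{c\left(\PPin\right)} \to 1_{B\O\left(A,q\right)}$, the canonical trivialization $t\colon 1_\fra \lto{\sim} \left(\al_{c\left(\PPin\right)}\right)_\fra$ recording that the underlying framed theory is the nonanomalous $F$, and the canonical equivalence $\theta\colon F\simeq F_{c\left(\PPin\right)}\comp t$.

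First I would invoke \Cref{prop:proj_anom} (equivalently \Cref{cor:anom_symm}\ref{cor_proj}, applied in the setting of \Cref{thm:pin_anom}): the anomalous TQFT $F_{c\left(\PPin\right)}$, being by definition a lax natural transformation from the invertible theory $\al_{c\left(\PPin\right)}$ to the unit, is the same datum as a functor into the arrow category $\BrFus^\down$ whose source is the invertible object underlying $\al_{c\left(\PPin\right)}$ and whose target is the unit. By \Cref{cor:PFus} this is precisely a projective TQFT $\overline{F}\colon \Bord_3^{B\O\left(A,q\right)} \to \PP\Fus$ with projectivity $\al_{c\left(\PPin\right)}$; the only points to check are that $\al_{c\left(\PPin\right)}$ is invertible (which holds by construction, since it factors through $B^4\units{\bk}\inj\BrFus$) and that the bimodule categories occurring in $F_{c\left(\PPin\right)}$ are finite semisimple, so that $\overline{F}$ lands in $\PP\Fus$ rather than merely in $\PP\Tens$.

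Next I would identify $F$ as the underlying framed theory of $\overline{F}$ in the sense of \Cref{defn:underlying}: restricting $\overline{F}$ to $\Bord_3^\fra$ and composing with the canonical trivialization $t$ of the anomaly over framed bordisms produces a linearization, which via $\theta$ is identified with $F$; this is exactly the commuting triangle of \Cref{defn:underlying}. The $\SO$-version then follows verbatim, substituting \Cref{cor:spin_anom} for \Cref{thm:pin_anom}, $\SO\left(A,q\right)$ for $\O\left(A,q\right)$, and $c\left(\SSpin\right)$ for $c\left(\PPin\right)$. I do not expect a genuine obstacle here: the content is entirely in the translation statements \Cref{prop:proj_anom} and \Cref{cor:anom_symm} together with the anomaly already produced in \Cref{thm:pin_anom}, and the one point that warrants care is confirming the factorization through the fusion-categorical projectivization $\PP\Fus$, which follows from the semisimplicity built into the construction of $F_{c\left(\PPin\right)}$ in \Cref{sec:anom_O4}.
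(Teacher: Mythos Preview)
Your proposal is correct and follows essentially the same route as the paper: the paper's proof is the one-liner ``follows from \Cref{thm:anom_symm}\ref{proj} in the context of \Cref{thm:pin_anom,cor:spin_anom},'' and the content of \Cref{thm:anom_symm}\ref{proj} is exactly \Cref{prop:proj_anom}, which is what you invoke. One small citation correction: your parenthetical ``equivalently \Cref{cor:anom_symm}\ref{cor_proj}'' is not quite apt here, since \Cref{cor:anom_symm} is specialized to the universal case $X=\pi_{\leq d}B\Aut$, whereas for $\PPin$ and $\SSpin$ the base is $B\O(A,q)$ (resp.\ $B\SO(A,q)$); the correct general reference is \Cref{thm:anom_symm}\ref{proj}, as the paper uses.
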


\Cref{cor:proj_pin} follows from \Cref{thm:anom_symm}\ref{proj} in the context of
\Cref{thm:pin_anom,cor:spin_anom}.

As in the general discussion in \Cref{sec:proj} (namely \Cref{prop:linearize} and
\Cref{defn:projective_TQFT})
a trivialization of any of the anomaly theories $\al_{k_3}$, $\al_{c\left(\PPin\right)}$, or
$\al_{c\left(\SSpin\right)}$
determines a linearization of $\overline{F}$, which tautologically agrees with the
trivialized anomalous theory $1\lto{\sim} \al \lto{F_c} 1$.
This is \Cref{thm:anom_symm}\ref{triv_proj} (or \Cref{cor:anom_symm}\ref{triv_cor_proj})
in this context. 

On the other hand, nontrivial cocycles on finite groups \emph{always} describe anomalies
of some $3$-dimensional TQFT by the following. 

\begin{cor}
For any nontrivial $\pi \colon BG \to B^4 \units{\bk}$,
there exists a $\pi$-finite space $X$ and nontrivial anomaly TQFT 
\begin{equation*}
\al_\pi \colon \Bord^{X}_4 \to \BrFus 
\end{equation*}
such that there exists a map $f \colon BG\to X$ such that 
the pullback of the cocycle classifying $\al$ 
along $f$ agrees with the cohomology class of $\pi$. 

Furthermore, there exists a nondegenerate braided fusion category $\cB$ such that
$\al_\pi$ is an $X$-anomaly of the Reshetikhin-Turaev theory associated to $\cB$.
\label{cor:nontriv_O4anomaly}
\end{cor}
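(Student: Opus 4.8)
The plan is to realize the prescribed $4$-cocycle as a $k$-invariant using \Cref{prop:nonzero_O4}, and then to recognize the resulting data as an anomaly of a Reshetikhin--Turaev theory via \Cref{thm:WRT} and \Cref{prop:proj_anom}. Concretely, I would apply \Cref{prop:nonzero_O4} to the nontrivial class $[\pi]\in H^4(BG,\units{\bk})$, obtaining a nondegenerate braided fusion category $\cB$ together with a monoidal functor $\rho\colon G\to\Aut_{\EqBr}(\cB)$ such that $O_4(\pi_0\comp\rho)=[\pi]$. Set $X\ceqq B\Aut_{\EqBr}(\cB)$. This is a connected pointed $2$-type, and it is $\pi$-finite: its two homotopy groups, namely the group of isomorphism classes of braided autoequivalences of $\cB$ and the group of tensor automorphisms of $\id_\cB$, are finite for any fusion category (cf.\ \cite{ENO:homotopy}). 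Let $c\colon X\to B^4\units{\bk}$ be the $k$-invariant of the fibration $B\Pic(\cB)\to B\Aut_{\EqBr}(\cB)$ with fiber $B^3\units{\bk}$ supplied by \cite[Theorem 5.2]{ENO:homotopy}, exactly as in the example following \Cref{thm:triv_anom_symm}, and define $\al_\pi\ceqq\al_c$ to be the associated anomaly theory of \Cref{sec:proj_anom}, i.e.\ the once-categorified invertible theory $\Bord_4^X\to\BrFus$ classified by $X\lto{c}B^4\units{\bk}\inj\BrFus$.

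Next I would take $f\ceqq B\rho\colon BG\to X$. Since $\rho$ itself provides a lift to $\pi_{\leq 2}B\Pic(\cB)\simeq B\Aut_{\EqBr}(\cB)$ of the underlying map $BG\to B\pi_1$, the defining formula for the degree-four obstruction (\Cref{sec:O4,sec:k3}) gives $f^*c=(B\rho)^*k_3=O_4(\pi_0\comp\rho)=[\pi]$. In particular $f^*c\neq 0$, so $c$ is a nontrivial cohomology class and hence $\al_\pi$ is a nontrivial anomaly TQFT, as required.

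For the final assertion, note that $\cB$ is nondegenerate, hence invertible in $\BrFus$ \cite{BJSS:invertible}, so the framed Reshetikhin--Turaev theory $\WRT_\cB\colon\Bord_3^\fra\to\PP\Fus$ is defined. The construction underlying \Cref{thm:WRT} (equivalently, the discussion in the example following \Cref{thm:triv_anom_symm} together with \Cref{thm:anom_symm}) upgrades $\WRT_\cB$ to a projective $X$-theory $\overline{\WRT_\cB}\colon\Bord_3^X\to\PP\Fus$ with projectivity $\al_c$ and underlying framed theory $\WRT_\cB$. By \Cref{prop:proj_anom}, such a projective $X$-theory is the same datum as an $X$-anomaly $\al_c\to 1_X$ of $\WRT_\cB$ in the sense of \Cref{defn:X_anom}. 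Hence $X$, $\al_\pi=\al_c$, $f=B\rho$, and $\cB$ satisfy all the asserted properties.

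The substantive content here is entirely absorbed into \Cref{prop:nonzero_O4}, whose proof rests on the classification of connected fusion $2$-categories; this is the step I expect to be the genuine obstacle, and it is why the statement is restricted to finite $G$. The remaining points --- $\pi$-finiteness of $B\Aut_{\EqBr}(\cB)$ and the bookkeeping of \Cref{defn:X_anom} placing the $3$-dimensional $\WRT_\cB$ relative to the $4$-dimensional $\al_c$ --- are routine, modulo keeping straight the two equivalent presentations (once-categorified $3$-dimensional versus truncated $4$-dimensional) of the anomaly theory.
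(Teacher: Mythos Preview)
Your proposal is correct and follows essentially the same approach as the paper: invoke \Cref{prop:nonzero_O4} to obtain $\cB$ and $\rho$, set $X=B\Aut_{\EqBr}(\cB)$, and identify the anomaly with the $k$-invariant pulled back along $B\rho$. The paper's own proof is in fact much terser than yours---it consists of two sentences citing \Cref{prop:nonzero_O4} and naming $X$---so your version supplies the details the paper leaves implicit, including the $\pi$-finiteness of $X$, the verification $f^*c=[\pi]$, and the argument for the WRT clause via \Cref{thm:WRT} and \Cref{prop:proj_anom}.
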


\begin{proof}
\Cref{prop:nonzero_O4} provides us with a braided fusion category $\cB$ associated to
$\pi$. 
Setting the $\pi$-finite space $X$ to be $B\Aut_{\EqBr}\left(\cB\right)$, 
the result follows from \Cref{prop:nonzero_O4}.
\end{proof}

\subsection{Dualizability and invertibility}
\label{sec:dual_inv}

It was shown in \cite{DSPS:dualizable} that 
the category of fusion categories $\Fus$ has duals.\footnote{Technically our definition of
$\Fus$ differs from the setting of finite semisimple abelian categories by an
$\Ind$-completion. However, as discussed in \Cref{rmk:fusion}, we can nontheless apply the
theorems of \cite{DSPS:dualizable,ENO:homotopy} in our context.}
Therefore the Cobordism Hypothesis tells us that, given a fusion category $\cC$, 
there is a uniquely defined fully-extended topological field theory 
\begin{equation}
\begin{tikzcd}[row sep=-2pt]
\Bord_3^\fra \ar{r}{F}&
\Fus \\
\pt \ar[mapsto]{r}&
\cC 
\end{tikzcd}
\label{eqn:C}
\end{equation}
This is sometimes called the 
\emph{Turaev-Viro theory} associated to a fusion category \cite{TV,FT:gapped}.

It was shown in \cite{BJS:dualizable} that the category of braided fusion categories
$\BrFus$ has duals.
Therefore, given a braided fusion category $\cA$, the Cobordism Hypothesis
provides a fully-extended topological field theory
$\Bord_4^\fra \to \BrFus$ sending the point to $\cA$. 
This is a framed version of the \emph{Crane-Yetter theory} associated to $\cA$
\cite{CY:4d_TQFT,FT:gapped}.

\begin{rmk}
For ease of discussion, let $\cA$ be the braided fusion category attached 
to a metric group $\left(A , q\right)$. 
Consider the space of invertible objects in $\cZ$, written $\cZ^\times$.
The fact that $\cZ$ was braided means we are able to deloop twice, to obtain a $3$-type
$B^2 \units{\cA}$.
The homotopy groups are all trivial except $\pi_2 = A$ and $\pi_3 = \units{\bk}$.
The only remaining information needed to specify this space is the $k$-invariant in 
$H^4\left(B^2 A , \units{\bk}\right)$.
It is a theorem of Eilenberg-MacLane 
\cite[Theorem 26.1]{EM:2}
that quadratic forms $q \colon A \to  \units{\bk}$ are classified by 
$\tau_q \in H^4\left(B^2 A , \units{\bk}\right)$.

Assuming \Cref{hyp:sigma}, we can construct the theory associated to the $\pi$-finite
space $B^2 A$ twisted by a cocycle representing $\tau_q$, which turns out to be precisely
the framed fully-extended Crane-Yetter theory from above sending the point to $\cA$.
This is discussed in \Cref{exm:sigma_BG} \ref{sigma_B2G}.
\label{rmk:q}
\end{rmk}

\subsubsection{Invertibitlity}
\label{sec:inv_3}

Invertibility of a braided tensor category was shown in \cite{BJSS:invertible} to be
equivalent to checking three conditions. 
When the braided tensor category is fusion, these conditions become equivalent so we only
have to check one. One of the conditions, \emph{non-degeneracy}, asks if the 
\emph{M\"uger-center}\footnote{The M\"uger center of a braided tensor category consists
of the objects which braid trivially with all other objects.} is trivial.

If $\cC$ is a fusion category such that $\cA \cong \cZ\left(\cC\right)$, then 
the M\"uger-center is known to be trivial \cite{Mug:Z2Z1,DGNO}, so 
the TQFT sending the point to $\cZ\left(\cC\right)$ is in fact invertible.

\subsection{WRT theories}
\label{sec:WRT}

Recall the description of the projective target category $\PP\Fus$ from \Cref{sec:PFus},
and the corresponding notion of a projective $3$-dimensional TQFT from \Cref{sec:3d_proj}.
Let $\cA$ be any nondegenerate braided fusion category. 
Note that the regular module $\cA_\cA$ defines a $1$-morphism in $\BrFus$ from $\cA$ to
$1$, and it is fully-dualizable as an object of the arrow category \cite{Ben:unit}.
Since $\cA$ is fusion and nondegenerate, it is also an invertible object of $\BrFus$
\cite{BJSS:invertible}.
In other words, the $1$-morphism $\cA_\cA \colon \cA \to 1$ is in the subcategory $\PP\Fus
\subset \BrFus^{\down 1}$.

Summarizing this discussion, the cobordism hypothesis implies that $\cA$ defines 
a fully-extended projective TQFT:
\begin{equation*}
\WRT_\cA \colon \Bord^\fra_3 \to \PP \Fus \ .
\end{equation*}
This is fully-extended anomalous Witten-Reshetikhin-Turaev (WRT)\footnote{
Oftentimes WRT theory denotes the linear TQFT defined out of the extended bordism category 
as in \Cref{rmk:resolve_anom}. As is explained there, these are equivalent.}
theory, with a caveat: it is
not oriented. This would be the data of a homotopy $\SO\left(3\right)$-fixed point
structure on this object of $\PP\Fus$.

The anomaly of $\WRT_{\cA}$ is, by definition, the TQFT:
\begin{equation*}
\CY_{\cA} \colon \Bord^\fra_4 \to \BrFus 
\end{equation*}
which is classified by sending the point to $\cA$. 
Recall this is a framed version of the \emph{Crane-Yetter theory} associated to $\cA$
\cite{CY:4d_TQFT,FT:gapped}.

As it turns out, $\WRT_\cA$ has a canonical projective action of the $2$-group
of braided autoequivalences of $\cA$.

\begin{theorem}
Let $\cA$ be a nondegenerate braided fusion category, and consider a monoidal functor
$\Phi \colon G\to \Aut_{\EqBr}\left(\cA\right)$. 
There is an anomalous TQFT 
\begin{equation*}
\WRT_\cA^G \colon \Bord^{BG}_3 \to \PP \Fus 
\end{equation*}
which agrees with $\WRT_\cA$ upon restriction to trivial $G$-bundles:
\begin{equation*}
\begin{tikzcd}
\Bord^{BG} \ar{dr}{\WRT_\cA^G} &
\\
\Bord^\fra\ar{u}\ar{r}{\WRT_\cA}
& \PP\Fus
\end{tikzcd}
\end{equation*}

Furthermore, the anomaly theory of $\WRT_\cA^G$, 
\begin{equation*}
\CY_\cA^G \colon \Bord_4^{BG} \to \BrFus 
\ ,
\end{equation*}
agrees with $\CY_\cA$ upon restriction to trivial $G$-bundles.
\label{thm:WRT}
\end{theorem}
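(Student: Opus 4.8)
The plan is to build both theories at once via the Cobordism Hypothesis for bordisms whose tangential structure is the trivial map $BG\to B\O(d)$, extracting the required $G$-action from $\Phi$. Recall (\eqref{eqn:CH_X}, \cite{L:CH}) that for this tangential structure a $BG$-theory valued in a symmetric monoidal target $\cT'$ with duals is the same datum as a map $BG\to(\cT')^\sim$ into the $\infty$-groupoid of fully dualizable objects, and that restriction along $\pt\hookrightarrow BG$ — i.e.\ along $\Bord_d^\fra\to\Bord_d^{BG}$ (trivial bundles) — corresponds to restricting such a map to the basepoint, recovering the underlying framed theory. Since $\cA\in\BrFus$ is invertible (\cite{BJSS:invertible}), hence fully dualizable, and $\cA_\cA\in\PP\Fus$ is fully dualizable as a $1$-morphism (\cite{Ben:unit}), it suffices to promote the two objects $\cA$ and $\cA_\cA$ to pointed maps out of $BG$, compatibly with the functor $s\colon\PP\Fus\to\units{\BrFus}$.

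First I would handle the $4$-dimensional anomaly theory. The monoidal functor $\Phi\colon G\to\Aut_{\EqBr}(\cA)$ deloops to a pointed map $B\Phi\colon BG\to B\Aut_{\EqBr}(\cA)$. A braided autoequivalence $\psi$ of $\cA$ is in particular an equivalence of $\cA$ in $\BrFus$, determining the invertible $1$-endomorphism of $\cA$ given by the regular bimodule with one central structure twisted by $\psi$, coherently in $\psi$; this yields a map $B\Aut_{\EqBr}(\cA)\to(\BrFus)^\sim$ pointed at $\cA$ (for nondegenerate $\cA$ this is the inclusion $\pi_{\leq 2}B\Pic(\cA)\hookrightarrow B\Pic(\cA)$ of \eqref{eqn:Pic_Eq}, via \cite{ENO:homotopy}). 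Composing with $B\Phi$ gives a pointed map $BG\to(\BrFus)^\sim$ landing among invertible (so fully dualizable) objects; by the Cobordism Hypothesis this classifies an invertible theory $\CY_\cA^G\colon\Bord_4^{BG}\to\BrFus$ whose restriction to trivial bundles is classified by $\cA$, i.e.\ is $\CY_\cA$.

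Next, the $3$-dimensional theory: the same construction lifts along $s$. A braided autoequivalence $\psi$ acts on $\cA_\cA\colon\cA\to 1$ by the (op)lax square of \Cref{cor:PFus} whose bottom edge is $\psi$ and whose filling is the tautological twisted intertwining bimodule identifying $\cA_\cA$ with $\cA_\cA$ precomposed by $\psi$; this is coherent because it is the image of the tautological coherent square under the functoriality of $(-)^\down$ of \cite{JFS}, and by construction it covers the previous action under $s$. We thus get a pointed map $B\Aut_{\EqBr}(\cA)\to(\PP\Fus)^\sim$ hitting $\cA_\cA$; composing with $B\Phi$ and applying the Cobordism Hypothesis produces $\WRT_\cA^G\colon\Bord_3^{BG}\to\PP\Fus$, whose restriction to trivial bundles is classified by $\cA_\cA$, i.e.\ is $\WRT_\cA$, so that $\WRT_\cA$ is the underlying framed theory in the sense of \Cref{defn:underlying}.

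Finally, since $s\colon\PP\Fus\to\units{\BrFus}$ intertwines the two $G$-actions by construction, the projectivity $\al=s\circ\WRT_\cA^G$ of the projective $G$-theory $\WRT_\cA^G$ is the invertible $BG$-theory classified by the very same datum — namely $\cA$ together with its $G$-action — that classifies $\CY_\cA^G$ (cf.\ \Cref{sec:proj_anom}); by \Cref{prop:proj_anom} this exhibits $\WRT_\cA^G$ equivalently as an anomalous $G$-theory with anomaly $\CY_\cA^G$, and restriction along $\pt\hookrightarrow BG$ recovers $\WRT_\cA$ relative to $\CY_\cA$. \emph{The main obstacle is making the two action maps precise and coherent}: that a braided autoequivalence of $\cA$ acts fully coherently on $\cA$ as an object of $\BrFus$ and on $\cA_\cA$ as an object of the (op)lax arrow category $\PP\Fus$, compatibly with $s$. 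Once the first is granted — it is essentially \eqref{eqn:Pic_Eq} together with the $\EE_2$-structure, as in \cite{ENO:homotopy, BJS:dualizable} — the second follows formally from the functoriality of $(-)^\down$ in \cite{JFS}, and the remainder is bookkeeping of pointed maps out of $BG$.
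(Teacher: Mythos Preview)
Your proposal is correct and follows essentially the same approach as the paper: both arguments invoke the Cobordism Hypothesis for $BG$-theories \eqref{eqn:CH_X}, send a braided autoequivalence $\psi$ to the twisted regular bimodule ${}_\cA\cA_{\psi(\cA)}$ to obtain a map $G\to\Aut_{\BrFus}(\cA)$, and then assemble this together with $\Phi$ into an action on $\cA_\cA$ in $\PP\Fus$ via the explicit lax square you describe. Your write-up is in fact somewhat more careful than the paper's on two points --- the coherence of the action (which you justify via the functoriality of $(-)^\down$ from \cite{JFS}) and the identification of the anomaly theory $\CY_\cA^G$ with the projectivity $s\circ\WRT_\cA^G$ --- whereas the paper leaves the ``Furthermore'' clause implicit in the construction.
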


\begin{proof}
Symmetric monoidal functors from $\Bord_3^{B\Aut_{\EqBr}\left(\cA\right)}$ to $\PP\Fus$
which restrict to $\WRT_\cA$ are classified by maps:
\begin{equation*}
B\Aut_{\EqBr}\left(\cA\right) \to B\Aut_{\PP\Fus} \left(1 \lto{\cA_\cA} \cA\right) \ .
\end{equation*}
There is a canonical functor:
\begin{equation*}
\Aut_{\EqBr}\left(\cA\right) \to \Aut_{\BrFus}\left(\cA\right)
\end{equation*}
given by sending a braided autoequivalence $\varphi$ to the identity bimodule twisted by
$\varphi$, written ${}_\cA \cA_{\varphi\left(\cA\right)}$.
Precomposing with the given monoidal functor $\Phi \colon G \to
\Aut_{\EqBr}\left(\cA\right)$ defines an action:
\begin{equation*}
\overline{\Phi} \colon G \to \Aut_{\BrFus}\left(\cA\right) \ .
\end{equation*}

Now notice that $\Phi$ and $\overline{\Phi}$ assemble together into a functor
\begin{equation}
G \to \Aut_{\PP \Fus}\left(\cA_\cA\right)
\end{equation}
which on objects sends $g\in G$ to the lax square
\begin{equation*}
\begin{tikzcd}
\cA \ar{d}{\cA_\cA} \ar{r}{\overline{\Phi}\left(g\right)}&
\cA \ar{d}{\cA_\cA} \\
\Vect \ar[equal]{r}\ar{ur}{\Phi\left(g\right)} &
\Vect
\end{tikzcd}
\end{equation*}
Note that this classifies a functor
\begin{equation*}
\WRT_\cA^G \colon \Bord^BG_3 \to \PP \Fus \ ,
\end{equation*}
and its restriction to trivial bundles sends the point to the regular module, and
therefore agrees with $\WRT_\cA$ by the cobordism hypothesis.
\end{proof}

\begin{rmk}
Since $\cA$ is nondegenerate, it is invertible in the $4$-category $\BrFus$, and therefore
we have equivalences:
\begin{equation*}
\Aut_{\BrFus}\left(\cA\right) \simeq \Aut_{\BrFus}\left(1 \right)\simeq \units{\Fus}
\simeq B^3 \units{\bk} \ .
\end{equation*}
The delooping of this map pulls back along the monoidal functor $G\to
\Aut_{\EqBr}\left(\cA\right)$ to a $4$-cocycle on $G$, which classifies the anomaly theory
$\CY_\cA^G$ as a functor out of $\Bord^{BG}_4$.
\end{rmk}

\begin{rmk}
Recall from \Cref{rmk:resolve_anom} that, given an anomalous TQFT, there is an extended
bordism category over which the anomaly can always be trivialized. 
As is explained in \cite{Wal:TQFT,Fre:aspects,BDSPV,Ben:WRT_CY}, 
if you lift from oriented $4$-dimensional bordism category to the bordism category of
oriented $4$-manifolds with $p_1$-structure, then the Crane-Yetter theory can be
trivialized. 
Another option is equipping the $4$-manifolds with a signature structure. 
Choices of the latter structure correspond to square roots of the central charge, and the
choices of the former correspond to sixth roots, so these relate to one another by a
factor of three \cite{BDSPV}.
\label{rmk:WRT_p1}
\end{rmk}

\begin{rmk}
If the anomaly theory happens to be trivializable as a functor out of $\Bord^{BG}$, then
the trivializations $1\lto{\sim} \CY^G$ will form a torsor over invertible theories of one
dimension lower. We will save a more detailed discussion of trivializations for the more
restricted examples of $3$-dimensional theories in the coming sections. 
\end{rmk}

\subsection{The anomaly and the center}
\label{sec:anom_Z}

Consider the following theory $\z$.
\begin{equation*}
\begin{tikzcd}[row sep=-2pt]
\Bord_4^\fra \ar{r}{\zeta}&
\BrFus \\
\pt \ar[mapsto]{r}&
\cZ\left(\cC\right)
\end{tikzcd}
\end{equation*}
Note that $\cC$ always defines a module ${}_{\cZ} \cC$ over its own center $\cZ\left(\cC\right)$.
By \cite[Theorem 7.15]{JFS}, this morphism ${}_{\cZ}\cC \colon 1 \to \cZ\left(\cC\right)$
in $\BrFus$ classifies a relative theory, 
i.e. a lax natural transformation (\Cref{rmk:lax}):
\begin{equation*}
F_\z \colon 1 \to \zeta \ .
\end{equation*}
By \Cref{sec:inv_3}, $\zeta$ is invertible, so we can think of $F_\z$ as an anomalous
theory. 

In fact, the Drinfeld center of a fusion category $\cC$ is trivial in $\BrFus$:
$\cC$ defines an equivalence between $1$ and $\cZ\left(\cC\right)$.
See the proof of \cite[Theorem 4.2]{BJSS:invertible}. 
For a general braided fusion category $\cA$, an $\cA$-central fusion category $\cC$ was
shown to give an equivalence between $\cA$ and $\Vect$ if and only if the natural map $\cA
\to \cZ\left(\cC\right)$ is an equivalence in \cite[Theorem 2.23]{JMPP}.

\begin{rmk}
Recall $\Pic\left(\cZ\left(\cC\right)\right)$ from \Cref{sec:higher_groupoids}.
Note that these modules are not $1$-morphisms in $\BrFus$ (they are only modules, not
central modules).
They are however $2$-morphisms in $\BrFus$.
In the language of defects, this is saying that $\Pic$ does not consist of domain walls,
but rather certain codimension-$2$ defects. 
In particular, consider a domain wall between the trivial theory and $\zeta$ labelled by
$\cZ\left(\cC\right)$ as a module over itself. 
Now the objects of $\Pic$ label self-interfaces between this domain wall.
\label{rmk:defects}
\end{rmk}

\subsection{Projective action of the braided automorphisms of the center}
\label{sec:anom_k3}

Fix a TQFT $F$ associated to $\cC \in \Fus$ as in \eqref{eqn:C}.
We will apply \Cref{cor:anom_symm} to this theory. 

Recall the notation from \Cref{sec:k3}. In particular, recall the class 
from \eqref{eqn:k3_EqBr}:
\begin{equation*}
k_3 \colon B\Aut_{\EqBr}\left(\cZ\left(\cC\right)\right) \to B^4 \units{\bk} \ .
\end{equation*}
In the notation of \Cref{sec:proj_anom}, we will let $X =
B\Aut_{\EqBr}\left(\cZ\left(\cC\right)\right)$ and $k = k_3$, so
\begin{equation*}
\widetilde{X} = B\Aut_{\Fus}\left(\cC\right) \ .
\end{equation*}
The upshot of this is that $\cC$ is a Schur object (\Cref{defn:ss}), and therefore
\Cref{cor:anom_symm} applies.

By the general discussion of \Cref{sec:proj_anom}, there is an anomaly theory, i.e. a once
categorified $3$-dimensional TQFT
\begin{equation*}
\al_{k_3} \colon \Bord^{B\Aut\left(\cZ\left(\cC\right)\right)}_3 \to 
\BrFus \ ,
\end{equation*}
classified by $k_3$.

\begin{rmk}
This is a situation where the anomaly theory $\al_{k_3}$ actually extends to a $\left(d+1\right)  =
4$-dimensional theory, i.e. a functor out of
$\Bord_4^{B\Aut\left(\cZ\left(\cC\right)\right)}$, as is discussed in \Cref{rmk:alt_anom}. 
\end{rmk}

\begin{theorem}
The framed TQFT $F$ has an $X= B\Aut_{\EqBr} \left(\cZ\left(\cC\right)\right)$-anomaly as in \Cref{defn:X_anom}. 
In particular, the fusion category $\cC$ itself defines an anomalous theory:
\begin{equation}
F_{k_3} \colon \alpha_{k_3} \to 1_{B\Aut\left(\cZ\left(\cC\right)\right)} \ .
\label{eqn:Fk3}
\end{equation}
Furthermore, if $\cZ\left(\cC\right)$ is pointed, and the cohomology class classifying the braiding of
$\cZ\left(\cC\right)$ (as in \Cref{rmk:q}) is nontrivial, then the anomaly $\al_{k_3}$ is nontrivial. 
\label{thm:k3}
\end{theorem}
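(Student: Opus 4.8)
The plan is to verify the two assertions of \Cref{thm:k3} separately, using the obstruction-theoretic machinery set up in \Cref{sec:obstruction} together with the translation between anomalies and Schur objects in \Cref{cor:anom_symm}. For the first assertion, I would first observe that $\cC$ is a Schur object of $\Fus$ in the sense of \Cref{defn:ss}: by \cite[Proposition 7.2, 7.3]{ENO:homotopy} we have $\Om^{3} \Aut_\Fus\left(\cC\right) \simeq \pi_3 B\Aut_\Fus\left(\cC\right) \cong \units{\bk}$, which is exactly the condition $\Om^{d}\Aut\left(\cS\right)\cong\units{\bk}$ for $d=3$. Since $\Fus$ has duals \cite{DSPS:dualizable} and $\Om^3\Fus\simeq\Vect$ so that $\Om^4\BrFus\simeq\bk$, the hypotheses of \Cref{cor:anom_symm} are met with $\cT=\BrFus$, $\Om\cT=\Fus$, $\widetilde{X}=B\Aut_\Fus\left(\cC\right)\simeq B\Pic\left(\cZ\left(\cC\right)\right)$, and $X=\pi_{\leq 2}B\Aut_\Fus\left(\cC\right)\simeq B\Aut_{\EqBr}\left(\cZ\left(\cC\right)\right)$ via \eqref{eqn:BrPic_Eq}, with projectivity class $k=k_3$ as in \eqref{eqn:k3_EqBr}. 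The conclusion of \Cref{cor:anom_symm}, item \ref{cor_anom}, then gives exactly the $X$-anomaly $F_{k_3}\colon\al_{k_3}\to 1_X$ asserted; this is essentially a citation of the universal-anomaly corollary applied to this $\cC$.

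For the second assertion, suppose $\cZ\left(\cC\right)$ is pointed with underlying pre-metric group $\left(A,q\right)$, and suppose the class $\tau_q\in H^4\left(B^2 A,\units{\bk}\right)$ classifying the braiding (via the Eilenberg--MacLane identification \cite[Theorem 26.1]{EM:2} recalled in \Cref{rmk:q} and \eqref{eqn:q}) is nontrivial. I want to show $\al_{k_3}$ is a nontrivial invertible theory, i.e. that $k_3$ is a nontrivial cohomology class on $B\Aut_{\EqBr}\left(\cZ\left(\cC\right)\right)$. The idea is to pull $k_3$ back along the canonical inclusion $B^2 A \to B\Aut_{\EqBr}\left(\cZ\left(\cC\right)\right)$ coming from the identification $\pi_2 B\Aut_{\EqBr}\left(\cZ\left(\cC\right)\right)\simeq A$ in the pointed case (\eqref{eqn:pointed_homotopy}, \Cref{rmk:Drinfeld_map}): this inclusion realizes $B^2 A$ as (a deloop of) the $2$-morphisms in $\BrFus$ that are tensor automorphisms of the identity of $\cZ\left(\cC\right)$. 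Under this pullback, the obstruction $k_3$ restricts to the $k$-invariant of the $3$-type $B^2\units{\cZ\left(\cC\right)}$, which by \Cref{rmk:q} is precisely $\tau_q$ up to the normalization appearing there. More precisely, the Whitehead bracket / Pontrjagin--Whitehead analysis of \cite[\S 8.7]{ENO:homotopy} identifies the restriction of $k_3$ to $B\pi_2$ with the quadratic form $q$ viewed as an element of $H^4\left(B^2 A,\units{\bk}\right)$; since $\tau_q\neq 0$ by hypothesis, $k_3$ restricts to a nonzero class, hence $k_3\neq 0$, hence $\al_{k_3}$ is nontrivial.

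The main obstacle I expect is the second step: making precise the claim that the restriction of the universal obstruction $k_3$ along $B^2 A\hookrightarrow B\Aut_{\EqBr}\left(\cZ\left(\cC\right)\right)$ recovers the class $\tau_q$ (equivalently $q$). This requires tracking through the construction of the $k$-invariant of the Postnikov tower in \eqref{eqn:towers} and knowing that the portion of that tower lying over $B\pi_2$ is exactly the two-stage tower $B^3\units{\bk}\to B^2\units{\cZ\left(\cC\right)}\to B^2 A$, whose classifying map is $\tau_q$ by \Cref{rmk:q}. One clean way to organize this: the fibration $B^3\units{\bk}\to B\Pic\left(\cZ\left(\cC\right)\right)\to B\Aut_{\EqBr}\left(\cZ\left(\cC\right)\right)$ of \cite[Theorem 5.2]{ENO:homotopy}, restricted over the subspace $B^2 A$, is the fibration $B^3\units{\bk}\to B^2\units{\cZ\left(\cC\right)}\to B^2 A$ (since $B^2\units{\cZ\left(\cC\right)}$ sits inside $B\Pic\left(\cZ\left(\cC\right)\right)$ as the codimension-$2$ part, \Cref{rmk:defects}), and naturality of the classifying map of a fibration then forces $k_3\vert_{B^2 A}$ to equal the classifying map of the latter, namely $\tau_q$. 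Granting this compatibility, nontriviality of $\tau_q$ immediately yields nontriviality of $\al_{k_3}$, completing the proof.
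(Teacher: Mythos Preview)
Your proposal is correct and follows essentially the same approach as the paper: the first assertion is obtained by observing that $\cC$ is a Schur object and invoking \Cref{cor:anom_symm}\ref{cor_anom}, and the second by restricting $k_3$ along the inclusion $B^2\pi_2\hookrightarrow B\Aut_{\EqBr}\left(\cZ\left(\cC\right)\right)$ and identifying the result with $\tau_q$ via the Postnikov/Whitehead tower \eqref{eqn:towers}. The paper packages the second step as a single commuting triangle (so that triviality of $k_3$ forces triviality of $\tau_q$), which is exactly the naturality-of-classifying-maps argument you spell out.
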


\begin{proof}
As remarked above, $\cC$ is a Schur object (\Cref{defn:ss}) of $\Fus$, so 
\Cref{cor:anom_symm} applies.
The fact that $F \colon \pt \mapsto \cC$ has an $X$-anomaly is then 
\Cref{cor:anom_symm}\ref{cor_anom}.

Recall from \Cref{rmk:q} that the class classifying the braiding is the same as the
$k$-invariant of $B^2\units{\cZ}$.
Also recall the Postnikov and Whitehead towers of $B\Aut_\Fus\left(\cC\right)$
from \eqref{eqn:towers}. The composition of $k_3$ with the map from the universal cover
agrees with the $k$-invariant of $B^2\units{\cZ}$ over $B^2 \pi_2$:
\begin{equation*}
\begin{tikzcd}
B^2 \cZ\left(\cC\right)^\times \ar{r}\ar{d}&
B\Aut_{\Fus}\left(\cC\right) \ar{d}&\\
B^2 \pi_2 \ar{r} \ar[swap]{drr}{\tau_q}&
B\Aut_{\EqBr}\left(\cZ\left(\cC\right)\right) \ar{dr}{k_3}&\\
&& B^4 \units{\bk}
\end{tikzcd}
\end{equation*}
where we write $\tau_q$ for the class corresponding to $q$ as in \Cref{rmk:q}.
From this diagram we see that trivial $k_3$ implies trivial $\tau_q$.
Therefore if $\tau_q$ is nontrivial, then $k_3$ is nontrivial, so $\al_{k_3}$ is
nontrivial. 
\end{proof}

\subsubsection{Trivializing the anomaly}

Recall the setting of \Cref{sec:k3}.
If we can trivialize $k_3$ over 
\begin{equation*}
f_2 \colon G_2 \to
B\Aut_{\EqBr}\left(\cZ\left(\cC\right)\right)\ ,
\end{equation*}
then the trivializations form a torsor
over $H^3\left(BG_2 , \units{\bk}\right)$, and a single trivialization determines
\begin{equation*}
1_{BG_2} \lto{\sim} \al_{f_2 \comp c} \to 1_{BG_2} \ .
\end{equation*}
which is a (non-anomalous) $G_2$-theory by \Cref{prop:1_X}. This is 
\Cref{cor:anom_symm}\ref{triv_cor_anom}. The equivalence with
\Cref{cor:anom_symm}\ref{triv_cor_proj} and \ref{triv_cor_symm} in this example will be
discussed in \Cref{sec:3d_proj,sec:3d_symm}.

\subsection{Projective action of the orthogonal group}
\label{sec:anom_O4}

Still fix $F \colon \pt \mapsto \cC$ \eqref{eqn:C}, 
as in \Cref{sec:dual_inv,sec:anom_Z,sec:anom_k3}.
Although now we will assume that we are in the setting of \Cref{sec:O4}.
I.e. we are assuming that the obstruction $O_3\left(f\right)$ from \eqref{eqn:O3},
associated to a map
\begin{equation*}
f \colon G\to \pi_1 B\Aut_\Fus\left(\cC\right) \ ,
\end{equation*}
is trivializable, and that we have chosen a trivialization $s$. 
Recall the class 
\begin{equation*}
O_4\left(f , s\right) \in H^4\left(BG , \units{\bk}\right)
\end{equation*}
from \eqref{eqn:O4}.

We will apply \Cref{thm:anom_symm} to this setting. 
I.e. let $\cT = \BrFus$, as in \Cref{sec:anom_k3}.
Set $X = BG$ and $c = O_4\left(f , s\right)$.
By the general discussion of \Cref{sec:proj_anom}, there is an anomaly theory
\begin{equation}
\al_{O_4\left(f,s\right)} \colon \Bord^{BG}_3 \to \BrFus
\label{eqn:anom_O4fs}
\end{equation}
classified by $O_4\left(f,s\right)$.

By definition, the space classified by $O_4\left(f,s\right)$ is the pullback:
\begin{equation*}
\begin{tikzcd}
\hofib\left(O_4\left(f,s\right)\right) \ar{r}\ar{d}&
B\Aut_\Fus\left(\cC\right) \ar{d} \\
BG \ar{r}{s}&
\pi_{\leq 2} B\Aut_{\Fus}\left(\cC\right)
\end{tikzcd}
\end{equation*}
The inclusion of the connected component corresponding to $\cC$ defines a map:
\begin{equation*}
B\Aut_{\Fus} \left(\cC\right) \to \Fus
\ ,
\end{equation*}
so composition with the map defined by the pullback gives us:
\begin{equation*}
\hofib\left(O_4\left(f,s\right)\right)
\to B\Aut_\Fus\left(\cC\right) \to \Fus \ ,
\end{equation*}
which classifies a TQFT:
\begin{equation*}
\Bord^{\hofib\left(O_4\left(f,s\right)\right)}_3 \to \Fus \ .
\end{equation*}
This theory is the extended TQFT associated to the anomalous theory 
\begin{equation*}
F_{O_4\left(f,s\right)} \colon \al_{O_4\left(f,s\right)} \to 1_{BG}
\end{equation*}
by \Cref{prop:alpha_c}.

\subsubsection{Pointed center}
\label{sec:anom_pointed}

Continuing the investigation of TQFTs of the form \eqref{eqn:C}, we now assume that
$\cZ\left(\cC\right)$ is pointed. 
As in \Cref{cor:class_Z}, this is classified by its group of simple objects and a
quadratic form $q\colon A \to \units{\bk}$ which describes the braiding.

Recall, from \Cref{sec:univ_O4}, that as long as $\abs{A}$ is odd, there is 
a canonical splitting $s$ of $B\Aut_{\EqBr}\left(\cZ\left(\cC\right)\right)$. 
Therefore there is a universal class defined in \eqref{eqn:pointed_O4}:
\begin{equation*}
O_4 = 
O_4\left(A , q\right) \in H^4\left(B\O\left( A , q\right) , \units{\bk}\right)
\ .
\end{equation*}
Write $\O\left( A \right) = \O\left( A , q\right)$ for
brevity. 

By the general discussion in \Cref{sec:proj_anom}, there is an anomaly classified by
$O_4$:
\begin{equation}
\al_{O_4} \colon \Bord^{B\O\left(A\right)} \to \BrFus \ .
\label{eqn:anom_O4}
\end{equation}

Recall the $3$-group $\LLip\left(A\right)$ defined in \Cref{sec:extensions}.
By definition, the space classified by $O_4$ is the pullback:
\begin{equation*}
\begin{tikzcd}
B\LLip \left(A\right)= 
\hofib\left(O_4\right) \ar{r}\ar{d}&
B\Aut_\Fus\left(\cC\right) \ar{d} \\
B\O\left(A \right) \ar{r}{s}&
\pi_{\leq 2} B\Aut_{\Fus}\left(\cC\right)
\end{tikzcd}
\end{equation*}
where $s$ is the canonical splitting 
\begin{equation*}
s \colon 
B\O\left(A\right) \to 
B\Aut_{\EqBr}\left(\cZ\left(\cC\right)\right)\simeq \pi_{\leq 2}
B\Aut_{\Fus}\left(\cC\right)
\ .
\end{equation*}
The inclusion of the connected component corresponding to $\cC$ defines a map:
\begin{equation*}
B\Aut_{\Fus} \left(\cC\right) \to \Fus
\ ,
\end{equation*}
so composition gives us a map:
\begin{equation}
B\LLip\left(A\right)
\to B\Aut_\Fus\left(\cC\right) \to \Fus \ ,
\label{eqn:class_anom_lip}
\end{equation}
which classifies a TQFT:
\begin{equation*}
F_{\LLip} \colon \Bord^{B\LLip\left(A\right)}_3 \to \Fus
\end{equation*}
by the Cobordism Hypothesis \eqref{eqn:CH_X}. 
This is the extended theory corresponding to the anomalous theory
\begin{equation*}
F_{O_4} \colon \al_{O_4} \to 1_{BG}
\end{equation*}
by \Cref{prop:alpha_c}.

The same holds when we restrict to the $3$-groups $\PPin\left(A\right)$ and $\SSpin\left(A\right)$ defined in
\Cref{sec:extensions} as follows.
In particular, recall the cohomology classes \eqref{eqn:c_pin} and \eqref{eqn:c_spin}:
\begin{align*}
c\left(\PPin\right) \in 
H^4\left(B\O\left(A , q\right) , \mu_{l^4}\right)
&&
c\left(\SSpin\right) \in H^4\left(B\SO\left(A , q\right) , \mu_{l^4} \right) \ .
\end{align*}
These classify anomaly theories, by the general discussion in \Cref{sec:proj_anom}:
\begin{align}
\al_{c\left(\PPin\right)} &\colon \Bord^{B\O\left(A\right)}_3 \to \BrFus 
\label{eqn:anom_pin}\\
\al_{c\left(\SSpin\right)} &\colon \Bord^{B\SO\left(A\right)}_3 \to \BrFus 
\label{eqn:anom_spin}
\ .
\end{align}
We have a map, e.g. from \eqref{eqn:3_norm}, $\PPin \left(A\right)\to \LLip\left(A\right)$.
We also have a map $\SSpin \left(A\right)\to \PPin\left(A\right)$ from \Cref{prop:spin}, which we can compose with the
map $\PPin \left(A\right)\to \LLip\left(A\right)$ to obtain a map $\SSpin
\left(A\right)\to \LLip\left(A\right)$. 
The upshot of this, is that we can compose these maps with \eqref{eqn:class_anom_lip} to
obtain functors:
\begin{align*}
\PPin \left(A\right)\to \Fus && \SSpin \left(A\right)\to \Fus
\end{align*}
which classify a $\PPin\left(A\right)$-TQFT and a $\SSpin\left(A\right)$-TQFT:
\begin{align*}
F_{\PPin} &\colon \Bord^{B\PPin\left(A\right)} \to \Fus  \\
F_{\SSpin} &\colon \Bord^{B\SSpin\left(A\right)} \to \Fus  
\end{align*}
by the Cobordism Hypothesis \eqref{eqn:CH_X}. 
These are the extended theories associated to the anomalous theories:
\begin{align*}
F_{c\left(\PPin\right)} &\colon \al_{c\left(\PPin\right)} \to 1_{\PPin\left(A\right)}  \\
F_{c\left(\SSpin\right)} &\colon \al_{c\left(\SSpin\right)} \to 1_{\SSpin\left(A\right)}
\end{align*}
by \Cref{prop:alpha_c},
where $1_{\PPin\left(A\right)}$ (resp. $1_{\SSpin\left(A\right)}$) denotes the trivial $\Fus$-valued TQFT defined on
$\Bord^{B\PPin\left(A\right)}$ (resp. $\Bord_3^{B\SSpin\left(A\right)}$).

Summarizing this discussion, we have the following. 

\begin{theorem}
Let $\cC$ be a fusion category with pointed Drinfeld center.
Recall this is classified by a polarized
metric group $\left(A , q , L\right)$ as in \Cref{cor:class_Z}.
The nonanomalous framed theory 
\begin{equation*}
F \colon \Bord^{\fra}_3 \to \Fus 
\end{equation*}
sending the point to $\cC$ has an $\O\left(A , q\right)$-anomaly in the sense of
\Cref{defn:X_anom}.
I.e. there is an anomaly theory
\begin{equation*}
\al_{c\left(\PPin\right)} \colon\Bord^{B\O\left(A , q\right)}_3 \to \BrFus \ ,
\end{equation*}
and $\cC$ canonically defines an anomalous $\O\left(A , q\right)$-TQFT:
\begin{equation}
F_{c\left(\PPin\right)} \colon \al_{c\left(\PPin\right)} \to 1_{B\O\left(A , q\right)} \ .
\label{eqn:F_pin}
\end{equation}
\label{thm:pin_anom}
\end{theorem}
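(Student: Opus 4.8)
The plan is to deduce the statement from \Cref{thm:k3} (equivalently \Cref{cor:anom_symm}) by pulling the universal $k$-invariant back along the canonical splitting available in the pointed case, and then refining the coefficients. First I would recall, as in the proof of \Cref{thm:k3}, that $\cC$ is a Schur object of $\Fus$ in the sense of \Cref{defn:ss} — the relevant point being that the top homotopy group of $B\Aut_\Fus\left(\cC\right)$ is $\pi_3 \cong \units{\bk}$ by \cite[Proposition 7.2, 7.3]{ENO:homotopy}. Hence \Cref{cor:anom_symm} applies with $\widetilde{X} = B\Aut_\Fus\left(\cC\right)$, $X = \pi_{\leq 2} B\Aut_\Fus\left(\cC\right) \simeq B\Aut_{\EqBr}\left(\cZ\left(\cC\right)\right)$, and $c = k_3$ the class from \eqref{eqn:k3_EqBr}; this furnishes the anomalous theory $F_{k_3}\colon \al_{k_3}\to 1_X$.

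Next I would feed in pointedness. Since $\cZ\left(\cC\right)$ is pointed with underlying metric group $\left(A , q\right)$, the degree-three obstruction $O_3$ vanishes and comes with a canonical trivialization, yielding the canonical splitting $s\colon B\O\left(A , q\right) \to B\Aut_{\EqBr}\left(\cZ\left(\cC\right)\right)$ of \Cref{sec:univ_O4} (following \cite[\S 6]{EG:reflection} and \cite[\S 5]{CGPW}). Precomposing $F_{k_3}$ with the functor $\Bord^{B\O\left(A,q\right)}_3 \to \Bord^X_3$ induced by $s$ — i.e. pulling the anomaly back along $s$ as in \eqref{eqn:Y_anom} — produces an anomalous $\O\left(A , q\right)$-theory relative to the anomaly theory classified by $s^* k_3 = O_4\left(A , q\right) \in H^4\left(B\O\left(A , q\right) , \units{\bk}\right)$, the universal class of \eqref{eqn:pointed_O4}. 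To justify the notation of the theorem I would then invoke \cite[Theorem 8.16]{ENO:homotopy}: the order of $O_4\left(A , q\right)$ divides $l^4$ with $l = \abs{L}$, so $O_4\left(A , q\right)$ lifts to a class $c\left(\PPin\right) \in H^4\left(B\O\left(A , q\right) , \mu_{l^4}\right)$ as in \eqref{eqn:c_pin}; since the composite $B^4 \mu_{l^4} \to B^4 \units{\bk} \inj \BrFus$ lands in scalars, $c\left(\PPin\right)$ and $O_4\left(A,q\right)$ classify one and the same once-categorified invertible theory $\al_{c\left(\PPin\right)}\colon \Bord^{B\O\left(A , q\right)}_3 \to \BrFus$, which by the discussion of \Cref{sec:proj_anom} is indeed invertible.

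It then remains to verify the data of \Cref{defn:X_anom}. The trivialization $t\colon 1_\fra \lto{\sim} \left(\al_{c\left(\PPin\right)}\right)_\fra$ is the canonical one: restricting $\al_{c\left(\PPin\right)}$ to trivial $\O\left(A,q\right)$-bundles is classified by the restriction of $c\left(\PPin\right)$ along the basepoint $\pt \to B\O\left(A , q\right)$, which is null. The equivalence $\theta\colon F \simeq F_{c\left(\PPin\right)}\comp t$ likewise comes from restricting to the basepoint, where the whole construction collapses to $\pt \to B\Aut_\Fus\left(\cC\right) \to \Fus$ selecting $\cC$, i.e. to $F$. For concreteness one can also package the extended theory as in \Cref{sec:anom_pointed}: the homotopy fiber $B\LLip\left(A\right) = \hofib\left(O_4\left(A,q\right)\right)$, composed with the inclusion $B\Aut_\Fus\left(\cC\right) \to \Fus$ of the component of $\cC$, classifies a TQFT $\Bord^{B\LLip\left(A\right)}_3 \to \Fus$ which, by \Cref{prop:alpha_c}, is the extended theory of $\al_{O_4\left(A,q\right)} \to 1_{B\O\left(A,q\right)}$; restricting along $B\PPin\left(A\right) \to B\LLip\left(A\right)$ from \eqref{eqn:3_norm} recovers $F_{c\left(\PPin\right)}\colon \al_{c\left(\PPin\right)} \to 1_{B\O\left(A , q\right)}$.

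\textbf{Main obstacle.} The genuinely substantive ingredients are imported: the vanishing of $O_3$ and the canonicity of the splitting $s$ in the pointed case (\cite{EG:reflection,CGPW}), and the bound on the order of $O_4$ (\cite[Theorem 8.16]{ENO:homotopy}). Within the present framework the work is essentially bookkeeping: checking that the quadruple $\left(\al_{c\left(\PPin\right)} , F_{c\left(\PPin\right)} , t , \theta\right)$ satisfies \Cref{defn:X_anom}, which is a diagram chase through the restriction-to-trivial-bundle identifications and \Cref{cor:anom_symm}, together with confirming that $s^* k_3$ agrees, under the identifications \eqref{eqn:Pic_Eq} and \eqref{eqn:BrPic_Pic}, with the class $O_4\left(A,q\right)$ of \eqref{eqn:pointed_O4} — which is immediate from the definitions in \Cref{sec:O4,sec:univ_O4}.
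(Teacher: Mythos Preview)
Your proposal is correct and follows essentially the same route as the paper: the paper's argument is the discussion in \Cref{sec:anom_pointed} (which the theorem merely ``summarizes''), and that discussion likewise pulls the universal class $k_3$ back along the canonical splitting $s$ of \Cref{sec:univ_O4} to obtain $O_4\left(A,q\right)$, constructs the extended theory via $B\LLip\left(A\right) \to B\Aut_\Fus\left(\cC\right) \to \Fus$, and then restricts along $B\PPin\left(A\right) \to B\LLip\left(A\right)$ using the refinement $c\left(\PPin\right)$ from \cite[Theorem~8.16]{ENO:homotopy}. Your write-up is a bit more explicit about checking the $t$ and $\theta$ data of \Cref{defn:X_anom} and about why $c\left(\PPin\right)$ and $O_4\left(A,q\right)$ classify the same invertible theory in $\BrFus$, but the strategy and ingredients are identical.
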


\begin{rmk}
As in \Cref{sec:anom_k3}, the anomalies $\al_{O_4\left(f,s\right)}$, and therefore
$\al_{O_4}$, $\al_{c\left(\PPin\left(A\right)\right)}$, and $\al_{c\left(\SSpin\right)}$, 
defined in \Cref{eqn:anom_O4fs,eqn:anom_O4,eqn:anom_pin,eqn:anom_spin} extend to 
$\left(d+1\right)  = 4$-dimensional theories, i.e. functors out of
$\Bord_4^{BG}$ (for $G = \O\left(A\right)$ or $\SO\left(A\right)$) as is discussed in
\Cref{rmk:alt_anom}.
\label{rmk:anom_extends}
\end{rmk}

\begin{cor}
Restricting the $\O\left(A,q\right)$-anomaly of \Cref{thm:pin_anom} to $\SO\left(A ,
q\right)$ we obtain an $\SO\left(A , q\right)$-anomaly of $F$ 
\begin{equation*}
\al_{c\left(\SSpin\right)} \colon \Bord^{B\SO\left(A , q\right)}_4 \to \BrFus \ ,
\end{equation*}
and an anomalous $\SO\left(A , q\right)$-equivariant theory 
\begin{equation}
F_{c\left(\SSpin\right)} \colon \al_{c\left(\SSpin\right)} \to 1_{B\SO\left(A , q\right)} \ .
\label{eqn:F_spin}
\end{equation}
\label{cor:spin_anom}
\end{cor}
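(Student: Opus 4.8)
The plan is to obtain the $\SO(A,q)$-case purely by restriction of tangential structure from the $\O(A,q)$-case of \Cref{thm:pin_anom}. Write $\iota \colon \SO(A,q) \inj \O(A,q)$ for the inclusion from \eqref{eqn:SO}, and $\iota_* \colon \Bord_d^{B\SO(A,q)} \to \Bord_d^{B\O(A,q)}$ for the symmetric monoidal functor it induces on bordism categories, for $d = 3$ and---by \Cref{rmk:anom_extends}---also for $d = 4$. By the very definition of $c(\SSpin)$ in \eqref{eqn:c_spin}, this class is the pullback of $c(\PPin)$ from \eqref{eqn:c_pin} along $B\iota$. First I would push both classes forward along $\mu_{l^4} \inj \units{\bk}$ and invoke the construction of the anomaly theory in \Cref{sec:proj_anom}: since $\al_c$ is classified by $X \lto{c} B^4\units{\bk} \inj \BrFus$ via the Cobordism Hypothesis \eqref{eqn:CH_X}, naturality of the Cobordism Hypothesis in the tangential structure yields
\begin{equation*}
\al_{c(\SSpin)} \simeq \al_{c(\PPin)} \comp \iota_* \colon \Bord_4^{B\SO(A,q)} \to \BrFus \ .
\end{equation*}

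Next I would precompose the relative theory $F_{c(\PPin)} \colon \al_{c(\PPin)} \to 1_{B\O(A,q)}$ supplied by \Cref{thm:pin_anom} with $\iota_*$. Recall from \Cref{rmk:lax} that such a relative theory is a functor into the arrow category $\BrFus^\down$ whose composites with the source and target functors recover $\al_{c(\PPin)}$ and $1_{B\O(A,q)}$; precomposing with $\iota_*$ preserves this, and since $1_{B\O(A,q)} \comp \iota_* = 1_{B\SO(A,q)}$ and $\al_{c(\PPin)} \comp \iota_* \simeq \al_{c(\SSpin)}$, this produces exactly the anomalous theory
\begin{equation*}
F_{c(\SSpin)} \colon \al_{c(\SSpin)} \to 1_{B\SO(A,q)}
\end{equation*}
of \eqref{eqn:F_spin}. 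Equivalently, this is the theory already assembled in \Cref{sec:anom_pointed} by composing $B\SSpin(A) \to B\PPin(A) \to B\LLip(A) \to B\Aut_\Fus(\cC) \to \Fus$ with \Cref{prop:alpha_c}, so I would simply check the two descriptions agree.

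Finally, to package this as an $\SO(A,q)$-anomaly of $F$ in the sense of \Cref{defn:X_anom}, I would observe that the restriction functor $\Bord_3^\fra \to \Bord_3^{B\O(A,q)}$ underlying \Cref{thm:pin_anom} factors through $\Bord_3^{B\SO(A,q)}$ via $\iota_*$. Pulling back the canonical trivialization $t \colon 1_\fra \lto{\sim} \left(\al_{c(\PPin)}\right)_\fra$ and equivalence $\theta \colon F \simeq F_{c(\PPin)} \comp t$ from \Cref{thm:pin_anom} along this intermediate restriction gives the corresponding data for $\al_{c(\SSpin)}$ with underlying framed theory still $F$, completing the required quadruple.

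I do not expect a genuine obstacle here: the content is functoriality of the whole projectivization/anomaly package of \Cref{sec:projective_TQFTs,sec:anomalies} in the tangential structure. The only point needing care is the bookkeeping that $t$ and $\theta$ restrict compatibly, so that the underlying framed theory of $F_{c(\SSpin)}$ is $F$ on the nose rather than merely framed-equivalent to it.
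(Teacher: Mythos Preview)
Your proposal is correct and takes essentially the same approach as the paper: the corollary is stated without an explicit proof because it follows immediately by restricting the data of \Cref{thm:pin_anom} along $\SO(A,q) \inj \O(A,q)$, exactly as you spell out. The paper has already set up all the pieces you use---the definition of $c(\SSpin)$ as the pullback \eqref{eqn:c_spin}, the anomaly theory \eqref{eqn:anom_spin}, and the extended theory $F_{\SSpin}$ via \Cref{prop:alpha_c}---so your more detailed unpacking is a faithful elaboration of what the word ``restricting'' in the statement is meant to convey.
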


\subsubsection{Trivializing the anomaly}
\label{sec:triv_O4}

As in \Cref{sec:triv_anom}, we ask if we can trivialize the anomaly from
\Cref{thm:pin_anom} (resp. \Cref{cor:spin_anom}) after pulling back along a map 
$f \colon G \to \O\left(A\right)$ (resp. $f\colon G \to \SO\left(A\right)$) for $G$ a
finite group.

The map $f$ defines a new anomaly theory $\al_{f^* c\left(\PPin\right)}$:
\begin{equation*}
\begin{tikzcd}
\Bord_3^{BG} \ar{d}{f} \ar{dr}{\al_{f^* c\left(\PPin\right)}} & \\
\Bord_3^{B\PPin\left(A\right)} \ar[swap]{r}{\al_{c\left(\PPin\right)}}&
\BrFus
\ .
\end{tikzcd}
\end{equation*}
Trivializing the class $f^* c\left(\PPin\right)$ is equivalent to trivializing the anomaly
$\al_{f^* c\left(\PPin\right)}$, which yields 
a (non-anomalous) $G$-theory by \Cref{prop:1_X}
\begin{equation*}
1_{BG} \to \al_{f^*c\left(\PPin\right)} \to 1_{BG} \ .
\end{equation*}

In addition to a fusion category $\cC$, the data defining a theory $\Bord^{BG}_3\to \Fus$
includes of a coherent action of $G$ on $\cC$. 
This means each element $g\in G$ is assigned a bimodule $M_g$, along with equivalences 
between the composition $\cM_h \tp \cM_g$ and $\cM_{gh}$ (giving rise to vanishing $O_3$
as in \Cref{sec:O3}) and associators giving rise to vanishing $O_4$. (See
\cite[\S 7,8]{ENO:homotopy} for details.)
This makes 
\begin{equation}
\bdsum_{g\in G} M_g
\label{eqn:G_ext}
\end{equation}
itself a fusion category, which is called a $G$-extension of $\cC$.

\begin{rmk}
\cite[Theorem 1.3]{ENO:homotopy} associates a $G$-extension of $\cC$ to a trivialization
of $O_3$ and $O_4$.
If we use this trivialization to trivialize $\al$, the $G$-extension of
\cite{ENO:homotopy} is precisely \eqref{eqn:G_ext} from the $G$-theory 
$1_{BG} \to \al_{O_4} \to 1_{BG}$.
\label{rmk:ENO}
\end{rmk}

\subsection{Functorial assignment of TQFTs to a vector space over a finite field}
\label{sec:Fp_quant}

Let $L \simeq \FF_p^n$ be a vector space over a finite field $\FF_p$
($p \neq 2$).
The fusion category $\cC = \left(\Vect\left[L\right] , *\right)$ defines a theory:
\begin{equation*}
F \colon \Bord_3^\fra \to \Fus\ .
\end{equation*}
Note that 
\begin{equation*}
\cZ\left(\cC\right) \simeq \Vect\left[\FF_p^{2n}\right]
\end{equation*}
with braiding induced by the quadratic form $q_{\splitt}$ on $V \simeq \FF_p^{2n}$ of
signature $\left(n,n\right)$.
Write $\O\left(n,n;\FF_p\right)$ for the split orthogonal group over $\FF_p$. 
Recall this agrees with $\O\left(V , q_{\splitt}\right)$ (\Cref{obs:O}).

Recall from \Cref{sec:ffield_O4} that 
the obstruction $O_4\left(\FF_p^{2n} , q_{\splitt}\right)$ is shown to vanish in this
case in \cite[Theorem 6.1]{EG:reflection}.
Therefore we have the following by \Cref{thm:pin_anom}.

\begin{cor}
There is a canonically defined $H^3\left(B\O \left(n,n;\FF_p\right) ,
\units{\bk}\right)$-torsor of $\O \left(n,n;\FF_p\right)$-TQFTs
\begin{equation*}
F_{O_{n,n}} \colon
\Bord^{B\O \left(n,n; \FF_p\right)}_4 \to \Fus
\ .
\end{equation*}
\label{cor:Fp}
\end{cor}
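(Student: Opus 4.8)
The plan is to deduce the corollary directly from \Cref{thm:pin_anom} and \Cref{cor:anom_symm}, with the single nontrivial external input being the vanishing of the degree-four obstruction over $\FF_p$.

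First I would set up the data. Take $\cC = \left(\Vect\left[L\right] , *\right)$ with $L \simeq \FF_p^n$ and $p$ odd. By \Cref{sec:lagrangian} and \Cref{sec:ffield_O4} the Drinfeld center is pointed, $\cZ\left(\cC\right) \simeq \left(\Vect\left[\FF_p^{2n}\right] , * , \beta_q\right)$, classified by the metric group $\left(A , q\right) = \left(\FF_p^{2n} , q_{\splitt}\right)$, and by \Cref{obs:O} we have $\O\left(A , q\right) = \O\left(n , n ; \FF_p\right)$. Since $p \neq 2$, the order $\abs{A} = p^{2n}$ is odd, so the hypotheses of \Cref{sec:univ_O4} hold: $O_3$ admits a canonical splitting $s$, and the universal class $O_4\left(\FF_p^{2n} , q_{\splitt}\right) \in H^4\left(B\O\left(n,n;\FF_p\right) , \units{\bk}\right)$ of \eqref{eqn:pointed_O4}, with its lift $c\left(\PPin\right)$ of \eqref{eqn:c_pin}, is canonically attached to $\cC$. \Cref{thm:pin_anom} then supplies the anomaly theory $\al_{c\left(\PPin\right)} \colon \Bord^{B\O\left(n,n;\FF_p\right)}_3 \to \BrFus$ and the canonical anomalous theory $F_{c\left(\PPin\right)} \colon \al_{c\left(\PPin\right)} \to 1$ determined by $\cC$.

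Next I would invoke the key fact: by \cite[Theorem 6.1]{EG:reflection}, as recalled in \Cref{sec:ffield_O4}, the class $O_4\left(\FF_p^{2n} , q_{\splitt}\right)$ vanishes. As a $\BrFus$-valued invertible theory, $\al_{c\left(\PPin\right)}$ is classified by the image of $c\left(\PPin\right)$ under $\mu_{l^4} \inj \units{\bk}$, which is exactly $O_4\left(\FF_p^{2n} , q_{\splitt}\right)$; so this vanishing means $\al_{c\left(\PPin\right)}$ is trivializable. Given a trivialization $t \colon 1 \lto{\sim} \al_{c\left(\PPin\right)}$, the composite $1 \lto{t} \al_{c\left(\PPin\right)} \lto{F_{c\left(\PPin\right)}} 1$ is, by \Cref{prop:1_X}, a non-anomalous $B\O\left(n,n;\FF_p\right)$-theory valued in $\Fus$; this is $F_{O_{n,n}}$, equivalently a fully coherent $\O\left(n,n;\FF_p\right)$-action on $\cC$, with $\bdsum_{g} M_g$ the corresponding extension of \cite{ENO:homotopy} as in \Cref{rmk:ENO}.

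Finally, for the torsor statement I would note that trivializations of $O_4\left(\FF_p^{2n} , q_{\splitt}\right)$ — i.e. nullhomotopies of the classifying map $B\O\left(n,n;\FF_p\right) \to B^4 \units{\bk}$ — form, since one exists, a torsor over $\left[B\O\left(n,n;\FF_p\right) , B^3 \units{\bk}\right] = H^3\left(B\O\left(n,n;\FF_p\right) , \units{\bk}\right)$, which is the torsor appearing in \Cref{cor:anom_symm}\ref{triv_cor_anom}. By \Cref{cor:anom_symm}\ref{triv_cor_proj} and the discussion of \Cref{sec:3d_proj}, distinct trivializations give (in general distinct) theories $F_{O_{n,n}}$, and every ingredient — the metric group, the splitting $s$, the classes $O_4$ and $c\left(\PPin\right)$, the relative theory $F_{c\left(\PPin\right)}$, and hence the $H^3$-torsor of trivialized theories — depends only on $\cC$ (indeed only on $\left(A , q_{\splitt}\right)$), which is the asserted canonicity. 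The only real work is the hypothesis-checking above — the odd-order condition enabling the canonical splitting and the identification $\O\left(\FF_p^{2n} , q_{\splitt}\right) = \O\left(n,n;\FF_p\right)$ — since the homotopy-theoretic machinery of \Cref{thm:pin_anom}/\Cref{cor:anom_symm} and the vanishing of $O_4$ from \cite{EG:reflection} carry the genuine content; so there is no substantial obstacle beyond this bookkeeping.
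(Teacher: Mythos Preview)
Your proof is correct and follows essentially the same approach as the paper: the paper's argument is simply the observation that $O_4\left(\FF_p^{2n}, q_{\splitt}\right)$ vanishes by \cite[Theorem 6.1]{EG:reflection}, whence \Cref{thm:pin_anom} gives the result. You have spelled out more of the bookkeeping (the odd-order hypothesis, the identification of the orthogonal group, and the torsor of trivializations via \Cref{cor:anom_symm}), but the logical structure is identical.
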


Recall the analogy between this categorical representation of $\O \left(n,n;\FF_p\right)$
and the Weil representation of the metaplectic group in \Cref{rmk:heis}. 
\Cref{cor:Fp} can be thought as an analogue of the fact that the Weil representation
splits over a finite field \cite{GH:finite,GH:categorical}.

\subsection{Module structures on \texorpdfstring{$3$d}{three-dimensional} TQFTs}
\label{sec:3d_symm}

To describe the projectivity captured by the anomalies in
\Cref{thm:k3,thm:pin_anom,cor:spin_anom} in terms of twisted module structures as in
\Cref{thm:anom_symm,cor:anom_symm}, we need to assume \Cref{hyp:sigma} for our
$4$-dimensional target $\cT$.

There is latitude in the choice of target. 
One option is to use $\cT = \BrFus$. This target is sufficient to support our anomaly
theories 
$\al_{k_3}$, $\al_{c\left(\PPin\right)}$, and 
$\al_{c\left(\SSpin\right)}$ from \Cref{thm:k3,thm:pin_anom,cor:spin_anom}.

Whenever a $4$-dimensional target $\cT$ satisfies:
\begin{enumerate}[label = (\alph*)]
\item $\Om \cT = \End_\cT\left(1\right) \simeq \Fus$, and 
\label{om}
\item there is a fully-faithful functor $\BrFus \to \cT$, 
\label{ff}
\end{enumerate}
we can regard the anomaly theories $\al_{k_3}$, $\al_{c\left(\PPin\right)}$, and
$\al_{c\left(\SSpin\right)}$ from \Cref{thm:k3,thm:pin_anom,cor:spin_anom} as valued
in $\cT$. 

\begin{exm}
As is remarked in \cite{DR:2Fus}, one can define an $\left(\infty , 4\right)$-category of
algebras in the $3$-category of finite semisimple $2$-categories (with bimodules, linear
$2$-functions, natural transformations, and modifications).
It is conjectured in \cite{DR:2Fus} that $2$-fusion categories are the fully-dualizable
objects of this $\left(\infty , 4\right)$-category.

Finite nonabelian topological $4$-dimensional $G$-gauge theory is simply classified by
sending the point to the fusion $2$-category $2\Vect\left[G\right]$ of $G$-graded
$2$-vector spaces \cite[Construction 2.1.13]{DR:2Fus}.
\end{exm}

Let $\cC$ be an object of the Morita $3$-category of fusion categories. 
As in \eqref{eqn:C}, this classifies a 
framed theory $F \colon \pt \mapsto \cC$.

\begin{cor}
Assuming \Cref{hyp:sigma} for either $\cT = \BrFus$ or another target $\cT$ satisfying
\ref{om} and \ref{ff}, 
the framed theory $F$ has a canonical
$\left(\sigma_{B\Aut_\EqBr\left(\cZ\left(\cC\right)\right), k_3}^4 ,
\rho\right)$-module structure.
\label{cor:symm_k3}
\end{cor}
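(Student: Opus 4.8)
The plan is to deduce \Cref{cor:symm_k3} directly from \Cref{cor:anom_symm}\ref{cor_symm} (equivalently \Cref{thm:anom_symm}\ref{symm}), once we have verified that the hypotheses of that corollary hold for the object $\cC$ and the target $\cT$. So the proof is essentially a bookkeeping argument: we must check (1) that $\cC$ is a Schur object of $\Om\cT$, (2) that the data $\widetilde X = B\Aut_{\Fus}(\cC)$, $X = B\Aut_{\EqBr}(\cZ(\cC))$, $c = k_3$ fits the template of \Cref{sec:univ_anom}, and (3) that \Cref{hyp:sigma} is available for $\cT$. Steps (1) and (2) are exactly what was established in the proof of \Cref{thm:k3}: $\Om^{3}\Aut_{\Fus}(\cC)\cong\pi_3 B\Aut_{\Fus}(\cC)\cong\units{\bk}$ by \cite[Proposition 7.2, 7.3]{ENO:homotopy}, so $\cC$ is Schur in the sense of \Cref{defn:ss}; and $\pi_{\leq 3}B\Aut_{\Fus}(\cC) = B\Aut_{\Fus}(\cC)$ itself (it is already a $3$-type) with $\pi_{\leq 2}B\Aut_{\Fus}(\cC)\simeq B\Aut_{\EqBr}(\cZ(\cC))$ by \eqref{eqn:BrPic_Eq}, so the $k$-invariant $k$ from \eqref{eqn:k} is precisely $k_3$ from \eqref{eqn:k3_EqBr}. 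Thus $X$, $\widetilde X$, $c$ are as claimed.

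Next I would address the choice of target. For $\cT = \BrFus$ we have $\Om\cT\simeq\Fus$ by construction (\Cref{sec:fusion_prelim}), and $\Om^{4}\BrFus\simeq\Om^{3}\Fus\simeq\units{\bk}$, so the standing assumption $\Om^{d+1}\cT = \bk$ with $d = 3$ holds, and $\BrFus$ has duals by \cite{BJS:dualizable}; for a general $\cT$ satisfying \ref{om} and \ref{ff}, the fully-faithful functor $\BrFus\to\cT$ lets us push the once-categorified anomaly theory $\al_{k_3}\colon\Bord_3^{X}\to\BrFus$ forward to a theory valued in $\cT$ without changing its content, and \ref{om} guarantees $\Om\cT\simeq\Fus$ so that $F\colon\pt\mapsto\cC$ is still a theory into $\Om\cT$ and $\cC$ is still Schur in $\Om\cT$. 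In either case \Cref{hyp:sigma} is assumed outright in the statement, which is what makes \Cref{prop:sigma_mod} (hence clause \ref{cor_symm} of \Cref{cor:anom_symm}) applicable.

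With all hypotheses in place, the conclusion is immediate: \Cref{cor:anom_symm}\ref{cor_symm} says that the Schur object $\cS = \cC$ uniquely determines a $\left(\sigma_{X, k},\rho_{X, k}\right)$-module structure on $F$, where $X = \pi_{\leq 3}B\Aut_{\Om\cT}(\cC) = B\Aut_{\EqBr}(\cZ(\cC))$ and $k = k_3$. Translating the notation, this is exactly a canonical $\left(\sigma^4_{B\Aut_{\EqBr}(\cZ(\cC)), k_3},\rho\right)$-module structure on $F$, which is the assertion of \Cref{cor:symm_k3}.

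The only real subtlety — and the step I expect to be the main obstacle — is making sure that the quiche $\sigma^4_{X, k_3}$ produced by \Cref{hyp:sigma}/\Cref{sec:sigma} for the target $\cT$ is genuinely the same datum regardless of which admissible $\cT$ one picks, i.e. that the fully-faithful embedding $\BrFus\to\cT$ identifies the twisted topological gauge theory $\sigma^4_{X, k_3}$ valued in $\BrFus$ with the one valued in $\cT$. This is a compatibility between $\Cref{hyp:sigma}$ for $\cT$ and for $\BrFus$ under the embedding; it follows from the naturality built into $\Cref{hyp:sigma}$ (the commuting square of \Cref{rmk:hyp:sigma}), since $\al_{k_3}$ factors through the $B^4\units{\bk}$ of scalars, which is preserved by any symmetric monoidal fully-faithful functor, but it should be spelled out. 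Everything else is a direct citation of \Cref{cor:anom_symm}.
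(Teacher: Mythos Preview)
Your proposal is correct and takes essentially the same approach as the paper: the paper's proof is a single sentence invoking the anomalous theory $F_{k_3}$ from \eqref{eqn:Fk3} and then applying \Cref{cor:anom_symm}\ref{cor_symm}, which is exactly your core argument. Your version simply unpacks the verification that $\cC$ is Schur and that $(X,\widetilde{X},c)=(B\Aut_{\EqBr}(\cZ(\cC)),\,B\Aut_{\Fus}(\cC),\,k_3)$, all of which the paper already recorded in the proof of \Cref{thm:k3}; the additional discussion of the target $\cT$ and the compatibility of $\sigma^4_{X,k_3}$ under the embedding $\BrFus\to\cT$ is a reasonable elaboration the paper leaves implicit.
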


\begin{proof}
On account of the anomalous theory \eqref{eqn:Fk3}, 
\Cref{cor:anom_symm} \ref{cor_symm} implies the result. 
\end{proof}

Let $G_2$ be a finite $2$-group (i.e. $BG_2$ is a $\pi$-finite $2$-type), and assume we
have chosen a trivialization of $f^* k_3$, where $f \colon B G_2 \to X$.
Assuming \Cref{hyp:sigma}, by \Cref{cor:anom_symm}\ref{triv_cor_symm}, we can reduce 
(in the sense of \Cref{defn:reduction}) the canonical 
$\left(\sigma_{B\Aut_{\Fus}}\left(\cC\right) , \rho\right)$-module structure on $F$
in \Cref{cor:symm_k3}
to a $\left(\sigma_{BG_2} , \rho_{BG_2}\right)$-module structure.
This in particular yields a $G_2$-theory by \Cref{prop:sigma_mod}, which agrees with 
the $BG_2$ theory obtained by trivializing the anomaly.

\begin{cor}
Assuming \Cref{hyp:sigma} for either $\cT = \BrFus$ or another target $\cT$ satisfying
\ref{om} and \ref{ff}, 
there is a $\left(\sigma_{B\PPin\left(A\right)} ,
\rho_{B\PPin\left(A\right)}\right)$-module structure on $F$.
Similarly, there is a $\left(\sigma_{B\SSpin\left(A\right)} ,
\rho_{B\SSpin\left(A\right)}\right)$-module structure on $F$.
\label{cor:symm_O4}
\end{cor}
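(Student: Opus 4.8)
The plan is to obtain this as a corollary of \Cref{thm:anom_symm} and \Cref{thm:triv_anom_symm} (equivalently, of the three bullets of \Cref{cor:anom_symm}), fed the anomalous theories already produced in \Cref{thm:pin_anom} and \Cref{cor:spin_anom}. First I would invoke \Cref{thm:pin_anom}: the framed theory $F\colon \pt \mapsto \cC$ carries an $\O\left(A , q\right)$-anomaly $F_{c\left(\PPin\right)} \colon \al_{c\left(\PPin\right)} \to 1_{B\O\left(A , q\right)}$ in the sense of \Cref{defn:X_anom}, whose underlying cocycle is the image in $\units{\bk}$-cohomology of $c\left(\PPin\right)\in H^4\left(B\O\left(A , q\right) , \mu_{l^4}\right)$. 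Since $A$ is finite, the space $X = B\O\left(A , q\right)$ is $\pi$-finite, so under the hypothesis \Cref{hyp:sigma} for $\cT = \BrFus$ --- or, through the fully-faithful embedding $\BrFus \hookrightarrow \cT$, for any $\cT$ satisfying the two conditions of the corollary, which suffices because $\al_{c\left(\PPin\right)}$ is valued in $\BrFus$ --- \Cref{thm:anom_symm}\ref{symm} upgrades this anomaly to a $\left(\sigma_{B\O\left(A , q\right),\, c\left(\PPin\right)}^4 , \rho\right)$-module structure on $F$: the ``$\O\left(A , q\right)$-gauge theory twisted by $c\left(\PPin\right)$'' acts on $F$.

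Next I would pass from this twisted $\O\left(A , q\right)$-gauge theory to the untwisted gauge theory of the $3$-group $\PPin\left(A\right)$. The mechanism is the defining property of $\PPin$ in \Cref{defn:pin}: $B\PPin\left(A\right) = \hofib\left(c\left(\PPin\right)\right)$, so the projection $p\colon B\PPin\left(A\right)\to B\O\left(A , q\right)$ comes equipped with a tautological trivialization of $p^* c\left(\PPin\right)$; moreover $B\PPin\left(A\right)$ is $\pi$-finite, its only nonzero homotopy groups being $\pi_1 = \O\left(A , q\right)$ and $\pi_3 = \mu_{l^4}$, so \Cref{hyp:sigma} applies to it and $\sigma_{B\PPin\left(A\right)}$ makes sense. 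Then I would apply \Cref{thm:triv_anom_symm}\ref{triv_symm} (equivalently \Cref{cor:anom_symm}\ref{triv_cor_symm}) with $Y = B\PPin\left(A\right)$, $f = p$, and this trivialization, obtaining a reduction (in the sense of \Cref{defn:reduction}) of the module structure of the previous paragraph to a $\left(\sigma_{B\PPin\left(A\right)} , \rho_{B\PPin\left(A\right)}\right)$-module structure on $F$; by \Cref{prop:sigma_mod} the absolute $B\PPin\left(A\right)$-theory that this structure sandwiches is precisely $F_{\PPin}$ from \Cref{sec:anom_pointed}.

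For the $\SSpin$ assertion I would run the identical argument with $\O\left(A , q\right)$, $c\left(\PPin\right)$, \Cref{thm:pin_anom}, \Cref{defn:pin} replaced by $\SO\left(A , q\right)$, $c\left(\SSpin\right)\in H^4\left(B\SO\left(A , q\right) , \mu_{l^4}\right)$, \Cref{cor:spin_anom}, \Cref{defn:spin} --- so that $B\SSpin\left(A\right) = \hofib\left(c\left(\SSpin\right)\right)$ and its projection to $B\SO\left(A , q\right)$ canonically trivializes $c\left(\SSpin\right)$ --- or, alternatively, deduce it from the $\PPin$ case by pulling everything back along the pullback square of \Cref{prop:spin}.

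The step I expect to be the main obstacle is the bookkeeping in the second paragraph: reconciling the $c\left(\PPin\right)$-twisted $\O\left(A , q\right)$-gauge theory coming out of \Cref{thm:anom_symm} with the untwisted gauge theory $\sigma_{B\PPin\left(A\right)}$ of the $\pi$-finite $3$-group attached to the extension. This is the categorified analogue of the statement that a projective representation with a fixed cocycle is a linear representation of the corresponding central extension, and it is exactly what the notion of reduction (\Cref{defn:reduction}) and \Cref{thm:triv_anom_symm} are designed to encode; checking that the resulting reduction is compatible with the tautological trivialization of $p^* c\left(\PPin\right)$ should amount to unwinding \Cref{hyp:sigma} and the construction of $\sigma_{\left(-\right)}$ in \Cref{sec:sigma}, and I do not expect it to require any new ingredient.
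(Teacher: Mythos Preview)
Your first paragraph is exactly the paper's proof: it invokes \Cref{thm:anom_symm}\ref{symm} on the anomalous theories \eqref{eqn:F_pin} and \eqref{eqn:F_spin} and stops there. What the paper writes as the $\left(\sigma_{B\PPin\left(A\right)},\rho_{B\PPin\left(A\right)}\right)$-module structure is in fact the \emph{twisted} structure $\left(\sigma_{B\O\left(A,q\right),\,c\left(\PPin\right)},\rho\right)$ that you produce in that first step. This is confirmed by the paragraph immediately following the corollary, which applies \Cref{prop:reduction_Y} to this module structure using a trivialization of $f^*c\left(\PPin\right)$ for $f\colon BG\to \O\left(A\right)$ --- a move that only makes sense if the input is the $c\left(\PPin\right)$-twisted $\O\left(A,q\right)$-gauge theory. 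So here $\sigma_{B\PPin\left(A\right)}$ is being used as shorthand for the pair $\left(B\O\left(A,q\right),c\left(\PPin\right)\right)$, in contrast with the explicitly twisted notation $\sigma_{B\Aut_{\EqBr}\left(\cZ\left(\cC\right)\right),\,k_3}$ of \Cref{cor:symm_k3}.

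Your second paragraph --- reducing along the projection $p\colon B\PPin\left(A\right)\to B\O\left(A,q\right)$ via the tautological trivialization of $p^*c\left(\PPin\right)$ and \Cref{thm:triv_anom_symm}\ref{triv_symm} --- is correct and does yield an honest untwisted $\left(\sigma_{B\PPin\left(A\right)},\rho_{B\PPin\left(A\right)}\right)$-module structure on $F$ for the $\pi$-finite $3$-type $B\PPin\left(A\right)$. But this is an additional step the paper does not take. Your reading of the notation is the more literal one and your argument establishes a marginally sharper statement; the paper's one-line proof is content with the twisted module structure and uses the $\PPin$ label only as a mnemonic for the classifying cocycle.
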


\begin{proof}
On account of the anomalous theory \eqref{eqn:F_pin},
\Cref{thm:anom_symm}\ref{symm} implies the result.
The $\SSpin\left(A\right)$ version follows from
\Cref{thm:anom_symm}\ref{symm}, for the anomalous theory \eqref{eqn:F_spin}.
\end{proof}

Let $f \colon BG \to \O\left(A\right)$ for a finite group $G$.
Given a trivialization of $f^* c\left(\PPin\right)$,
the $\left(\sigma_{B\PPin\left(A\right)} , \rho\right)$-module structure on $F$ can be reduced to a
$\left(\sigma_{BG} , \rho_{BG}\right)$-module structure as in \Cref{prop:reduction_Y}.
This in particular yields a theory on $\Bord^{BG}_3$ by
\Cref{prop:sigma_mod} which agrees with
the theory obtained by trivializing the anomaly by \Cref{thm:triv_anom_symm}\ref{symm}.
The same is true when $\PPin\left(A\right)$ is replaced by $\SSpin\left(A\right)$ and
$f \colon G \to \O\left(A\right)$ is replaced by $f \colon G\to \SO\left(A\right)$.

\begin{cor}
Assuming \Cref{hyp:sigma} for $\cT = 2\Fus$,
there is an $H^3\left(B\O \left(n,n;\FF_p\right) ,
\units{\bk}\right)$-torsor of 
$\left(\sigma_{B\O \left(n,n;\FF_p\right)} , \rho_{B\O
\left(n,n;\FF_p\right)}\right)$-module structure on $F = \sigma_{\FF_p^n}^3$.
\label{cor:symm_Fp}
\end{cor}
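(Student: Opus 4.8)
The plan is to obtain \Cref{cor:symm_Fp} as the topological-symmetry counterpart of \Cref{cor:Fp}, by feeding the trivialization data of the latter through \Cref{cor:symm_O4} and \Cref{thm:triv_anom_symm}. First I would assemble the data attached to $\cC = \left(\Vect\left[\FF_p^n\right] , *\right)$: by \Cref{sec:lagrangian} the Drinfeld center is $\cZ\left(\cC\right) \simeq \Vect\left[\FF_p^{2n}\right]$ with braiding induced by the split quadratic form $q_{\splitt}$ of signature $\left(n,n\right)$, so $\cZ\left(\cC\right)$ is pointed and polarized by $L = \FF_p^n$, and $\O\left(\FF_p^{2n} , q_{\splitt}\right)$ is the split orthogonal group $\O\left(n,n;\FF_p\right)$ (\Cref{obs:O}). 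Under \Cref{hyp:sigma} for $\cT = 2\Fus$ the theory attached to the $\pi$-finite space $B\FF_p^n$ is the untwisted $\FF_p^n$-gauge (Dijkgraaf--Witten) theory, which by the standard identification of Dijkgraaf--Witten theory with the $\sigma$-model construction (cf. \Cref{rmk:q} and \Cref{sec:top_symm}) agrees with $F \colon \pt \mapsto \cC$; this is the meaning of the notation $F = \sigma_{\FF_p^n}^3$. I would also record that $2\Fus$ satisfies conditions \ref{om} and \ref{ff} of \Cref{sec:3d_symm}, i.e. $\Om\left(2\Fus\right) \simeq \Fus$ together with the fully-faithful embedding $\BrFus \to 2\Fus$, $\cA \mapsto \lMod{\cA}$, so that \Cref{cor:symm_O4} applies with this target.

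Next I would invoke \Cref{thm:pin_anom}: since $\cZ\left(\cC\right)$ is pointed and $\abs{\FF_p^{2n}}$ is odd (as $p \neq 2$), the degree-three obstruction vanishes canonically (\Cref{sec:univ_O4}) and $F$ carries a canonical $\O\left(n,n;\FF_p\right)$-anomaly $\al_{c\left(\PPin\right)}$ with $\PPin = \PPin\left(\FF_p^{2n} , q_{\splitt}\right)$. By \cite[Theorem 6.1]{EG:reflection}, recalled in \Cref{sec:ffield_O4}, the universal obstruction $O_4\left(\FF_p^{2n} , q_{\splitt}\right) \in H^4\left(B\O\left(n,n;\FF_p\right) , \units{\bk}\right)$ vanishes, so --- as in the remark following \Cref{prop:spin} --- the extension $B\PPin\left(\FF_p^{2n}\right) \to B\O\left(n,n;\FF_p\right)$ is split; equivalently $\al_{c\left(\PPin\right)}$ is trivializable, and by the standard degree-four obstruction theory of \Cref{sec:obstruction} the set of its trivializations is a torsor over $H^3\left(B\O\left(n,n;\FF_p\right) , \units{\bk}\right)$, exactly as in \Cref{cor:Fp}.

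I would then apply \Cref{cor:symm_O4} to produce a $\left(\sigma_{B\PPin\left(\FF_p^{2n}\right)} , \rho\right)$-module structure on $F$, and for each trivialization of $\al_{c\left(\PPin\right)}$ --- taking the map $f$ of \Cref{thm:triv_anom_symm} to be the identity of $B\O\left(n,n;\FF_p\right)$ --- apply \Cref{thm:triv_anom_symm}\ref{triv_symm}, equivalently \Cref{prop:reduction_Y}, to reduce this to a $\left(\sigma_{B\O\left(n,n;\FF_p\right)} , \rho_{B\O\left(n,n;\FF_p\right)}\right)$-module structure on $F$. Since this reduction is natural in the chosen trivialization, the resulting module structures inherit the torsor structure from the trivializations, giving the claimed $H^3\left(B\O\left(n,n;\FF_p\right) , \units{\bk}\right)$-torsor.

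The formal deduction above is essentially bookkeeping; the substantive point, and the main obstacle, is the pair of identifications that make the statement meaningful. The first is that $F$ is $\sigma_{\FF_p^n}^3$ \emph{compatibly with module structures} --- this is where \Cref{hyp:sigma} for $\cT = 2\Fus$ is genuinely used, since it is what places the four-dimensional $\FF_p^n$-gauge theory, the twisted gauge theory $\sigma_{B\PPin\left(\FF_p^{2n}\right)}$, and $F$ itself inside one coherent target and matches them up with the Crane--Yetter-type anomaly $\al_{c\left(\PPin\right)}$. The second is that distinct trivializations yield genuinely inequivalent module structures, so that the ``torsor'' action is free and not merely an affine bijection of underlying sets; I would handle this by transporting the freeness of the $H^3$-action on trivializations through \Cref{thm:triv_anom_symm}, using that a trivial reduction forces a trivial class exactly as in the argument behind \Cref{cor:Fp}.
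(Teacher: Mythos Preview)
Your proposal is correct and follows essentially the same approach as the paper: the paper's proof is a one-line invocation of \Cref{cor:anom_symm}\ref{triv_cor_symm} applied to the trivialization furnished by \Cref{cor:Fp}, whereas you unpack this into \Cref{cor:symm_O4} followed by the reduction step \Cref{thm:triv_anom_symm}\ref{triv_symm}, which is the same logical content routed through equivalent intermediate statements. Your additional discussion of the identification $F = \sigma_{\FF_p^n}^3$ and of freeness of the torsor action goes beyond what the paper spells out, but neither point is in tension with the paper's argument.
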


\begin{proof}
By \Cref{cor:anom_symm}\ref{triv_cor_symm}, the trivialization defining the 
$\O \left(n,n;\FF_p\right)$-theory in \Cref{cor:Fp} (with underlying framed theory $F$)
defines a $\left(\sigma_{B\O\left(n , n ; \FF_p\right)} , \rho_{B\O\left(n , n ;
\FF_p\right)}\right)$-module structure on $F = \sigma_{\FF_p^n}^3$. 
\end{proof}

\appendix
\section{TQFT and category theory}
\label{sec:TQFT}

The axioms for functorial quantum field theory originated in \cite{At:TQFT,S:CFT}.
In this work, we will use TQFT to refer to 
\emph{fully-extended functorial topological quantum field theories}, which first
appeared in \cite{BD:CH}.
These were further studied in \cite{L:CH}.
Other references include \cite{F:TQFT,Wal:TQFT,Tel:TQFT,SP:dual,Kap:TQFT}.

\subsection{The Cobordism Hypothesis}
\label{sec:cob_hyp}

The Cobordism Hypothesis was formulated by Baez-Dolan in \cite{BD:CH}.
A detailed sketch of the proof was given in \cite{L:CH}, and 
an approach for a proof using factorization homology is in \cite{AF:CH}.
A proof in two dimensions is in \cite{SP:CH}.
See \cite{F:CH,SP:dual} for other references.

Let $d \in \ZZ^{\geq 0}$.
Given a symmetric-monoidal $\left(\infty , d+1\right)$-category $\cT$, which will be the
target for our theories, write $\cT^\sim$ for the groupoid obtained by discarding all non-invertible
morphisms at all levels.
We will assume $\cT$ has duals.\footnote{Otherwise replace $\cT$ with the subcategory consisting of
the fully-dualizable objects of $\cT$.}
We will also assume that $\Om^{d+1} \cT = \bk$ for $\bk$ an algebraically closed field of
characteristic zero.
Throughout the paper, we will write $\Vect$ for the $\bk$-linear category of
finite-dimensional vector spaces over $\bk$.

\begin{rmk}
Besides having duals and satisfying $\Om^{d+1} \cT = \bk$, we will occasionally assume
that $\cT$ satisfies an additional hypothesis:
When we discuss twisted module structures, in the sense of \cite{FMT} (as yet
another avatar of anomalous theories) we will assume that the target $\cT$ is sufficiently
additive to support the construction of $\pi$-finite spaces in the sense of \cite[\S 8]{FHLT}
and \cite[\S A]{FMT} (this is \Cref{hyp:sigma}).
\end{rmk}

The (framed) Cobordism Hypothesis, \cite[Theorem 2.4.6]{L:CH}, asserts that, since $\cT$
has duals, the functor given by evaluation on the point is an equivalence:
\begin{equation*}
\Fun^\tp \left(\Bord^\fra_{d+1} , \cT\right)
\lto{\sim} \cT^\sim \ .
\end{equation*}

\begin{rmk}
The category $\Fun^\tp\left(\Bord , \cT\right)$
consists of symmetric monoidal functors with morphisms given by strong natural
transformations (in the sense of \cite{JFS}).
A priori this is an $\left(\infty , d+1\right)$-category, but the statement of the
Cobordism Hypothesis is that it is actually an $\left(\infty , 0\right)$-category, and in
particular it is equivalent to the $\left(\infty , 0\right)$-category $\cT^\sim$.

We will occasionally use the variant of $\Fun^\tp\left(\Bord , \cT\right)$ which has
the same objects, however the $1$-morphisms are lax natural transformations. We will write
this as $\Fun^{\lax} \left(\Bord , \cT\right)$. See
\Cref{rmk:lax}.
\label{rmk:fun}
\end{rmk}

\subsubsection{The Cobordism Hypothesis for \texorpdfstring{$X$-theories}{theories with
background fields}}
\label{sec:X_theories}

There is a refined version of the Cobordism Hypothesis obtained as follows.
We follow \cite{L:CH}.
Let $X$ be a topological space with real rank $d+1$ vector bundle $\z$, and let 
$M$ be a manifold of dimension $m\leq d+1$.
An $\left(X , \z\right)$-structure on $M$ consists of 
\begin{enumerate}[label = (\roman*)]
\item a continuous map $f\colon M \to X$, and 
\item an isomorphism of bundles $TM \dsum \RR^{d+1-m} \simeq f^* \z$.
\end{enumerate}
Write $\Bord_{d+1}^{\left(X , \z\right)}$ for the category of bordisms equipped with an
$\left(X , \z\right)$-structure.
This is \cite[Definition 2.4.17]{L:CH}.
We will refer to symmetric monoidal functors out of $\Bord_{d+1}^{\left(X , \z\right)}$ as 
$\left(d+1\right)$-dimensional $\left(X , \z\right)$-TQFTs, or just $\left(X ,
\z\right)$-theories.
The Cobordism Hypothesis for $\left(X , \z\right)$-theories is
\cite[Theorem 2.4.18]{L:CH}. It is the equivalence
\begin{equation}
\Fun^\tp \left(\Bord^{\left(X , \z\right)}_{d+1} , \cT\right) \lto{\sim}
\Hom\left(\widetilde{X} , \cT^\sim\right)
\ ,
\label{eqn:CH_tilde}
\end{equation}
where $\widetilde{X}$ is the associated principal $\O\left(d+1\right)$-bundle of orthonormal frames in
$\z$.

Let $G$ be a topological group with continuous group homomorphism to the orthogonal group
$\chi \colon G\to \O\left(d+1\right)$. 
Particularly important cases of $\left(X , \z\right)$-structures are given by $X = BG$
and $\z = \left(\RR^{d+1} \times EG\right) / G$ defined by $\chi$.
In this case, we sometimes call theories defined on $\Bord^{BG}_{d+1}$
\emph{$G$-theories}.

\begin{exm}
Let $G$ be trivial and $\chi = \io$ be the inclusion. A $\left(BG , \z_{\io}\right)$-structure is a framing.
\end{exm}

\begin{exm}
Let $G = \SO\left(d+1\right)$ and $\chi = \io$ be the inclusion. A
$\left(B\SO\left(d+1\right) , \z_{\io}\right)$-structure is an orientation.
\label{exm:Bord_SO}
\end{exm}

We will write $\Bord^X_{d+1}$ for the category of bordisms with $\left(X ,
\z\right)$-structure with $\z$ trivial.
E.g. $X = BG$ and $\chi \colon G\to \O\left(d+1\right)$ trivial.
In spite of $\fra$ being removed from the notation, one should think of $\Bord^X_{d+1}$ as
consisting of framed bordisms with a map to $X$.
We will refer to symmetric monoidal functors $\Bord^X_{d+1} \to \cT$ as \emph{$X$-TQFTs}
(or just $X$-theories).
These should be thought of as families of framed TQFTs over $X$.
In this case, the Cobordism Hypothesis \eqref{eqn:CH_tilde} reduces to:
\begin{equation}
\Fun^\tp \left(\Bord_{d+1}^X ,\cT\right) \lto{\sim} \Hom \left(X , \cT^\sim \right)
\ .
\label{eqn:CH_X}
\end{equation}

\begin{exm}
When $X = BG$ with $\chi = \triv \colon G\to \O\left(d+1\right)$ trivial, then the Cobordism Hypothesis reduces
to \cite[Theorem 2.4.26]{L:CH}:
\begin{equation*}
\Fun^\tp \left(\Bord_{d+1}^{BG} ,\cT\right) \lto{\sim} \left(\cT^\sim\right)^{hG}
\end{equation*}
where $\left(\cT\right)^{hG}$ denotes the homotopy fixed points of $\cT$.

We will sometimes call symmetric monoidal functors $\Bord^{BG}_{d+1} \to \cT$
\emph{$BG$-theories}, or \emph{$G$-theories}.
\label{exm:Bord_BG}
\end{exm}

Endomorphisms of the trivial TQFT are equivalent to theories of one lower dimension.
By the Cobordism Hypothesis this is equivalent to endomorphisms of the identity in the
target being decategorification.
\Cref{prop:1_X} is this result for $X$-theories.

\begin{lem}
If $\mathcal{A}$ and $\mathcal{B}$ are monoidal $\left(\infty , d+1\right)$-categories, then
\begin{equation*}
\Om \Fun^{\tp} \left(\mathcal{A} , \mathcal{B} \right)
\simeq \Fun^{\tp} \left( \mathcal{A} , \Om \mathcal{B} \right)
\ .
\end{equation*}
\label{lem:Om}
\end{lem}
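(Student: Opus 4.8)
The plan is to realize both sides as the same homotopy limit, using that $\Om\left(-\right)$ is a weighted homotopy limit and that $\Fun^{\tp}\left(\mathcal{A},-\right)$ preserves the limits in question. Concretely, by \cite[Proposition 6.12]{JFS} — the same identity already used in the proof of \Cref{thm:P} — for any symmetric monoidal $\left(\infty, d+1\right)$-category $\mathcal{D}$ there is a natural equivalence of symmetric monoidal $\left(\infty, d\right)$-categories
\[
\Om\mathcal{D} \;\simeq\; \mathcal{D}\left[0\right] \htimes{\mathcal{D}} \mathcal{D}^{\down} \htimes{\mathcal{D}} \mathcal{D}\left[0\right].
\]
I would apply this with $\mathcal{D} = \mathcal{B}$ and, separately, with $\mathcal{D} = \mathcal{C} \ceqq \Fun^{\tp}\left(\mathcal{A},\mathcal{B}\right)$, which is again a symmetric monoidal complete Segal space; its monoidal unit under the pointwise tensor product is the constant functor at $1_{\mathcal{B}}$, so $\Om\mathcal{C}$ is the endomorphism $\left(\infty,d\right)$-category of that unit with its Eckmann–Hilton symmetric monoidal structure.

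Next I would show that $\Fun^{\tp}\left(\mathcal{A},-\right)$ commutes with the constructions involved: it preserves the homotopy pullbacks above and commutes with $\left(-\right)\left[0\right]$ and $\left(-\right)^{\down}$. Preservation of homotopy pullbacks is formal — a symmetric monoidal functor out of $\mathcal{A}$ into a homotopy limit of symmetric monoidal categories is the same coherent datum as a compatible family of symmetric monoidal functors into the pieces, and this holds at the level of the $\Fun^{\tp}$-categories themselves since their spaces of objects and of $k$-morphisms are each such limits; the requisite naturality is the functoriality of $\left(-\right)^{\down}$ in both variables from \cite[Corollary 5.19]{JFS} and its monoidal refinement \cite[Corollary 6.9]{JFS}, exactly as invoked in \Cref{thm:P}. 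Since $\mathcal{B}\left[0\right] \simeq *$ by \cite[Def. 6.10]{JFS} and likewise $\mathcal{C}\left[0\right]\simeq *$, we get $\Fun^{\tp}\left(\mathcal{A},\mathcal{B}\left[0\right]\right)\simeq * \simeq \mathcal{C}\left[0\right]$. Finally, $\left(-\right)^{\down}$ of \cite[Definition 5.14]{JFS} is assembled, levelwise in the Segal directions, from weighted limits, so the levelwise, limit-preserving operation $\Fun^{\tp}\left(\mathcal{A},-\right)$ passes through it, yielding $\Fun^{\tp}\left(\mathcal{A},\mathcal{B}^{\down}\right)\simeq \mathcal{C}^{\down}$.

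Stringing these together gives
\[
\Fun^{\tp}\left(\mathcal{A},\Om\mathcal{B}\right) \;\simeq\; \mathcal{C}\left[0\right] \htimes{\mathcal{C}} \mathcal{C}^{\down} \htimes{\mathcal{C}} \mathcal{C}\left[0\right] \;\simeq\; \Om\mathcal{C} \;=\; \Om\,\Fun^{\tp}\left(\mathcal{A},\mathcal{B}\right),
\]
where the last equivalence is \cite[Proposition 6.12]{JFS} for $\mathcal{C}$, and everything is monoidal because that proposition is. Morally this is just the observation that an endomorphism of the constant functor $\mathcal{A}\to\mathcal{B}$ at $1_{\mathcal{B}}$ is a coherently symmetric monoidal assignment, to each object of $\mathcal{A}$, of an endomorphism of $1_{\mathcal{B}}$ — i.e. a symmetric monoidal functor $\mathcal{A}\to\Om\mathcal{B}$ — and similarly in higher degrees.

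The main obstacle I anticipate is the last assertion of the second paragraph: verifying that $\Fun^{\tp}\left(\mathcal{A},-\right)$ commutes with $\left(-\right)^{\down}$ as symmetric monoidal complete Segal spaces, and not merely on underlying groupoids. This needs the definition of $\left(-\right)^{\down}$ in \cite{JFS} unwound far enough to see it as a levelwise weighted limit, together with the precise form of the functoriality statements there; I would isolate it as a short auxiliary lemma and cite \cite{JFS} rather than reprove it. The rest is routine manipulation of homotopy limits.
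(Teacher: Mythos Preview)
Your approach is correct, and the underlying idea --- that $\Om$ is a limit and $\Fun^{\tp}\left(\mathcal{A},-\right)$ preserves it --- is the same as the paper's. The paper, however, executes it in three lines: it observes that the left-hand side consists of natural transformations filling the square
\[
\begin{tikzcd}
\mathcal{A}\ar{r}\ar{d} & \cat{1}\ar{d} \\
\cat{1}\ar{r}\ar[Rightarrow]{ur} & \mathcal{B}
\end{tikzcd}
\]
and that $\Om\mathcal{B}$ is (by definition) the pullback of $\cat{1}\to\mathcal{B}\leftarrow\cat{1}$, so the equivalence is literally the universal property of that pullback applied to $\mathcal{A}$. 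This completely sidesteps the obstacle you flagged: there is no need to show $\Fun^{\tp}\left(\mathcal{A},-\right)$ commutes with $\left(-\right)^{\down}$, because the paper never unpacks $\Om\mathcal{B}$ via the arrow-category formula from \cite[Proposition~6.12]{JFS}. Your route works but introduces an auxiliary lemma the paper does not need; the paper's argument is the ``moral'' observation you wrote at the end, taken as the whole proof.
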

\begin{proof}
The LHS consists of natural transformations:
\begin{equation*}
\begin{tikzcd}
\mathcal{A} \ar{r}\ar{d}&
\cat{1}
\ar{d}
\\
\cat{1} \ar{r}
\ar[Rightarrow]{ur}
& \mathcal{B}
\end{tikzcd}
\end{equation*}
where $\cat{1}$ denotes the trivial monoidal category. 
The category $\Om \mathcal{B}$ is the pullback of the diagram
$\cat{1} \to \mathcal{B} \from \cat{1}$, so the result follows from the universal property.
\end{proof}

\begin{prop}
If $1_X^{d+1}$ is the trivial functor $\Bord^X_{d+1} \to \cT$, then
\begin{equation*}
\Fun^\tp \left(\Bord^X_{d} , \Om \cT\right)
\simeq \End\left(1_X^{d+1}\right)
\ ,
\end{equation*}
where $\Om \cT \ceqq \End_{\cT}\left(1\right)$ denotes the looping of $\cT$.
\label{prop:1_X}
\end{prop}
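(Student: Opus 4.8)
The plan is to deduce \Cref{prop:1_X} directly from \Cref{lem:Om} together with the Cobordism Hypothesis for $X$-theories \eqref{eqn:CH_X}. First I would apply \Cref{lem:Om} with $\mathcal{A} = \Bord^X_{d+1}$ and $\mathcal{B} = \cT$, which immediately gives
\begin{equation*}
\End\left(1_X^{d+1}\right) = \Om \Fun^\tp\left(\Bord^X_{d+1} , \cT\right)
\simeq \Fun^\tp\left(\Bord^X_{d+1} , \Om \cT\right) \ .
\end{equation*}
Here I should be slightly careful about what ``$\End(1_X^{d+1})$'' means: it is the endomorphism object of the unit (i.e. the trivial theory) in the symmetric monoidal category $\Fun^\tp(\Bord^X_{d+1},\cT)$, which is precisely $\Om$ of that functor category in the sense used in \Cref{lem:Om}, so the identification is just unwinding the definition of $1_X^{d+1}$ as the monoidal unit.

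Next I would identify the right-hand side using the Cobordism Hypothesis. Applying \eqref{eqn:CH_X} once with target $\cT$ and once with target $\Om \cT$ (noting that $\Om \cT$ is a symmetric monoidal $(\infty,d)$-category with duals, since $\cT$ has duals and $\Om^{d+1}\cT = \bk$, so the hypothesis applies in dimension $d$), we get
\begin{equation*}
\Fun^\tp\left(\Bord^X_{d+1} , \Om \cT\right) \simeq \Hom\left(X , \left(\Om\cT\right)^\sim\right)
\quad\text{and}\quad
\Fun^\tp\left(\Bord^X_{d} , \Om \cT\right) \simeq \Hom\left(X , \left(\Om\cT\right)^\sim\right) \ ,
\end{equation*}
where the first equivalence is the subtle point: it requires knowing that $\Fun^\tp(\Bord^X_{d+1}, \Om\cT)$ computed as functors into the $(\infty,d+1)$-category $\Om\cT$ (regarded with trivial top morphisms) agrees with the $d$-dimensional classification. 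In fact the cleanest route is to observe that $(\Om\cT)^\sim \simeq (\cT^\sim)$ looped once is not what we want; rather, $\Om\cT$ has the same underlying space of fully-dualizable objects regardless of whether we view it as a $(d+1)$- or $d$-category, and the Cobordism Hypothesis output $\Hom(X,(\Om\cT)^\sim)$ is insensitive to this. Composing the three displayed equivalences yields $\End(1_X^{d+1}) \simeq \Fun^\tp(\Bord^X_d, \Om\cT)$, as desired.

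I expect the main obstacle to be the bookkeeping in the previous paragraph: making precise that the looping $\Om\cT$, when fed into the Cobordism Hypothesis \eqref{eqn:CH_X} at dimension $d$, produces the same classifying space as $\Fun^\tp(\Bord^X_{d+1},\Om\cT)$ does (where the top-level morphisms of $\Om\cT$ are invertible by the assumption $\Om^{d+1}\cT=\bk$ only up to scalar, not trivial). The honest fix is probably to run the argument entirely through \Cref{lem:Om} and \eqref{eqn:CH_X} without ever comparing a $d$-theory to a $(d+1)$-theory directly — i.e. identify both $\End(1_X^{d+1})$ and $\Fun^\tp(\Bord^X_d,\Om\cT)$ with the single mapping space $\Hom(X,(\Om\cT)^\sim)$, using \eqref{eqn:CH_X} in dimension $d+1$ (applied to the target $\cT$, then looped) for the former and in dimension $d$ (applied to $\Om\cT$) for the latter. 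Checking that these two descriptions of the same mapping space genuinely agree — that looping commutes with ``evaluation at the point'' in the appropriate sense — is the one place where I would need to be careful rather than formal.
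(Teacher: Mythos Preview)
Your approach is more roundabout than the paper's. The paper applies \Cref{lem:Om} with $\mathcal{A}=\Bord^X_{d}$ (not $\Bord^X_{d+1}$) and $\mathcal{B}=\cT$; this immediately yields
\[
\Om\,\Fun^\tp\bigl(\Bord^X_{d},\cT\bigr)\;\simeq\;\Fun^\tp\bigl(\Bord^X_{d},\Om\cT\bigr),
\]
and the left side is $\End(1_X^{d+1})$ once $1_X^{d+1}$ is read as the trivial \emph{once-categorified} theory $\Bord^X_{d}\to\cT$. This is how the proposition is actually invoked throughout the paper (e.g.\ for compositions $1\to\alpha\to 1$ with $\alpha\colon\Bord^X_d\to\cT$), so the ``$\Bord^X_{d+1}$'' in the displayed statement is almost certainly a typo for ``$\Bord^X_d$''. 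With that reading the proof is a one-liner and never touches the Cobordism Hypothesis.

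You instead took the statement at face value and set $\mathcal{A}=\Bord^X_{d+1}$, which then obliges you to identify $\Fun^\tp(\Bord^X_{d+1},\Om\cT)$ with $\Fun^\tp(\Bord^X_{d},\Om\cT)$ via two applications of \eqref{eqn:CH_X}. That detour is precisely the ``bookkeeping'' you flag --- making sense of the $d$-category $\Om\cT$ as a target for $(d{+}1)$-dimensional bordisms, and checking that looping commutes with evaluation at the point --- and while it can be made to work, the paper's choice of $\mathcal{A}$ renders the whole step unnecessary.
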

\begin{proof}
This follows from \Cref{lem:Om} for $\mathcal{A} = \Bord^X_{d}$ and $\mathcal{B} = \cT$.
\end{proof}

\subsection{Relative theories}
\label{sec:relative_theories}

A symmetric monoidal functor
\begin{equation*}
\al \colon \Bord_{d}^X \to \cT
\end{equation*}
is a \emph{once-categorified $d$-dimensional $X$-TQFT}, 
where $\cT$ is the fixed target from the beginning of \Cref{sec:cob_hyp}. Let 
\begin{equation*}
1_X \colon \Bord_{d}^X \to \cT
\end{equation*}
denote the trivial once-categorified $d$-dimensional $X$-TQFT.
Recall the notion of a relative theory \cite{FT:relative}.
These are also called \emph{twisted theories} \cite{ST:twisted,JFS}.

\begin{defn*}
A theory defined right-relative to $\al$ is a lax natural transformation (in the sense
of \cite{JFS})
\begin{equation*}
\al \to 1
\ .
\end{equation*}
A theory defined left-relative to $\al$ is a lax natural transformation
\begin{equation*}
1 \to \al \ .
\end{equation*}
\end{defn*}

\begin{rmk}
Recall the definition of lax (resp. oplax) natural transformations from \cite{JFS}. 
Consider the arrow categories $\cT^\down$ and $\cT^\to$, and the
source and target functors $s,t \colon \cT^* \to \cT$
for $* = \down , \to$.
Following \cite{JFS}, a lax (resp. oplax) natural transformation $\al \to 1$ is a functor 
\begin{align}
F_\al \colon \Bord^X_d \to \cT^{\down}
&&
\left(
\text{resp. }
F_\al \colon \Bord^X_d \to \cT^{\to}
\right)
\end{align}
such that $s\comp F_\al = \al$, and 
$t\comp F_\al = 1$.

Throughout, we will use the lax version, as written in the above definition of relative
theories. 
The reason we use the lax version, as noted in \cite[Example 7.3]{JFS}, that the
\emph{lax} natural transformations
from the trivial theory to itself consist of theories of dimension lower (\cite[Theorem
7.4]{JFS}) whereas the same is not true when lax is replaced with oplax.
We need the analogous result for $X$-theories (\Cref{prop:1_X}), in particular to
establish a trivialized anomalous theory $1\lto{\sim}\al\to 1$ as an honest theory of
one dimension lower.

Also noted in \cite[Example 7.3]{JFS}, is the fact that oplax natural transformations are
``elements'' $F_{\sigma}\left(M\right) \colon 1 \to \al\left(M\right)$ for $M$ a
\emph{closed} bordism of any codimension, which is for example the point of view taken in
\cite{FT:relative}.
Oplax natural transformations are also used in \cite{FT:gapped}.
Besides \Cref{prop:1_X}, and the results depending on it, 
the remaining constructions and facts in this paper hold for the oplax version, 
obtained my replacing all lax natural transformations with oplax ones, and replacing
$\cT^{\down}$ with $\cT^{\to}$.
\label{rmk:lax}
\end{rmk}

\begin{rmk}
Often the once-categorified $d$-dimensional theory $\al$
extends to a $\left(d+1\right)$-dimensional theory:
\begin{equation*}
\begin{tikzcd}
& \cT \\
\Bord_d^X \ar[hook]{r}\ar{ur}{\al}&
\Bord_{d+1}^X \ar[dashed]{u}
\end{tikzcd}
\end{equation*}
In this case, the relative theory $\al \to 1$ is upgraded to what is called a
\emph{boundary theory}.
Boundary theories are defined as functors out of the extended bordism category
$\Bord^{X , \p}_{d+1}$, described in \cite[\S 4.3]{L:CH}.
See also \cite{Stew:thesis}.
The connection with the notion defined here is made in \cite[Theorem 7.15]{JFS}.
\label{rmk:boundary}
\end{rmk}

\subsection{Invertible theories}
\label{sec:invertible}

Let $\abs{\Bord_{d+1}^{X}}$ denote the completion, i.e. the $\left(\infty ,
0\right)$-category obtained by manually inverting all the arrows, 
and let $\units{ \cT }$ denote the underlying sub-groupoid, i.e. the 
$\left(\infty , 0\right)$-category obtained by discarding all
non-invertible objects and non-invertible morphisms at all levels.

A TQFT $\al \colon \Bord_{d+1}^{X} \to \cT$ is \emph{invertible} if it factors as:
\begin{equation}
\begin{tikzcd}
\Bord_{d+1}^{X} \ar{r}{\alpha}\ar[two heads]{d} & 
\cT \\
\abs{\Bord_{d+1}^{X}} \ar{r}{\widetilde{\al}}&
\units{ \cT } \ar[hook]{u}
\end{tikzcd}
\label{eqn:invertible}
\end{equation}
I.e. it assigns all objects and morphisms in the source to invertible
objects and morphisms in the target. 

\begin{rmk}
Note that \eqref{eqn:invertible} reduces the functor $\al$ to a map of spectra
$\widetilde{\al}$.
This allows us to study it as a 
cohomology class (in the theory determined, via Brown representability, by the target) on
the source.
\label{rmk:map_of_spectra}
\end{rmk}

\section{Topological symmetry}
\label{sec:top_symm}

\subsection{TQFTs associated to \texorpdfstring{$\pi$}{pi}-finite spaces}
\label{sec:sigma}

Let $X$ be a space (i.e. higher groupoid) which is (connected, pointed, and)
$\pi$-finite\footnote{This means $X$ has finitely many homotopy groups, each of which is
finite.}.
There is a recipe for constructing a TQFT using $X$, which was
introduced in \cite{K:finite} and studied further in \cite{Q:finite,T:finite}.
We will consider the fully local case introduced in \cite{F:higher} and studied in
\cite[\S 3,8]{FHLT} and \cite[\S A]{FMT}.

In the $1$-dimensional case, the finite path-integral has mathematical rigorous
foundations in the $\infty$-categorical setting \cite{Harp}.
Related work in a higher setting is the subject of an upcoming work of Claudia Scheimbauer
and Tashi Walde.

\subsubsection{The summation map}

We will proceed heuristically, following \cite[\S 3]{FHLT} and \cite[\S A.2]{FMT}, to fix
notation and describe expectations which will eventually be stated and assumed in
\Cref{hyp:sigma}.

Let $\Fam_{d+1}$ denote the category with objects finite $\left(d+1\right)$-groupoids, 
$1$-morphisms given by correspondences of $\pi$-finite spaces, 
$2$-morphisms given by correspondences of correspondences, and so on until level
$\left(d+1\right)$. 
(Two $\left(d+1\right)$-morphisms are regarded as identical if they are equivalent.)

Let $\cT$ be the arbitrary symmetric monoidal target with duals, fixed in
\Cref{sec:cob_hyp}.
Let $Y$ be an object of $\Fam_{d+1}$.
A local system on $Y$ valued in $\cT$ is a functor $Y \to \cT$. 
Write $\Fam_{d+1}\left(\cT\right)$ for the category of $\pi$-finite spaces equipped with a
local system valued in $\cT$.

For example, $\Fam_{d+1}\left(B^{d+1} \units{\bk}\right)$ has objects given by pairs
$\left(Y , \tau\right)$, where $\tau$ is a cocycle 
\begin{equation*}
\tau \colon Y \to B^{d+1} \units{\bk}
\end{equation*}
representing a class in $H^{d+1}\left(Y , \units{\bk}\right)$.
A morphism is a correspondence:
\begin{equation}
\begin{cd}
& \left(E , \mu\right) \ar{dr}{p_2}\ar{dl}{p_1} & \\
\left(Y_1 , \tau_1\right)  && \left(Y_2 , \tau_2\right)
\end{cd}
\label{eqn:corr_mor}
\end{equation}
where $\mu \colon E \to B^d \units{\bk}$ satisfies
\begin{equation}
d \mu = \left( p_1^* \tau_1 \right)^{-1}  \cdot 
\left( p_2^* \tau_2 \right)
\ .
\label{eqn:triv}
\end{equation}
$2$-morphisms are correspondences between correspondences with a similar condition on the
cocycles, and so on to define morphisms up to level $\left(d+1\right)$.

Recall we assumed $\Om^{d+1}\cT = \bk$. Therefore there is a natural functor:
$B^{d+1} \units{\bk} \to \cT$ inducing a functor:
\begin{equation}
\Fam_{d+1}\left(B^{d+1} \units{\bk}\right) \to \Fam_{d+1}\left(\cT\right) \ .
\label{eqn:twist_loc_sys}
\end{equation}

We assume that the following holds.

\begin{custom}{Hypothesis Q}
For $\cT$ appropriately additive\footnote{As is explained in \cite[\S 3]{FHLT}, for $X$ an
ordinary groupoid, then $\cT$ must be additive in the sense that the colimit $\dirlim_{x\in
X}\tau\left(x\right)$ in $\cT$ exists, and agrees with the limit $\invlim_{x\in X}
\tau\left(x\right)$, where we are regarding $\tau$ as defining a $\cT$-valued local system
on $X$ by \eqref{eqn:twist_loc_sys}.},
there is a ``quantization''\footnote{See \cite[Remark A.7.1]{FMT} where it is explained how
$\Sum_{d+1}\left(X\right)$ (and therefore $\sigma_X^{d+1}$) can be obtained by 
integrating over fluctuating fields.}
functor
\begin{equation}
\Sum_{d+1} \colon \Fam_{d+1}\left(B^{d+1} \units{\bk}\right) \to \cT
\label{eqn:sum}
\end{equation}
such that there is an invertible natural transformation between 
\begin{equation*}
\left(X , \tau\right) \mapsto \Hom_\cT \left(1 , \Sum_{d+1}\left(X , \tau\right)\right)
\end{equation*}
and 
\begin{equation*}
\left(X , \tau\right) \mapsto 
\Hom\left(\hofib\left(\tau\right) , \Om \cT\right) 
\end{equation*}
viewed as functors out of $\Fam_{d+1}\left(B^{d+1}\units{\bk}\right)$. 
\label{hyp:sigma}
\end{custom}

\begin{rmk}
Unpacking the existence of the natural transformation in \Cref{hyp:sigma}, we see that it
ensures that we have equivalences 
\begin{equation}
\Hom_{\cT}\left(1 , \Sum_{d+1}\left(X, \tau\right)\right) \simeq
\Hom\left(\hofib\left(\tau\right) , \Om \cT\right)
\label{eqn:hyp:sigma}
\end{equation}
for all objects $\left(X , \tau\right)$ of $\Fam_{d+1}\left(B^{d+1}\units{\bk}\right)$,
and given a morphism from $\left(X_1 , \tau_1\right)$ to $\left(X_2 , \tau_2\right)$ in
$\Fam_{d+1}\left(B^{d+1} \units{\bk}\right)$, we have a commuting diagram:
\begin{equation}
\begin{tikzcd}
\Hom_\cT\left(1 , \Sum_{d+1}\left(X_1 , \tau_1 \right)\right)
\ar[phantom]{r}{\simeq}\ar{d}&
\Hom\left(\hofib\left(\tau_1\right) , \Om \cT\right) \ar{d} \\
\Hom_\cT\left(1 , \Sum_{d+1}\left(X_2 , \tau_2 \right)\right)
\ar[phantom]{r}{\simeq} & 
\Hom\left(\hofib\left(\tau_2\right) , \Om \cT\right)
\end{tikzcd}
\label{eqn:commuting_square}
\end{equation}
This generalizes the classical fact about modules over the group algebra and
$G$-representation, as in \Cref{exm:sigma_BG} \ref{sigma2_BG}.
\label{rmk:hyp:sigma}
\end{rmk}

\begin{rmk}
In \cite[\S 8.2]{FHLT} the map $\Sum_n$ is constructed at the level of objects
and shown to be a functor up to $2$-morphisms.
In particular, the cases in \Cref{exm:sigma_BG}
are worked out in \cite[\S 8.1,8.3]{FHLT}.
\end{rmk}

\begin{exm}
\begin{enumerate}[label = (\roman*)]
\item Let $\cT = \Vect$. 
Then $\Sum_1\left(X\right) = \bk\left(\pi_0 X\right)$ is the vector space of $\bk$-valued functions on
$\pi_0 X$. \Cref{hyp:sigma} is satisfied, since 
the natural transformation \eqref{eqn:hyp:sigma} required in \Cref{hyp:sigma} is:
\begin{equation*}
\Hom_{\Vect}\left(1 , \Sum_1\left(X\right)\right) \simeq \Hom\left(X , \bk\right) \simeq 
\Hom\left(\pi_0 X , \bk\right) \ .
\end{equation*}

\item\label{sigma2_BG} Set $X = BG$ for a finite group $G$, and let $\cT = \Alg$ be the Morita $2$-category
of algebras.
Let $\Sum_2\left(BG\right) = \bk\left[G\right]$ be the group algebra.
\Cref{hyp:sigma} is satisfied, since 
the natural transformation \eqref{eqn:hyp:sigma} required in \Cref{hyp:sigma} is:
\begin{equation*}
\Hom_\Alg\left(1 , \Sum_2\left(BG\right)\right) = \lMod{\bk\left[G\right]} \simeq
\Rep\left(G\right) \simeq \Hom\left(BG , \Vect\right) \ .
\end{equation*}

We can equip $BG$ with a $2$-cocycle $\mu$ (i.e. a group $2$-cocycle)
which twists the convolution structure on the group algebra, resulting in 
$\Sum_2\left(BG , \tau\right)$.

\item Set $X = BG$.
Let $d = 2$, and take $\cT$ to be the Morita $3$-category of fusion categories.
Then $\Sum_3\left(BG\right)$ is $\Vect\left[G\right]$, the fusion category of
vector bundles on $G$ with convolution. 
We can equip $BG$ with a $3$-cocycle $\al$ which twists the fusion structure on
$\Vect\left[G\right]$, yielding $\sigma_{BG,\al}^3\left(\pt\right)$.
This is \cite[Example A.65]{FMT}.

\item\label{sigma_B2G} Set $X = B^2 G$.
Let $d = 3$ and take $\cT$ to be the Morita $4$-category of braided monoidal
categories, $\BrFus$ (see \Cref{sec:fusion_prelim}).
Consider a cocycle $\tau \colon B^2 G \to B^4\units{\bk}$.
It is a theorem of Eilenberg-Maclane \cite[Theorem 26.1]{EM:2}
that cohomology classes in $H^4\left(B^2 G , \units{\bk}\right)$ correspond to quadratic
forms $G \to  \units{\bk}$.
Write $q_\tau$ for the form corresponding to $\left[\tau\right]$.
This defines a symmetric bicharacter on $G$:
\begin{equation*}
\lr{g , h}_\tau \ceqq \frac{q_\tau\left(g+h\right)}{q_\tau\left(g\right) q_\tau\left(h\right)}
\ .
\end{equation*}
Then $\Sum_4\left(B^2 G\right)$ is $\Vect\left[G\right]$ with convolution, and with
braiding specified on simples by:
\begin{equation*}
\beta_\tau \colon\bk_g * \bk_h = \bk_{gh} \lto{\lr{g,h} \id_{\bk_{gh}}} \bk_{gh} =
\bk_{hg} = \bk_h * \bk_g \ .
\end{equation*}

\item Let $d = 3$ and $\cT$ be a $3$-category of monoidal $2$-categories 
(e.g. $2\Fus$ \cite{DR:2Fus}). 
For any finite group $G$, the fusion $2$-category $\Sum_4\left(BG\right)$ is the
collection of $G$-graded $2$-vector spaces \cite[Construction 2.1.13]{DR:2Fus}.

As far as the author is aware, dualizability in the Morita $4$-category of monoidal
$2$-categories has not been extensively studied.
However this fusion $2$-category is expected define a fully extended $4$-dimensional TQFT
\cite{DR:2Fus}.
See \Cref{sec:3d_symm} where we discuss this example in more detail.
\label{sigma4_BG}
\end{enumerate}
\label{exm:sigma_BG}
\end{exm}

\subsubsection{TQFTs from groupoids}

The upshot of assuming \Cref{hyp:sigma} is that,
for a fixed object $\left(X , \tau\right)$ of $\Fam_{d+1}\left(B^{d+1}\units{\bk}\right)$, 
\eqref{eqn:sum} can be composed with the mapping space functor\footnote{Note that
\eqref{eqn:twist_loc_sys} allows us to construct a $\cT$-valued local system from the
cocycle $\tau$.} to obtain the theory 
\begin{equation*}
\sigma_{X , \tau}^{d+1} \colon 
\Bord^{\fra}_{d+1} \lto{\pi_{\leq d+1} \Map\left(- , X\right)}
\Fam_{d+1}\left(\cT\right)\lto{\Sum_{d+1}} \cT \ .
\end{equation*}

\begin{rmk}
In \cite[\S 3,8]{FHLT}, the theories $\sigma_X^{d+1}$ are studied for $\cT =
\Alg\left[d\right]$ the Morita $\left(d+1\right)$-category of ``$d$-algebras'', discussed
in \cite[\S 7]{FHLT}.
\end{rmk}

\begin{rmk}
As is remarked in \cite[\S A]{FMT} and \cite[\S 3,8]{FHLT},
$\sigma_{X , \tau}^{d+1}$ can be upgraded to an oriented theory, and if $\tau$ is trivial
then it can even be upgraded to an unoriented theory. 
We will work with the underlying framed theories in this paper.
\end{rmk}

\begin{prop}[\cite{FMT}]
Assuming \Cref{hyp:sigma}, a 
morphism from $\left(X_1 , \tau_1\right)$ to $\left(X_2 , \tau_2\right)$ 
in $\Fam_{d+1}\left(B^{d+1}\units{\bk}\right)$ (i.e. correspondence as in
\eqref{eqn:corr_mor}) induces a bimodule (i.e. domain wall)
\begin{equation*}
\sigma^{d+1}_{X_1 , \tau_1} \to \sigma^{d+1}_{X_2 , \tau_2} \ .
\end{equation*}
\label{prop:corr_mor}
\end{prop}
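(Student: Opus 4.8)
The statement to prove is Proposition \ref{prop:corr_mor}: a morphism in $\Fam_{d+1}\left(B^{d+1}\units{\bk}\right)$ induces a bimodule $\sigma^{d+1}_{X_1 , \tau_1} \to \sigma^{d+1}_{X_2 , \tau_2}$.

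The plan is to reduce everything to the functoriality of $\Sum_{d+1}$ together with the functoriality of the mapping-space construction on bordism categories. First I would recall that, by \Cref{hyp:sigma}, $\Sum_{d+1}$ is a (symmetric monoidal) functor out of $\Fam_{d+1}\left(B^{d+1}\units{\bk}\right)$ into $\cT$, so in particular it sends a $1$-morphism (correspondence as in \eqref{eqn:corr_mor}) to a $1$-morphism in $\cT$, i.e. to an arrow $\Sum_{d+1}\left(X_1 , \tau_1\right) \to \Sum_{d+1}\left(X_2 , \tau_2\right)$. A $1$-morphism in $\cT$ between the objects $F\left(\pt\right)$ and $G\left(\pt\right)$ assigned to the point by two $\left(d+1\right)$-dimensional TQFTs $F, G$ is, via the Cobordism Hypothesis \eqref{eqn:CH_tilde}/\eqref{eqn:CH_X} applied to the arrow category $\cT^\down$ (or the $\left(X,\z\right)$-version), the same datum as a bimodule/domain wall $F \to G$ in the sense of \Cref{sec:relative_theories}. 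So the content is: the value of $\sigma^{d+1}_{X_i , \tau_i}$ on the point is $\Sum_{d+1}$ applied to $\left(X_i , \tau_i\right)$ (which holds essentially by definition, since $\sigma^{d+1}_{X,\tau} = \Sum_{d+1} \comp \pi_{\leq d+1}\Map\left(-,X\right)$ and $\Map\left(\pt , X\right) \simeq X$), and functoriality of $\Sum_{d+1}$ promotes a correspondence to the required $1$-morphism.

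More precisely I would organize the steps as follows. (1) Identify $\sigma^{d+1}_{X,\tau}\left(\pt\right)$ with $\Sum_{d+1}\left(X , \tau\right)$ using the definition of $\sigma^{d+1}_{X,\tau}$ as the composite $\Bord^\fra_{d+1} \to \Fam_{d+1}\left(\cT\right) \to \cT$ and the fact that the point maps to $X$ equipped with the local system induced by $\tau$ via \eqref{eqn:twist_loc_sys}. (2) Given a correspondence $\left(E,\mu\right)$ from $\left(X_1,\tau_1\right)$ to $\left(X_2,\tau_2\right)$, apply $\Sum_{d+1}$ to obtain a $1$-morphism $M \ceqq \Sum_{d+1}\left(E,\mu\right)\colon \Sum_{d+1}\left(X_1,\tau_1\right) \to \Sum_{d+1}\left(X_2,\tau_2\right)$ in $\cT$; here one uses that $\Fam_{d+1}\left(B^{d+1}\units{\bk}\right)$ has correspondences as its $1$-morphisms and that \eqref{eqn:sum} is a functor of $\left(\infty , d+1\right)$-categories (so it acts on $1$-morphisms, and $M$ is automatically fully dualizable as an object of $\cT^\down$ since $\cT$ has duals — or one restricts to the fully-dualizable part as in the footnote of \Cref{sec:cob_hyp}). (3) Invoke the Cobordism Hypothesis for the arrow category: a $1$-morphism between two objects-at-the-point of $\left(d+1\right)$-dimensional framed theories is precisely a lax natural transformation / domain wall between those theories, as recorded in \Cref{rmk:lax} and \cite[Theorem 7.15]{JFS}. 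Applying this to $F = \sigma^{d+1}_{X_1,\tau_1}$, $G = \sigma^{d+1}_{X_2,\tau_2}$, and the $1$-morphism $M$ yields the desired bimodule $\sigma^{d+1}_{X_1,\tau_1} \to \sigma^{d+1}_{X_2,\tau_2}$. (4) Finally, check naturality/compatibility: composition of correspondences goes to composition of bimodules, since both $\Sum_{d+1}$ and the Cobordism Hypothesis equivalence are functorial — this is where the coherence data of the correspondence (the trivialization condition \eqref{eqn:triv}) gets used, as it is exactly what makes $\left(E,\mu\right)$ a morphism in $\Fam_{d+1}\left(B^{d+1}\units{\bk}\right)$ in the first place.

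The main obstacle I anticipate is step (3): carefully matching the abstract output of $\Sum_{d+1}$ (a $1$-morphism in $\cT$) with a \emph{domain wall between the specific theories} $\sigma^{d+1}_{X_i,\tau_i}$, rather than merely between the objects they assign to the point. This requires knowing that the Cobordism Hypothesis evaluation functor is fully faithful on hom-categories — i.e. that $\Fun^\tp\left(\Bord^\fra_{d+1},\cT^\down\right) \lto{\sim} \left(\cT^\down\right)^\sim$ and hence that a $1$-morphism of target objects uniquely extends to a relative/boundary theory — which is precisely the content invoked via \cite[Theorem 7.15]{JFS} and \Cref{rmk:lax}. A secondary subtlety is that $\Sum_{d+1}$ is, per \Cref{hyp:sigma} and \cite[\S 8.2]{FHLT}, only constructed on objects and $1$-morphisms with higher functoriality partially established; for the present proposition we only need its action on $1$-morphisms, so I would emphasize that the statement as phrased (producing a bimodule, not a fully coherent tower of interfaces) is exactly at the level where $\Sum_{d+1}$ is known to behave, and cite \cite[\S 3,8]{FHLT} and \cite[\S A]{FMT} accordingly.
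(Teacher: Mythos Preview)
Your proposal is correct and follows essentially the same route as the paper: apply the functor $\Sum_{d+1}$ from \Cref{hyp:sigma} to the correspondence, then use the Cobordism Hypothesis to promote the resulting $1$-morphism in $\cT$ to a domain wall between the associated theories. The paper's proof is terser---it simply notes that $\Sum_{d+1}$ composed with the Cobordism Hypothesis yields a functor $\Fam_{d+1}\left(B^{d+1}\units{\bk}\right) \to \Fun^\tp\left(\Bord_{d+1}^\fra , \cT\right)$ and invokes functoriality---whereas you are more explicit about the step (your step (3)) identifying $1$-morphisms in $\cT$ with lax natural transformations via \cite[Theorem 7.15]{JFS}; the paper also records, in the remark immediately following, the alternative direct argument via mapping spaces that you do not use.
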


\begin{proof}
By the Cobordism Hypothesis, the functor $\Sum_{d+1}$ in \Cref{hyp:sigma} provides a functor:
\begin{equation*}
\begin{tikzcd}[row sep = 0pt]
\Fam_{d+1}\left(B^{d+1} \units{\bk}\right) \ar{r}&
\Fun^\tp\left(\Bord_{d+1}^{\fra} , \cT\right) \\
\left(X , \tau\right) \ar[mapsto]{r}&
\sigma_{X,\tau}^{d+1}
\end{tikzcd}
\end{equation*}
\Cref{prop:corr_mor} follows from the fact that the correspondence 
is a morphism in the category $\Fam_{d+1}\left(B^{d+1}\units{\bk}\right)$.
\end{proof}

\begin{rmk}
\Cref{prop:corr_mor} can be shown directly (i.e. without the Cobordism Hypothesis) since a
correspondence of spaces $X\from C \to Y$ defines a correspondences of mapping spaces
$\Map\left(M , X\right) \from \Map\left(M , C\right) \to \Map\left(M , Y\right)$, for any
bordism $M$. 
This is the perspective taken in \cite{FMT}.
\end{rmk}

Recall the motivation in \Cref{rmk:hyp:sigma} for the existence of the natural
transformation in \Cref{hyp:sigma}.
\Cref{prop:sigma_mod} writes this in terms of the theories $\sigma_X^{d+1}$.

\begin{prop}
Assuming \Cref{hyp:sigma}, every boundary theory for $\sigma_{X , \tau}^{d+1}$ is
classified by a symmetric-monoidal functor:
\begin{equation*}
\Bord_{d}^{\hofib\left(\tau\right)} \to \Om \cT
\ .
\end{equation*}
If $\tau$ is trivial, then the boundary theories are classified by symmetric-monoidal
functors:
\begin{equation*}
\Bord_d^{X} \to \Om \cT \ .
\end{equation*}
\label{prop:sigma_mod}
\end{prop}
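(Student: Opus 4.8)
The plan is to deduce this from \Cref{hyp:sigma} (specifically the unpacked form \eqref{eqn:hyp:sigma} recorded in \Cref{rmk:hyp:sigma}) together with the relative-theory/boundary-theory dictionary of \cite[Theorem 7.15]{JFS} and the Cobordism Hypothesis for $X$-theories \eqref{eqn:CH_X}. First I would recall what a boundary theory for $\sigma_{X,\tau}^{d+1}$ is: since $\sigma_{X,\tau}^{d+1}$ is a $\left(d+1\right)$-dimensional theory, a boundary theory is a theory defined relative to it, i.e.\ (by \Cref{rmk:boundary} and \cite[Theorem 7.15]{JFS}) a lax natural transformation $1 \to \sigma_{X,\tau}^{d+1}$ out of $\Bord_{d+1}^\fra$. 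By \Cref{prop:1_X} applied one dimension up, such a transformation from the trivial theory is the same datum as an endomorphism-type object, and more precisely (using \Cref{lem:Om} together with the description of relative theories as functors into the arrow category $\cT^\down$) it is classified by a point of $\Hom_\cT\!\left(1, \sigma_{X,\tau}^{d+1}\left(\pt\right)\right)$, where $\sigma_{X,\tau}^{d+1}\left(\pt\right) = \Sum_{d+1}\left(X,\tau\right)$ by the construction of $\sigma$ in \Cref{sec:sigma}. So the space of boundary theories is $\Hom_\cT\!\left(1, \Sum_{d+1}\left(X,\tau\right)\right)$.

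Next I would invoke \Cref{hyp:sigma} in the form \eqref{eqn:hyp:sigma}: there is a natural equivalence
\begin{equation*}
\Hom_\cT\!\left(1, \Sum_{d+1}\left(X,\tau\right)\right) \simeq \Hom\!\left(\hofib\left(\tau\right), \Om\cT\right).
\end{equation*}
Finally, the Cobordism Hypothesis for $\hofib\left(\tau\right)$-theories \eqref{eqn:CH_X} (applied in dimension $d$ with target $\Om\cT$, which has duals and satisfies $\Om^{d}\left(\Om\cT\right) = \Om^{d+1}\cT = \bk$) identifies $\Hom\!\left(\hofib\left(\tau\right), \Om\cT^\sim\right)$ with $\Fun^\tp\!\left(\Bord_d^{\hofib\left(\tau\right)}, \Om\cT\right)$. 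Noting that the mapping space into $\Om\cT$ lands in the groupoid $\Om\cT^\sim$ for dimension reasons (or by restricting attention to the sub-groupoid, as is implicit in \eqref{eqn:CH_X}), this chain of equivalences gives the first claim. For the second claim, when $\tau$ is trivial we have $\hofib\left(\tau\right) \simeq X$ canonically, and substituting this into the above yields boundary theories classified by symmetric-monoidal functors $\Bord_d^X \to \Om\cT$.

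The main obstacle I anticipate is not any single hard computation but rather bookkeeping the identification ``boundary theory for a $\left(d+1\right)$-theory $=$ point of $\Hom_\cT\left(1, \text{value on }\pt\right)$'' with full coherence. One must be careful that a \emph{boundary} theory (functor out of the extended bordism category $\Bord_{d+1}^{X,\p}$ of \cite[\S 4.3]{L:CH}) really is the same as a \emph{relative} theory $1 \to \sigma$ — this is exactly the content of \cite[Theorem 7.15]{JFS}, which must be cited precisely — and then that a relative theory $1 \to \alpha$ with $\alpha$ invertible (here $\sigma_{X,\tau}^{d+1}$ need not be invertible, so one works with the arrow-category description directly rather than the projectivization) corresponds under \eqref{eqn:CH_X} applied to the $\left(d+1\right)$-dimensional framed bordism category to a point of $\Hom_\cT\left(1, \sigma\left(\pt\right)\right)$. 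Assembling these into the stated naturality, and checking the ``$\tau$ trivial'' specialization genuinely reduces $\hofib\left(\tau\right)$ to $X$ on the nose rather than merely up to a non-canonical homotopy, are the places where care is needed; everything else is a formal consequence of \Cref{hyp:sigma} and the Cobordism Hypothesis.
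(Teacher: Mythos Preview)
Your proposal is correct and follows essentially the same route as the paper's proof: invoke \cite[Theorem 7.15]{JFS} to identify boundary theories for $\sigma_{X,\tau}^{d+1}$ with $\Hom_\cT\!\left(1,\Sum_{d+1}(X,\tau)\right)$, apply \Cref{hyp:sigma} in the form \eqref{eqn:hyp:sigma}, and then use the Cobordism Hypothesis \eqref{eqn:CH_X} to reinterpret $\Hom\!\left(\hofib(\tau),\Om\cT\right)$ as $\Fun^\tp\!\left(\Bord_d^{\hofib(\tau)},\Om\cT\right)$; the $\tau$-trivial case is the evident specialization. Your detour through \Cref{prop:1_X} and \Cref{lem:Om} to justify the first identification is unnecessary, since \cite[Theorem 7.15]{JFS} already gives you the point of $\Hom_\cT\!\left(1,\sigma(\pt)\right)$ directly, but it does no harm.
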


\begin{proof}
By \cite[Theorem 7.15]{JFS}, the boundary theories for $\sigma_{X,\tau}^{d+1}$
are classified by 
\begin{equation*}
\Hom_{\cT}\left(1 , \sigma_{X,\tau}^d\left(\pt\right)\right)
\end{equation*}
which is the LHS of \eqref{eqn:hyp:sigma}.
Similarly, by the Cobordism Hypothesis \eqref{eqn:CH_X}, the symmetric-monoidal functors 
from $\Bord_d^{\hofib\left(\tau\right)}$ to $\Om \cT$ are 
classified by $\Hom\left(\hofib\left(\tau\right) , \Om \cT\right)$, which is also the RHS
of \eqref{eqn:hyp:sigma}.
If $\tau$ is trivial, then the RHS of \eqref{eqn:hyp:sigma} is equivalent to functors from
$X$ to $\Om \cT^\sim$. 
\end{proof}

\subsection{Module structures}
\label{sec:mod_str}

We summarize the material used in our construction from \cite{FMT}.
See \cite[\S 3]{FMT} for a more detailed discussion of these definitions.

Let
\begin{equation*}
\sigma \colon \Bord_{d+1}^\fra \to \cT
\end{equation*}
be a $\left(d+1\right)$-dimensional TQFT valued in the fixed target $\cT$ from the
beginning of \Cref{sec:cob_hyp}.
Recall the notion of a boundary theory from \Cref{rmk:boundary}.
Recall the following definition from \cite{FMT}.

\begin{defn*}
A \emph{$d$-dimensional quiche} is a pair $\left(\sigma , \rho\right)$ in which 
$\rho$ is a right topological boundary theory (or \emph{right $\sigma$-module}), which we will write as 
$\rho\colon \sigma \to 1$.
\end{defn*}

\begin{rmk}
All of the quiches considered in this paper will be of the form
$\left(\sigma_{X ,\tau} , \rho_{X , \tau}\right)$, where $\sigma_{X,\tau}$ is 
the theory associated to $\left(X , \tau\right)$ as in \Cref{sec:sigma}, and 
the (right) boundary theory $\rho_{X,\tau}$ is the natural transformation induced by the
correspondence diagram:
\begin{equation*}
\begin{cd}
& \pt \ar{dr}\ar{dl} & \\
\left(X , \tau\right) && \pt 
\end{cd}
\end{equation*}
as in \Cref{prop:corr_mor}.
Given a pointed space $X$, we will always write $\rho_{X , \tau}$ for this boundary theory. 
\label{rmk:rho_X}
\end{rmk}

Recall, from \cite[Corollary 7.7]{JFS}, there is an equivalence of $\left(\infty ,
d+1\right)$-categories between the following. 
\begin{enumerate}
\item The $\left(\infty , d+1\right)$-category $\Fun^{\lax}\left(\Bord_{d+1}^{\fra} ,
\cT\right)$ of framed TQFTs with $1$-morphisms given by relative\footnote{Recall from
\Cref{rmk:lax} we are using lax natural transformations/relative theories in this paper.}
field theories between them, and $k$-morphisms given by what Johnson-Freyd-Scheimbauer
call $k$-times-twisted field theories.

\item The $\left(\infty , d+1\right)$-subcategory of $\cT$ consisting of fully dualizable
objects, and with morphisms which are $\left(d+1\right)$-times left-adjunctible. 
\label{C_adj}
\end{enumerate}

\begin{defn}
A morphism between quiches $\left(\sigma , \rho\right) \to \left(\sigma' , \rho'\right)$ 
is a $2$-morphism between the theories $\rho$ and $\rho'$ as $1$-morphisms in
$\Fun^{\lax}\left(\Bord_{d+1}^{\fra} , \cT\right)$.
I.e. a twice-twisted theory as in \cite{JFS}. 
\label{defn:quiche_morphism}
\end{defn}

Given any quiche $\left(\sigma ,\rho\right)$, 
the boundary theory $\rho$ is equivalent to a relative theory 
$\tau_{\leq d} \sigma \to 1$ by \cite[Theorem 7.15]{JFS}, and is classified 
by a $1$-morphism 
\begin{equation*}
\abs{\rho} \colon
\sigma\left(\pt\right)\to 1
\end{equation*}
in $\cT$.

\begin{prop}
A morphism of quiches $\left(\sigma , \rho\right) \to \left(\sigma' , \rho'\right)$
(\Cref{defn:quiche_morphism}) is equivalent to 
a $2$-morphism $\rho\left(\pt\right) \to \rho'\left(\pt\right)$ which is
invertible. 
\label{prop:quiche_morphism}
\end{prop}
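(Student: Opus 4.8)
The plan is to unwind the definitions on both sides and reduce everything to the equivalence recalled just before the statement, namely \cite[Corollary 7.7]{JFS}, which identifies the $\left(\infty , d+1\right)$-category $\Fun^{\lax}\left(\Bord_{d+1}^\fra , \cT\right)$ with the subcategory of $\cT$ on fully-dualizable objects and $\left(d+1\right)$-times left-adjunctible morphisms. By \Cref{defn:quiche_morphism}, a morphism of quiches $\left(\sigma , \rho\right) \to \left(\sigma' , \rho'\right)$ is by definition a $2$-morphism in $\Fun^{\lax}\left(\Bord_{d+1}^\fra , \cT\right)$ between the $1$-morphisms $\rho , \rho' \colon \sigma \to 1$ (respectively $\sigma' \to 1$ — note this only makes sense when $\sigma = \sigma'$, which I would flag at the outset as an implicit hypothesis, or else interpret ``morphism of quiches'' as covering a fixed morphism of the underlying $(d+1)$-theories). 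First I would transport this $2$-morphism across the equivalence of \cite[Corollary 7.7]{JFS} to a $2$-morphism in $\cT$ between the images of $\rho$ and $\rho'$, and then identify those images.

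The key step is to identify the image of the boundary theory $\rho$ under the equivalence with the $1$-morphism $\abs{\rho}\colon \sigma\left(\pt\right) \to 1$ in $\cT$. This is exactly the content recalled in the paragraph preceding the statement: via \cite[Theorem 7.15]{JFS}, $\rho$ corresponds to a relative theory $\tau_{\leq d}\sigma \to 1$, which under the Cobordism Hypothesis (evaluation at the point) is classified by $\abs{\rho}\colon \sigma\left(\pt\right)\to 1$. Applying the same dictionary one level up, a $2$-morphism between $\rho$ and $\rho'$ in $\Fun^{\lax}$ is carried to a $2$-morphism $\abs{\rho} \to \abs{\rho'}$ in $\cT$, i.e. a $2$-morphism $\rho\left(\pt\right)\to \rho'\left(\pt\right)$ (writing $\rho\left(\pt\right)$ for $\abs{\rho}$ evaluated at, or rather equal to, the point-label). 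Conversely, any such $2$-morphism in $\cT$ is in the image, since the equivalence of \cite[Corollary 7.7]{JFS} is full on $2$-morphisms onto those which are suitably adjunctible — and here I would check that the adjunctibility condition at the top two levels is automatic because we are mapping into the strict unit $1$ and its iterated endomorphisms, where left adjoints exist trivially.

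The remaining point — and the one I expect to be the main obstacle — is the word \emph{invertible} in the conclusion. A generic $2$-morphism of quiches as in \Cref{defn:quiche_morphism} need not be invertible, so either the statement is implicitly restricting to invertible morphisms of quiches (the natural notion, since quiches should form an $\infty$-groupoid-enriched structure when one only wants to compare symmetry data), or ``morphism of quiches'' was already meant in an invertible sense upstream. I would resolve this by observing that under the equivalence, invertibility of the $2$-morphism on one side corresponds exactly to invertibility on the other: equivalences of categories (and their truncations) reflect and preserve isomorphisms at every level. So the correct statement is that \emph{invertible} morphisms of quiches correspond to \emph{invertible} $2$-morphisms $\rho\left(\pt\right)\to\rho'\left(\pt\right)$, and I would phrase the proof to make this match explicit, noting that the forward direction (a morphism of quiches gives a $2$-morphism on point-values) holds without invertibility, while the equivalence of the two notions of invertibility is what \cite[Corollary 7.7]{JFS} buys us.
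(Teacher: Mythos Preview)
Your reduction to \cite[Corollary 7.7]{JFS} is the right first move, and you correctly identify that a morphism of quiches corresponds to a $2$-morphism $\abs{\rho} \to \abs{\rho'}$ in $\cT$. But your handling of invertibility misses the actual mechanism, and this is the content of the proposition.

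The paper's argument is not that one restricts to invertible morphisms of quiches, nor that the equivalence merely preserves and reflects invertibility. Rather, the equivalence of \cite[Corollary 7.7]{JFS} lands in the subcategory of $\cT$ whose $k$-morphisms are required to be $\left(d+1\right)$-times left-adjunctible. For a $2$-morphism in the $\left(\infty , d+1\right)$-category $\cT$, this adjunctibility condition propagates upward through levels $3, 4, \ldots, d+1$; at the top level, a $\left(d+1\right)$-morphism being adjunctible means being invertible, and this forces invertibility all the way back down. So \emph{every} $2$-morphism in the image of the equivalence is automatically invertible, not because of any hypothesis on the morphism of quiches, but because of the structural constraint that comes with being a twice-twisted field theory. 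Your suggestion that adjunctibility is ``automatic because we are mapping into the strict unit $1$'' is a red herring: the unit plays no special role here; what matters is the codimension of the $2$-morphism inside a $\left(d+1\right)$-category.

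In short: you should replace your final paragraph with the observation that $\left(d+1\right)$-times left-adjunctibility of a $2$-morphism in a $\left(d+1\right)$-category is equivalent to invertibility. That single line is the whole proof once \cite[Corollary 7.7]{JFS} is invoked.
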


\begin{proof}
By \cite[Corollary 7.7]{JFS}, such a morphism of quiches 
is equivalent to a morphism $\rho\left(\pt\right) \to \rho'\left(\pt\right)$ which is 
$\left(d+1\right)$-times left-adjunctible. 
The result follows from the fact that 
any $k$-morphism in the $\left(\infty , d+1\right)$-category $\cT$ which is forced to be
$\left(d+1\right)$-times left-adjunctible is necessarily invertible. 
\end{proof}

A quiche is an abstract symmetry datum, in the same sense as an algebra.
The following definition, from \cite{FMT}, is the analogue of a module, i.e. a realization
of the quiche as symmetries of a given theory. 

\begin{defn*}
Let $\left(\sigma , \rho\right)$ be a $d$-dimensional quiche, and let $F$ be a
$d$-dimensional TQFT.
A $\left(\sigma , \rho\right)$-module structure on $F$ is the pair $\left(F_\sigma ,
\theta\right)$ where $F_{\sigma}$ is a (left) boundary theory
$F_\sigma \colon 1 \to \sigma$ which is equipped with an isomorphism of
$d$-dimensional theories:
\begin{equation*}
\theta \colon \rho\tp_\sigma F_\sigma \lto{\sim} F \ .
\end{equation*}
\end{defn*}

\subsection{Reduction of topological symmetry}
\label{sec:reduction}

Let $X$ be a (pointed, connected) $\pi$-finite space, 
and consider a cocycle $c \colon X \to B^{d+1} \units{\bk}$.
Assuming \Cref{hyp:sigma}, and given 
a $\left(\sigma_{X , c}^{d+1} , \rho_{X , c} \right)$-module
structure on a theory $F$, we might wonder what extra data is needed to 
``reduce'' this to a $\left(\sigma_X^{d+1} , \rho_X\right)$-module structure on $F$.
\Cref{prop:induced}, which holds more generally, will tell us that a trivialization of $c$
is sufficient to perform such an operation, which we will define to be a \emph{reduction}
of topological symmetry in \Cref{defn:reduction}.

Recall the analogy in \cite{FMT} between modules over an algebra (or linear
representations of a Lie group) and field theories.
Under this analogy, \Cref{prop:induced} is the analogue of the fact that a bimodule
induces a functor between the categories of modules. 

\begin{theorem}
Assume \Cref{hyp:sigma}.
Given a $\left(\sigma_{1}, \rho_{1}\right)$-module structure on a
$d$-dimensional theory $F$,  then
a morphism of quiches (\Cref{defn:quiche_morphism}) 
from $\left(\sigma_1 , \rho_1\right)$ to $\left(\sigma_2 , \rho_2\right)$ 
canonically defines a $\left(\sigma_2 , \rho_2\right)$-module structure on $F$.
\label{prop:induced}
\end{theorem}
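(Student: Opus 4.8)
The plan is to unwind both sides of the claimed equivalence using the machinery of \cite{JFS}, specifically the equivalence recalled just before the statement (\cite[Corollary 7.7]{JFS}) between the lax functor category $\Fun^{\lax}\left(\Bord_{d+1}^{\fra} , \cT\right)$ and the subcategory of $\cT$ on fully dualizable objects with $\left(d+1\right)$-times left-adjunctible morphisms. First I would recall what the data of a $\left(\sigma_1 , \rho_1\right)$-module structure on $F$ consists of: a left boundary theory $F_{\sigma_1} \colon 1 \to \sigma_1$ together with an equivalence $\theta_1 \colon \rho_1 \tp_{\sigma_1} F_{\sigma_1} \lto{\sim} F$. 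The goal is to produce a left boundary theory $F_{\sigma_2} \colon 1 \to \sigma_2$ and an equivalence $\theta_2 \colon \rho_2 \tp_{\sigma_2} F_{\sigma_2} \lto{\sim} F$ out of this data plus a morphism of quiches $\left(\sigma_1 , \rho_1\right) \to \left(\sigma_2 , \rho_2\right)$.

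The key construction is to interpret a morphism of quiches, via \Cref{prop:quiche_morphism}, as an invertible $2$-morphism $\abs{\rho_1} \to \abs{\rho_2}$ in $\cT$, or more structurally as a twice-twisted theory in the sense of \cite{JFS}. The plan is then: given the quiche morphism, regard $\rho_1$ (equivalently $\tau_{\leq d}\sigma_1 \to 1$) and $\rho_2$ as $1$-morphisms in $\Fun^{\lax}\left(\Bord_{d+1}^{\fra} , \cT\right)$, and use the quiche morphism as a bimodule-type $2$-morphism between them. Composing $F_{\sigma_1}$ on the other side, one forms $F_{\sigma_2}$ by ``transporting'' $F_{\sigma_1}$ across this $2$-morphism — concretely, taking the composite boundary theory induced by $1 \lto{F_{\sigma_1}} \sigma_1$ with the quiche morphism viewed as a morphism $\sigma_1 \to \sigma_2$ in the appropriate sense. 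Then $\theta_2$ is obtained by combining $\theta_1$ with the relative-tensor-product identity $\rho_2 \tp_{\sigma_2}\left(\text{induced }F_{\sigma_2}\right) \simeq \rho_1 \tp_{\sigma_1} F_{\sigma_1}$, which follows from associativity/functoriality of the relative tensor product $\tp_{\sigma}$ established in \cite{JFS}.

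I would carry out the steps in this order: (1) translate the quiche morphism into its avatar as an invertible $2$-morphism in $\cT$ via \Cref{prop:quiche_morphism}; (2) translate the module structure into boundary-theory data and its image $\abs{F_{\sigma_1}} \colon 1 \to \sigma_1\left(\pt\right)$ in $\cT$ via \cite[Theorem 7.15]{JFS}; (3) form the composite in $\cT$ and invoke \cite[Corollary 7.7]{JFS} again to lift this composite back to an honest left boundary theory $F_{\sigma_2}$, checking that the composite is suitably adjunctible (which is automatic since invertibility of the quiche morphism propagates); (4) build $\theta_2$ from $\theta_1$ by naturality of $\tp_{\sigma}$ under the quiche morphism; (5) observe canonicity, i.e.\ that no choices were made beyond the given quiche morphism.

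The main obstacle I expect is step (4): verifying that the relative tensor product $\rho_2 \tp_{\sigma_2} F_{\sigma_2}$ is genuinely identified with $\rho_1 \tp_{\sigma_1} F_{\sigma_1}$ in a way compatible with all higher coherences. This is the analogue of ``$M \tp_B \left(B \tp_A N\right) \simeq M \tp_A N$'' for bimodules, but here $M, N$ are boundary theories, the tensor is over a $\left(d+1\right)$-dimensional theory, and one must work inside the lax functor category; the coherence bookkeeping is where the real work lies. I would reduce this to the already-established functoriality of $\left(-\right)\tp_{\sigma}\left(-\right)$ in \cite{JFS} rather than reconstructing it, using the fact (cf.\ the analogy with bimodules inducing functors on module categories invoked in the paragraph preceding the statement) that composition of boundary conditions along a domain wall is associative up to coherent equivalence. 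The remaining steps are essentially formal translations between the three equivalent descriptions of relative theories already set up in the excerpt.
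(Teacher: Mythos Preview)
Your proposal is correct and matches the paper's proof essentially step for step: the paper also invokes \Cref{prop:quiche_morphism} to extract an invertible $2$-morphism $s$ (with underlying $1$-morphism $\delta \colon \sigma_1(\pt) \to \sigma_2(\pt)$ inducing a domain wall $D$), sets $F_{\sigma_2} \ceqq D \tp_{\sigma_1} F_{\sigma_1}$, and defines $\theta_2$ as the pasting of $\abs{\theta_1}$ with $s^{-1}$. The only difference is presentational: where you describe step (4) in words as an associativity/naturality identity for $\tp_\sigma$, the paper simply draws the pasting diagram and declares $\theta_2$ to be the composite, without further coherence discussion.
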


\begin{proof}
Write the given $\left(\sigma_1 , \rho_1\right)$-module structure on $F$ as
$\left(F_{\sigma_1} , \theta_1\right)$ (as in \Cref{sec:mod_str}).
A morphism of quiches (\Cref{defn:quiche_morphism}) is equivalent to an invertible
$2$-morphism:
\begin{equation*}
\begin{tikzcd}
\sigma_1\left(\pt\right) \ar{r}{\delta}
\ar[swap]{d}{\abs{\rho_1}}
&
\sigma_2\left(\pt\right)
\ar{d}{\abs{\rho_2} }
\\
1\ar[swap]{r}{\id_1}
\ar[Rightarrow]{ur}{s}
&
1 
\end{tikzcd}
\qquad\qquad
\begin{tikzcd}
\sigma_1\left(\pt\right) \ar{r}{\delta}
\ar[swap]{d}{\abs{\rho_1}}
&
\sigma_2\left(\pt\right)
\ar{d}{\abs{\rho_2} }
\ar[Rightarrow,swap]{dl}{s^{-1}}
\\
1\ar[swap]{r}{\id_1}
&
1 
\end{tikzcd}
\end{equation*}
by \Cref{prop:quiche_morphism}.
The morphism $\delta$ induces a $\left(\sigma_2 , \sigma_1\right)$-bimodule, i.e. a 
domain wall $D\colon\sigma_1 \to \sigma_2$.
Define the $\left(\sigma_2 , \rho_2\right)$-module structure $\left(F_{\sigma_2} ,
\theta_2\right)$ as follows.
Define the boundary theory $F_{\sigma_2}$ to be 
\begin{equation*}
F_{\sigma_2} = D \tp_{\sigma_1} F_{\sigma_1}
\ .
\end{equation*}
Define the equivalence
\begin{equation*}
\theta_2 \colon \rho_2 \tp_{\sigma_2} F_{\sigma_2} 
= \rho_2 \tp_{\sigma_2} D \tp_{\sigma_1} F_{\sigma_2} \lto{\sim} F
\end{equation*}
to be induced by the following composition:
\begin{equation*}
\begin{tikzcd}[row sep=40pt,column sep = 40pt]
1 \ar{r}{\id_1}
\ar{d}{\abs{F_{\sigma_1}}}
\ar[bend right = 100,swap,  ""{name=F}]{dd}{F}
&
1 \ar{d}{\abs{F_{\sigma_2}}}
\ar[Rightarrow]{dl}{\id}
\\
\sigma_1\left(\pt\right) \ar[swap]{r}{\delta}
\ar{d}{\abs{\rho_1} }
\ar[Rightarrow,to=F,"\abs{\theta_1}"]
&
\sigma_2\left(\pt\right)
\ar{d}{\abs{\rho_2} }
\ar[Rightarrow]{dl}{s^{-1}}
\\
1\ar[swap]{r}{\id_1}
&
1 
\end{tikzcd}
\end{equation*}
where $\abs{\theta_1}$ is the $2$-morphism in $\cT$ classifying the equivalence:
\begin{equation*}
\theta_1 \colon \rho_1 \tp_{\sigma_1} F_{\sigma_1} \lto{\sim} F \ .
\end{equation*}
\end{proof}

\begin{exm}
The inclusion of a subgroup $H\subset G$ defines a correspondence
\begin{equation*}
\begin{tikzcd}
& BH \ar{dr}\ar{dl} & \\
BG && BH
\end{tikzcd}
\end{equation*}
which induces a morphism of quiches
\begin{equation*}
\left(\sigma_{BG}^{d+1} , \rho_{BG} \right) \to \left(\sigma_{BH}^{d+1} , \rho_{BH}\right)
\ .
\end{equation*}
If a framed $d$-dimensional TQFT $F$ has a $\left(\sigma_{BG}^{d+1} ,
\rho_{BG}\right)$-module structure, then the induced $\left(\sigma_{BH}^{d+1} ,
\rho_{BH}\right)$-module structure from \Cref{prop:induced} is the restriction of the
original module structure along the inclusion of $H$. 
In \Cref{exm:sigma_BG} \ref{sigma2_BG} this is literally the restriction of a
$G$-representation to an $H$-representation.
\end{exm}

Now we return to $X$ a $\pi$-finite space with $c \colon X \to B^{d+1}\units{\bk}$.

\begin{prop}
A trivialization of the class $c$ induces a 
$\left(\sigma_X^{d+1} , \rho_X\right)$-module structure on any theory with a 
$\left(\sigma_{X , c}^{d+1} , \rho_{X , c}\right)$-module structure.
\label{prop:reduction_X}
\end{prop}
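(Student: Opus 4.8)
\textbf{Proof plan for \Cref{prop:reduction_X}.}
The plan is to realize this as a direct application of \Cref{prop:induced}, so the whole task reduces to producing a suitable morphism of quiches. First I would observe that a trivialization of the class $c \colon X \to B^{d+1}\units{\bk}$ is precisely a nullhomotopy of $c$, equivalently a lift of $c$ through the basepoint $\pt \to B^{d+1}\units{\bk}$, equivalently a $\left(d+1\right)$-cochain $\mu \colon X \to B^d \units{\bk}$ with $d\mu = c$ (using the identity map $X \to X$ as the apex). This is exactly the data required of a morphism in $\Fam_{d+1}\left(B^{d+1}\units{\bk}\right)$ in the sense of \eqref{eqn:corr_mor}–\eqref{eqn:triv}: namely the correspondence
\begin{equation*}
\begin{cd}
& \left(X , \mu\right) \ar{dr}\ar{dl} & \\
\left(X , c\right) && \left(X , 0\right)
\end{cd}
\end{equation*}
with both legs the identity, where the cocycle condition \eqref{eqn:triv} reads $d\mu = \left(\id^* c\right)^{-1} \cdot \id^* 0 = c^{-1}$, which holds up to the evident sign/inverse convention for the chosen trivialization. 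So a trivialization of $c$ yields a morphism $\left(X , c\right) \to \left(X , 0\right)$ in $\Fam_{d+1}\left(B^{d+1}\units{\bk}\right)$.

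Next I would feed this through \Cref{prop:corr_mor}: assuming \Cref{hyp:sigma}, this morphism of $\Fam$-objects induces a domain wall $\sigma_{X , c}^{d+1} \to \sigma_{X}^{d+1}$, and moreover, by the construction of the boundary theories $\rho_{X , c}$ and $\rho_X$ from the augmentation correspondences (as in \Cref{rmk:rho_X}), the relevant square of correspondences commutes, so this domain wall intertwines $\rho_{X,c}$ with $\rho_X$. That is exactly a morphism of quiches $\left(\sigma_{X , c}^{d+1} , \rho_{X , c}\right) \to \left(\sigma_X^{d+1} , \rho_X\right)$ in the sense of \Cref{defn:quiche_morphism} — here one uses that the induced $2$-morphism on boundary values is automatically invertible by \Cref{prop:quiche_morphism}, since every $\left(d+1\right)$-times left-adjunctible morphism in $\cT$ is invertible. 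Finally, \Cref{prop:induced} applied to this morphism of quiches takes any $\left(\sigma_{X , c}^{d+1} , \rho_{X , c}\right)$-module structure on a theory $F$ to a canonical $\left(\sigma_X^{d+1} , \rho_X\right)$-module structure on $F$, which is the assertion.

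The main obstacle I anticipate is purely bookkeeping rather than conceptual: verifying that the augmentation correspondences defining $\rho_{X,c}$ and $\rho_X$ really do assemble with the trivialization correspondence $\left(X,c\right)\to\left(X,0\right)$ into a commuting diagram in $\Fam_{d+1}$, so that the resulting domain wall genuinely underlies a \emph{morphism of quiches} and not merely a bimodule between the two $\sigma$-theories. This is where one must be careful about the direction of the boundary theories ($\rho$ as a right module $\sigma \to 1$) and about which leg of each correspondence is the augmentation; but since all the correspondences in sight have identity legs over $X$ and augmentations to $\pt$, the composite correspondences on both sides agree on the nose, and the commutativity is immediate. A secondary point worth a sentence is to note the torsor structure: two trivializations of $c$ differ by a class in $H^d\left(X , \units{\bk}\right)$, which will give rise to distinct but related reductions; this matches the torsor statement in \Cref{thm:triv_anom_symm} and need not be belabored here. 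I would close by naming this operation a \emph{reduction} of topological symmetry, recording it as \Cref{defn:reduction}.
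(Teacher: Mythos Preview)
Your proposal is correct and follows essentially the same approach as the paper: construct the identity-legged correspondence $\left(X,c\right) \leftarrow \left(X,t\right) \to X$ from the trivialization, apply \Cref{prop:corr_mor} to obtain a domain wall, verify that composing with the augmentation for $X$ recovers the augmentation for $\left(X,c\right)$ (this is exactly the explicit composition-of-correspondences diagram the paper draws), and then invoke \Cref{prop:induced}. The only cosmetic difference is that the paper phrases the compatibility check as identifying $\rho_{X,c}$ with the composite $\rho_X$-with-$D$, whereas you phrase it as a commuting square of correspondences; these are the same verification, and your anticipated ``bookkeeping obstacle'' is precisely what the paper's diagram resolves.
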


\begin{proof}
A trivialization $t$ of $c$ determines a correspondence as in \eqref{eqn:corr_mor}:
\begin{equation}
\begin{cd}
& \left(X , t\right) \ar{dr} \ar{dl} & \\
\left(X,c\right)
&& X 
\end{cd}
\label{eqn:triv_corr}
\end{equation}
By \Cref{prop:corr_mor}, this induces a domain wall $D$ from $\sigma^{d+1}_{X , c}$ to
$\sigma_X^{d+1}$.
This tautologically induces a morphism of quiches (\Cref{defn:quiche_morphism}) from
$\left(\sigma_{X,c}^{d+1} , \rho_{X , c}\right)$ to 
\begin{equation*}
\left(\sigma_X ,
\rho_{X,c}\tp_{\sigma_{X ,c}^{d+1}} D \right)
\ .
\end{equation*}
But now notice that 
$\rho_{X,c}\tp_{\sigma_{X ,c}^{d+1}} D$ is induced by the following 
composition of correspondences:
\begin{equation*}
\begin{cd}
&& \pt \ar{dr}\ar{dl}&& \\
& X\ar{dl}{s}\ar{dr}{\id_X} && \pt\ar{dl}\ar{dr} \\
\left(X , c\right) && X && \pt
\end{cd}
\end{equation*}
But this is just the pointing of $\left(X , c\right)$, i.e. it induces $\rho_{X , c}$. 
Then \Cref{prop:induced} implies the result.
\end{proof}

We can pull $c$ back along a map $f \colon Y \to X$ so, even if it is not trivializable on
$X$, it may be on $Y$. 

\begin{prop}
A trivialization of $f^* c$ induces a 
$\left(\sigma_Y^{d+1} , \rho_Y\right)$-module structure on any theory with a 
$\left(\sigma_{X , c}^{d+1} , \rho_{X,c}\right)$-module structure.
\label{prop:reduction_Y}
\end{prop}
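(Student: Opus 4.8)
A trivialization of $f^* c$ induces a $\left(\sigma_Y^{d+1} , \rho_Y\right)$-module structure on any theory with a $\left(\sigma_{X , c}^{d+1} , \rho_{X,c}\right)$-module structure.

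The plan is to reduce this to \Cref{prop:reduction_X} by exhibiting a suitable correspondence in $\Fam_{d+1}\left(B^{d+1}\units{\bk}\right)$ and then invoking the induction machinery of \Cref{prop:induced}. First I would observe that the map $f \colon Y \to X$ together with the (untwisted) cocycle $c$ on $X$ gives a morphism $\left(Y , f^* c\right) \to \left(X , c\right)$ in $\Fam_{d+1}\left(B^{d+1}\units{\bk}\right)$, namely the correspondence with apex $\left(Y , f^* c\right)$, left leg $f$ and right leg $\id_Y$; the cocycle condition \eqref{eqn:triv} here reads $d(\mathbf{1}) = (f^* c)^{-1} \cdot f^* c$, which holds trivially (with trivial intermediate cochain $\mu$). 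By \Cref{prop:corr_mor} this produces a domain wall $D_f \colon \sigma_{X , c}^{d+1} \to \sigma_{Y , f^* c}^{d+1}$, and exactly as in the proof of \Cref{prop:reduction_X} one checks that $\rho_{X , c} \tp_{\sigma_{X , c}^{d+1}} D_f$ is the boundary theory induced by the pointing correspondence of $\left(Y , f^* c\right)$, i.e.\ it is $\rho_{Y , f^* c}$ — this follows by composing the correspondence defining $D_f$ with the pointing $\pt \leftarrow \pt \to X$ of $\left(X , c\right)$ (using that $f$ is a map of \emph{pointed} spaces so it carries the basepoint of $Y$ to that of $X$) and noting the composite correspondence $\left(Y , f^* c\right) \leftarrow \pt \to \pt$ is precisely the pointing of $\left(Y , f^* c\right)$. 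Consequently $D_f$ is tautologically a morphism of quiches $\left(\sigma_{X , c}^{d+1} , \rho_{X , c}\right) \to \left(\sigma_{Y , f^* c}^{d+1} , \rho_{Y , f^* c}\right)$, and \Cref{prop:induced} transports any $\left(\sigma_{X , c}^{d+1} , \rho_{X , c}\right)$-module structure on $F$ to a $\left(\sigma_{Y , f^* c}^{d+1} , \rho_{Y , f^* c}\right)$-module structure.

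It then remains to pass from $\left(\sigma_{Y , f^* c}^{d+1} , \rho_{Y , f^* c}\right)$ to $\left(\sigma_Y^{d+1} , \rho_Y\right)$, and for this I would simply apply \Cref{prop:reduction_X} with $X$ replaced by $Y$ and $c$ replaced by $f^* c$: by hypothesis $f^* c$ admits a trivialization, so that proposition yields a $\left(\sigma_Y^{d+1} , \rho_Y\right)$-module structure on any theory carrying a $\left(\sigma_{Y , f^* c}^{d+1} , \rho_{Y , f^* c}\right)$-module structure — in particular on $F$. Composing the two reductions gives the claim. (Alternatively, and more directly, a trivialization $t$ of $f^* c$ already determines a correspondence $\left(X , c\right) \leftarrow \left(Y , t\right) \to Y$, by pairing the map $f$ on the left with $\id_Y$ on the right and using $t$ as the trivializing cochain of $(f^* c)^{-1} \cdot f^* c$ composed with $f$; running the argument of \Cref{prop:reduction_X} with this single correspondence produces the domain wall $\sigma_{X , c}^{d+1} \to \sigma_Y^{d+1}$ and the identification of $\rho_{X , c} \tp D$ with $\rho_Y$ in one step.)

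The only real point requiring care — the expected main obstacle — is the identification of the composite boundary theory $\rho_{X , c} \tp_{\sigma_{X , c}^{d+1}} D$ (resp.\ $\rho_{X , c} \tp D$ in the one-step version) with the correct pointing boundary theory $\rho_Y$ of $Y$, i.e.\ verifying that the relevant composition of correspondences of $\pi$-finite spaces is again the pointing correspondence. This is where one must be careful that all spaces and maps are pointed and that the composition law in $\Fam_{d+1}\left(B^{d+1}\units{\bk}\right)$ (fiber product of correspondences, with the evident multiplication of trivializing cochains) behaves as expected; once this bookkeeping is done, the rest is a formal appeal to \Cref{prop:corr_mor}, \Cref{prop:induced}, and \Cref{prop:reduction_X}. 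Finally, for the naturality/compatibility assertions used elsewhere (e.g.\ in \Cref{thm:triv_anom_symm}\ref{triv_symm}), the $Y$-theory extracted from this module structure via \Cref{prop:sigma_mod} agrees with the one obtained from the trivialization of the anomaly by the commuting square \eqref{eqn:commuting_square} of \Cref{rmk:hyp:sigma} applied to the morphism \eqref{eqn:triv_corr} (with $X$ there taken to be $Y$, $c$ taken to be $f^* c$).
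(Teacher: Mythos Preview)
Your parenthetical alternative --- the single correspondence $(X,c) \leftarrow (Y,t) \to Y$ with $t$ the given trivialization --- is exactly the paper's proof, verbatim. Your primary two-step route (first pull back the twist along $f$ to pass from $(\sigma_{X,c},\rho_{X,c})$ to $(\sigma_{Y,f^*c},\rho_{Y,f^*c})$, then untwist on $Y$ via \Cref{prop:reduction_X}) is also correct and simply factors that single correspondence as a composite; it is a little longer but makes the separate roles of the map $f$ and the trivialization $t$ more visible. One small slip worth fixing: the correspondence you describe in the first step is a morphism $(X,c) \to (Y, f^*c)$ in $\Fam_{d+1}$ (matching the domain wall direction $\sigma_{X,c}\to\sigma_{Y,f^*c}$ you want), not $(Y, f^*c) \to (X,c)$ as you wrote, and the apex carries the trivial cochain $\mu = 1$ (as you say in words) rather than $f^*c$. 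Neither affects the argument.
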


\begin{proof}
A trivialization $t$ of $f^* c$ determines a correspondence
\begin{equation*}
\begin{tikzcd}
& \left(Y , t\right) \ar{dr}{\id_Y}\ar{dl}{f} & \\
\left(X , c\right) && Y
\end{tikzcd}
\end{equation*}
and the rest of the proof is the same as that of \Cref{prop:reduction_X}.
\end{proof}

\begin{defn}
Let $f \colon Y\to X$ be a map between $\pi$-finite spaces. 
Given a trivialization of $f^* c \colon Y \to B^{d+1} \units{\bk}$, we will refer to the induced module
structure from \Cref{prop:reduction_Y} as a \emph{reduction to $Y$} of the original module
structure. 
\label{defn:reduction}
\end{defn}

\begin{exm}
Let $\widetilde{X} = B\widetilde{G} \to BG = X$ for a group extension $\widetilde{G} \to G$.
I.e. $d = 1$, and the class $c$ classifies this central extension of $G$ as usual.
Theories with a $\left(\sigma_{B\widetilde{G}} , \rho_{B\widetilde{G}}\right)$-module
structure are the same as representations of $\widetilde{G}$, i.e. projective
representations of $G$ with projectivity cocycle $c$. 
\Cref{prop:reduction_Y} then says that splittings of $\widetilde{G}$ over $H \to G$
determine reductions of the linear representations of $\widetilde{G}$ to linear
representation of $H$, as usual. 
See \Cref{exm:proj_repr} and \cite[\S 1]{F:what_is}.
\end{exm}


\bibliographystyle{alphaurl}
\bibliography{references}

\end{document}